\definecolor{darkblue}{rgb}{0,0,0.68}
\numberwithin{equation}{section}
\newcommand{\BB}{\mathbb{B}}
\newcommand{\RR}{\mathbb{R}}
\newcommand{\UU}{\mathbb{U}}
\newcommand{\VVV}{\mathbb{V}}
\newcommand{\SSS}{\mathbb{S}}
\newcommand{\DD}{\mathbb{D}}
\newcommand{\PP}{\mathbb{P}}
\newcommand{\QQ}{\mathbb{Q}}
\newcommand{\TT}{\mathbb{T}}
\newcommand{\len}{\mathrm{l}} 
\newcommand{\one}{\mathbbm1}
\newcommand{\he}{\mathrm{ht}} 
\newcommand{\pro}{\mathrm{proj}}
\newcommand{\Cl}{\mathrm{Club}} 
\newcommand{\Ns}{\mathrm{NS}} 
\newcommand{\RA}{\mathrm{RA}}
\newcommand{\splitt}{\mathrm{split}} 
\newcommand{\succsplit}{\mathrm{succsplit}} 
\newcommand{\cof}{\operatorname{cof}}
\newcommand{\proj}{\operatorname{p}}
\newcommand{\Ord}{{\mathrm{Ord}}}
\newcommand{\Lim}{{\mathrm{Lim}}}
\newcommand{\ran}[1]{{{\rm{ran}}(#1)}}
\newcommand{\dom}[1]{{{\rm{dom}}(#1)}}
\newcommand{\Add}[2]{{\rm{Add}}({#1},{#2})}
\newcommand{\Addd}[2]{{\rm{Add}^*}({#1},{#2})}
\newcommand{\Col}[2]{{\rm{Col}}({#1},{#2})}
\DeclarePairedDelimiterX{\goe}[1]{\prec}{\succ}{#1}
\newcommand{\ka}{\kappa}
\newcommand{\lk}{{<}\ka}
\newcommand{\kk}{{}^{\ka}\ka}
\newcommand{\klk}{{}^{<\ka}\ka}
\newcommand{\Ordk}{{}^{\ka}\Ord}
\newcommand{\ltl}{{}^{\lambda}\lambda}
\newcommand{\ltll}{{}^{<\lambda}\lambda}
\newcommand{\Ordl}{{}^{\lambda}\Ord}
\newtheorem{theorem}{Theorem}[section]
\newtheorem{lemma}[theorem]{Lemma}
\newtheorem{corollary}[theorem]{Corollary}
\newtheorem*{question*}{Question}
\newtheorem{question}[theorem]{Question}
\newtheorem{problem}{Problem}
\newtheorem*{claim*}{Claim}
\newtheorem*{subclaim*}{Subclaim}
\theoremstyle{definition}
\newtheorem{definition}[theorem]{Definition}
\newtheorem{example}[theorem]{Example}
\theoremstyle{remark}
\newtheorem{remark}[theorem]{Remark}
\newenvironment{enumerate-(a)}{\begin{enumerate}[label={\upshape (\alph*)}, leftmargin=2pc]}{\end{enumerate}}
\newenvironment{enumerate-(a)-r}{\begin{enumerate}[label={\upshape (\alph*)}, leftmargin=2pc,resume]}{\end{enumerate}}
\newenvironment{enumerate-(A)}{\begin{enumerate}[label={\upshape (\Alph*)}, leftmargin=2pc]}{\end{enumerate}}
\newenvironment{enumerate-(A)-r}{\begin{enumerate}[label={\upshape (\Alph*)}, leftmargin=2pc,resume]}{\end{enumerate}}
\newenvironment{enumerate-(i)}{\begin{enumerate}[label={\upshape (\roman*)}, leftmargin=2pc]}{\end{enumerate}}
\newenvironment{enumerate-(i)-r}{\begin{enumerate}[label={\upshape (\roman*)}, leftmargin=2pc,resume]}{\end{enumerate}}
\newenvironment{enumerate-(I)}{\begin{enumerate}[label={\upshape (\Roman*)}, leftmargin=2pc]}{\end{enumerate}}
\newenvironment{enumerate-(I)-r}{\begin{enumerate}[label={\upshape (\Roman*)}, leftmargin=2pc,resume]}{\end{enumerate}}
\newenvironment{enumerate-(1)}{\begin{enumerate}[label={\upshape (\arabic*)}, leftmargin=2pc]}{\end{enumerate}}
\newenvironment{enumerate-(1)-r}{\begin{enumerate}[label={\upshape (\arabic*)}, leftmargin=2pc,resume]}{\end{enumerate}}
\begin{document} 

\title[Perfect subsets of generalized Baire spaces]{Perfect subsets of generalized Baire spaces and long 
%Banach-Mazur 
games} 
%The perfect subset property for generalized Baire spaces]{The perfect subset property and determinacy of the Banach-Mazur game for definable subsets of generalized Baire spaces} 
%\date{\today} 

\author{Philipp Schlicht}  

\thanks{The author was partially supported by DFG-grant LU2020/1-1 during the preparation of this paper. }

\maketitle

\begin{abstract} 
We extend Solovay's theorem about definable subsets of the Baire space to the generalized Baire space $\ltl$, where $\lambda$ is an uncountable cardinal with $\lambda^{<\lambda}=\lambda$. In the first main theorem, we show that  that the perfect set property for all subsets 
of ${}^{\lambda}\lambda$ that are definable from elements of $\Ordl$ is consistent relative to the existence of an inaccessible cardinal above $\lambda$. 
In the second main theorem, we introduce a Banach-Mazur type game of length $\lambda$ and show that the determinacy of this game, for all subsets of $\ltl$ that are definable from elements of $\Ordl$ as winning conditions, is consistent relative to the existence of an inaccessible cardinal above $\lambda$. We further obtain some related results about definable functions on $\ltl$ and consequences of resurrection axioms for definable subsets of $\ltl$. 
\end{abstract} 
%is it ok to write: consistent relative to an inaccessible cardinal above $\kappa$?

\setcounter{tocdepth}{2}
\tableofcontents

%%%%%%%%%%%%%%%%%%%%%%%%%%%%%%%%%%%%%%%%%%%%%%%%%%%%%%%%%%%%%%%%
%%%%%%%%%%%%%%%%%%%%%%%%%%%%%%%%%%%%%%%%%%%%%%%%%%%%%%%%%%%%%%%%
\section{Introduction}

%\todo[inline]{REPLACE $\kappa$ with $\lambda$ everywhere in the introduction? consistent notation everywhere else?} 

%This paper is motivated by the fact that the perfect set property and the Baire property are central in descriptive set theory. Therefore, one aims to know if analogues to these properties hold for definable subsets of ${}^{\kappa}\kappa$. 

%A \emph{perfect subset} of the Baire space ${}^{\omega}\omega$ is a nonempty, closed subset without isolated points. 
The \emph{perfect set property} for a subset of the Baire space 
%${}^{\omega}\omega$ 
states that it either contains a perfect subset, i.e. a nonempty, closed subset without isolated points, or is countable. By a classical result, this property is provable for the analytic subsets of the Baire space 
%${}^{\omega}\omega$ 
\cite[Corollary 14.8]{MR2731169}, but not for their complements \cite[Theorem 13.12]{MR2731169}. 
%Among the results about the perfect set property, we recall the following two central results: 
%Two main results known about the perfect set property are the following:  
%(1) All analytic subsets of ${}^{\omega}\omega$ have the perfect set property, where an analytic set is a continuous images of a closed subsets of ${}^{\omega}\omega$, and 
%The perfect set property \todo{cite} for $\bf{\Sigma}^1_1$ subsets of ${}^{\omega}\omega$ is provable in $\ZFC$, while the perfect set property for $\bf{\Pi}^1_1$ sets requires an inaccessible cardinal. 
Moreover, by an important result of Solovay, it is consistent relative to the existence of an inaccessible cardinal, that all subsets of ${}^{\omega}\omega$ that are definable from
countable sequences of ordinals 
 %elements of ${}^{\omega}\Ord$ 
 have the perfect set property \cite[Theorem 2]{MR0265151}.

%The perfect set property for all subsets of ${}^{\omega}\omega$ definable from elements of $\Ord^{\omega}$ by collapsing an inaccessible cardinal to $\omega_1$. 
%\todo[inline]{maybe define the perfect set property here?} 

%Various results have been generalized from the Baire space ${}^{\omega}\omega$ to generalized Baire spaces ${}^{\kappa}\kappa$, where $\kappa$ is an uncountable regular cardinal with $\kappa^{<\kappa}=\kappa$. 
%Motivated by these results, 
%Solovay's result, 
%we consider 

It is natural to ask whether the last result extends to uncountable cardinals $\lambda$. 
%These results motivate the question whether a natural generalization of this property, that is given in Definition \ref{definition: perfect sets} below, is consistent. 
In the uncountable setting, a \emph{perfect subset of $\ltl$} is defined as the set of all cofinal branches of some ${<}\lambda$-closed subtree of the set $\ltll$ of all sequences in $\lambda$ of length strictly less than $\lambda$. 
Accordingly, a subset of $\ltl$ has the \emph{perfect set property} if it either contains a perfect subset or has size at most $\lambda$. 
%\cite{MR0265151}, 
The next question (and variants thereof) was asked by Mekler and V\"a\"an\"anen \cite{vaan},
%\footnote{The paper erroneously claims to solve this question. } 
Kovachev \cite{Kovachev-thesis}, Friedman and others. 

\begin{question} 
Is it consistent, relative to the existence of large cardinals, that for some uncountable cardinal $\lambda$, the perfect set property holds for all 
%$\Pi^1_1$ 
subsets of $\ltl$ that are definable from $\lambda$? 
% that are definable over $H_{\kappa^+}$? 
%every subset $A$ of ${}^{\kappa}\kappa$ with $|A|\geq\kappa^+$ has a perfect subset? 
\end{question} 

The first main result, which we prove in Theorem \ref{perfect subsets of definable sets} below, gives a positive answer to this question. 

\begin{theorem} \label{main theorem on perfect sets in introduction} 
For any uncountable regular cardinal $\lambda$ with an inaccessible cardinal above it, there is a generic extension by a ${<}\lambda$-closed forcing in which 
%with $\kappa^{<\kappa}=\kappa$, 
%it is consistent relative to an inaccessible cardinal above $\kappa$ 
every subset of $\ltl$ that is definable from a $\lambda$-sequence of ordinals 
%\todo{write "$\kappa$-sequence of ordinals", everywhere?} element of $\Ordk$ 
has the perfect set property. 
\end{theorem}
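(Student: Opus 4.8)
The plan is to run Solovay's argument in the generalized setting; it is essentially the proof of Theorem~\ref{perfect subsets of definable sets}. First I fix the forcing. Let $\ka>\lambda$ be inaccessible and let $\PP=\Col{\lambda}{\lk}$ be the Levy collapse whose conditions are partial functions of size ${<}\lambda$ sending each $\mu\in(\lambda,\ka)$ onto $\lambda$. Using $\lambda^{<\lambda}=\lambda$ one checks the three standard properties: $\PP$ is ${<}\lambda$-closed (so it preserves $\lambda$ and adds no new ${<}\lambda$-sequences), it is $\ka$-c.c.\ by a $\Delta$-system argument, and it turns $\ka$ into $\lambda^+$. I also record the structural facts I will use repeatedly: $\PP$ factors as $\Col{\lambda}{{<}\alpha}\times\Col{\lambda}{[\alpha,\ka)}$ for every $\alpha<\ka$, each tail $\Col{\lambda}{[\alpha,\ka)}$ is isomorphic to $\PP$ and is weakly homogeneous, and, by $\ka$-c.c.\ together with a nice-name count, every element of $\Ordl$ in the extension $\VV[G]$ already lies in some intermediate model $\VV[G_\alpha]$ with $\alpha<\ka$.

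Now let $A\subseteq\ltl$ be definable from a parameter $s\in\Ordl$ in $\VV[G]$, and set $M=\VV[G_\alpha]$ for an $\alpha<\ka$ with $s\in M$. Since $\ka$ is inaccessible and becomes $\lambda^+$, we have $|(\ltl)^M|\le\lambda$ in $\VV[G]$. The key ingredient is a generalized Solovay local absoluteness lemma, which I would establish first: there is a formula $\psi$ such that for every $x\in\ltl\cap\VV[G]$ one has $x\in A$ if and only if $M[x]\models\psi[x,s]$, and moreover $\psi[x,s]$ has the same truth value in every model $M[x]\subseteq N\subseteq\VV[G]$. This follows because $\VV[G]$ is a Levy-collapse extension of $M[x]$ and the weak homogeneity of that collapse forces membership in $A$ to be decided by the trivial condition over $M[x]$. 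Everything downstream reduces membership in $A$ to this local, forcing-theoretic statement.

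With the lemma in hand the dichotomy is the classical one. If $|A|\le\lambda$ then $A$ has the perfect set property by definition, so assume $|A|>\lambda$. As $|(\ltl)^M|\le\lambda$ there is $x\in A\setminus M$. By $\ka$-c.c.\ pick $\gamma<\ka$ with $x\in M[h]$, where $h$ is $\QQ$-generic over $M$ for $\QQ=\Col{\lambda}{[\alpha,\gamma)}$, a ${<}\lambda$-closed, weakly homogeneous poset of size ${\le}\lambda$ in $\VV[G]$. Fix a $\QQ$-name $\dot x$ and a condition $p\in h$ forcing over $M$ both $\psi(\dot x,s)$ and that $\dot x$ differs from every element of $(\ltl)^M$ (the latter is available because $x\notin M$). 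I would then run a fusion of length $\lambda$ building a system $\langle p_t : t\in{}^{<\lambda}2\rangle$ of conditions below $p$: at successor steps I split, choosing $p_{t0},p_{t1}\le p_t$ indexed by the two immediate successors $t0,t1$ of $t$ that force incompatible initial segments of $\dot x$, and I also meet the $\beta$-th of the ${\le}\lambda$ dense subsets of $\QQ$ lying in $M$; at limit steps of cofinality ${<}\lambda$ I take lower bounds using ${<}\lambda$-closure. For each branch $b\in{}^{\lambda}2$ the conditions $\langle p_{b\restriction\beta} : \beta<\lambda\rangle$ generate a filter $h_b$ that is $\QQ$-generic over $M$ (all dense sets were met), the values $\dot x[h_b]$ are pairwise distinct cofinal elements of $\ltl$, and each satisfies $\psi$ and hence lies in $A$ by the lemma. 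The range of $b\mapsto\dot x[h_b]$ is then the branch set of a ${<}\lambda$-closed subtree of $\ltll$, i.e.\ a perfect subset of $A$.

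The step I expect to be the main obstacle is this final $\lambda$-length fusion, precisely because there is no Baire category theorem for $\ltl$ to lean on. I must simultaneously (i) split $\lambda$ times to obtain a genuinely perfect, ${<}\lambda$-closed tree; (ii) meet the ${\le}\lambda$ many dense sets of $\QQ$ coded in $M$ so that each branch filter is actually generic; and (iii) decide ever-longer initial segments of $\dot x$ so that $b\mapsto\dot x[h_b]$ is continuous, injective, and has cofinal values. Threading all three through the limit stages of cofinality ${<}\lambda$ is exactly where ${<}\lambda$-closure of $\QQ$ and the arithmetic $\lambda^{<\lambda}=\lambda$ are indispensable, and verifying that the resulting image is the branch set of a ${<}\lambda$-closed subtree of $\ltll$ is the remaining bookkeeping.
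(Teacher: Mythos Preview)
Your proposal has a genuine gap, and it is not where you locate it: the $\lambda$-length fusion is routine given ${<}\lambda$-closure of $\QQ$, but the ``Solovay local absoluteness lemma'' you invoke beforehand is false for uncountable $\lambda$. You assert that $\VV[G]$ is a Levy-collapse extension of $M[x]$ for every $x\in(\ltl)^{\VV[G]}$. In the countable case this holds because $\Col{\omega}{{<}\ka}$ absorbs every forcing of size ${<}\ka$: the quotient of $x$ in any initial segment, followed by the tail, is again equivalent to the full collapse. At uncountable $\lambda$ this absorption fails, since the quotient $\BB(\Col{\lambda}{{<}\gamma})/\BB(\dot{x})$ need not be ${<}\lambda$-closed. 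Lemma~\ref{a forcing with bad quotient} exhibits an $\Add{\lambda}{1}$-generic whose quotient destroys stationary subsets of $\lambda$; for such $x$ the passage from $M[x]$ to $\VV[G]$ is not by $\Col{\lambda}{{<}\ka}$, and no $\psi$ evaluated in $M[x]$ can predict membership in an arbitrary definable $A$. The paper explicitly identifies this as the obstruction in the discussion following Lemma~\ref{a forcing with bad quotient}, noting that the direct Solovay argument handles $\mathbf{\Sigma}^1_1$ sets but already breaks for $\mathbf{\Pi}^1_1$. In your fusion the filters $h_b$ are $\QQ$-generic over $M$, but the tree $\langle p_t\rangle$ is built in $\VV[G]$ and you have no control over how $M[h_b]$ sits inside $\VV[G]$; from $M[h_b]\models\psi(x_b,s)$ you therefore cannot conclude $\VV[G]\models\varphi(x_b,s)$.

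The paper's substitute for the missing absoluteness is a dedicated forcing $\PP$ (Definition~\ref{definition of the forcing adding a perfect tree}), equivalent to $\Add{\lambda}{1}$, that adds a perfect subtree of $\ltll$ whose branches are not merely $\Add{\lambda}{1}$-generic over the relevant intermediate model (Lemma~\ref{distinct branches are mutually generic}) but each have $\Add{\lambda}{1}$ as their quotient (Lemma~\ref{quotient forcing of a branch}). This explicit control of the quotient of every branch is exactly what lets the homogeneity argument go through in Theorem~\ref{perfect subsets of definable sets}, and it is the main new idea needed beyond Solovay.
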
 

%This is proved in Theorem \ref{perfect subsets of definable sets} below. 
Assuming that there is a proper class of inaccessible cardinals, this can be extended to the next result, which is proved in Theorem \ref{perfect set property at all regulars} below. 

\begin{theorem} \label{perfect set property at all regulars} 
Assume that there is a proper class of inaccessible cardinals. Then there is a class generic extension 
%of $V$ 
in which 
%such that in $V[G]$, 
for every infinite regular cardinal $\lambda$, every subset of $\ltl$ that is definable from a $\lambda$-sequence of ordinals 
%n element of ${}^{\kappa}\Ord$ 
has the perfect set property. 
%$\mathsf{PSP}^{\kappa}_{od}$ holds for all infinite regular cardinals $\kappa$ in a class forcing extension. 
\end{theorem}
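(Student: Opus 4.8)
The plan is to derive Theorem~\ref{perfect set property at all regulars} from Theorem~\ref{main theorem on perfect sets in introduction} by iterating the single-cardinal construction along the proper class of inaccessible cardinals, in a way that preserves the perfect set property already achieved at smaller cardinals. First I would enumerate the infinite regular cardinals $\lambda$ (in the ground model and all relevant extensions, the class of regulars is coarse enough to index by the ordinals) and, using the proper class of inaccessibles, assign to each such $\lambda$ an inaccessible cardinal $\kappa_\lambda>\lambda$, chosen so that these witnesses are strictly increasing and cofinal in the ordinals. The aim is to build a class-length Easton-style iteration $\langle \PP_\alpha, \dot{\QQ}_\alpha \rangle$ in which, at the stage corresponding to $\lambda$, the factor $\dot{\QQ}_\lambda$ is the ${<}\lambda$-closed forcing supplied by Theorem~\ref{main theorem on perfect sets in introduction} for forcing the perfect set property for all subsets of $\ltl$ definable from a $\lambda$-sequence of ordinals, using the inaccessible $\kappa_\lambda$.

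The key structural point is the support and the bookkeeping. Since each $\dot{\QQ}_\lambda$ is ${<}\lambda$-closed, I would take Easton support: full support below any given regular cardinal and direct limits (bounded support) at inaccessible stages, so that the iteration is tame and the usual Easton factorization is available. The crucial verification is that the forcing above stage $\lambda$ does not destroy the perfect set property secured at $\lambda$. For this I would factor the class iteration as $\PP_\lambda * \dot{\QQ}_\lambda * \dot{\PP}^{\mathrm{tail}}$, where $\dot{\PP}^{\mathrm{tail}}$ consists of the stages indexed by regular cardinals $\mu>\lambda$. Each such tail factor is ${<}\mu$-closed, hence in particular ${<}\lambda^+$-closed, so the tail adds no new elements of $\ltl$ and no new subsets of $\ltl$ of size $\le\lambda$, and it adds no new subtrees of $\ltll$; a perfect subset witnessing the property at $\lambda$ in the extension by $\PP_\lambda*\dot{\QQ}_\lambda$ therefore remains a perfect subset in the full class extension, and a set that had size $\le\lambda$ keeps that size. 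Thus the perfect set property at $\lambda$, once arranged, is permanent.

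Next I would address definability across the class iteration. A subset of $\ltl$ definable in the final model from a $\lambda$-sequence of ordinals is, by closure, already coded in the initial segment $\PP_\lambda*\dot{\QQ}_\lambda$ extension together with the ${<}\lambda^+$-closed tail having no effect on such sequences; I would argue that the parameter and the subset it defines are captured at an intermediate stage so that Theorem~\ref{main theorem on perfect sets in introduction}, applied locally at $\lambda$ inside the extension by $\PP_\lambda$, applies. Here one uses that the head forcing $\PP_\lambda$ can itself be absorbed into the hypotheses of the single-step theorem: the theorem is stated for an arbitrary ground model with a regular $\lambda$ and an inaccessible above it, so I would verify that $\lambda$ remains regular and $\kappa_\lambda$ remains inaccessible after forcing with $\PP_\lambda$, which follows from the closure and chain-condition bounds of the Easton iteration together with $\kappa_\lambda$ being inaccessible and strictly above all earlier witnesses.

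The main obstacle I anticipate is the interaction between stages, namely ensuring simultaneously that each $\kappa_\lambda$ survives as inaccessible up to the point where it is used and that the iteration preserves all the relevant cardinals and cofinalities so that the notion of \emph{perfect subset of $\ltl$} is not disturbed at any $\lambda$. Concretely, the delicate verification is that the collapses or trees introduced by $\dot{\QQ}_\lambda$ (which absorb $\kappa_\lambda$ down to $\lambda^+$) do not interfere with the inaccessibility of later witnesses $\kappa_\mu$, and that the Easton factorization at each inaccessible stage genuinely yields the required closure of the tail; this is the standard but technically substantial heart of an Easton-support class iteration, and getting the support conditions exactly right so that both the closure (for permanence) and the local applicability of Theorem~\ref{main theorem on perfect sets in introduction} hold at every regular $\lambda$ is where the real work lies.
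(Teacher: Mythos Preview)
Your overall architecture---an Easton-support class iteration of the single-cardinal forcings, with the tail sufficiently closed to preserve what was achieved at earlier stages---is correct and matches the paper's approach. However, there is a genuine gap in your treatment of definability.

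You write that ``the parameter and the subset it defines are captured at an intermediate stage'' by closure of the tail. Closure gives you the parameter: since $z\in\Ordl$ and the tail is ${<}\lambda^+$-closed, $z$ lies in the intermediate extension $V[G_{\alpha+1}]$. But closure alone does \emph{not} give you the set $A=\{x\in\ltl : V[G]\models\varphi(x,z)\}$. The formula $\varphi$ may quantify over arbitrary sets of $V[G]$, so there is no reason a priori that $A$, as computed in $V[G]$, coincides with anything definable in $V[G_{\alpha+1}]$, even though both models have the same $\ltl$. Your proposed permanence argument (``a perfect subset witnessing the property \ldots\ remains a perfect subset'') is fine once you know $A\in V[G_{\alpha+1}]$, but that is precisely what has not been established.

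The missing ingredient is \emph{homogeneity} of the tail forcing. Since each factor is (a name for) a Levy collapse, the tail $\PP^{(\alpha+1)}$ is homogeneous; hence for $x,z\in V[G_{\alpha+1}]$ the truth value of $\varphi(x,z)$ in $V[G]$ is decided by $\one_{\PP^{(\alpha+1)}}$, and therefore
\[
A=\{x\in\ltl : V[G_{\alpha+1}]\models \one\Vdash_{\PP^{(\alpha+1)}}\varphi(\check x,\check z)\}\in V[G_{\alpha+1}].
\]
Now $A$ is definable in $V[G_{\alpha+1}]$ from $z$ and ordinals, so Theorem~\ref{perfect subsets of definable sets} applies over the ground model $V[G_\alpha]$. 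The paper makes exactly this move.

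A secondary organizational point: indexing the iteration by ground-model regular cardinals is awkward, since the collapse at stage $\lambda$ destroys all regulars in the interval $(\lambda,\kappa_\lambda)$. The paper instead indexes by the closure of the class of inaccessibles (together with $\omega$), forcing at stage $\alpha$ with $\Col{\nu_\alpha}{{<}\kappa_{\alpha+1}}$ where $\nu_\alpha$ is the least surviving regular $\geq\kappa_\alpha$; this ensures that the regulars of the final model are exactly the $\nu_\alpha$, so the bookkeeping is transparent.
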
 
%\begin{theorem} 
%\todo{rephrase for a class generic extension?} 
%It is consistent relative to a proper class of inaccessible cardinals that for all regular cardinals $\kappa$ with $\kappa^{<\kappa}=\kappa$, any subset of ${}^{\kappa}\kappa$ that is definable from an element of $\Ordk$ has the perfect set property. 
%\end{theorem} 

We will further 
%use the proof of the first main theorem for 
obtain the next result about definable functions in Theorem \ref{function on 2^kappa continuous on a perfect set}. 
%in analogy to \todo{\ \ \ \ cite WOODIN cons strength of proj unif?} a result for the Baire space. 
In the statement, let $[X]^\gamma_{\neq}$ denote the set of sequences $\langle x_i\mid i<\gamma\rangle$ of distinct elements of $X$ for any set $X$ and any ordinal $\gamma$. 
% ${}^{\omega}\omega$. 

\begin{theorem} 
%relative to an inaccessible cardinal 
%\todo{rewrite, check statement below} 
For any uncountable regular cardinal $\lambda$ with $\lambda^{<\lambda}=\lambda$, there is a generic extension by a ${<}\lambda$-closed forcing in which for every $\gamma<\lambda$ and every function
$f\colon [{}^{\lambda}\lambda]_\neq^\gamma \mapsto {}^{\lambda}\lambda$ that is definable from a $\lambda$-sequence of ordinals, 
%elements of $\Ordk$, 
there is a perfect subset $C$ of ${}^{\lambda}\lambda$ such that $f{\upharpoonright }[C]^\gamma$ is continuous. 
\end{theorem}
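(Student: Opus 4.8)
The plan is to work in the same generic extension $V[G]$ as for Theorem \ref{main theorem on perfect sets in introduction}, obtained by the ${<}\lambda$-closed L\'evy collapse $\Col{\lambda}{{<}\kappa}$ of an inaccessible cardinal $\kappa>\lambda$, and to reduce the statement to two ingredients: a regularity property of $f$, and a perfect-set extraction of Mycielski type. Fix $\gamma<\lambda$ and a function $f\colon[\ltl]^\gamma_\neq\to\ltl$ defined by a formula with a parameter in $\Ordl$. Since $\gamma<\lambda$ and $\lambda^{<\lambda}=\lambda$, the space $({}^\lambda\lambda)^\gamma$ is homeomorphic to $\ltl$ and is a $\lambda$-Baire space, so the generalized Baire category theorem is available, and its open subspace $[\ltl]^\gamma_\neq$ is $\lambda$-comeager in it.

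The first ingredient is that $f$ is $\lambda$-Baire measurable. For each $t\in\ltll$, the preimage $\Set{\vec{x}\in[\ltl]^\gamma_\neq}{f(\vec{x})\in N_t}$, where $N_t$ denotes the basic open subset of $\ltl$ determined by $t$, is definable from a $\lambda$-sequence of ordinals; hence, by the Solovay-style capturing and homogeneity argument underlying Theorem \ref{main theorem on perfect sets in introduction}, it has the $\lambda$-Baire property in $V[G]$. There are only $\lambda$-many such $t$, and the $\lambda$-meager ideal is closed under unions of length $\lambda$ because $\lambda\cdot\lambda=\lambda$; so, writing each preimage as an open set modulo a $\lambda$-meager error and discarding the union of these errors, I obtain a single $\lambda$-comeager set $G\subseteq({}^\lambda\lambda)^\gamma$ such that $f$ is continuous on $G\cap[\ltl]^\gamma_\neq$.

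The second ingredient is a generalized Mycielski theorem: since $\lambda^{<\lambda}=\lambda$ and $G$ is $\lambda$-comeager, there is a perfect set $C\subseteq\ltl$ with $[C]^\gamma_\neq\subseteq G$. I would build the witnessing ${<}\lambda$-closed perfect tree $T\subseteq\ltll$ by a fusion of length $\lambda$. Writing $G=\bigcap_{\eta<\lambda}U_\eta$ with each $U_\eta$ dense open, I maintain at stage $\xi<\lambda$ an approximation $T_\xi$ with its splitting nodes fixed below a level $\delta_\xi$, and at each step I shrink the ${<}\lambda$-many tuples of pairwise-distinct stems currently present so that every tuple of branches extending them enters $U_\xi$, while retaining enough splitting to keep the final tree perfect. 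A bookkeeping of length $\lambda$ handles the $\leq\lambda$-many stem-tuples and dense open sets, which is exactly what $\lambda^{<\lambda}=\lambda$ provides. Taking $C=[T]$, any $\gamma$-tuple of pairwise-distinct branches agrees below each $\delta_\xi$ with a tracked stem-tuple and so lies in every $U_\eta$; thus $[C]^\gamma_\neq\subseteq G$.

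Combining the ingredients, $f\upharpoonright[C]^\gamma_\neq$ is the restriction to a subset of $G$ of a map that is continuous on $G$, hence continuous. The step I expect to be the main obstacle is the generalized Mycielski extraction: a single perfect tree must be threaded through a $\lambda$-comeager subset of the $\gamma$-fold product so that \emph{all} tuples of pairwise-distinct branches are caught at once. Because distinct branches of one tree share initial segments, the fusion has to diagonalize against the $U_\eta$ for every pattern of splitting among the $\gamma$ coordinates simultaneously, and one must keep these genericity demands compatible with the perfectness demands at every stage of the length-$\lambda$ construction; verifying that $\lambda^{<\lambda}=\lambda$ makes the bookkeeping succeed is the technical heart of the matter.
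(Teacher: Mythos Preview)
Your first ingredient fails: in the uncountable setting, it is simply \emph{not} the case that definable subsets of $({}^\lambda\lambda)^\gamma$ have the $\lambda$-Baire property, even after the L\'evy collapse. The paper makes this point explicitly in the introduction: for any uncountable $\lambda$ the $\Sigma^1_1$ set $\Cl_\lambda=\{x\in{}^\lambda 2\mid\exists C\text{ club }\forall i\in C\ x(i)\neq 0\}$ is never $\lambda$-Baire, and the companion example $\Cl^S_\lambda$ for a bistationary $S\subseteq\lambda$ shows the same phenomenon in ${}^\lambda\lambda$. So the Solovay-style argument you invoke does \emph{not} yield the $\lambda$-Baire property for the preimages $f^{-1}[N_t]$; Theorem~\ref{main theorem on perfect sets in introduction} is about the perfect set property only, and the paper's Section~3 develops instead an asymmetric ``almost Baire'' notion precisely because the symmetric property is out of reach. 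Without the $\lambda$-Baire property for these preimages, you have no comeager set on which $f$ is continuous, and the Mycielski step has nothing to work with.

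The paper's proof avoids Baire category entirely. It introduces a forcing $\mathbb{P}$ (Definition~\ref{definition of the forcing adding a perfect tree}) which is sub-equivalent to $\Add{\lambda}{1}$ and which adds a perfect tree $T$ such that any $\gamma$-tuple of distinct branches of $T$ is $\Add{\lambda}{\gamma}$-generic over the ground model \emph{and} has $\Add{\lambda}{1}$ as a quotient (Lemma~\ref{perfect set with good quotients}). Working in an $\Add{\lambda}{\lambda^+}$-generic extension, one then argues by homogeneity that for any definable $f$ and any tuple $\vec{x}\in[C]^\gamma_{\neq}$ from the perfect set $C=[T]$, the value $f(\vec{x})$ already lies in $V[\vec{x}]$; continuity on $[C]^\gamma_{\neq}$ then follows directly from the forcing relation, since a condition in the Cohen product below $\vec{x}$ decides each initial segment of $f(\vec{x})$. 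The key idea you are missing is this explicit construction of a perfect set of mutually generic branches with controlled quotients, which replaces the inaccessible Baire-category machinery.
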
 

%We will give precise definitions for the statement of this question below. 
%A \emph{perfect subtree} $T$ of ${}^{<\kappa}\kappa$ is a tree such that above every node, there is a splitting node, and for ever strictly increasing  sequence $\langle s_\alpha\mid \alpha<\gamma\rangle$ with $\gamma<\kappa$, $s=\bigcup_{\alpha<\gamma} s_\alpha \in T$. 
%A \emph{perfect subset} $C$ of ${}^{\kappa}\kappa$ is a set $C=[T]$, where $T$ is a perfect subtree of ${}^{<\kappa}\kappa$. 
%Question asked by Mekler-V\"a\"an\"anen. Also asked by Friedman in some slides from Luminy? Technique used by Laguzzi (and others?). 
%The question of the consistency of the perfect set property for definable subsets of ${}^{\kappa}\kappa$ can be found in [Mekler-V\"a\"an\"anen]. We show that it holds in $Col(\kappa,<\lambda)$-generic extensions for inaccessible $\lambda>\kappa$ by avoiding to work with subsets of $\kappa$ whose quotient forcing is not equivalent to $Col(\kappa,<\lambda)$. 
%V\"a\"an\"anen asked if it is consistent that the perfect set property holds for more definable subsets of ${}^{\kappa}\kappa$. 
%The first main result is that for regular uncountable cardinals $\kappa$, $\mathsf{PSP}^{\kappa}_{od}$ holds after an inaccessible $\lambda>\kappa$ is Levy-collapsed to $\kappa^+$. 
%We bypass the problem of working with subsets of $\kappa$ with bad quotients by constructing (in the next section) a perfect set of $Add(\kappa,1)$-generics inside an $Add(\kappa,1)$-generic extension such that each quotient forcing is equivalent to $Add(\kappa,1)$. 

We now turn to the Baire property and generalizations thereof, which we study in the second part of this paper. 
%Here, i
It is provable that analytic and co-analytic subsets of ${}^{\omega}\omega$ have the Baire property \cite[Theorem 21.6]{MR1321597}. 
Moreover, Solovay proved that it is consistent, relative to the existence of an inaccessible cardinal, that all subsets of ${}^{\omega}\omega$ that are definable from elements of ${}^{\omega}\Ord$ have the Baire property \cite[Theorem 2]{MR0265151}.

The direct generalization of the Baire property, which we here call \emph{$\lambda$-Baire}, is given in Definition \ref{definition: kappa-Borel etc} below. 
However, the situation for this property in the uncountable setting is very different compared to both the Baire property in the countable setting and the perfect set property in the uncountable setting, 
%from that of the perfect set property. 
%The notion of $\kappa$-Baire subsets of ${}^{\kappa}\kappa$ is given in Definition \ref{definition: kappa-Borel etc}. 
%A subset of ${}^{\kappa}\kappa$ is \emph{$\kappa$-Baire} if it is an element of the $\kappa$-albegra generated 
%However, the analogous statement to Solovay's theorem fails for this notion, 
since there are always $\Sigma^1_1$ subsets of ${}^{\lambda}2$ that are not $\lambda$-Baire by the next example. 
%as is well known. 
To state the example, 
we consider the set 
%let $\Cl_\kappa$ denote the set 
$$\Cl_\lambda=\{x\in {}^{\lambda}2\mid \exists C\subseteq\lambda \text{ club } \forall i\in C\ x(i)\neq 0\}$$ 
%$$\Cl_\lambda=\{x\in {}^{\lambda}\lambda\mid \exists C\subseteq\lambda \text{ club } \forall i\in C\ x(i)\neq 0\}$$ 
of functions coding elements of the club filter as characteristic functions. 
%denote the club filter on $\kappa$. 
%$\Cl_\kappa$ denote the set of $x\in {}^{\kappa}2$ such that there is a club $C$ in $\kappa$ with $x(i)\neq 0$ for all $i\in C$. 

\begin{example} \cite[Theorem 4.2]{MR1880900} 
Suppose that $\lambda$ is a cardinal with $\cof(\lambda)>\omega$. 
Then the set $\Cl_\lambda$ is not a $\lambda$-Baire subset of ${}^{\lambda}2$. 
%and the nonstationary ideal $\Ns_\kappa$ on $\kappa$ are not \todo{define kappa-Baire} $\kappa$-Baire. 
\end{example} 

Moreover, this counterexample is generalized to subsets of ${}^{\lambda}\lambda$ in \cite{MR3235820} as follows. 
If $S$ is a subset of $\lambda$, 
we consider the set 
%let $\Cl_\kappa$ denote the set 
$$\Cl^S_\lambda=\{x\in {}^{\lambda}\lambda\mid \exists C\subseteq\lambda \text{ club } \forall i\in C\ x(i)\in S\}.$$ 
%$\Cl^S_\kappa$ denote the set of $x\in {}^{\kappa}\kappa$ such that there is a club $C$ in $\kappa$ with $x(i)\in S$ for all $i\in C$. 

\begin{example} \cite[Theorem 3.10]{MR3235820} 
Suppose that $\lambda$ is an uncountable cardinal with $\lambda^{<\lambda}=\lambda$ and $S$ is a bi-stationary subset of $\lambda$. 
Then the set $\Cl^S_\lambda$ is not a $\lambda$-Baire subset of ${}^{\lambda}\lambda$. 
%and the nonstationary ideal $\Ns_\kappa$ on $\kappa$ are not \todo{define kappa-Baire} $\kappa$-Baire. 
\end{example} 
%\todo{earlier!!!}It is well known that there are $\Sigma^1_1$ subsets of ${}^{\kappa}\kappa$ without the Baire property. 
%\begin{lemma}[Halko-Shelah] \todo{cite, also write the right example for ${}^{\kappa}\kappa$, from Yurii et al?} 
%The club filter 
%$$\mathrm{C}_{\kappa}=\{x\subseteq \kappa\mid \text{ there is a club }C\text{ in }\kappa \text{ with }C\subseteq x\}$$ 
%and \todo{maybe move this to the introduction} the non-stationary ideal $P(\kappa)\setminus \mathrm{C}_\kappa$ 
%on $\kappa$ do not have the property of Baire. 
%\end{lemma} 
%This result has been extended in \todo{cite Yuri et al} to other regularity properties derived from tree forcings such as \todo{to do}. 

It is worthwhile to mention that there are further 
%\todo{single line looks strange} Further 
strengthenings of this failure that can be found in 
%are proved in 
\cite[Proposition 3.7]{MR3430247}. 

Since the Baire property for subsets of ${}^{\omega}\omega$ is characterized by the Banach-Mazur game \cite[Theorem 8.33]{MR1321597}, it is useful to consider a generalization of this game of uncountable length (see Definition \ref{definition:  Banach-Mazur game} below). 
However, because of the asymmetry of the game at limit times, the condition that a given subset $A$ of $\ltl$ is $\lambda$-Baire is stronger than the determinacy of the Banach-Mazur game of length $\lambda$ for the set $A$ as a winning condition. 
%Thus the following question is natural, in analogy with Theorem \ref{main theorem on perfect sets in introduction}, and 
This motivates the following question, which was asked in \cite{Kovachev-thesis}. 

\begin{question} 
Is it consistent, relative to the existence of large cardinals, that for some uncountable cardinal $\lambda$, the Banach-Mazur game of length $\lambda$ is determined for all 
%$\Pi^1_1$ 
subsets of $\ltl$ that are definable from $\lambda$ as winning conditions? 
%Is it consistent that for some uncountable cardinal $\kappa$, the Banach-Mazur game of length $\kappa$ is determined for all  subsets of ${}^{\kappa}\kappa$ definable from $\kappa$? 
% that are definable over $H_{\kappa^+}$? 
%every subset $A$ of ${}^{\kappa}\kappa$ with $|A|\geq\kappa^+$ has a perfect subset? 
\end{question} 

%Therefore, the Baire property can be generalized in a different way to subsets of ${}^{\kappa}\kappa$. 
The second main result, which we prove in Theorem \ref{determinacy result} below, gives a positive answer to this question. 

\begin{theorem} 
%\todo{rewrite? ... "as a winning condition"? } 
For any uncountable regular cardinal $\lambda$ with an inaccessible cardinal above it, there is a generic extension by a ${<}\lambda$-closed forcing in which 
%with $\kappa^{<\kappa}=\kappa$, 
%it is consistent relative to an inaccessible cardinal above $\kappa$ 
the Banach-Mazur game of length $\lambda$ is determined for any subset of $\ltl$ that is definable from a $\lambda$-sequence of ordinals.  
%an element of $\Ordk$. 

%It is consistent relative to an inaccessible cardinal that the Banach-Mazur game of length $\kappa$ is determined for all subsets of ${}^{\kappa}\kappa$ that are definable from elements of $\Ord^{\kappa}$. 
\end{theorem}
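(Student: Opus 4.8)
The plan is to mirror Solovay's classical collapse argument, using the same model that produces the perfect set property in Theorem~\ref{main theorem on perfect sets in introduction}. Starting from a ground model with an inaccessible cardinal $\mu$ above $\lambda$, I would force with the $\lambda$-version of the L\'evy collapse $\Col(\lambda,{<}\mu)$, i.e. the ${<}\lambda$-support (or appropriately defined) product collapsing each cardinal in the interval $(\lambda,\mu)$ to $\lambda$ while keeping the forcing ${<}\lambda$-closed so that no new ${<}\lambda$-sequences are added. The target extension $V[G]$ is the one in which $\mu=\lambda^+$ and every subset of $\ltl$ definable from an element of $\Ordl$ is "homogeneously definable" via a term in the collapse.

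The key steps, carried out in order, are as follows. First, I would fix a subset $A\subseteq\ltl$ in $V[G]$ definable from a parameter $z\in\Ordl$, together with a formula $\varphi$, and exploit the factorization and homogeneity of the collapse: choosing $\alpha<\mu$ large enough that $z\in V[G{\upharpoonright}\alpha]$, the remaining forcing $\Col(\lambda,{<}\mu)/G{\upharpoonright}\alpha$ is (isomorphic to) the full collapse again and is sufficiently homogeneous that membership "$x\in A$" is decided by a term that depends only on the name for $x$ and the parameter $z$. Second, I would describe the two players' strategies in the Banach-Mazur game of length $\lambda$ in terms of this term: the idea is that a player can, at each successor stage, consult the forcing relation $\Vdash$ to steer the play so that the generic real produced lands in (or out of) $A$ according to which side is trying to win. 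Because the collapse is ${<}\lambda$-closed, the players' moves, which are conditions (elements of $\ltll$) descending in a tree, assemble at limit stages into a genuine element of $\ltl$; this is exactly the point where the ${<}\lambda$-closure is indispensable and where the asymmetry noted in the introduction must be handled carefully. Third, I would argue that the branch produced by a play following the strategy is itself generic (or captures enough genericity) over $V[G{\upharpoonright}\alpha]$, so that the forcing theorem computes its membership in $A$ correctly, and hence the designated player wins.

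The main obstacle I expect is precisely the \emph{limit-stage asymmetry} flagged in the paragraph preceding the theorem: in the Banach-Mazur game of length $\lambda$ one player controls what happens at limit times (the sequences need not close up symmetrically for both players), so one cannot simply transplant the symmetric countable argument where both players alternately extend. I anticipate that the correct framing is to show that a generic filter for $\Col(\lambda,{<}\mu)$ can be read off from a suitable play, so that the determinacy reduces to a density argument: the set of conditions forcing a definite truth value of $\varphi(\dot x,\check z)$ is dense, and by homogeneity one side can always drive the play into the dense set that favors it. Concretely, I would show that if no strategy wins for the "builder," then by the homogeneity of the collapse the complementary set is comeager in the relevant sense, yielding a winning strategy for the opponent; the delicate verification is that the limit-stage moves controlled by one player still allow the opponent to preserve genericity along the branch.

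A secondary technical point is ensuring the reduction from "definable from $\Ordl$" to "definable from $\lambda$," but this is handled exactly as in Theorem~\ref{main theorem on perfect sets in introduction}: any parameter in $\Ordl$ appears in some intermediate model $V[G{\upharpoonright}\alpha]$ with $\alpha<\mu$, over which the tail of the collapse is again a full collapse, so the argument localizes and the global conclusion follows uniformly for all such parameters. Thus the structure of the proof is: \emph{(i)} set up the collapse and its homogeneity/factorization lemmas; \emph{(ii)} associate to each definable $A$ the forcing-term description of membership; \emph{(iii)} translate the two players' goals into driving a generic branch into or out of a dense set of conditions, using ${<}\lambda$-closure to survive limits; and \emph{(iv)} conclude determinacy, with the crux being the careful treatment of who controls the limit stages.
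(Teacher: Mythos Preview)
Your proposal has a genuine gap that goes to the heart of why this theorem is hard. You propose that player I ``consult the forcing relation'' and ``drive the play into a dense set'' so that the resulting branch $x$ is generic and lands in $A$. But suppose player I succeeds in making $x$ $\Add{\lambda}{1}$-generic over $V[G{\upharpoonright}\alpha]$; why should $\varphi(x,z)$ hold in $V[G]$? To conclude this, you need the quotient forcing from $V[G{\upharpoonright}\alpha][x]$ up to $V[G]$ to be sufficiently well-behaved (e.g.\ ${<}\lambda$-closed or at least homogeneous in the right way). The paper's Section~1.3 shows precisely that this fails in general: there are complete subalgebras of $\BB(\Add{\lambda}{1})$ equivalent to $\Add{\lambda}{1}$ whose quotient destroys stationary sets. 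So the naive Solovay transplant---which works for $\omega$ because all quotients of Cohen forcing are again Cohen---breaks down, and your density/homogeneity outline does not supply a mechanism to control the quotient of the branch produced by the play.

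The paper's actual argument is substantially more elaborate. Given an $\Add{\lambda}{1}$-generic $x\in A$, it first represents the passage from $V[x]$ to the relevant extension as a two-step iteration $\QQ*\dot{\QQ}$ equivalent to $\Add{\lambda}{1}$, and encodes this iteration as a perfect level subset $S\subseteq\Add{\lambda}{1}^2$ (Lemma~\ref{two step iteration coded by a set S}). It then introduces a forcing $\PP_S$ (Definition~\ref{definition of P}) that adds a \emph{strategic $S$-tree} $T$: a tree of pairs $(s,t)$ with the property that player~I has a winning strategy to land in $\proj[T]$ (Lemma~\ref{trees and strategies}). The technical core (Lemmas~\ref{projection onto S}--\ref{quotients for elements of the projection}) is a quotient analysis showing that every branch $b\in\proj[T]$ has, over the appropriate intermediate model, a quotient equivalent to $[S/\Add{\lambda}{1}]^{\pi_S}\times\Add{\lambda}{1}$---exactly the forcing needed to resurrect $\varphi(b,z)$. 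This is what replaces your step (iii), and it is not a density argument at all: it is the construction of a tailored forcing that manufactures a winning set for player~I together with a guarantee that every element of that set has the correct quotient. The case where player~II wins is the easy one (if no Cohen-generic lies in $A$, then $A$ is meager and player~II wins by Lemma~\ref{characterization of Baire property by game}); all the work is in building player~I's strategy.
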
 

%This is proved in Theorem \ref{determinacy result} below. 
We will moreover use the Banach-Mazur game to define a generalization of the Baire property, which we call \emph{almost Baire}, in Section \ref{subsection: Banach-Mazur games}, and show that it is consistent that this property holds for the same class of definable sets that is considered above. 
%We prove related results about definable functions on ${}^{\kappa}\kappa$ in Section \ref{subsection: consistency of the almost Baire property}. 

%We \todo{rewrite, define the game} consider \todo{Define the game here?} the perfect set game for a set $A\subseteq {}^{\kappa}\kappa$. This is known to characterize the perfect set property in the sense that $|A|\leq\kappa$ if an only if the second player has a winning strategy (to play outside of $A$) and $A$ has a perfect subset if and only if the first player has a winning strategy (to play inside $A$). 
%Hence this game is determined for sets definable from ordinals and subsets of $\kappa$ after an inaccessible $\lambda>\kappa$ is Levy-collapsed to $\kappa^+$.  

%In the Banach-Mazur game of length $\kappa$ for a set $A\subseteq{}^{\kappa}\kappa$, two players play a strictly increasing sequence $(s_{\alpha})_{\alpha<\kappa}$ of elements of ${}^{<\kappa}\kappa$. The second player wins if $\bigcup_{\alpha<\kappa}s_{\alpha}\in A$. 

We now turn to the question whether the conclusions of the above results follow from strong axioms. 
In the countable setting, 
%Solovay's theorem \cite{MR0265151} has the following consequence. 
it is well known that $M_n^{\#}$ is absolute to all set generic extensions for all natural numbers $n$ 
and that therefore, the theory of $(H_{\omega_1},\in)$ is absolute to all generic extensions if there is a proper class of Woodin cardinals (see \cite{MR2768698,MR3226056}). 
%If there is a proper class of Woodin cardinals, 
%then the theory of $H_{\omega_1}$ is \todo{cite} absolute to all set generic extensions. 
Hence the conclusion of Solovay's theorem \cite[Theorem 1]{MR0265151} for projective sets is provable from a proper class of Woodin cardinals.\footnote{Infinitely many Woodin cardinals are sufficient by \cite{MR955605}.} 

In the uncountable setting, the theory of $(H_{\omega_2},\in)$ is not absolute to all generic extensions that preserve $\omega_1$, since both the existence and non-existence of $\omega_1$-Kurepa trees can be forced by ${<}\omega_1$-closed forcings, assuming the existence of an inaccessible cardinal. 
%for instance it is possible to add a $\omega_1$-Kurepa trees by ${<}\omega_1$-closed forcing, while the non-existence of $\omega_1$-Kurepa trees can be forced by collapsing an inaccessible cardinal to $\omega_2$ by a ${<}\omega_1$-closed forcing. 
% (this was proved by Jack Silver). 
Therefore, we will consider a variant ot the \emph{resurrection axiom} that was introduced by Hamkins and Johnstone \cite{MR3194674}. 
%\emph{iterated resurrection axioms} that were introduced by Viale and Audrito \cite{Audrito-Viale}. 
% and were based on work of 
The idea for such axioms is to postulate that certain properties of the ground model which might be lost in a generic extension 
can be resurrected 
%for certain classes of forcings 
by passing to a further 
%generic 
extension. 

We will see that variants of the conclusions of the above results 
%for subsets of ${}^{\kappa}\kappa$ that are definable from elements of $H_{\kappa^+}$, 
follow from such an axioms for a class of ${<}\lambda$-closed forcings. 
If $\lambda$ is a regular cardinal, we say that $\nu$ is \emph{$\lambda$-inaccessible} if $\nu>\lambda$ is regular and $\mu^{<\lambda}<\nu$ holds for all cardinals $\mu<\nu$. 
The following result is proved in Theorem \ref{implications of resurrection} below. 

\begin{theorem} 
Suppose that $\lambda$ is an uncountable regular cardinal, 
%$\Gamma$ is the class of forcings $\Col{\kappa}{<\nu}$ for regular cardinal $\nu>\kappa$ 
and the resurrection axiom $\RA^{\lambda}$ (see Definition \ref{definition of resurrection axioms} below) holds for the class of forcings $\Col{\lambda}{{<}\nu}$, where $\nu$ is $\lambda$-inaccessible. Then the following statements hold for every subset $A$ of $\ltl$ that is definable over $(H_{\lambda^+},\in)$ with parameters in $H_{\lambda^+}$. 
% has the following properties. 
\begin{enumerate-(1)} 
\item \label{perfect set property from resurrection} 
$A$ has the perfect set property. 
% for every $\PP$-generic filter $G$ over $V$. 
\item \label{almost Baire property from resurrection} 
The Banach-Mazur game of length $\lambda$ 
%$G_{\kappa}(A)$ 
with $A$ as a winning condition is determined. 
%of ${}^{\kappa}\kappa$ definable over $(H_{\kappa^+},\in)$ with parameters in $H_{\kappa^+}$ as winning conditions. 
\end{enumerate-(1)} 
\end{theorem}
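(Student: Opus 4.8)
The plan is to deduce both \ref{perfect set property from resurrection} and \ref{almost Baire property from resurrection} from the already-established main theorems by a reflection argument along the elementary embedding supplied by $\RA^\lambda$. The first observation is that, writing $A=\set{x\in\ltl}{(H_{\lambda^+},\in)\models\varphi(x,p)}$ with $p\in H_{\lambda^+}$, each of the two desired conclusions is expressible by a single first-order sentence over $(H_{\lambda^+},\in)$ with parameter $p$. Indeed, since $\lambda^{<\lambda}=\lambda$, every $<\lambda$-closed subtree of $\ltll$ and every surjection $\lambda\to\ltl$ has size $\lambda$ and hence lies in $H_{\lambda^+}$, while the cofinal branches of such a tree and the potential elements of $A$ all belong to $\ltl\subseteq H_{\lambda^+}$; thus ``$A$ has the perfect set property'' unfolds as ``there is a $<\lambda$-closed perfect subtree $T\subseteq\ltll$ each of whose cofinal branches $x$ satisfies $\varphi(x,p)$, or there is a surjection $s\colon\lambda\to\ltl$ whose range contains every $x$ with $\varphi(x,p)$.'' Likewise, a strategy for the Banach--Mazur game of length $\lambda$ (Definition \ref{definition:  Banach-Mazur game}) is a function on the $\lambda^{<\lambda}=\lambda$ many partial plays, hence an element of $H_{\lambda^+}$, and ``the game with payoff $A$ is determined'' unfolds as the disjunction ``player $\pI$ has a winning strategy or player $\pII$ has a winning strategy,'' each disjunct being a statement over $(H_{\lambda^+},\in)$ whose quantifiers over plays and outcomes range within $H_{\lambda^+}$. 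Coding $p$ by a subset of $\lambda$, that is, by an element of $\Ordl$, shows moreover that $A$ is definable from a $\lambda$-sequence of ordinals.

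Next I would invoke $\RA^\lambda$. Fix a $\lambda$-inaccessible $\nu$ large enough that $p\in H_\nu$, and apply the resurrection axiom to $\mathbb{Q}=\Col{\lambda}{{<}\nu}$ to obtain a name $\dot{\mathbb R}$ and generics $g*h$ for $\mathbb{Q}*\dot{\mathbb R}$ with
\[(H_{\lambda^+})^{V}\prec(H_{\lambda^+})^{V[g*h]}.\]
Because the two properties above are each a single sentence of $(H_{\lambda^+},\in)$ with parameter $p\in(H_{\lambda^+})^V$, and because an elementary substructure preserves truth in both directions, it suffices to verify, in $V[g*h]$, that the set $A^*=\set{x\in\ltl}{(H_{\lambda^+},\in)\models\varphi(x,p)}$ computed there has the perfect set property and that its Banach--Mazur game of length $\lambda$ is determined. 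Granting this, both sentences hold in $(H_{\lambda^+})^{V[g*h]}$, hence in $(H_{\lambda^+})^V$, which yields \ref{perfect set property from resurrection} and \ref{almost Baire property from resurrection} for $A$ back in $V$. In the perfect-set case the reflected witness is a tree $T\in(H_{\lambda^+})^V$, and elementarity of the clause ``every cofinal branch $x$ of $T$ satisfies $\varphi(x,p)$'' guarantees that every cofinal branch of $T$ present in $V$ does lie in $A$; in the determinacy case the reflected winning strategy remains winning, since ``$\sigma$ is a winning strategy'' is itself a formula over $(H_{\lambda^+},\in)$ and hence absolute between the two structures.

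The heart of the matter, and the step I expect to be the main obstacle, is to verify the two regularity properties for $A^*$ inside the resurrected extension $V[g*h]$. The natural route is to place $V[g*h]$ under the hypotheses of Theorems \ref{main theorem on perfect sets in introduction} and \ref{determinacy result}: since $p$ is coded by an element of $\Ordl$ and those theorems provide the perfect set property and the determinacy of the length-$\lambda$ Banach--Mazur game for every set definable from a $\lambda$-sequence of ordinals in a $\Col{\lambda}{{<}\nu}$-collapse extension, it would follow immediately that $A^*$ is as required. The delicate point is that those theorems are proved by collapsing a \emph{genuinely inaccessible} cardinal above $\lambda$, whereas $\RA^\lambda$ only furnishes collapses of $\lambda$-inaccessible cardinals; the Solovay-style smallness at the core of the main theorems needs that $\ltl\cap V$, which codes $p$, has size at most $\lambda$ after the collapse, and this uses the strong-limit behaviour of a genuine inaccessible rather than mere $\lambda$-inaccessibility (indeed $\lambda$-inaccessible cardinals already exist in $\ZFC$, so the collapse alone cannot suffice). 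This is precisely where the strength of the resurrection axiom must be spent: one has to argue that $V[g*h]$ genuinely realises the Solovay configuration for parameters from $(H_{\lambda^+})^V$ --- for instance, that the witnessing forcing $\dot{\mathbb R}$ delivered by $\RA^\lambda$ continues the collapse far enough that, from the viewpoint of $V[g*h]$, the set $\ltl\cap V$ has size $\lambda$ --- so that the proofs of Theorems \ref{main theorem on perfect sets in introduction} and \ref{determinacy result} go through inside $V[g*h]$ for the single definable set $A^*$. Once this smallness is secured, the remainder is the by-now standard homogeneity-and-genericity construction of a perfect set of mutually generic branches, respectively of a winning strategy in the Banach--Mazur game, carried out over the ground model in which $p$ lives.
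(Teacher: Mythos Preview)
Your framework is exactly the paper's: code each conclusion as a sentence over $(H_{\lambda^+},\in)$ with parameter $p$, invoke $\RA^\lambda(\Gamma)$ for some $\Col{\lambda}{{<}\nu}$, and transfer the conclusion from $V[g*h]$ down along the elementarity. The paper's own proof is a single sentence pointing to Lemma~\ref{perfect subset of definable set in collapse extension} and the definition of $\RA^\lambda$.

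Where you part ways is in your last paragraph. You worry that the main theorems need a genuine inaccessible while $\Gamma$ only supplies $\lambda$-inaccessibles, and you propose that $\RA^\lambda$ must be ``spent'' to recover a Solovay-type smallness of $({}^\lambda\lambda)^V$ in the extension. That is not how the paper proceeds. Instead the paper records, in Lemma~\ref{perfect subset of definable set in collapse extension}, that the arguments of Theorems~\ref{perfect subsets of definable sets} and~\ref{determinacy result} already go through for $\Col{\lambda}{{<}\nu}$ with $\nu$ merely $\lambda$-inaccessible: the only features of inaccessibility actually invoked in those proofs are the $\nu$-cc of $\Col{\lambda}{{<}\nu}$ (so that any $\lambda$-sequence of ordinals is captured at some bounded stage $\gamma<\nu$) and the existence of unboundedly many $\mu<\nu$ with $\mu^{<\lambda}=\mu$, both of which follow from $\lambda$-inaccessibility via the $\Delta$-system lemma. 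So the obstacle you flag is dissolved by rereading the earlier proofs, not by extracting further content from $\RA^\lambda$.

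You also leave implicit one point the paper's setup makes available: since $\RA^\lambda(\Gamma)$ requires $\Vdash_{\PP}\dot{\mathbb R}\in\Gamma$, the tail is itself (a name for) a Levy collapse $\Col{\lambda}{{<}\nu'}$ with $\nu'$ $\lambda$-inaccessible in $V[g]$, hence in $V$ as well (the first step is ${<}\lambda$-closed and of size ${<}\nu'$). The two-step is then sub-equivalent to a single $\Col{\lambda}{{<}\nu'}$ over $V$, and Lemma~\ref{perfect subset of definable set in collapse extension} applies directly to $V[g*h]$; no further appeal to resurrection is needed beyond the elementarity itself.
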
 
%Hence these models are analogous to Solovay's model. 
%problems studied in 

\iffalse 
\begin{theorem} \label{implications of resurrection} 
Suppose that $\Gamma$ is the class of forcings $\Col{\lambda}{{<}\nu}$, where $\nu$ is $\lambda$-inaccessible. 
Assuming that $\RA^{\lambda}(\Gamma)$ holds, 
the following statements hold for every subset $A$ of ${}^\lambda\lambda$ that is definable over $(H_{\lambda^+},\in)$ with parameters in $H_{\lambda^+}$. 
\begin{enumerate-(1)} 
\item \label{perfect set property from resurrection} 
$A$ has the perfect set property. 
\item \label{almost Baire property from resurrection} 
The game 
$G_{\lambda}(A)$ is determined. 
\end{enumerate-(1)} 
\end{theorem} 
\fi

This paper is organized as follows. 
In the remainder of this section, we will collect several definitions and facts about trees and forcings. 
In Section \ref{section: The perfect set property}, we will prove among other results the consistency of the perfect set property for definable subsets of $\ltl$. 
% and a related result about definable functions on ${}^{\kappa}\kappa$. 
In Section \ref{section: The almost Baire property}, we will prove among other results the consistency of the almost Baire property for definable subsets of $\ltl$. %and related results about definable functions on ${}^{\kappa}\kappa$. 
Finally, in Section \ref{resurrection axioms}, we will derive variants of the conclusions of the main results from resurrection axioms. 

For notation, we will assume throughout this paper that $\kappa$ is an uncountable regular cardinal with $\kappa^{<\kappa}=\kappa$ and $\lambda$ is an uncountable regular cardinal. 

We would like to thank Peter Holy for discussions about the presentation and the referee for various helpful comments. 
%Moreover, 
The results in this paper are motivated by work of Solovay \cite{MR0265151}, Mekler and V\"a\"an\"anen \cite{vaan}, Donder and Kovachev \cite{Kovachev-thesis} 
%Friedman, Hyttinen and Kulikov \cite{MR3235820}, 
and 
%moreover, 
some ideas from this work have already been applied in subsequent work 
%L\"ucke, Motto Ros and Schlicht 
\cite{MR3319716, Hurewicz}. 
\subsection{Trees and perfect sets} 

%\todo{consistent everywhere? write: throughout the paper?} In this section, we 
We always assume that $\lambda$ is a regular uncountable cardinal. 
%We always assume that $\kappa$ is an uncountable regular cardinal with $\kappa^{<\kappa}=\kappa$. 
The \emph{standard topology} (or \emph{bounded topology}) on ${}^{\lambda}\lambda$ is generated by the basic open sets 
$$N_t=\{x\in {}^{\lambda}\lambda\mid t\subseteq x\}$$ 
for $t\in {}^{<\lambda}\lambda$. 
The \emph{generalized Baire space for $\lambda$} is the set ${}^{\lambda}\lambda$ of functions $f\colon \lambda\rightarrow\lambda$ with the standard topology.

Since we will work with definable subsets of ${}^{\lambda}\lambda$, we will use the following notation. 

\begin{definition} \label{definition of Aphi} 
If $\varphi(x,y)$ is a formula with the two free variables $x$, $y$ and $z$ is a set, let 
$$A_{\varphi,z}^\lambda=\{x\in {}^{\lambda}\lambda\mid \varphi(x,z)\}.$$ 
If $\varphi(x)$ is a formula with the free variable $x$, let 
$$A_{\varphi}^\lambda=\{x\in {}^{\lambda}\lambda\mid \varphi(x)\}.$$ 
\end{definition}

The following definition generalizes perfect trees and perfect sets to the uncountable setting. 
% or \emph{standard topology} 
%with the basic open sets $N_s=\{f\in {}^{\kappa}\kappa\mid s\subseteq f\}$ for $s\in {}^{<\kappa}\kappa$. 
%From the viewpoint of descriptive set theory, this space is analogous to the Baire space ${}^{\omega}\omega$ of functions $f\colon\omega\rightarrow\omega$ with the product topology, and in fact it has many similar properties such as a generalization of the Baire category theorem. 
%The descriptive set theory of this space (for uncountable $\kappa$) was first studied by Mekler and V\"a\"an\"anen [Mekler-V\"a\"an\"anen, V\"a\"an\"anen] and later for example in [Hyttinen-Kulikov-Friedman, L\"ucke]. 
%We consider the following generalizations of perfect subtrees of ${}^{<\omega}\omega$, perfect subsets of ${}^{\omega}\omega$, and the perfect set property for subsets of ${}^{\omega}\omega$. 

\begin{definition} \label{definition: perfect sets} 
Suppose that $T$ is a subtree of ${}^{<\lambda}\lambda$, that is a downwards closed subset of ${}^{<\lambda}\lambda$. 
\begin{enumerate-(a)} 
\item 
$\mathrm{pred}_T(t)=\{s\in T\mid s\subsetneq t\}$ and 
%\todo{check with definition below} 
$\len(t)=\dom{t}$. 
%=\mathrm{type}(\mathrm{pred}_T(t))$ for $t\in T$, where $\mathrm{type}$ denotes the order type. 
\item 
A node $s$ in $T$ is \emph{terminal} if it has no direct successors in $T$ and \emph{splitting} if it has at least two direct successors in $T$. 
\item 
A \emph{branch} in $T$ is a sequence $b\in {}^{\lambda}\lambda$ with $b{\upharpoonright}\alpha \in T$ for all $\alpha<\lambda$. 
\item 
The \emph{body} of $T$ is the set $[T]$ of branches in $T$. 
% $[T]=\{f\in {}^{\kappa}\kappa\mid \forall\alpha<\kappa (x\upharpoonright \alpha\in T)\}$
\item 
%\todo{deleted 'limit-closed'. report: is this the same as limit-closed?} $T$ 
%\subseteq {}^{<\kappa}\kappa$ 
$T$ is \emph{closed} (\emph{${<}\lambda$-closed}) if every strictly increasing sequence in $T$ of length ${<}\lambda$ has an upper bound in $T$. 
%\item 
%$T$ 
%\subseteq {}^{<\kappa}\kappa$ 
%\todo{need to define this for subsets. and it only makes sense for sets, not trees. actually Def 3.15 might be sufficient, maybe delete this here} is \emph{limit-closed} if the supremum (i.e. the union) of every strictly increasings sequence in $T$ of length $<\kappa$ is in $T$. 
\item
$T$ 
%\subseteq {}^{<\kappa}\kappa$ 
%of height $\kappa$ 
is \emph{perfect} (\emph{$\lambda$-perfect}) if $T$ is closed and the set of splitting nodes in $T$ is cofinal in the tree order of $T$, that is, above every node there is some splitting node. 
\item 
A subset $A$ of ${}^{\lambda}\lambda$ is \emph{perfect} (\emph{$\lambda$-perfect}, \emph{superclosed}) if $A=[T]=\{x\in {}^{\lambda}\lambda\mid \forall\alpha<\lambda (x{\upharpoonright} \alpha\in T)\}$ for some perfect tree $T$. 
\item 
A subset $A$ of ${}^{\lambda}\lambda$ has the \emph{perfect set property} 
%($\PSP$) 
if $|A|\leq \lambda$ or $A$ has a perfect subset. 
%\item 
%For $\Gamma\subseteq P({}^{\lambda}\lambda)$, $\PSP(\Gamma)$ denotes the perfect set property for all $A\in \Gamma$. 
%\item 
%For a class $X$, 
%\todo{report: added $\lambda$ to notation. check everywhere below} $\PSP^{\lambda}_X$ denotes the perfect set property for all subsets of ${}^{\lambda}\lambda$ definable from parameters in $X$. 
\end{enumerate-(a)} 
\end{definition} 

V\"a\"an\"anen \cite[Section 2]{MR1110032} introduced a different notion of $\lambda$-perfect sets based on a game of length $\lambda$. 
We will see in Section \ref{section: The perfect set property} that the perfect set property associated to this notion is equivalent to our definition. 
Moreover, Kanamori \cite{MR593029} introduced a variant of Sacks forcing for $\lambda$, leading to a corresponding stronger notion of perfect sets (see also \cite{MR3459046}), but our results do not hold for this notion. 

In the following definition, a \emph{$\lambda$-algebra} of subsets of ${}^{\lambda}\lambda$ is a set of subsets of ${}^{\lambda}\lambda$ that is closed under complements, unions of length $\lambda$ and intersections of length $\lambda$. 

\begin{definition} \label{definition: kappa-Borel etc} 
Suppose that $A$, $B$ are subsets of ${}^{\lambda}\lambda$. 
\begin{enumerate-(a)} 
\item 
$A$ is \emph{$\lambda$-Borel} (\emph{Borel}) if it is an element of the smallest $\lambda$-algebra containing the open subsets of ${}^{\lambda}\lambda$. 
% by forming complements and unions and intersections of length $\kappa$. 
\item 
$A$ is \emph{$\lambda$-meager} (\emph{meager}) \emph{in $B$} if $A\cap B$ is the union of $\lambda$ many nowhere dense subsets of $B$, and \emph{$\lambda$-comeager} (\emph{comeager})  \emph{in $B$} if its complement is $\lambda$-meager in $B$. 
Moreover, we will omit $B$ if it is equal to ${}^\lambda\lambda$. 
%If $B={}^{\lambda}\lambda$, then $B$ is omitted. 
\item 
$A$ is \emph{$\lambda$-Baire} (\emph{Baire}) if there is an open subset $U$ of ${}^{\lambda}\lambda$ such that $A\triangle U$ is $\lambda$-meager. 
\end{enumerate-(a)} 
\end{definition} 
%We will write meager instead of $\kappa$-meager and Baire property instead of $\kappa$-Baire property. 
%\todo{where should this definition go?} We write $\len(t)$ for the length of $t$. 
%\todo[inline]{write out} 
%We will write \emph{perfect} instead of $\kappa$-perfect when working with a fixed cardinal $\kappa$. 
%Note that V\"a\"an\"anen [V\"a\"an\"anen: A Cantor-Bendixson theorem, Def. ?] introduced a different type of $\kappa$-perfect set based on a game of length $\kappa$, but \todo{write down as a lemma why they are equivalent} the perfect set property is equivalent for the two definitions. 
%this is equivalent for our purpose, since every such set contains a perfect set and every perfect set (in our sense) satisfies V\"a\"an\"anen's definition. 
%Solovay [Solovay] proved that $\mathsf{PSP}^{\omega}_{od}$ holds after forcing with $Col(\omega,<\lambda)$, where $\lambda$ is inaccessible. The proof uses the factoring of intermediate models, i.e. for every real in a $Col(\omega,<\lambda)$-generic extension for $\lambda>\kappa$ inaccessible, the quotient forcing for the real is forcing equivalent to $Col(\omega,<\lambda)$, i.e. the quotient forcing and $Col(\omega,<\lambda)$ have isomorphic dense subsets. 
%In most parts of the paper, we will fix an uncountable regular cardinal $\kappa$ with $\kappa^{<\kappa}=\kappa$ and use the following simplified notation. 

%\begin{definition} 
%\todo{move to later, when this is used?}For a set $S$, let $\pro(S)=\{x \mid \exists y\ (x,y)\in S\}$. 
%\end{definition} 

%%%%%%%%%%%%%%%%%%%%%%%%%%%%%%%%%%%%%%%%%%%%%%%%%%
\subsection{Forcings} 

%We now state some basic facts about forcing that we will use below. 
%We collect some definitions related to forcing that are used throughout this paper. 
%\todo{does a forcing have to have a 1?} 
A forcing $\PP=(P,\leq)$ consists of a set $P$ and a transitive reflexive relation (also called a pre-order) $\leq$ with domain $P$. We will also write $p\in \PP$ for conditions $p\in P$ 
by identifying $\PP$ with its domain. 
%no $\PP$-generic filter is in the ground model. 
If $\PP$ is a separative partial order, we will assume that $\BB(\PP)$ denotes a fixed Boolean completion such that $\PP$ is a dense subset of $\BB(\PP)$.

\begin{definition} 
%Suppose that $\PP$ is a forcing. 
\begin{enumerate-(a)} 
\item 
An \emph{atom} in a forcing $\PP$ is a condition $p\in\PP$ with no incompatible extensions. Moreover, a forcing $\PP$ is \emph{non-atomic} if it has no atoms. 
\item 
A forcing $\PP$ is \emph{homogeneous} if for all $p,q\in \PP$, there is an automorphism $\pi\colon \PP\rightarrow \PP$ such that $\pi(p)$ and $q$ are compatible. 
%\item 
%\todo{NEED THIS??} Suppose that $\PP$ is a complete Boolean algebra and $X\subseteq \PP$. 
%Then $\langle X\rangle^{\PP}$ denotes the complete Boolean subalgebra of $\PP$ generated by $X$. 
\end{enumerate-(a)} 
\end{definition} 

The sub-equivalence in the next definition is stronger than the standard notion of equivalence for separative partial orders, which 
%is defined by the existence of an isomorphism between 
states that the Boolean completions are isomorphic. This specific definition is used in several constructions in the proofs below. 

\begin{definition} \label{definition: equivalent and similar forcings} 
Suppose that $\PP$, $\QQ$ are forcings. 
\begin{enumerate-(a)} 
%\item 
%An \emph{isomorphism $\theta\colon \PP\rightarrow_{\mathrm{d}} \QQ$ between dense subsets} is an isomorphism $\theta\colon \PP_{\mathrm{d}}\rightarrow \QQ_{\mathrm{d}}$, where $\PP_{\mathrm{d}}$ is a dense subset of $\PP$ and $\QQ_{\mathrm{d}}$ is a dense subset of $\QQ$. 
%states that there are 
% such that $\theta\colon D_{\PP}\rightarrow D_{\QQ}$ is an isomorphism. 
\item 
A \emph{sub-isomorphism $\iota\colon \PP\rightarrow \QQ$} is an isomorphism between $\PP$ and a dense subset of $\QQ$. 
%$\theta\colon \PP_{\mathrm{d}}\rightarrow \QQ_{\mathrm{d}}$, where $\PP_{\mathrm{d}}$ is a dense subset of $\PP$ and $\QQ_{\mathrm{d}}$ is a dense subset of $\QQ$. 
\item 
$\PP$, $\QQ$ are \emph{sub-equivalent} ($\PP\eqsim \QQ$) if there are sub-isomorphisms $\iota\colon \RR\rightarrow \PP$, $\nu\colon \RR\rightarrow\QQ$ for some forcing $\RR$. 
%\todo{should isomorphisms preserve incompatibility?} an isomorphism $\theta\colon \PP\rightarrow_{\mathrm{d}} \QQ$ between dense subsets. 
\item 
$\PP$, $\QQ$ are \emph{equivalent} ($\PP\simeq \QQ$) if there are sub-isomorphisms $\iota\colon \PP\rightarrow \RR$, $\nu\colon \QQ\rightarrow\RR$ for some forcing $\RR$. 
\item 
$\PP$, $\QQ$ are \emph{isomorphic} ($\PP\cong \QQ$) if there is an isomorphism $\iota\colon \PP\rightarrow \QQ$. 
\item \label{definition: equivalent and similar forcings - last item} 
if $\iota\colon \PP\rightarrow \QQ$ is a sub-isomorphism, 
we define a $\PP$-name $\tau^\iota$ for each $\QQ$-name $\tau$ by induction on the rank as 
$$\tau^{\iota}=\{(\sigma^{\iota},p) \mid p\in \PP,\ \exists q\in\QQ\  (\sigma,q)\in \tau,\  \iota(p)\leq q\}.$$ 
%if there is \todo{should isomorphisms preserve incompatibility?} an isomorphism $\theta\colon \PP\rightarrow_{\mathrm{d}} \QQ$ between dense subsets. 
%there are dense subsets $D\subseteq \PP$, $E\subseteq\QQ$ such that $D,E$ are isomorphic. 
%\item 
%$\PP$, $\QQ$ are \emph{similar} if for every $\PP$-generic filter $G$ over $V$, there is a $\QQ$-generic filter $H$ over $V$ with $V[G]=V[H]$, and conversely. We write \todo{find the right symbol: double snake} $\PP\sim \QQ$. 
\end{enumerate-(a)} 
\end{definition}

%\todo[inline]{check everywhere: replace equivalent with sub-equivalent} 

%\todo{define this as a subforcing of ${}^{<\kappa}\kappa$? need more versions?} Let $\Addd{\kappa}{1}=\{p\in \Add{\kappa}{1}\mid \dom{p}\in\Succ\}$. 
%We will \todo{check} will frequently pull back names through complete embeddings as in the following definition. 
It is easy to check that in Definition \ref{definition: equivalent and similar forcings} \ref{definition: equivalent and similar forcings - last item}, 
for any $\PP$-generic filter $G$ and for the upwards closure $H$ of $\iota[G]$ in $\QQ$, $(\tau^\iota)^G=\tau^H$.

\begin{lemma} \label{equivalence of forcings is transitive} 
%\todo[inline]{CITE THIS BELOW AT LEAST ONCE } 
Suppose that $\PP$, $\QQ$, $\RR$ are forcings. 
\begin{enumerate-(1)} 
\item 
If $\iota\colon \PP\rightarrow\QQ$, $\nu\colon\PP\rightarrow\RR$ are sub-isomorphisms, then there is a partial order $\SSS$ and isomorphisms onto dense subforcings $\iota^*\colon\QQ\rightarrow \SSS$, $\nu^*\colon\RR\rightarrow\SSS$ with $\iota^*\iota=\nu^*\nu$. 
\item 
If $\PP\eqsim \QQ$, then $\PP\simeq\QQ$. 
%Any two sub-equivalent forcings are equivalent. 
\item 
The relation $\simeq$ is transitive. 
\end{enumerate-(1)} 
\end{lemma}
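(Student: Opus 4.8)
The plan is to prove the three parts in order, since each builds on the previous one, and the main work is concentrated in part (1).

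For part (1), suppose $\iota\colon\PP\to\QQ$ and $\nu\colon\PP\to\RR$ are sub-isomorphisms, so $\iota[\PP]$ is dense in $\QQ$ and $\nu[\PP]$ is dense in $\RR$. The natural construction is to amalgamate $\QQ$ and $\RR$ over the common copy of $\PP$. Concretely, I would form $\SSS$ as a pushout: take the disjoint union of $\QQ$ and $\RR$ and then identify $\iota(p)$ with $\nu(p)$ for each $p\in\PP$, using the images of $\PP$ as the shared ``spine.'' More carefully, I would define $\SSS$ on the set $(\QQ\setminus\iota[\PP])\sqcup(\RR\setminus\nu[\PP])\sqcup\PP$, with the embeddings $\iota^*\colon\QQ\to\SSS$ sending $\iota(p)\mapsto p$ and fixing the rest, and $\nu^*\colon\RR\to\SSS$ sending $\nu(p)\mapsto p$ and fixing the rest, so that $\iota^*\iota=\nu^*\nu=\id_\PP$ by construction. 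The order on $\SSS$ must be defined so that $\iota^*$ and $\nu^*$ are order-isomorphisms onto their ranges; the only subtle comparisons are between an element coming from $\QQ\setminus\iota[\PP]$ and one coming from $\RR\setminus\nu[\PP]$, and these should be declared comparable exactly when they can be routed through the common spine $\PP$ (e.g. $q\leq_\SSS r$ iff there is $p\in\PP$ with $q\leq_\QQ\iota(p)$ and $\nu(p)\leq_\RR r$, and dually). I would then verify that this relation is a pre-order, that $\iota^*$ and $\nu^*$ are sub-isomorphisms, and that the images are dense in $\SSS$, which follows from density of $\iota[\PP]$ in $\QQ$ and of $\nu[\PP]$ in $\RR$ together with the fact that the spine $\PP$ lies in both images.

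Part (2) is then immediate: if $\PP\eqsim\QQ$, there is a forcing $\RR$ with sub-isomorphisms $\iota\colon\RR\to\PP$ and $\nu\colon\RR\to\QQ$, so applying part (1) with $\RR$ in the role of the source yields a forcing $\SSS$ and sub-isomorphisms $\iota^*\colon\PP\to\SSS$, $\nu^*\colon\QQ\to\SSS$ onto dense subforcings, which is exactly the witness required for $\PP\simeq\QQ$.

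For part (3), transitivity of $\simeq$, suppose $\PP\simeq\QQ$ and $\QQ\simeq\RR$. Then there are forcings $\SSS_0,\SSS_1$ with sub-isomorphisms $\PP\to\SSS_0$, $\QQ\to\SSS_0$ and $\QQ\to\SSS_1$, $\RR\to\SSS_1$. The two sub-isomorphisms out of $\QQ$ put us back in the hypothesis of part (1) (with $\QQ$ as source, $\SSS_0,\SSS_1$ as targets), so part (1) produces a common forcing $\SSS$ with compatible dense embeddings of $\SSS_0$ and $\SSS_1$; composing these with the embeddings of $\PP$ into $\SSS_0$ and $\RR$ into $\SSS_1$ gives sub-isomorphisms $\PP\to\SSS$ and $\RR\to\SSS$ onto dense subforcings, witnessing $\PP\simeq\RR$. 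Here I would use the standard fact that a composition of sub-isomorphisms is a sub-isomorphism (a dense subset of a dense subset is dense). The main obstacle is entirely in part (1): one must set up the amalgamated order $\SSS$ so that it is genuinely a pre-order and the two embeddings are order-reflecting as well as order-preserving, which requires care precisely in the cross comparisons between the $\QQ$-side and the $\RR$-side and in checking transitivity of $\leq_\SSS$ through the spine.
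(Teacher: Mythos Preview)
Your proposal is correct and follows essentially the same route as the paper. For part (1) the paper also builds a pushout of $\QQ$ and $\RR$ over $\PP$: it takes the disjoint union $S=\QQ\sqcup\RR$, defines the cross-comparisons through the spine exactly as you do (via some $p\in\PP$ with $u\leq_\QQ\iota(p)$ and $\nu(p)\leq_\RR v$, or dually), and then quotients by the induced equivalence to obtain the partial order $\SSS$, setting $\iota^*(q)=[q]$ and $\nu^*(r)=[r]$. The only presentational difference is that you pre-identify $\iota(p)$ with $\nu(p)$ before defining the order, whereas the paper keeps them separate and lets the final quotient collapse them; either way one must pass to a quotient to get an actual partial order, which you should make explicit. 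Your arguments for parts (2) and (3) are exactly the paper's.
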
 
\begin{proof} 
%\todo{read again}
For the first claim, let $\leq_\PP$, $\leq_\QQ$, $\leq_\RR$ be the given forcing preorders. 
We can assume that $\QQ$, $\RR$ are disjoint and let 
% and $\leq_{\QQ,\RR}$ the union of $\leq_\QQ$ and $\leq_\RR$. 
$S=\QQ\cup \RR$. 
%\todo{write with $\QQ_0$, $\QQ_1$ everywhere???} 
Moreover, we define the relation $\leq_S$ on $S$ by $u\leq_S v$ if 
$u\leq_\QQ v$, $u\leq_\RR v$ or for some $p\in \PP$, 
$$ (u\leq_\QQ \iota(p)\text{ and } \nu(p)\leq_\RR v)\text{ or } (u\leq_\RR \nu(p)\text{ and } \iota(p)\leq_\QQ v).$$ 
It is then easy to check that $\leq_S$ is transitive and reflexive, ${\leq_{S}}{\upharpoonright}\QQ=\leq_\QQ$ 
and $\iota$, $\nu$ are sub-isomorphisms into $S$. 
%$\iota[\PP]$, $\nu[\PP]$ are dense in $S$. 

We now let $u\equiv_S v$ if $u\leq_S v$ and $v\leq_S u$ and let $\SSS$ denote the poset that is obtained as a quotient of $S$ by $\equiv_S$ with the partial order induced by $\leq_S$. 
Let further $\iota^*\colon \QQ\rightarrow \SSS$, $\iota(q)=[q]$ and $\nu^*\colon \RR\rightarrow \SSS$, $\nu^*(r)=[r]$, where $[p]$ denotes the equivalence class of $p\in S$ with respect to $\equiv_S$. By the definitions, $\iota^*$, $\nu^*$ are sub-isomorphisms into $\SSS$ that commute in the required fashion. 

Moreover, this immediately implies the second claim. 

For the last claim, suppose that $\PP\simeq\QQ$ and $\QQ\simeq\RR$ are witnessed by sub-isomorphisms $\iota\colon \PP\rightarrow \SSS$, $\lambda\colon \QQ\rightarrow \SSS$, $\mu\colon\QQ\rightarrow \TT$ and $\nu\colon\RR\rightarrow \TT$. 
By the first claim, there is a partial order $\UU$ and sub-isomorphisms $\lambda^*\colon \SSS\rightarrow \UU$, $\mu^*\colon \TT\rightarrow \UU$ with $\lambda^*\lambda=\mu^*\mu$. 
Then $\lambda^*\iota$, $\mu^*\nu$ witness that $\PP\simeq \RR$. 
\end{proof}

% by the first part of the previous lemma. 

%\begin{claim} 
%$\pi$ is a projection that preserves well-ordered infima. 
%\end{claim} 
%\begin{proof} 
%\todo{ok?}It follows by an easy and well-known argument that $\pi$ is a projection. 
%To see that $\pi$ preserves infima, suppose that $X\subseteq Q$ and $q=\inf^\QQ(X)$. It follows immediately from the definition of $\pi$ that $\pi(q)\leq \inf^\PP(\pi[X])$. 
%
%Suppose that $p\in\PP$, $p\geq q$. 
%\end{proof} 

\begin{definition} \label{pull back names} 
Suppose that $\PP$, $\QQ$ are forcings. 
\begin{enumerate-(a)} 
\item 
A \emph{complete subforcing $\PP$ of $\QQ$} ($\PP \lessdot \QQ$) is a subforcing of $\QQ$ such that every maximal antichain in $\PP$ is maximal in $\QQ$. 
\item 
A \emph{complete embedding} $i\colon \PP\rightarrow \QQ$ is a homomorphism with respect to $\leq$ and $\perp$ with the property that for every $q\in\QQ$, there is a condition $p\in \PP$ (called a \emph{reduction of $q$}) such that for every $r\leq p$ in $\PP$, $i(r)$ is compatible with $q$. 
\item 
%(see \cite[Definition 5.2]{MR2768691}) 
Suppose that $i\colon \PP\rightarrow \QQ$ is a complete embedding and $G$ is $\PP$-generic over $V$. 
The \emph{quotient forcing $\QQ/G$ for $G$ in $\QQ$} is defined as the subforcing 
$$\QQ/G=\{q\in \QQ\mid \forall p\in G\ i(p)\parallel q \}$$ 
of $\QQ$. 
Moreover, we fix a $\PP$-name $ \QQ/\PP$ for  for the quotient forcing for $\PP$ in $\dot{G}$, where $\dot{G}$ is a $\PP$-name for the $\PP$-generic filter, and also refer to this as (a name for) the quotient forcing for $\PP$ in $\QQ$. 
%\item 
%\todo{isn't this in the other direction, and do we need this for projections?} 
%Suppose that \todo{$i$ instead of $\pi$?} $\pi\colon \PP\rightarrow \QQ$ is a complete embedding. 
%\todo[inline]{does this make sense both ways? used at top of page 9} 
%We \todo{write $i$ instead of $\pi$} define $\tau^\pi$ for all $\QQ$-names $\tau$ by induction as 
%$$\tau^{\pi}=\{(\sigma^{\pi},p) \mid p\in \PP,\ \exists q\in\QQ\  (\sigma,q)\in \tau,\  \pi(p)\leq q\}.$$ 
\end{enumerate-(a)} 
\end{definition} 
%\todo{write definition. replace $p$ with the set of all  $q\leq p$ in $\ran{\pi}$} If $\pi\colon \PP\rightarrow \QQ$ is a complete embedding and $\sigma$ is a $\QQ$-name, let $\sigma^{\pi}$ denote the $\PP$-name induced by $\sigma$. 

It is a standard fact that a subforcing $\PP$ of $\QQ$ is a complete subforcing if and only if the identity on $\PP$ is a complete embedding. 
%\todo[inline]{REWRITE. CHECK if and how this is used. only for dense embeddings?} 
%In particular, for any dense subforcing $\PP$ of $\QQ$, the $\PP$-names are translated to $\QQ$-names. 
%In the situation of Definition \ref{pull back names}, we have $(\tau^{\pi})^{\pi^{-1}[G]}=\tau^G$ for all $\QQ$-generic filters $G$. 
%This will be often used in the proofs below.  

\begin{definition} \label{definition: quotient forcing} (see \cite[Definition 0.1]{MR717829}, \cite[Definition 5.2]{MR2768691}) 
Suppose that $\PP$ and $\QQ$ are forcings. 
\begin{enumerate-(a)} 
\item 
A 
%\todo{call just "projection"? everywhere?} 
\emph{projection} $\pi\colon \QQ\rightarrow \PP$ is a homomorphism with respect to $\leq$ such that $\pi[\QQ]$ is dense in $\PP$ and for all $q\in \QQ$ and all $p\leq \pi(q)$, there is a condition $\bar{q}\leq q$ with $\pi(\bar{q})\leq p$. 
\item 
Suppose that $\pi\colon \QQ\rightarrow \PP$ is a projection and $G$ is a $\PP$-generic filter over $V$. 
The \emph{quotient forcing $\QQ/G$ for $G$ in $\QQ$ relative to $\pi$} is defined as the subforcing 
$$\QQ/G=\{q\in \QQ\mid \pi(q)\in G\}$$ 
of $\QQ$. 
Moreover, we fix a $\PP$-name $(\QQ/\PP)^{\pi}$ for the quotient forcing for $\dot{G}$ in $\QQ$ relative to $\pi$, where $\dot{G}$ is a $\PP$-name for the $\PP$-generic filter, and will refer to this as (a name for) the quotient forcing for $\PP$ in $\QQ$ relative to $\pi$. 
%$\QQ$ in $\PP$, \todo{say something more, what is this name?} given by a $\PP$-name for the $\QQ$-generic filter induced by $\pi$. 
\end{enumerate-(a)} 
\end{definition} 

In Definition  \ref{definition: quotient forcing}, 
by standard facts about quotient forcing, 
for any $\PP$-generic filter $G$ over $V$, 
any $\QQ/G$-generic filter $H$ over $V[G]$ is $\QQ$-generic over $V$. 
Moreover, 
%if $\dot{\QQ}$ is a $\PP$-name for the quotient forcing, then 
any $\QQ$-generic filter $H$ over $V$ induces the $\PP$-generic filter $G=\pi[H]$ over $V$ and $H$ is $[(\QQ/\PP)^\pi]^H$-generic over $V[G]$ with 
%the $\QQ*\dot{\QQ}$-generic filter $\pi[G]*H$ and 
$V[H]=V[G*H]$. 
%It is easy to see that in Definition \ref{definition: quotient forcing}, a
Assuming that $\PP$ and $\QQ$ have weakest elements $\one_\PP$ and $\one_\QQ$, respectively, it is easy to see that the condition that $\pi[\QQ]$ is dense in $\PP$ in Definition \ref{definition: quotient forcing} is equivalent to the condition that $\pi(\one_\QQ)=\one_\PP$. 
%In the situation of Definition \ref{definition: quotient forcing}, 

It is easy to check that the following map is actually a projection. 

%\todo{move} 
\begin{definition} \label{definition of natural projection} 
Suppose that $\PP$, $\QQ$ are complete Boolean algebras and $\PP$ is a complete subalgebra of $\QQ$. 
We define the \emph{natural projection} $\pi\colon \QQ\rightarrow \PP$ by 
\begin{center}
$\pi(q)=\inf_{p\in\PP,\ p\geq q}p.$ 
\end{center} 
\end{definition}

We will further use the following notation when working with quotient forcings induced by names. 
%If $\RR$ is a forcing, let $\BB(\RR)$ denote the Boolean completion of the \todo{need this? write further above?} separative quotient of $\RR$, which will in fact be the Boolean completion in all cases below, since we will only apply this to separative forcings. 
%but since \todo{check} we will only apply this to separative forcings, it will simply be the Boolean completion. 

\begin{definition} \label{definition: generated Boolean subalgebra} 
If $\PP$ is a complete Boolean algebra and $\sigma$ is a $\PP$-name for a subset of a set $x$, let $\BB(\sigma)=\BB^{\PP}(\sigma)$ denote the complete Boolean subalgebra of $\PP$ that is generated by the Boolean values $\llbracket y\in \sigma \rrbracket_{\PP}$ for all $y\in x$. 
%Moreover, if $\RR$ is a separative forcing and $\sigma$ is an $\RR$-name, then we will assume that $\RR$ is a subset of 
Moreover, we will also use this notation if $\sigma$ is a name for a set that can be coded as a subset of a ground model set in an absolute way. 
\end{definition}

Moreover, we will often add Cohen subsets to a regular cardinal $\kappa$ with $\kappa^{<\kappa}=\kappa$. 
%\todo{check consistent usage everywhere} 
The following definition of the forcing for adding Cohen subsets is non-standard, but essential in several proofs below. 
%In the proofs below, we will need the following non-standard definition of the forcing for adding a Cohen subset of $\kappa$. 
% is non-standard, but it is essential for the following proofs. 
In the following definitions, let $\mathrm{Succ}$ denote the class of successor ordinals. 
% together with $0$. 

\begin{definition} 
Suppose that $\lambda$ is a regular uncountable cardinal.  
\begin{enumerate-(a)} 
\item 
$\Add{\lambda}{1}$ is defined as the forcing 
%with conditions 
$$\Add{\lambda}{1}=\{p\colon \alpha\rightarrow \lambda\mid \alpha<\lambda\},$$ ordered by reverse inclusion. 
\item 
$\Addd{\lambda}{1}$ is defined as the dense subforcing 
$$\Addd{\lambda}{1}=\{p\in \Add{\lambda}{1}\mid \dom{p}\in \mathrm{Succ}\}$$ 
of $\Add{\lambda}{1}$. 
\item 
$\Add{\lambda}{\gamma}$ is defined as the ${<}\lambda$-support product $\prod_{i<\gamma}\Add{\lambda}{1}$ for any ordinal $\gamma$. 
%=\emptyset\ \text{ or }\exists \alpha<\kappa\ \dom{p}=\alpha+1\},$$ 
%ordered by reverse inclusion. 
\end{enumerate-(a)} 
\end{definition} 

We will 
%quite 
often use the following standard facts about adding Cohen subsets and collapse forcings.

\begin{lemma} \label{forcing equivalent to Add(kappa,1)} 
Suppose that $\lambda$ is a regular uncountable cardinal.  
%Let $\PP=\{p\in \Add{\kappa}{1}\mid \dom{p}\in \Succ\}$. 
\begin{enumerate-(1)} 
\item 
If  $\lambda^{<\lambda}=\lambda$ and $\PP$ is a non-atomic ${<}\lambda$-closed forcing of size $\lambda$, then $\PP$ has a dense subset that is isomorphic to $\Addd{\lambda}{1}$. In particular, $\PP$ is sub-equivalent to $\Add{\lambda}{1}$. 
\item 
\cite[Lemma 2.2]{MR2387944} 
Suppose that $\nu>\lambda$ is a cardinal with $\nu^{<\lambda}=\nu$, $\PP$ is a separative ${<}\lambda$-closed forcing of size $\nu$ and $\one_\PP$ forces that $\nu$ has size $\lambda$. 
Then $\PP$ has a dense subset that is isomorphic to the dense subforcing 
$$\mathrm{Col}^*(\lambda,\nu)=\{p\in \Col{\lambda}{\nu}\mid \dom{p}\in\mathrm{Succ}\}$$ 
of $\Col{\lambda}{\nu}$. 
In particular, $\PP$ is sub-equivalent to $\Col{\lambda}{\nu}$. 
%$\PP \lessdot \Add{\kappa}{1}$ and $|\PP|=\kappa$, then $\PP$ has a dense subset isomorphic to $\Addd{\kappa}{1}$. 
\end{enumerate-(1)} 
\end{lemma}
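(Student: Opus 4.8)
The plan is to handle the two parts separately. Part (2) is quoted from \cite[Lemma 2.2]{MR2387944}, so I would only cite it; and in both parts the final \anf{in particular} clause is immediate. Since $\Addd{\lambda}{1}$ is by definition dense in $\Add{\lambda}{1}$ and $\mathrm{Col}^*(\lambda,\nu)$ is dense in $\Col{\lambda}{\nu}$, the inclusions are sub-isomorphisms; hence an isomorphism between a dense subset of $\PP$ and $\Addd{\lambda}{1}$ (respectively $\mathrm{Col}^*(\lambda,\nu)$) provides sub-isomorphisms from $\RR=\Addd{\lambda}{1}$ (respectively $\RR=\mathrm{Col}^*(\lambda,\nu)$) into both $\PP$ and $\Add{\lambda}{1}$ (respectively $\Col{\lambda}{\nu}$), witnessing $\PP\eqsim\Add{\lambda}{1}$ (respectively $\PP\eqsim\Col{\lambda}{\nu}$). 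It therefore remains to build, for part (1), a dense subset $D\subseteq\PP$ together with an order-isomorphism onto $\Addd{\lambda}{1}$. Recall that $\Addd{\lambda}{1}$ is, as a forcing, the tree of all $s\in\ltll$ with $\len(s)\in\Succ$ ordered by reverse inclusion, in which every node has exactly $\lambda$ immediate successors. I would construct a system $\langle q_s\mid s\in\Addd{\lambda}{1}\rangle$ of conditions in $\PP$ by recursion on $\len(s)$ so that $s\mapsto q_s$ is the desired isomorphism.

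Before the recursion I would record an auxiliary fact: below every $p\in\PP$ there is a \emph{maximal} antichain of size exactly $\lambda$. Using non-atomicity one builds a binary splitting tree $\langle r_t\mid t\in{}^{<\lambda}2\rangle$ below $p$, taking lower bounds at limit levels by ${<}\lambda$-closure; the diagonal conditions $r_{t_\beta}$, where $t_\beta\in{}^{\beta+1}2$ is the string of $\beta$ zeros followed by a single $1$, are pairwise incompatible and form an antichain of size $\lambda$ below $p$, which extends to a maximal one, still of size $\lambda$ since $|\PP|=\lambda$. Enumerating such an antichain in order type $\lambda$ supplies the $\lambda$ immediate successors of each node, matching $\Addd{\lambda}{1}$. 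The recursion then runs as follows. Fix an enumeration $\PP=\langle p_\xi\mid\xi<\lambda\rangle$. The length-$1$ nodes are sent to a maximal antichain of size $\lambda$. At a successor stage, having placed the maximal antichain $A_\beta=\{q_s\mid\len(s)=\beta+1\}$, I split each $q_s$ into a maximal antichain of size $\lambda$ below it, indexed as $\langle q_{s^\frown\langle i\rangle}\mid i<\lambda\rangle$, so that the union $A_{\beta+1}$ is again a maximal antichain. To force density I handle $p_\xi$ at stage $\xi$: as $A_\xi$ is a maximal antichain, $p_\xi$ is compatible with some $q_s\in A_\xi$, and I choose the splitting of $q_s$ so that one successor lies below a common extension of $q_s$ and $p_\xi$, hence below $p_\xi$. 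At a limit stage $\alpha$, for each branch $x\in{}^{\alpha}\lambda$ the chain $\langle q_{x\restriction(\gamma+1)}\mid\gamma<\alpha\rangle$ is decreasing of length $\alpha<\lambda$, so its set $B_x$ of common lower bounds is nonempty by ${<}\lambda$-closure; since $B_x$ is itself non-atomic and ${<}\lambda$-closed, I pick a maximal antichain of $B_x$ of size $\lambda$ as the successors $\langle q_{x^\frown\langle i\rangle}\mid i<\lambda\rangle$. As $|{}^{\alpha}\lambda|=\lambda$ by $\lambda^{<\lambda}=\lambda$, each level uses at most $\lambda$ conditions, and the whole construction stays inside $\PP$.

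The main obstacle is to maintain that every level, and in particular every limit level, is a maximal antichain of $\PP$, which is exactly what makes the density bookkeeping work at every stage. The point of taking the successors at a limit stage to be a maximal antichain among the \emph{common lower bounds} $B_x$ is precisely to secure this: given any $p\in\PP$, a branch-chasing (fusion) recursion of length $\alpha$ produces a branch $x$ and a descending sequence $r_\gamma\leq p$ with $r_\gamma\leq q_{x\restriction(\gamma+1)}$, using maximality of the successor antichains at successor steps and ${<}\lambda$-closure together with the maximality of the $B_{x\restriction\delta}$-antichains at limit steps. Its lower bound $r$ then lies in $B_x$ and satisfies $r\leq p$, so $r$, and hence $p$, is compatible with some $q_{x^\frown\langle i\rangle}$; this is the required predensity. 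Incompatibility across distinct branches follows from incompatibility of siblings at the level where the branches diverge, so each level really is an antichain.

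Finally I would verify that $s\mapsto q_s$ is an order-isomorphism onto $D=\{q_s\mid s\in\Addd{\lambda}{1}\}$. Incomparable nodes are sent to incompatible, hence $\leq$-unrelated, conditions. If $s\subsetneq t$, then $q_t\leq q_s$, and choosing a sibling $t'$ of $t$ with $s\subseteq t'$ and $q_{t'}\perp q_t$ shows $q_s\not\leq q_t$, since $q_{t'}\leq q_s$ would otherwise give $q_{t'}\leq q_t$; thus no spurious order relations arise and the map is injective with the correct order in both directions. Density of $D$ was arranged by the bookkeeping over $\langle p_\xi\mid\xi<\lambda\rangle$, so $D$ is a dense subset of $\PP$ isomorphic to $\Addd{\lambda}{1}$, completing part (1); the sub-equivalence $\PP\eqsim\Add{\lambda}{1}$ then follows as explained above.
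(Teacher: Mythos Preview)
Your proposal is correct and matches the paper's treatment: the paper omits the proof of part (1) entirely as ``straightforward and well-known'' and defers part (2) to \cite[Lemma 2.2]{MR2387944}, exactly as you do. The detailed construction you give for part (1)---building a $\lambda$-branching tree of conditions by recursively refining maximal antichains of size $\lambda$, with bookkeeping against an enumeration of $\PP$ to secure density, and handling limit levels by choosing maximal antichains within the set $B_x$ of lower bounds of each branch---is precisely the standard argument the paper has in mind, and your verification that the resulting map is an order-isomorphism onto its (dense) image is clean.
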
 
\begin{proof} 
Since the proof of the first claim is straightforward and well-known, we do not include it here. 
The proof of the second claim is an adaptation of the proof of \cite[Lemma 26.7]{MR1940513} that can be found in \cite[Lemma 2.2]{MR2387944}. 
\end{proof}

%\todo[inline]{define various quotient notations?????? $\QQ/\PP$ if $\PP$ is a complete subforcing of $\QQ$, $(\QQ/\PP)^{\pi}$ if $\pi\colon \QQ\rightarrow \PP$ is a projection} 

%\todo{what did I mean here? $(\PP\times \RR)/\QQ$ and $(\PP/\QQ)\times \RR)$?}In the situation of Definition \ref{definition: quotient forcing}, for any forcing $\RR$ the quotient forcing $(\PP\times \RR)/\QQ$ is isomorphic to $(\PP\times \QQ)/\RR$. We will use this in the proofs below. 

\iffalse 
\begin{lemma} \label{restriction of projection to dense subforcing} 
\todo{probably comment out, if not cited below} Suppose that $\pi\colon \QQ\rightarrow \RR$ is a projection and $\PP$ is a dense subforcing of $\QQ$. Then $\pi\upharpoonright \PP\colon \PP\rightarrow \RR$ is a projection and $\pi[\PP]$ is a dense subforcing of $\RR$. 
\end{lemma} 
\begin{proof} 
Suppose that $p\in \PP$ and $r\leq \pi(p)$. 
Since $\pi$ is a projection, there is some $\bar{p}\leq p$ in $\PP$ with $\pi(\bar{p})\leq r$. Since $\PP$ is dense in $\QQ$, there is some $q\leq \bar{p}$ in $\QQ$ with $\pi(q)\leq \pi(\bar{p})\leq r$. 
The second claim follows from the fact that $\pi\upharpoonright \PP$ is a projection. 
\end{proof} 

\fi

%DEFINE: 
%If $\sigma\colon \QQ\rightarrow \RR$ is a forcing projection, let $(\QQ/\RR)^{\sigma}$ denote the quotient of $\RR$ in $\QQ$ with respect to $\sigma$. 

%DEFINE: Quotient forcing for $\PP \subseteq \BB(\QQ)$, instead of $\PP \subseteq \QQ$???? WE USE THIS OFTEN!!! 

%%%%%%%%%%%%%%%%%%%%%%%%%%%%%%%%%%%%%%%%%%%%%%%%%%
\subsection{A counterexample for quotient forcings} 
%A quotient of $\Add{\kappa}{1}$ that does not preserve stationary sets} 
%A forcing with no strategically closed quotient} 
%\todo[inline]{maybe call: a counterexample for quotient forcings?} 

The following well-known example of a quotient of $\Add{\kappa}{1}$ that does not preserve stationary subsets of $\kappa$ shows that the proofs of regularity properties for definable subsets of ${}^{\omega}\omega$ in Solovay's model do not generalize to any uncountable regular cardinal. 
% $\lambda$. 

%The following folklore result is an obstacle in generalizing the perfect set property to uncountable $\kappa$. 
%Let $Add(\kappa,1)$ denote the forcing to add a Cohen subset of $\kappa$ with bounded conditions. 
For any uncountable regular cardinal $\kappa$ with $\kappa^{<\kappa}=\kappa$, we define a complete subforcing of the Boolean completion $\BB(\Add{\kappa}{1})$ of $\Add{\kappa}{1}$ such that its quotient forcing in $\BB(\Add{\kappa}{1})$ does not preserve stationary subsets of $\kappa$. 
%In the following lemma, $\Adddd{\kappa}{1}$ denotes the forcing with conditions $p\colon \alpha\rightarrow 2$ for $\alpha<\kappa$, ordered by inclusion. 
%This forcing is equivalent to $\Add{\kappa}{1}$ by Lemma \ref{forcing equivalent to Add(kappa,1)}. 

For any condition $p\in\Add{\kappa}{1}$, we consider the set $s_p=\{\alpha\in \mathrm{dom}(p)\mid p(\alpha)\neq 0\}$. 
Suppose that $\dot{G}$ is an $\Add{\kappa}{1}$-name for the generic filter $G$ and $\dot{S}$ is an $\Add{\kappa}{1}$-name for $\bigcup_{p\in G}s_p$. 
Then $\one_{\Add{\kappa}{1}}$ forces that $\dot{S}$ is a bi-stationary subset of $\kappa$. 
%the subset of $\kappa$ with characteristic function $f_G=\bigcup G$. Then $\dot{S}^G$ is a fat stationary subset of $\kappa$. 
Moreover, if $S$ is a subset of $\kappa$, we define  
% that is coded by $p$. For any subset $S$ of $\kappa$, let 
$$\mathbb{Q}_S=\{p\in \Add{\kappa}{1}\mid %\alpha<\omega_1,\ %s_p\subseteq S,\ 
s_p\text{ is a closed subset of }S\}.$$ 
%If $S$ is a fat stationary subset of $\kappa$, let $\QQ_S$ denote the subforcing 
%Let $s_p=\{\alpha\in \mathrm{dom}(p)\mid p(\alpha)\neq 0\}$ for $p\in {}^{<\kappa}2$. 
%Suppose that $S\subseteq \kappa$ \todo{define fat stationary before the lemma} is \emph{fat stationary} in $\kappa$, i.e. it is a stationary subset of $\kappa$ and contains closed subsets of arbitrary length $\alpha<\kappa$. 
%$$\mathbb{Q}_S=\{p\in \Add{\kappa}{1}\mid %\alpha<\omega_1,\ %s_p\subseteq S,\ 
%s_p\text{ is a closed subset of }S\}$$ 
%of $\Add{\kappa}{1}$. 

\begin{lemma} \label{a forcing with bad quotient} 
%Suppose that $\dot{G}$ is an $\Add{\omega_1}{1}$-name for the generic filter, $\dot{S}$ is an $\Add{\omega_1}{1}$-name for the subset of $\kappa$ with characteristic function $f_G=\bigcup G$ and 
Suppose that $\kappa^{<\kappa}=\kappa$ and $\dot{\mathbb{Q}}$ is an $\Add{\kappa}{1}$-name for $\mathbb{Q}_{\dot{S}}$, where $\dot{S}$ is defined as above. 
Then $\Add{\kappa}{1}*\dot{\mathbb{Q}}$ is sub-equivalent to $\Add{\kappa}{1}$. 
%where $S\subseteq\omega_1$ is the $Add(\omega_1,1)$-generic. 
%Then there is a dense subset of $Add(\omega_1,1)*\dot{\mathbb{Q}}$ which is isomorphic to $Add(\omega_1,1)$. 
\end{lemma}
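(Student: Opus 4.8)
The plan is to exhibit a dense subforcing $D$ of $\Add{\kappa}{1}*\dot{\mathbb{Q}}$ which is non-atomic, ${<}\kappa$-closed and of size $\kappa$, and then to apply Lemma \ref{forcing equivalent to Add(kappa,1)}~(1) (with $\lambda=\kappa$) to $D$: this yields a dense subset of $D$ isomorphic to $\Addd{\kappa}{1}$, which is in turn dense in $\Add{\kappa}{1}*\dot{\mathbb{Q}}$, so that $\Addd{\kappa}{1}$ embeds as a dense subforcing of both $\Add{\kappa}{1}*\dot{\mathbb{Q}}$ and $\Add{\kappa}{1}$, witnessing $\Add{\kappa}{1}*\dot{\mathbb{Q}}\eqsim\Add{\kappa}{1}$. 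I stress that the iteration itself is \emph{not} literally ${<}\kappa$-closed: at a limit, the union of the second coordinates can fail to be closed at its supremum $\delta$ while $\delta$ has already been decided to lie outside $\dot S$. Avoiding precisely this phenomenon is the whole point, and it dictates the choice of $D$.

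The first step is to pass to explicit conditions. Since $\mathbb{Q}_{\dot S}$ is a subforcing of the ground model forcing $\Add{\kappa}{1}$ whose underlying set is determined by $\dot S$, a condition $q\in\Add{\kappa}{1}$ lies in $\mathbb{Q}_{\dot S}$ exactly when $s_q$ is closed and $s_q\subseteq\dot S$; hence $p\Vdash\check q\in\dot{\mathbb{Q}}$ iff $s_q$ is closed and $s_q\subseteq s_p$. By first extending the first coordinate to decide a name $\dot q$ as some $\check q$, I may restrict attention to pairs $(p,\check q)$ with $p,q\in\Add{\kappa}{1}$, $s_q$ closed and $s_q\subseteq s_p$. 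I then let $D$ consist of those such $(p,\check q)$ for which in addition $p,q\in\Addd{\kappa}{1}$ with $\dom{p}=\dom{q}=\beta+1$ and $\beta\in s_q$; that is, both coordinates have the \emph{same} successor domain and are \anf{capped} by a nonzero value at the top point $\beta$. Density of $D$ is routine: given any explicit $(p,\check q)$ as above, choose a successor ordinal $\beta$ strictly above $\sup(s_q)$ and above $\dom{p}$, $\dom{q}$, and extend both coordinates to domain $\beta+1$ by filling in zeros and setting the value $1$ at $\beta$; this keeps $s_q$ closed (the new top point is isolated from below) and preserves $s_q\subseteq s_p$ and $\beta\in s_q$.

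The main step, and the expected obstacle, is ${<}\kappa$-closure. Given a decreasing sequence $\langle(p_\xi,\check q_\xi)\mid\xi<\gamma\rangle$ in $D$ with $\gamma<\kappa$ a limit, write $\dom{p_\xi}=\dom{q_\xi}=\beta_\xi+1$ and put $p^*=\bigcup_{\xi<\gamma}p_\xi$, $q^*=\bigcup_{\xi<\gamma}q_\xi$ and $\delta=\sup_{\xi<\gamma}\beta_\xi<\kappa$, so that $\dom{p^*}=\dom{q^*}=\delta$. The capping ensures $\beta_\xi\in s_{q^*}$ for all $\xi$, so $\delta$ is a limit point of $s_{q^*}$ and $s_{q^*}$ is not closed; this is exactly the obstruction noted above. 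The decisive point is that, because the two coordinates share the same domain, $\delta\notin\dom{p^*}$, so the value of the first coordinate at $\delta$ is not yet decided and I am free to set it. I therefore define $p'=p^*\cup\{(\delta,1)\}$ and $q'=q^*\cup\{(\delta,1)\}$. Then $s_{q'}=s_{q^*}\cup\{\delta\}$ is closed---its only limit point not already present in the closed sets $s_{q_\xi}$ was $\delta$, which I have now added---and $s_{q'}\subseteq s_{p'}$ with $\dom{p'}=\dom{q'}=\delta+1$ and $\delta\in s_{q'}$, so $(p',\check{q'})\in D$ and it is a lower bound of the sequence. Non-atomicity is immediate, since any condition has two incompatible capped extensions obtained by setting different values at a fresh coordinate, and $|D|\le|\Add{\kappa}{1}|=\kappa^{<\kappa}=\kappa$. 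With $D$ in hand, Lemma \ref{forcing equivalent to Add(kappa,1)}~(1) completes the argument as described above.
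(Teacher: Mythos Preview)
Your proof is correct and follows essentially the same approach as the paper. The paper simply exhibits the dense set
\[
\{(p,\check{q})\mid p,q\in \Add{\kappa}{1},\ \dom{p}=\dom{q},\ p\Vdash_{\Add{\kappa}{1}}\check{q}\in \dot{\QQ}\}
\]
and asserts without further comment that it is non-atomic, ${<}\kappa$-closed and dense, then invokes Lemma~\ref{forcing equivalent to Add(kappa,1)}. Your set $D$ is a dense subset of this one obtained by additionally requiring a successor domain capped by a nonzero value; this extra condition is not needed, but it does make the closure argument uniform (you always add the limit point $\delta$) rather than case-based, and your write-up makes explicit the crucial point the paper leaves to the reader: that equal domains guarantee $\delta\notin\dom{p^*}$, so one is free to set $p'(\delta)\neq 0$ and thereby force $\delta\in\dot S$. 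Two cosmetic remarks: in the density paragraph ``$\beta\in s_q$'' should read ``$\beta\in s_{q'}$'', and in the closure paragraph you tacitly assume the $\beta_\xi$ are not eventually constant (the constant case being trivial).
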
 
\begin{proof} 
%\todo{define $\len(t)$ first} 
The set 
$$\{(p,\check{q})\mid p,q\in \Add{\kappa}{1},\ \dom{p}=\dom{q},\ p\Vdash_{\Add{\kappa}{1}}\check{q}\in \dot{\QQ}\}$$ 
is a non-atomic ${<}\kappa$-closed dense subset of $\Add{\kappa}{1}*\QQ$, hence $\Add{\kappa}{1}*\QQ$ is sub-equivalent to $\Add{\kappa}{1}$ by Lemma \ref{forcing equivalent to Add(kappa,1)}. 
%Let $D=\{(p,\check{q})\mid p\colon \alpha\rightarrow 2,\ q\colon \alpha\rightarrow 2,\ \alpha<\omega_1,\ s_q\subseteq s_p,$ $s_q$ is a closed subset of $\omega_1 \}$. 
%We claim that $D$ is a dense subset of $\Add{\omega_1}{1}*\dot{\mathbb{Q}}$. 
%uppose that $(p,\dot{q})\in \Add{\omega_1}{1}*\dot{\mathbb{Q}}$. 
%Suppose that $G*H$ is $\Add{\omega_1}{1}*\dot{\mathbb{Q}}$-generic over $V$ with $(p,\dot{q})\in G*H$. 
%Suppose that $f=\bigcup G$ and $g=\bigcup H$. 
%$(f,g)\in ({}^{\kappa}2)^2$ is $Add(\kappa,1)$-generic over $V$ with $p\subseteq f$ and $q\subseteq g$. 
%Suppose that $r\leq p$, $r\subseteq f$ and $r$ decides $\dot{q}$, i.e. there is some $q\in {}^{<\kappa}2$ such that $r\Vdash \dot{q}=\check{q}$. 
%$r\leq p$ and $r\Vdash \dot{q}=\check{q}$ for some $q\in {}^{<\kappa}2$. 
%We can assume that $r$ has successor length $\alpha$ and $\alpha\geq |q|$. 
%Let $s=g\upharpoonright \alpha$. Then $(r,s)\leq (p,\dot{q})$ and $(r,s)\in D$. 
%Since $D$ is $<\kappa$-closed and has size $\kappa$, this proves the claim. 
\end{proof} 

It is forced by $\one_{\PP}$ that $\dot{\QQ}$ shoots a club through $\dot{S}$ and hence $\dot{\QQ}$ is not stationary set preserving. Thus, $\dot{\QQ}$ is a name for the required quotient forcing. In particular, such a forcing fails to be ${<}\kappa$-closed. 

%\begin{remark} 
Now suppose that $\kappa$ is an uncountable regular cardinal and $\lambda>\kappa$ is inaccessible. 
An argument analogous to the proof of \cite[Theorem 1]{MR0265151} shows that after forcing with $\Col{\kappa}{{<}\lambda}$, all $\bf{\Sigma}^1_1$ subsets of ${}^{\kappa}\kappa$ have the perfect set property. However, this proof fails to work for $\bf{\Pi}^1_1$ subsets of ${}^{\kappa}\kappa$ precisely because some quotient forcings, such as the ones appearing in Lemma \ref{a forcing with bad quotient}, are not necessarily ${<}\kappa$-closed. 
%\end{remark} 
%easily modified to show that the perfect set property for $\bf{\Sigma}^1_1$ subsets of ${}^{\kappa}\kappa$ holds in $Col(\kappa,<\lambda)$-generic extensions. In the case of arbitrary subsets of ${}^{\kappa}\kappa$ definable from ordinals and subsets of ${}^{\kappa}\kappa$, the proof fails by the following example. 
%\begin{example} Suppose \todo{rewrite} $\kappa^{<\kappa}=\kappa>\omega$. Let $\mathbb{P}$ consist of the pairs $(p,q)\in ({}^{<\kappa}2)^2$ such that $s_q$ is closed and $s_q\subseteq s_p$ for the sets $s_p,s_q\subseteq\kappa$ with characteristic functions $p,q$. The conditions are ordered by coordinatewise end extension. Let $\mathbb{Q}=\mathbb{P}*Col(\kappa,<\lambda)^{\check{}}$. 
%\end{example} 
%
%The \todo{rewrite} forcing $\mathbb{P}$ is $<\kappa$-closed and has size $\kappa$, so it is forcing equivalent to $Add(\kappa,1)$ (in the sense that there are isomorphic dense subsets of these two forcings). Hence $\mathbb{Q}$ is equivalent to $Col(\kappa,<\lambda)$. 
%Moreover, it is easy to see that $\mathbb{P}$ is equivalent to $Add(\kappa,1)*\dot{\mathbb{S}}$, where $Add(\kappa,1)$ adds (with conditions of size $<\kappa$) a Cohen subset of $\kappa$, which is stationary, and $\dot{\mathbb{S}}$ shoots a club through its complement. Since $Col(\kappa,<\lambda)$ is $<\kappa$-closed, it does not change the truth of $\Sigma^1_1$ statements over ${}^{\kappa}\kappa$ and hence does not destroy the stationarity of subsets of $\kappa$. Hence the quotient of $Add(\kappa,1)$ in $Col(\kappa,<\lambda)$ is not equivalent to $Col(\kappa,<\lambda)$. 

\begin{remark} 
%\todo{check} 
In the situation of Lemma \ref{a forcing with bad quotient}, $\one_{\PP}$ forces that $\QQ_{\dot{S}}$ is 
%in the $\Add{\kappa}{1}$-generic extension, $S$ 
%is a stationary subset of $\kappa$ and the forcing $\QQ_S$ is 
$\lk$-distributive, since it appears in a two-step iteration which is $\lk$-distributive. 
%is a quotient forcing in $\Add{\kappa}{1}$. 
% is $\lk$-dsitributive. 
%it appears in a ${<}\kappa$-distributive $2$-step iteration. 
However, in general one needs to require more conditions on $S$ to ensure that $\QQ_S$ is $\lk$-distributive. For instance, assuming that the $\mathrm{GCH}$ holds, it is sufficient that $S$ is a \emph{fat stationary subset of $\kappa$} in the sense that for every club $C$ in $\kappa$, $S\cap C$ contains closed subsets of arbitrarily large order types below $\kappa$ (see \cite[Theorem 1 \& Theorem 2]{MR716625}). 
\end{remark} 

%A subset $S$ of a regular cardinal $\kappa$ is \emph{fat stationary} if for every $\alpha<\kappa$, there is a closed subset of $S$ with order type $\alpha$. 

%%%%%%%%%%%%%%%%%%%%%%%%%%%%%%%%%%%%%%%%%%%%%%%%%%
%%%%%%%%%%%%%%%%%%%%%%%%%%%%%%%%%%%%%%%%%%%%%%%%%%
\section{The perfect set property} \label{section: The perfect set property} 

We always assume that $\kappa$ is an uncountable regular cardinal with $\kappa^{<\kappa}=\kappa$ and that $\lambda$ is an uncountable regular cardinal. 
We define the \emph{length} of various types of objects in the next definition. 

\begin{definition} 
%\todo{maybe in earlier section?}
\begin{enumerate-(a)} 
\item 
%\todo{check with later definition of $\len$ and $\he$} 
Let $\len(s)=\dom{s}$ for any function $s$. 
%for $s\in {}^{<\kappa}V$. 
\item 
Let $\len(t)=\sup_{s\in t}\len(s)$ 
%\todo{should it say $l(s)+1$ instead of $l(s)$ in the definition of $\he(t)$?} 
for $t\subseteq{}^{<\lambda}\lambda$. 
\item 
Let $\len(p)=\len(t)$ 
for $p=(t,s)$ and $t,s\subseteq {}^{<\lambda}\lambda$. 
%$t\subseteq{}^{<\kappa}\kappa$. 
%\item 
%Let $l(t,s)=l(t)$ for $s,t\subseteq {}^{<\kappa}\kappa$ with $s\subseteq t$. 
\end{enumerate-(a)} 
\end{definition}

%%%%%%%%%%%%%%%%%%%%%%%%%%%%%%%%%%%%%%%%%%%%%%%%%%
\subsection{Perfect set games} 

The perfect set property is characterized by the \emph{perfect set game}. 

\begin{definition} 
The \emph{perfect set game} $F_\lambda(A)$ of length $\lambda$ for a subset $A$ of ${}^{\lambda}2$ is defined as follows. 
%The game has length $\kappa$. 
The first (even) player, player I, plays some $s_\alpha\in {}^{{<}\lambda}2$ in all even rounds $\alpha$. 
% and the second (odd) player plays in all odd rounds. 
The second (odd) player, player II, plays some $s_\alpha\in {}^{{<}\lambda}2$ in all odd rounds $\alpha$. 
Together, they play a strictly increasing sequence $\vec{s}=\langle s_\alpha\mid \alpha<\lambda\rangle$ with $s_\alpha\in {}^{<\lambda}2$ for all $\alpha<\lambda$. 
Player II has to satisfy the additional requirement that $\len(s_{\alpha+1})=\len(s_\alpha)+1$ for all even ordinals $\alpha<\lambda$. 
The combined sequence $\vec{s}$ of moves of both players defines a sequence 
$$\bigcup_{\alpha<\lambda} s_\alpha= x=\langle x(i)\mid i<\lambda \rangle \in {}^{\lambda}2.$$ 
Player I wins if $x\in A$. 
Moreover, if $t\in {}^{<\lambda}2$, 
the game $F^{t}_\lambda(A)$ is defined as $F_\lambda(A)$ with the additional requirement that $t\subseteq s_0$ for the first move $s_0$ of player I. 
\end{definition} 

The perfect set game characterizes the perfect set property for subsets of ${}^{\lambda}2$ in the following sense. 

\begin{lemma} \cite[Lemma 7.2.2]{Kovachev-thesis} \label{characterization of perfect set property by game} 
Suppose that $A$ is a subset of ${}^{\lambda}2$ and $t\in {}^{<\lambda}2$. 
\begin{enumerate-(1)} 
\item \label{I wins if the set has a perfect subset} 
Player I has a winning strategy in $F^t_\lambda(A)$ if and only if $A\cap N_t$ has a perfect subset. 
\item 
Player II has a winning strategy in $F^t_\lambda(A)$ if and only if $|A\cap N_t|\leq\lambda$. 
\end{enumerate-(1)} 
\end{lemma}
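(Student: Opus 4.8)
The plan is to prove the two equivalences by relating the outcomes of the game to the internal structure of $A\cap N_t$, treating the two claims largely independently. Throughout I would use that limit rounds are even, so that Player~I is responsible for every move at a limit stage, and that Player~II's moves are forced one-bit extensions, so that at the odd round $\alpha+1$ Player~II only chooses a single bit to append to Player~I's move $s_\alpha$. For a position $p$ it is convenient to write $m(p)$ for its \emph{current node}, the union of all moves played in $p$.

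For the first claim the easy direction is ($\Leftarrow$). Given a perfect tree $T$ with $[T]\subseteq A\cap N_t$ (so $t\in T$, as $[T]$ is a nonempty subset of $N_t$), I would have Player~I walk through splitting nodes of $T$: at each even round Player~I extends the current node $m\in T$ to a splitting node $s\supseteq m$ of $T$, which exists by perfectness; Player~II's one-bit answer then stays in $T$ because $s$ splits, and at limit rounds the current node lies in $T$ by ${<}\lambda$-closure. The resulting branch $x$ has all initial segments in $T$, so $x\in[T]\subseteq A$ and Player~I wins. For the converse, from a winning strategy $\sigma$ for Player~I I would build by recursion on $\len(u)$, for $u\in{}^{<\lambda}2$, positions $p_u$ consistent with $\sigma$ together with their current nodes $t_u=m(p_u)$ so that $u\subseteq u'$ implies $t_u\subseteq t_{u'}$, so that $t_{u{}^\frown 0}\perp t_{u{}^\frown 1}$ (letting Player~II answer with bit $0$ respectively $1$ and then applying $\sigma$), and so that $t_u=\bigcup_{\gamma<\len(u)}t_{u\upharpoonright\gamma}$ at limits. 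The downward closure $S$ of $\{t_u\mid u\in{}^{<\lambda}2\}$ is then perfect: splitting nodes are cofinal by the incompatibility clause, and ${<}\lambda$-closure follows from continuity of $u\mapsto t_u$ at limits together with regularity of $\lambda$, since a chain in $S$ of length ${<}\lambda$ is bounded in length and meets fewer than $\lambda$ splitting nodes, hence is an initial segment of some $t_u$. Each branch of $S$ is the outcome of a run consistent with $\sigma$, so it lies in $A$ and extends $t$, giving $[S]\subseteq A\cap N_t$.

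For the second claim the direction ($\Leftarrow$) is a diagonalisation. Assuming $|A\cap N_t|\le\lambda$ I would enumerate $A\cap N_t=\{y_\xi\mid\xi<\lambda\}$ and fix a bijection between $\lambda$ and the odd ordinals below $\lambda$. At the odd round assigned to $\xi$, with Player~I's last move $s_\alpha$ in hand, Player~II appends the bit $1-y_\xi(\len(s_\alpha))$ whenever $s_\alpha\subseteq y_\xi$ (and an arbitrary bit otherwise); this forces $s_{\alpha+1}\not\subseteq y_\xi$ and hence $x\neq y_\xi$. As every $\xi<\lambda$ is treated at some round and $x\in N_t$, the outcome avoids all of $A\cap N_t$, so $x\notin A$ and Player~II wins.

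The heart of the argument, and the step I expect to be the main obstacle, is ($\Rightarrow$) of the second claim: deducing $|A\cap N_t|\le\lambda$ from a winning strategy $\tau$ for Player~II. For $y\in A\cap N_t$ I would call a position $p$ consistent with $\tau$, with Player~I to move and $m(p)\subseteq y$, a \emph{trap for $y$} if for every $\eta$ with $\len(m(p))<\eta<\lambda$ the response of $\tau$ to Player~I playing $y\upharpoonright\eta$ disagrees with $y$ at coordinate $\eta$. First I would show every $y\in A\cap N_t$ admits a trap: otherwise Player~I could, starting at the root and passing through unions at limit rounds, always extend the current node along $y$ without $\tau$ deviating, producing a full run with outcome $x=y\in A$ and contradicting that $\tau$ wins; here regularity of $\lambda$ is used to see that the successive strict extensions reach length $\lambda$. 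I would then define $y\mapsto\bigl(p_y,\,y(\len(m(p_y)))\bigr)$ for a chosen trap $p_y$ and prove injectivity: if distinct $y,y'$ had the same image they would share a trap $p$ and agree at $\rho=\len(m(p))$, so their first disagreement $\delta$ would satisfy $\delta>\rho$, and Player~I playing the common segment $y\upharpoonright\delta=y'\upharpoonright\delta$ would receive from $\tau$ a single bit equal to one of $y(\delta),y'(\delta)$, contradicting that $p$ traps both. The delicate point is precisely the case where two branches diverge exactly at the node of the trap, which is why the branching bit $y(\len(m(p_y)))$ must be recorded alongside the position. Finally I would count the positions consistent with $\tau$, namely sequences of length ${<}\lambda$ of elements of ${}^{<\lambda}2$, of which there are at most $\lambda^{<\lambda}=\lambda$, so the injection yields $|A\cap N_t|\le\lambda$. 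I expect the limit bookkeeping in the existence step and this position count, which is where $\lambda^{<\lambda}=\lambda$ is needed, to be the only places where the uncountable setting demands more than the classical argument.
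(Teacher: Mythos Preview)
The paper does not supply its own proof of this lemma; it is quoted from Kovachev's thesis and left without argument, so there is no in-paper proof to compare your proposal against. Judged on its own, your argument is correct and follows the standard route one would expect Kovachev's proof to take: splitting nodes versus strategies for part~(1), diagonalisation and the ``trap'' analysis for part~(2).

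Two small technical points are worth tightening. First, in your definition of a trap you quantify over all $\eta>\len(m(p))$, but at the empty position the legal first moves for Player~I are only those $y\upharpoonright\eta$ with $\eta\ge\len(t)$; the quantifier should be restricted to legal moves throughout, otherwise ``not a trap'' could be witnessed by an $\eta$ that Player~I cannot actually play. Second, the closure argument for $S$ in part~(1) is cleanest if you phrase it as: $u\mapsto t_u$ is a continuous order-embedding of ${}^{<\lambda}2$ onto a cofinal subset of $S$, so any chain in $S$ of length ${<}\lambda$ is bounded by some $t_u$ and hence has its union in $S$, and branches of $S$ correspond bijectively to $z\in{}^\lambda 2$. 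Your observation that the position count in part~(2) needs $\lambda^{<\lambda}=\lambda$ is correct; this hypothesis is ambient in the paper (it appears in the abstract and is the standing assumption on $\kappa$), even though the convention stated for $\lambda$ at the start of Section~2 literally only says ``uncountable regular''.
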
 

The perfect set property is equivalent to the following variant defined in \cite[Section 2]{MR1110032}. 

\begin{definition} 
The game $V_\lambda(A)$ of length $\lambda$ for a subset $A$ of ${}^{\lambda}2$ is defined as follows. 
%The game has length $\kappa$. 
The first (even) player, player I, plays an ordinal $\alpha_i$ in all even rounds $i$. 
% and the second (odd) player plays in all odd rounds. 
The second (odd) player, player II, plays an element $x_i$ of $A$ in all odd rounds $i$. 
Moreover, the sequence 
%Player I plays a strictly increasing continuous sequence 
$\langle \alpha_i\mid i<\lambda\rangle$ of moves of player I has to be continuous. 
% of ordinals below $\kappa$. 
%Player II plays a sequence $\langle x_i\mid i<\kappa\rangle$ of elements of $A$. 
Player II wins if for all $i<j<\lambda$, $x_i{\upharpoonright} \alpha_i= x_j{\upharpoonright} \alpha_i$ and $x_i\neq x_j$. 
\end{definition} 
%We denote V\"a\"an\"anen's game for a subset $A$ of ${}^{\kappa}\kappa$ defined in \todo{cite} by $V(A)$. 

\begin{lemma} 
Suppose that $A$ is a subset of $\ltl$. 
Then $A$ has a perfect subset if and only there is a closed subset $C$ of $A$ such that player II has a winning strategy in $V_\lambda(C)$. 
\end{lemma}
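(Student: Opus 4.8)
The plan is to prove the two directions separately: for the forward direction I take $C=[T]$ and exhibit a winning strategy for player II directly, and for the backward direction I use a winning strategy for player II to construct a perfect tree.

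For the forward direction, suppose $A$ has a perfect subset, say $[T]\subseteq A$ for a perfect tree $T\subseteq{}^{<\lambda}\lambda$, and put $C=[T]$, which is closed and contained in $A$. I would describe a winning strategy for player II in $V_\lambda(C)$ as follows. At each of its rounds, from the moves played so far player II forms the union $u=\bigcup_k x_k{\upharpoonright}\alpha_k$ of the committed initial segments; the coherence demanded by the winning condition makes these initial segments nested, so $u\in{}^{<\lambda}\lambda$ has length $\sup_k\alpha_k<\lambda$ by the regularity of $\lambda$, and $u\in T$ because $T$ is ${<}\lambda$-closed. Since $T$ is perfect, the subtree of $T$ above $u$ is again perfect and hence has more than $\lambda$ branches, while fewer than $\lambda$ elements of $C$ have been played; thus player II can pick some $x\in[T]$ extending $u$ and distinct from all earlier moves. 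One checks that this maintains both the agreement clause $x_i{\upharpoonright}\alpha_i=x_j{\upharpoonright}\alpha_i$ and the distinctness clause $x_i\neq x_j$ at every round, so player II wins.

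For the backward direction, suppose $C\subseteq A$ is closed and $\sigma$ is a winning strategy for player II in $V_\lambda(C)$; since $C\subseteq A$ it suffices to produce a perfect tree $T$ with $[T]\subseteq C$. I would build $T$ by a recursion along the binary tree ${}^{<\lambda}2$, attaching to each $s\in{}^{<\lambda}2$ a partial play $p_s$ consistent with $\sigma$ together with its committed initial segment $t_s\in{}^{<\lambda}\lambda$, arranged so that $t_s\subseteq t_{s'}$ whenever $s\subseteq s'$, so that $t_{s^\frown 0}$ and $t_{s^\frown 1}$ are incompatible, and so that $\len(t_s)$ tends to $\lambda$ along every branch of ${}^{<\lambda}2$. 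The splitting is to be extracted from the strategy itself: continuing $p_s$, player I challenges repeatedly so that player II, forced by the distinctness clause, must answer with distinct elements of $C$ extending $t_s$; any two such answers split above $t_s$, and player I then challenges at a length beyond the revealed split level so as to lock two incompatible extensions into the committed parts $t_{s^\frown 0},t_{s^\frown 1}$. At limit stages I would amalgamate the plays and take unions of the $t_s$, using the continuity of player I's ordinal moves to keep the play legal and the ${<}\lambda$-closure of $C$ to ensure that $t_s$ is still an initial segment of some element of $C$. Taking $T$ to be the downward closure of $\{t_s\mid s\in{}^{<\lambda}2\}$, the incompatibility of siblings makes the splitting nodes cofinal, while ${<}\lambda$-closure of $C$ yields $[T]\subseteq C\subseteq A$, so $A$ has a perfect subset.

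The main obstacle is the successor step of the backward direction. Because player I controls only the ordinals and not player II's elements, the splitting of $T$ has to be forced entirely through the distinctness requirement, and the delicate point is to arrange the bookkeeping so that the two children's committed parts become genuinely incompatible and each remains the committed part of a legal play consistent with $\sigma$ that can still be continued for the remaining rounds. Threading this through the tree of plays, and checking at limit stages that the amalgamated play is legal and that the committed part stays inside the tree of initial segments of $C$, is where the real work lies. As an alternative that isolates the same difficulty, one can instead convert $\sigma$ into a winning strategy for player I in the perfect set game of length $\lambda$ for $C$ and then appeal to Lemma \ref{characterization of perfect set property by game}\ref{I wins if the set has a perfect subset}.
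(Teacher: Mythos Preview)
Your proposal is correct and follows essentially the same approach as the paper. The forward direction is handled just as the paper does (indeed, the paper dismisses it in one sentence as ``straightforward'' while you spell it out). For the backward direction both you and the paper use $\sigma$ to build a system $\langle t_s \mid s\in{}^{<\lambda}2\rangle$ of initial segments of elements of $C$ with $t_{s^\frown 0}\perp t_{s^\frown 1}$, so that the downward closure is a perfect tree whose branches lie in $C$ by closedness.

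The only substantive difference is the bookkeeping device for the successor step. The paper's scheme is to let a bit $1$ in $s$ correspond to actually extending the partial run by one round (player~I plays $\gamma_{s^\frown 1}$, player~II responds with $x_{s^\frown 1}$), while a bit $0$ leaves the run unchanged and sets $x_{s^\frown 0}=x_s$; the incompatibility of $t_{s^\frown 0}$ and $t_{s^\frown 1}$ then comes directly from the distinctness clause forcing $x_{s^\frown 1}\neq x_s$. Your description instead speaks of challenging repeatedly inside a single continuation of $p_s$ to reveal a split and then challenging past it; this is the same idea but one has to be a bit careful, since two successive answers $y_1,y_2$ in one continuation have committed parts $y_1{\upharpoonright}\gamma_1\subseteq y_2{\upharpoonright}\gamma_2$ that are \emph{comparable}, so the split must be read off against the earlier element $x_s$ (or the $0$-branch must keep $x_s$ as its witness) rather than between $y_1$ and $y_2$ themselves. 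You explicitly flag exactly this point as ``the delicate part'', so the proposal is sound; the paper's $0/1$ bookkeeping is simply a clean way to encode that the $0$-child reuses $x_s$ while the $1$-child takes the fresh response. Your alternative route through Lemma~\ref{characterization of perfect set property by game}\ref{I wins if the set has a perfect subset} is also valid and isolates the same difficulty.
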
 
\begin{proof} 
If $A$ has a perfect subset $C$, then it is straightforward to define a winning strategy for player II in $V_\lambda(C)$. 
%can win by playing only elements of $C$. 

Now suppose that $C$ is a closed subset of $A$ and that player II has a winning strategy $\sigma$ in $V_\lambda(C)$. 
%\todo{check the indices in the proof} 
Using $\sigma$, we can inductively construct $\langle x_s, t_s, \gamma_s\mid s\in {}^{<\lambda}2\rangle$ such that the following conditions hold for all $r, s\in {}^{<\lambda}2$. 
%$x_s\in {}^{\kappa}\kappa$, $t_s\in {}^{<\kappa}\kappa$, $\gamma_s\in \Ord$ and 
\begin{enumerate-(1)} 
\item 
\begin{enumerate-(a)} 
\item 
$t_r\subsetneq t_s$ if $r\subsetneq s$. 
\item 
$t_{r^\smallfrown\langle 0\rangle} \perp t_{r^\smallfrown {\langle 1\rangle}}$. 
% for all $r$, 
\item 
$t_s=\bigcup_{u\subsetneq s} t_u$ if $\len(s)$ is a limit. 
\end{enumerate-(a)} 
\item 
$\len(t_s)=\gamma_s$. 
\item 
$t_s\subseteq x_s$. 
% for all $s$, 
\item 
%$\langle x_{z\upharpoonright (\alpha+1)}, \gamma_{z\upharpoonright\alpha} \mid \alpha \in C_z\rangle$ \ \\ 
%$\langle x_{\emptyset}\rangle^\smallfrown 
Let $c_s$ denote the closure of the set $\{\alpha<\len(s)\mid \exists \bar{\alpha}\ \alpha=\bar{\alpha}+1,\ s(\bar{\alpha})=1\}$ and $\pi\colon c_s\rightarrow \delta_s$ its transitive collapse. 
Then 
$$\langle \gamma_{s\upharpoonright \pi^{-1}(\alpha)}, x_{s\upharpoonright\pi^{-1}(\alpha)}  \mid \alpha< \delta_s\rangle$$ 
is a partial run of $V_\lambda(C)$ according to $\sigma$.

%where $s_z$ is the closure of the set $\{\alpha<\len(s)\mid s(\alpha)=1\}$. 
% in $\kappa$. 
\end{enumerate-(1)} 

The last condition ensures the existence of partial runs that split exactly at the times $\alpha$ where $s$ has successor length $\alpha$ and the last value $1$ and at the limits of such times. In particular, whenever $\alpha<\delta_s$, $\pi^{-1}(\alpha)=\bar{\alpha}+1$ and $s(\bar{\alpha})=1$, the partial run for $s$ is extended by player I playing an ordinal $\gamma_{s{\upharpoonright}(\bar{\alpha}+1)}$ and player II responding with an element $x_{s{\upharpoonright}(\bar{\alpha}+1)}$ of $A$ that splits from $x_{s{\upharpoonright}\bar{\alpha}}$, and whenever $s(\bar{\alpha})=0$, the partial run for $s$ is not extended. 

We thus obtain a perfect tree $T=\{t\in {}^{<\lambda}2\mid \exists s\in {}^{<\lambda}2\ t\subseteq t_s\}$. 
%and by the construction, every branch in $T$ is in $C$
Since $C$ is closed, it follows from the construction that $[T]\subseteq C$, proving the claim. 
%We define runs $\vec{s}_t$ for all $t\in {}^{<\kappa}2$ such that $\vec{s}_{t^\smallfrown\langle 0\rangle}(\gamma)=\vec{s}_{t^\smallfrown\langle 1\rangle}(\gamma)$ and $\vec{s}_{t^\smallfrown\langle 0\rangle}(\gamma+1)\neq \vec{s}_{t^\smallfrown\langle 1\rangle}(\gamma+1)$ for $\len(t)=\gamma$. 
%
%We define $\langle u_s\mid s\in {}^{<\kappa}2\rangle$ such that for all $t\in {}^{<\kappa}2$ 
%\begin{enumerate-(a)} 
%\item 
%$t_y\upharpoonright \len(s)=u_s$ if $s\subseteq y$, 
\end{proof}

%%%%%%%%%%%%%%%%%%%%%%%%%%%%%%%%%%%%%%%%%%%%%%%%%%
\subsection{The perfect set property for definable sets} \label{subsection perfect set property}
%Consistency of the perfect set property} 

%\todo[inline]{maybe call: The perfect set property for definable sets?} 

We will show that forcing with $\Add{\kappa}{1}$ adds a perfect set of $\Add{\kappa}{1}$-generic elements of $\kk$ whose quotient forcings are sub-equivalent to $\Add{\kappa}{1}$. More precisely, each of these elements 
%of $\kk$ 
will have an $\Add{\kappa}{1}$-name that generates a complete subalgebra of $\BB(\Add{\kappa}{1})$ whose quotient forcing in $\BB(\Add{\kappa}{1})$ is 
%and names for them such that for the complete subalgebra of $\BB(\Add{\kappa}{1})$ generated by each name, the quotient forcing is again 
sub-equivalent to $\Add{\kappa}{1}$. 
%\begin{definition} Suppose $V[G]$ is a generic extension of $V$ by a $<\kappa$-closed forcing and $x\in V[G]$. Then $x$ \emph{factors} in $(V,V[G])$ if $V[G]$ is a generic extension of $V[x]$ by a $<\kappa$-closed forcing. 
%\end{definition} 

This will be proved by considering the 
following forcing $\PP$. The forcing adds a perfect subtree of ${}^{<\kappa}\kappa$ by approximations of size ${<}\kappa$. 

\begin{definition} \label{definition of the forcing adding a perfect tree} 
Let $\mathbb{P}$ denote the set of %the 
pairs $(t,s)$ such that  
\begin{enumerate-(a)} 
\item 
$t\subseteq{}^{<\kappa}\kappa$ 
%\todo{should we work with ${}^{<\kappa}2$ instead?} 
is a tree of size ${<}\kappa$, 
\item 
every node $u\in t$ has at most two direct successors in $t$, 
\item 
$s\subseteq t$ and 
if $u\in t$ is non-terminal in $t$, then $u\in s$ if and only if $u$ has exactly one successor in $t$. 
%\todo{i.e. neighbors of nodes in $s$ cannot be in $t$} is such that every $p\in s$ has successor height and $q^\smallfrown\beta\notin t$ for $q,\alpha$ with $p=q^\smallfrown \alpha\in s$ and all $\beta\neq\alpha$. 
\end{enumerate-(a)} 
Let $(t,s)\leq (u,v)$ if $u\subseteq t$ 
%$v\subseteq s$ 
and $s\cap u=v$. 
%The conditions ordered by coordinatewise reverse inclusion. 
\end{definition} 
%\todo{maybe it is simpler to let $s$ denote the nodes in $t$ which can only have one successor?}

The set $s$ marks the non-branching nodes in the tree. 
It follows from the definition of $\PP$ that the forcing adds a perfect binary splitting subtree of ${}^{<\kappa}\kappa$. 
Since every decreasing sequence of length $<\kappa$ in $\PP$ has an infimum, $|\PP|=\kappa$ and $\PP$ is non-atomic, the forcing is sub-equivalent to $\Add{\kappa}{1}$ by 
%\todo{reference}
Lemma \ref{forcing equivalent to Add(kappa,1)}. 

In the remainder of this section, we write $T_G=\bigcup_{(t,s)\in G} t$ if $G$ is a $\mathbb{P}$-generic filter over $V$. 

\begin{lemma} 
Suppose that $G$ is $\mathbb{P}$-generic over $V$ and $T=T_G$. 
Then $V[G]=V[T]$. 
\end{lemma}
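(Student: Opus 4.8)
The claim is that if $G$ is $\mathbb{P}$-generic over $V$ and $T = T_G = \bigcup_{(t,s)\in G} t$, then $V[G] = V[T]$. This is a standard "the generic filter is recoverable from the generic object" type of lemma.

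**The plan.**

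The inclusion $V[T] \subseteq V[G]$ is immediate, since $T = T_G$ is definable from $G$ in $V[G]$ (it is simply the union of the first coordinates of the conditions in $G$). The substantive direction is $V[G] \subseteq V[T]$, for which it suffices to show that $G$ itself is definable from $T$ (together with parameters in $V$). So the core of the proof is to recover the generic filter $G$ from the tree $T$.

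**Recovering $G$ from $T$.**

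The plan is to show that a condition $(u,v) \in \mathbb{P}$ belongs to $G$ if and only if a condition expressible purely in terms of $T$ holds. The natural guess is: $(u,v) \in G$ iff $u \subseteq T$ and $v$ correctly records the non-branching behaviour of $u$ relative to $T$ — that is, for every non-terminal node $w$ of $u$, $w \in v$ precisely when $w$ has a unique successor not in $u$ but... — here one must be careful. The right characterization uses the ordering $(t,s)\le(u,v)$ iff $u\subseteq t$ and $s\cap u=v$. First I would establish, by a genericity/density argument, that $T$ is a perfect (binary-splitting) subtree of ${}^{<\kappa}\kappa$ and that its set of splitting nodes and one-successor nodes is completely determined: by density, for any $(t,s)\in G$ and any non-terminal node of $t$, the generic decides (eventually) whether that node splits in $T$ or has a unique successor. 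Concretely, I would argue that a node $w$ of $T$ is a splitting node of $T$ iff $w \notin s_p$ for the relevant $p \in G$, and that this is detectable from $T$ alone since $w$ splits in $T$ exactly when $w$ has two immediate successors in $T$. The recovery formula I would aim to verify is
\[
(u,v)\in G \iff u\subseteq T \text{ and } v = \{w\in u \mid w\text{ non-terminal in }u,\ w\text{ has a unique immediate successor in }T\}.
\]
The forward direction follows from the definition of $\le$ and the fact that any $(t,s)\le(u,v)$ in $G$ has $u\subseteq t\subseteq T$ and $s\cap u=v$, together with a density argument showing $s$ agrees with the splitting structure of $T$. The reverse direction uses genericity: given such $(u,v)$, a density argument shows some extension lies in $G$, and compatibility plus the definition of $\mathbb{P}$ forces $(u,v)\in G$.

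**The main obstacle.**

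The delicate point — and where I expect the real work to lie — is verifying that the \emph{marking} $v$ is genuinely recoverable from $T$, i.e.\ that the distinction in $\mathbb{P}$ between a node that "has exactly one successor" (hence is in $s$) and a node whose second successor simply has not yet been added is resolved correctly in the limit object $T$. A node $w$ that is in $s_p$ for some $p\in G$ is committed to never splitting, so by the ordering it never gains a second successor in any stronger condition, and hence is non-splitting in $T$; conversely a density argument shows every non-terminal node of $T$ that is \emph{not} so committed does eventually split. I would prove these two density facts carefully, since they are exactly what guarantees that "unique immediate successor in $T$" is equivalent to membership in the marking. Once this equivalence is in hand, the displayed recovery formula defines $G$ from $T$ in $V[T]$, giving $G \in V[T]$ and hence $V[G]\subseteq V[T]$, completing the proof.
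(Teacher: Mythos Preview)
Your approach---recover $G$ from $T$ via an explicit membership criterion---is exactly the paper's. However, your displayed recovery formula has an error: the restriction to nodes $w$ that are \emph{non-terminal in $u$} must be dropped. The definition of $\mathbb{P}$ places no constraint on whether a terminal node of $t$ lies in $s$, so at first glance one cannot read off from $u$ alone whether such a $w$ belongs to $v$. But genericity does determine it: passing to any stronger $(t',s')\le(u,v)$ in $G$ in which $w$ has become non-terminal, the ordering gives $s'\cap u=v$, hence $w\in v\iff w\in s'\iff w$ has exactly one successor in $t'$, and this matches the successor count of $w$ in $T$. Concretely, the condition $(\{\emptyset\},\{\emptyset\})$ lies in $G$ precisely when the root has a single successor in $T$, yet your formula would always exclude it since $\emptyset$ is terminal in $\{\emptyset\}$.

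The correct characterization (which the paper states directly) is, for $(t,s)\in\mathbb{P}$,
\[
(t,s)\in G \iff t\subseteq T \text{ and } s=\{w\in t\mid w\text{ has exactly one direct successor in }T\},
\]
with no non-terminality clause. Once you make this fix, your outline goes through and coincides with the paper's argument; the density considerations you anticipate are exactly what underlies the equivalence.
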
 
\begin{proof} 
Since $T\in V[G]$, it is sufficient to show that $G\in V[T]$. 
Since $G$ is generic, for all $(t,s)\in \mathbb{P}$, $(t,s)\in G$ if and only if $(t,s)$ is compatible with all conditions in $G$. 
Hence the elements $(t,s)$ of $G$ are exactly the pairs $(t,s)$ such that $s\subseteq t\subseteq T$ and $s$ is the set of $u \in t$ such that $u$ has exactly one direct successor in $T$. Hence $G\in V[T]$. 
\end{proof} 
%Then $(t,s)\in G$ if and only if $t$ is a subtree of $T$ of size $<\kappa$ with $s\subseteq t\subseteq T$, every $p\in s$ has successor height, and $q^\smallfrown\beta\notin T$ for $q,\alpha$ with $p=q^\smallfrown\alpha$ and all $\beta\neq\alpha$. 
%Therefore $V[G]=V[T]$. 
%Hence we will say that $T$ is $\mathbb{P}$-generic. 
%\end{proof} 

If $b=\cup g$ for some $\Add{\kappa}{1}$-generic filter $g$ over $V$ as in the next lemma, we will also say that $b$ is $\Add{\kappa}{1}$-generic over $V$. 

\begin{lemma} \label{distinct branches are mutually generic}
Suppose that $G$ is $\mathbb{P}$-generic and $b,c$ are distinct branches in $T=T_G$. 
Then there is an $\Add{\kappa}{1}\times \Add{\kappa}{1}$-generic filter $g\times h$ over $V$ in $V[T]$ such that $b=\bigcup g$ and $c=\bigcup h$. 
\end{lemma}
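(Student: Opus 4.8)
The plan is to show that the upward closures $g=\{p\in\Add{\kappa}{1}\mid p\subseteq b\}$ and $h=\{p\in\Add{\kappa}{1}\mid p\subseteq c\}$ form an $\Add{\kappa}{1}\times\Add{\kappa}{1}$-generic filter $g\times h$ over $V$. Note that $g,h\in V[T]=V[G]$ by the previous lemma, that $\bigcup g=b$ and $\bigcup h=c$, and that a filter is generic as soon as it meets every dense open set. So it suffices to fix a dense open $D\subseteq\Add{\kappa}{1}\times\Add{\kappa}{1}$ in $V$ and arrange $(g\times h)\cap D\neq\emptyset$. I would run this as a density argument inside $\PP$. Fixing $\PP$-names $\dot b,\dot c$ for the two branches and writing $g_{\dot b}$, $g_{\dot c}$ for the canonical names of their associated filters, I claim that the set of conditions forcing \anf{$\dot b,\dot c$ are not distinct branches of $T_{\dot G}$, or $(g_{\dot b}\times g_{\dot c})\cap\check D\neq\emptyset$} is dense in $\PP$. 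Applying this below a condition of $G$ witnessing that $b,c$ are distinct branches then produces the desired meeting with $D$.

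The heart of the matter is this density claim. Given a condition forcing that $\dot b,\dot c$ are distinct branches, I would first use that $\PP$ is ${<}\kappa$-closed (its descending ${<}\kappa$-sequences have infima) to pass, by deciding $\dot b(\beta),\dot c(\beta)$ through a descending sequence and taking infima at limits, to an extension $(t,s)$ deciding $\dot b{\upharpoonright}\gamma=v_0$ and $\dot c{\upharpoonright}\gamma=v_1$ for some $\gamma<\kappa$, where $v_0\perp v_1$. Continuing past the original tree, I arrange $v_0,v_1$ to be fresh terminal nodes of $t$ and to lie in the marking component $s$; this is permitted since clause (c) of the definition of $\PP$ constrains $s$ only on non-terminal nodes. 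Since $D$ is dense, choose $(m_0,m_1)\in D$ with $m_0\supseteq v_0$ and $m_1\supseteq v_1$, and extend $(t,s)$ by adjoining the two unique paths $v_0\subsetneq\cdots\subsetneq m_0$ and $v_1\subsetneq\cdots\subsetneq m_1$, marking every node along them in $s$ and leaving $m_0,m_1$ terminal. Because $v_0\perp v_1$, these two paths lie in disjoint cones of the tree and cause no conflict, so the result is a legitimate condition below $(t,s)$. The marking is exactly what forces the branches along the paths: every node of the adjoined paths has a unique successor in any generic extension, so the new condition forces $\dot b\supseteq m_0$ and $\dot c\supseteq m_1$, whence $(m_0,m_1)\in g_{\dot b}\times g_{\dot c}$ meets $D$.

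I expect the main obstacle to be controlling how the branches leave a given condition, that is, the limit stages. A branch of $T$ need not escape a condition $(t,s)\in G$ at a terminal node: it may exit at a limit level through a \emph{gap} of $t$, and one cannot repair this by restricting to closed conditions, since under $\mathrm{CH}$ with $\kappa=\omega_1$ a closed binary-splitting tree of limit height already has size $\kappa$ and so is not a condition of $\PP$. The device that circumvents this is precisely to track the \emph{named} branches $\dot b,\dot c$ rather than all branches of $T$: by deciding $\dot b,\dot c$ up to fresh terminal nodes placed in the marking set and then capping those nodes along marked unique paths into $D$, the component $s$ forces the generic branches to follow the capping paths, so no appeal to closedness, and no simultaneous treatment of the cofinally many branches, is required. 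The remaining points are routine: that $g$ and $h$ are genuine filters with unions $b$ and $c$; that the limit stages of the decision use the infima guaranteed for $\PP$; and that the marking bookkeeping along the adjoined fresh paths respects the requirement $s\cap(\text{old tree})=(\text{old marking})$ in the ordering of $\PP$.
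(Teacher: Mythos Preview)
Your approach is the same as the paper's: fix names $\dot b,\dot c$, show that for every dense open $D\subseteq\Add{\kappa}{1}^2$ the set of conditions forcing some pair of initial segments of $(\dot b,\dot c)$ into $D$ is dense in $\PP$, and do this by first deciding $\dot b{\upharpoonright}\gamma=v_0$, $\dot c{\upharpoonright}\gamma=v_1$ and then capping $v_0,v_1$ with marked paths reaching into $D$. The capping step and its justification via the marking set are exactly as in the paper.

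The gap is the step you summarize as ``I arrange $v_0,v_1$ to be fresh terminal nodes of $t$.'' For the capping to produce a condition you need every proper initial segment $v_0{\upharpoonright}\alpha$ and $v_1{\upharpoonright}\alpha$ to already lie in $t$; otherwise adjoining the paths may violate downward closure or the at-most-two-successors clause. Merely deciding $\dot b{\upharpoonright}\gamma$ does not by itself place $v_0{\upharpoonright}\alpha$ in $t$, and you cannot simply add these nodes, since some $v_0{\upharpoonright}\beta\in t$ might already have two successors different from $v_0{\upharpoonright}(\beta{+}1)$. You correctly flag this as the main obstacle in your final paragraph, but ``tracking the named branches'' is not a fix --- it is a restatement of the problem.

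The paper closes this gap by a specific interleaving: build $q=\inf_n q_n$ where $q_{n+1}$ has $\len(q_{n+1})>\len(q_n)$ and decides $\dot b{\upharpoonright}\len(q_n)$, $\dot c{\upharpoonright}\len(q_n)$. Then $\gamma=\len(q)$ is a limit and $q$ decides $\dot b{\upharpoonright}\gamma=v_0$, $\dot c{\upharpoonright}\gamma=v_1$. Now argue by contradiction: if $\alpha<\gamma$ is least with $v_0{\upharpoonright}\alpha\notin t_q$, extend $q$ by adding $v_0{\upharpoonright}\alpha$ to $t$ as a marked node with a single successor $(v_0{\upharpoonright}\alpha)^\smallfrown\langle\beta\rangle$ for some $\beta\neq v_0(\alpha)$. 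This is a legitimate condition (minimality of $\alpha$ guarantees downward closure, and the node is fresh so no successor conflict), and it forces $\dot b(\alpha)=\beta\neq v_0(\alpha)$, contradicting that $q$ already decided $\dot b{\upharpoonright}\gamma=v_0$. This argument --- essentially the content of the later Lemma~\ref{decided part of branch} --- is what you are missing.
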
 
\begin{proof} 
Suppose that $b$, $c$ are distinct branches in $T$ and $\sigma$, $\tau$ are $\PP$-names for $b$, $c$ in the sense that $\sigma^G=b$ and $\tau^G=c$. 
Moreover, let $\dot{T}$ be a $\mathbb{P}$-name for $T$. 
% and $\sigma$, $\tau$ are $\PP$-names for two distinct branches $b$, $c$ in $T$ in the sense that 
Then there is a condition $p_0\in G$ with $p_0\Vdash_{\mathbb{P}}\sigma,\tau\in [\dot{T}]$ and $p_0\Vdash_{\mathbb{P}} \sigma\neq\tau$. 
We can assume that $p_0=\one_\PP$ by replacing $\sigma$, $\tau$ with names that satisfy these conditions for $p_0=\one_\PP$. 

Now suppose that $D$ is a dense open subset of $\Add{\kappa}{1}\times \Add{\kappa}{1}$ and let 
$$E=\{q\in \mathbb{P}\mid \exists (u,v)\in D,\ q\Vdash_{\mathbb{P}} u\subseteq \sigma,\ v\subseteq\tau\}.$$ 
\begin{claim*} 
$E$ is dense. 
% below $p_0$. 
%For every $q\leq p\in\mathbb{P}$ there is some $r\leq q$ and some $(u,v)\in D$ such that  $r\Vdash u\subseteq\sigma$ and $r\Vdash v\subseteq\tau$. 
\end{claim*} 
\begin{proof} 
Suppose that $p\in\PP$. 
%\leq p_0$. 
Since $\one_\PP\Vdash_{\PP} \sigma\neq\tau$, we can assume by extending $p$ that for some $\alpha<\len(p)$, $p$ decides $\sigma(\alpha)$, $\tau(\alpha)$ and these values are different. 
%$p\Vdash_\PP \sigma(\alpha)\neq\tau(\alpha)$ 
We let $q_0=p$ and choose successively for each $n\in\omega$ an extension 
$q_{n+1}\leq q_n$ such that $\len(q_n)<\len(q_{n+1})$ and $q_{n+1}$ decides both $\sigma{\upharpoonright} \len(q_n)$ and $\tau{\upharpoonright} \len(q_n)$. %and $l(q_n)<l(q_{n+1})$. 
Finally, let $q=\inf_{n\in\omega} q_n$ and suppose that $q=(t_q, s_q)$. 
%let $\gamma= l(q)$. 
Since the lengths $ \len(q_n)$ form a strictly increasing sequence, $\gamma= \len(q)=\sup_{n\in\omega}\len(q_n)$ is a limit ordinal. 
Moreover, by the choice of the sequence of conditions, there are $u,v\in {}^{\gamma}\kappa$ with $u\neq v$ and $q\Vdash_{\mathbb{P}} \sigma{\upharpoonright} \gamma=u,\ \tau{\upharpoonright} \gamma= v$. 

%\begin{subclaim*} 
We first claim that $(u{\upharpoonright} \alpha),\ (v{\upharpoonright} \alpha) \in t_q$ for all $\alpha<\gamma$. It is sufficient to prove that $(u{\upharpoonright} \alpha) \in t_q$ for all $\alpha<\gamma$ by symmetry. 
%\end{subclaim*} 
%\begin{proof} 
To see this, suppose towards a contradiction that $u{\upharpoonright} \alpha\notin t_q$ for some $\alpha<\gamma$. 
Suppose that $\alpha$ is minimal. 
%Then there is some $r\leq q$ with $r=(t_r,s_r)$ and $u\upharpoonright (\alpha+1)\notin t_r$. 
We extend $q=(t_q,s_q)$ to $r=(t_r,s_r)$ as follows. We choose $\beta<\kappa$ with $u(\alpha)\neq\beta$ and let $t_r=t_q\cup\{u{\upharpoonright}\alpha, (u{\upharpoonright}\alpha)^\smallfrown\langle\beta\rangle\}$ and $s_r=s_q\cup\{u{\upharpoonright}\alpha\}$. Then $r\Vdash_{\PP}u(\alpha)=\beta$ and hence $r\Vdash_{\PP} u\not\subseteq \sigma$, contradicting the fact that $q\Vdash_{\mathbb{P}} \sigma{\upharpoonright}\gamma=u$ by the choice of $q$ and $u$. 
This shows that $u{\upharpoonright} \alpha\in t_q$ for all $\alpha<\gamma$. 
%The same argument shows that $v{\upharpoonright} \alpha\in t_q$ for all $\alpha<\gamma$. 
%\end{proof} 

Since $D$ is dense in $\Add{\kappa}{1}\times \Add{\kappa}{1}$, there are conditions $\bar{u}\leq u$, $\bar{v}\leq v$ with $(\bar{u},\bar{v})\in D$. 
%We can assume that $\gamma:=|q|$ is a limit ordinal and that $q\Vdash (\sigma\upharpoonright\gamma,\tau\upharpoonright\gamma)=(\check{x},\check{y})$ for some $x,y\in {}^{\gamma}\kappa$, by extending $q$ in $\omega$ steps. 
%Then find $(u,v)\leq(x,y)$ in $Add(\kappa,1)^2$ with $(u,v)\in D$. 
Since $D$ is open, we can assume that $\len(\bar{u})=\len(\bar{v})=\delta$ for some limit ordinal $\delta$ with $\gamma<\delta<\kappa$. 
Now let 
$$x= \{ \bar{u}{\upharpoonright} \eta\mid \gamma\leq \eta <\delta\} \cup \{ \bar{v}{\upharpoonright} \eta\mid \gamma\leq \eta <\delta\}.$$ 
Moreover, let $\bar{t}=t_q\cup x$, $\bar{s}=s_q\cup x$ and 
$r=(\bar{t}, \bar{s})$. 
Then $r\in \PP$ and $r\leq p$. 

\begin{subclaim*} 
$r\Vdash_{\mathbb{P}} \bar{u}\subseteq\sigma,\ \bar{v}\subseteq \tau$. 
\end{subclaim*} 
\begin{proof} 
It is sufficient to prove $r\Vdash_{\PP}\bar{u}\subseteq \sigma$ by symmetry. Since $r\leq q$ and $q\Vdash_{\mathbb{P}} \sigma{\upharpoonright}\gamma=u$ by the choice of $u$, we have $r\Vdash_{\PP}\sigma{\upharpoonright}\gamma=u$. 
Since $u=\bar{u}{\upharpoonright}\gamma\in x\subseteq s_q\cup x=\bar{s}$ by the definition of $x$ and $\bar{s}$ and since $r=(\bar{t},\bar{s})\in \PP$, the node $u=\bar{u}{\upharpoonright}\gamma$ has the unique direct successor $\bar{u}{\upharpoonright}(\gamma+1)$ in $t$. Hence $r\Vdash_{\PP}\sigma{\upharpoonright}(\gamma+1)=\bar{u}{\upharpoonright}(\gamma+1)$. An analogous argument shows inductively that $r\Vdash_{\PP}\sigma{\upharpoonright}(\eta+1)=\bar{u}{\upharpoonright}(\eta+1)$ for all $\eta$ with $\gamma\leq\eta<\delta$. Hence $r\Vdash_{\PP}\sigma{\upharpoonright}\delta=\bar{u}$. 
\end{proof} 
%$r\Vdash_{\mathbb{P}} \bar{u}\subseteq\sigma,\ \bar{v}\subseteq \tau$, so
This implies that $r\leq p$ and $r\in E$, proving the claim. 
%Suppose that $|u|=\delta$ and $|v|=\eta$. 
%Suppose that $q=(t,s)$. 
%Let $a=t\cup\{ u\upharpoonright \alpha\mid \gamma<\alpha<\delta\}\cup \{ v\upharpoonright \alpha\mid \gamma\leq \alpha<\eta\}$ 
%and $b= s\cup \{x\upharpoonright (\alpha+1)\mid \gamma<\alpha<\delta\}\cup\{y\upharpoonright (\alpha+1)\mid \gamma<\alpha<\eta\}$. 
%Then $r=(a,b)$ is as required. 
\end{proof} 
Let $g=\{s\in {}^{<\kappa}\kappa\mid s\subseteq b\}$, $h=\{s\in {}^{<\kappa}\kappa\mid s\subseteq h\}$. 
The previous claim implies that $g\times h$ is $\Add{\kappa}{1}\times \Add{\kappa}{1}$-generic over $V$. 
%Then the set of these conditions $q$ is dense in $\mathbb{P}$ and the claim follows from the genericity of $T$. 
\end{proof} 

%Let $\bar{t}=\{w\in {}^{\beta}\kappa\mid \beta<\kappa$ and $p\upharpoonright (\alpha+1)\in t$ for all $\alpha<\beta\}$ denote the closure of $t$ in ${}^{<\kappa}\kappa$. \todo{is this used somewhere?}

We obtain the same result for ${<}\kappa$ many branches in $T_G$. 

\begin{lemma} \label{many distinct branches are mutually generic} 
Suppose that $G$ is $\mathbb{P}$-generic and $\langle b_i\mid i<\gamma\rangle$ is a sequence of distinct branches in $T=T_G$ for some $\gamma<\kappa$. Then there is an $\Add{\kappa}{\gamma}$-generic filter $\prod_{i<\gamma} g_i$ over $V$ in $V[G]$ with $b_i=\bigcup g_i$ for all $i<\gamma$. 
%Then there is an $\Add{\kappa}{1}\times \Add{\kappa}{1}$-generic filter $g\times h$ over $V$ in $V[T]$ such that $b=\bigcup g$ and $c=\bigcup h$. 
\end{lemma}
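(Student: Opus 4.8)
The plan is to adapt the density argument in the proof of Lemma~\ref{distinct branches are mutually generic} from two branches to $\gamma$ many branches, the only genuinely new ingredient being bookkeeping to handle all $\gamma<\kappa$ coordinates at once. Fix $\PP$-names $\langle\sigma_i\mid i<\gamma\rangle$ with $\sigma_i^G=b_i$, and as before replace them by names for which $\one_\PP\Vdash_\PP\sigma_i\in[\dot T]$ and $\one_\PP\Vdash_\PP\sigma_i\neq\sigma_j$ for all $i\neq j$. Given a dense open $D\subseteq\Add{\kappa}{\gamma}$, I would set
$$E=\{q\in\PP\mid \exists\,\langle u_i\mid i<\gamma\rangle\in D\ \ q\Vdash_\PP \forall i<\gamma\ u_i\subseteq\sigma_i\},$$
and the whole content is to show that $E$ is dense. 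Genericity of $\prod_{i<\gamma}g_i$, where $g_i=\{s\in\klk\mid s\subseteq b_i\}$ and $b_i=\bigcup g_i$, then follows exactly as in the two-branch case, since any $q\in E\cap G$ supplies a condition of $D$ lying in $\prod_{i<\gamma}g_i$.

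To prove density, fix $p\in\PP$. First, using that $\one_\PP\Vdash_\PP\sigma_i\neq\sigma_j$ and that $\PP$ is ${<}\kappa$-closed, I would build a descending sequence of length ${<}\kappa$ below $p$, treating one pair $\{i,j\}$ per step, to obtain an extension $p'$ deciding, for every pair $i<j<\gamma$, a value $\alpha_{ij}<\len(p')$ with $\sigma_i(\alpha_{ij})\neq\sigma_j(\alpha_{ij})$; this is possible since there are fewer than $\kappa$ pairs and $\sup_{i<j}\alpha_{ij}<\kappa$ by regularity of $\kappa$. Next I would choose a descending sequence $\langle q_n\mid n<\omega\rangle$ below $p'$ with strictly increasing lengths such that $q_{n+1}$ decides $\sigma_i\restriction\len(q_n)$ for all $i<\gamma$ simultaneously — again legitimate because this is fewer than $\kappa$ many decisions and $\PP$ is ${<}\kappa$-closed. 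Setting $q=\inf_n q_n=(t_q,s_q)$, the ordinal $\eta=\len(q)=\sup_n\len(q_n)$ is a limit, and $q$ forces $\sigma_i\restriction\eta=u_i$ for pairwise distinct $u_i\in{}^{\eta}\kappa$, the distinctness coming from the splittings at the $\alpha_{ij}<\eta$. Replaying the minimal-counterexample argument of the two-branch proof coordinatewise then gives $u_i\restriction\alpha\in t_q$ for all $i<\gamma$ and $\alpha<\eta$.

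The final step is to hit $D$ and fold the result back into $\PP$. Since $|\gamma|<\kappa$, the sequence $\langle u_i\mid i<\gamma\rangle$ is a condition of $\Add{\kappa}{\gamma}$, so by density and openness of $D$ there is $\langle\bar u_i\mid i<\gamma\rangle\in D$ below it with all $\bar u_i$ of a common limit length $\delta$, $\eta<\delta<\kappa$. I would then extend $q$ to $r=(\bar t,\bar s)$ by throwing the $\gamma$ tails $\{\bar u_i\restriction\zeta\mid \eta\leq\zeta<\delta\}$ into both $\bar t$ and $\bar s$, so that each becomes a single-successor path. Because the $u_i$ are already pairwise distinct, these tails sit above distinct nodes and never merge, and since $|t_q|<\kappa$, $\gamma<\kappa$ and $\delta<\kappa$ with $\kappa$ regular, $\bar t$ still has size ${<}\kappa$ and every node still has at most two direct successors, so $r\in\PP$ and $r\leq p$. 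The inductive argument of the two-branch proof then yields $r\Vdash_\PP\bar u_i\subseteq\sigma_i$ for every $i<\gamma$, placing $r$ in $E$.

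The step I expect to carry the real weight is the simultaneous construction of $q$ together with the verification that the single extension $r$ remains a legitimate condition: one must decide all $\gamma$ branches up to one common limit length while forcing every pair to split below it, and then check that adjoining $\gamma$ many tails at once respects the size bound and the ``at most two direct successors'' and $s$-marking constraints of Definition~\ref{definition of the forcing adding a perfect tree}. Everything else is a routine coordinatewise replay of Lemma~\ref{distinct branches are mutually generic}.
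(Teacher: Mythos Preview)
Your proposal is correct and follows exactly the approach the paper indicates: the paper's own proof is a one-sentence sketch saying to redo Lemma~\ref{distinct branches are mutually generic} with a sequence $\langle\sigma_i\mid i<\gamma\rangle$ of names in place of $\sigma,\tau$, and you have faithfully expanded that sketch, correctly handling the bookkeeping (pairwise splitting via ${<}\kappa$-closure, deciding all $\gamma$ branches simultaneously up to a common limit height, and checking that adjoining $\gamma<\kappa$ disjoint tails preserves the size bound and the successor constraints of Definition~\ref{definition of the forcing adding a perfect tree}).
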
 
\begin{proof} 
The proof is as the proof of Lemma \ref{distinct branches are mutually generic}, but instead of working with names $\sigma$, $\tau$ for branches in $T_G$ with $\one_\PP\Vdash_\PP \sigma\neq \tau$, we work with a sequence $\langle \sigma_i\mid i<\gamma\rangle$ of names for branches in $T_G$ with $\one_\PP\Vdash_\PP \sigma_i\neq \sigma_j$ for all $i<j<\gamma$. 
\end{proof} 

We will show that for every branch $b$ of $T=T_G$, the quotient forcing relative to a name for $b$ is equivalent to $\Add{\kappa}{1}$. 
%\todo{ maybe we can show this for a $\PP$-name $\dot{\RR}$ for a homogeneous $<\kappa$-closed forcing? we don't need it though } 
% i.e. there are dense subsets $D_0\subseteq \mathbb{P}_b$ and $D_1\subseteq Add{\kappa}{1}$ such that $D_0$, $D_1$ are isomorphic. 
Suppose that $\dot{T}$ is a $\mathbb{P}$-name for $T$ and 
$\dot{b}$ is a $\mathbb{P}$-name for a branch in $\dot{T}$, in the sense that these properties are forced by $\one_\PP$. 
Moreover, if $p\in\mathbb{P}$, let 
%\todo{check: dots everywhere!} 
$\dot{b}_p
%=\dot{b}_{t,s}
=\{(\alpha,\beta)\mid p\Vdash \dot{b}(\alpha)=\beta\}$. 

\begin{lemma} \label{decided part of branch} 
If $p=(t,s)\in\mathbb{P}$ and $\gamma\subseteq\dom{\dot{b}_p}$, then 
\begin{enumerate-(1)} 
\item 
$\dot{b}_p{\upharpoonright}\beta\in t$ for all $\beta<\gamma$, if $\gamma$ is a limit, and 
\item 
$\dot{b}_p{\upharpoonright}\gamma\in t$ if $\gamma$ is a successor. 
\end{enumerate-(1)} 
%then $b_p\subseteq t$. 
%\upharpoonright (\gamma+1)\in t$ for all $\gamma<\delta$. 
\end{lemma}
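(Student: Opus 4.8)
The plan is to prove the two items simultaneously by induction on the length, reducing everything to a single \emph{successor step} and a single \emph{limit step}, both modelled on the density computation in the proof of Lemma \ref{distinct branches are mutually generic}. Writing $w_\alpha=\dot{b}_p\upharpoonright\alpha$, I will show by induction on $\alpha$ that $w_\alpha\in t$ for every $\alpha$ that is either ${<}\gamma$, or equal to $\gamma$ in case $\gamma$ is a successor; call such $\alpha$ \emph{admissible}. Since $\gamma\subseteq\dom{\dot{b}_p}$, each $w_\alpha$ for admissible $\alpha$ is well defined and all proper initial segments $w_{\alpha'}$ with $\alpha'<\alpha$ are indexed by admissible ordinals, so the induction is well founded; the two conclusions of the lemma are then exactly the cases ``$\gamma$ limit, $\alpha<\gamma$'' and ``$\gamma$ successor, $\alpha=\gamma$''.

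For the successor step, suppose $\alpha=\delta+1$ is admissible; then $\delta<\gamma$, so by induction $w_\delta\in t$, and $p$ decides $\dot{b}(\delta)$, say $p\Vdash_\PP\dot{b}(\delta)=c$ with $w_\alpha=w_\delta{}^\smallfrown\langle c\rangle$. I would show $w_\delta{}^\smallfrown\langle c\rangle\in t$ by assuming otherwise and analysing the successors of $w_\delta$ in $t$. If $w_\delta$ is branching (two successors, hence $w_\delta\notin s$) or non-branching (one successor, hence $w_\delta\in s$), then by condition (b) and the marking rule (c) no further successors can appear in $T_G$, so the value $\dot{b}(\delta)$ is confined to the existing successor values, none of which is $c$, contradicting $p\Vdash_\PP\dot{b}(\delta)=c$. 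If $w_\delta$ is terminal in $t$, I extend $p$ by adjoining one or two fresh successors of $w_\delta$ with values ${\neq}c$ --- respecting the marking $s$ exactly as dictated by whether $w_\delta\in s$ --- to obtain $r\leq p$ forcing $\dot{b}(\delta)\neq c$, again a contradiction. Hence $w_\alpha\in t$.

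For the limit step, suppose $\alpha<\gamma$ is a limit, so by induction $w_{\alpha'}\in t$ for all $\alpha'<\alpha$, and $p$ decides $\dot{b}(\alpha)=c$. Here the node $w_\alpha$ is \emph{fresh}, so the route-away argument of Lemma \ref{distinct branches are mutually generic} applies verbatim: if $w_\alpha\notin t$, set $t_r=t\cup\{w_\alpha, w_\alpha{}^\smallfrown\langle c'\rangle\}$ and $s_r=s\cup\{w_\alpha\}$ for some $c'\neq c$; since $w_\alpha\notin t$ we have $s_r\cap t=s$, so $r=(t_r,s_r)\leq p$ is a legitimate condition in which $w_\alpha$ is marked non-branching, whence $r\Vdash_\PP\dot{b}(\alpha)=c'\neq c$, a contradiction. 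This is precisely where the hypothesis $\alpha<\gamma$ (rather than $\alpha=\gamma$) is used: at $\alpha=\gamma$ with $\gamma$ limit, $p$ need not decide $\dot{b}(\gamma)$, there is no value to route away from, and indeed $t$ need not contain the limit node $w_\gamma$ --- which is why item (1) only asserts membership for $\beta<\gamma$.

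The main obstacle is the bookkeeping of the marking component $s$ under the ordering of Definition \ref{definition of the forcing adding a perfect tree}, where $(t_r,s_r)\leq(t,s)$ demands $s_r\cap t=s$. In the limit step this constraint is automatic, because $w_\alpha$ is a new node and so marking it changes $s_r\cap t$ not at all. In the successor step, by contrast, one cannot simply declare an \emph{existing} unmarked terminal node to be non-branching, since that would alter $s_r\cap t$ and violate both the ordering and condition (c). This is exactly why the successor step must proceed by the case analysis above rather than by the uniform add-and-mark trick, and it is the only place where genuine care is required.
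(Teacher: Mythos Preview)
Your proposal is correct and follows essentially the same route as the paper's proof. The paper argues by minimal counterexample on $\gamma$ while you run an explicit induction on $\alpha$, but the substantive case analysis is identical: in the successor step both proofs split on whether $w_\delta$ is terminal, non-branching (hence in $s$), or branching (hence not in $s$), and in the limit step both adjoin the fresh limit node together with a single marked successor routed away from the decided value.
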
 
\begin{proof} 
Suppose that $\gamma$ is least such that the claim fails. 
%Let $\delta=l(b_p)$. 
First suppose that $\gamma$ is a limit. In this case, we define $q=(u,v)\leq p$ by $u=t\cup\{\dot{b}_p{\upharpoonright}\gamma,(\dot{b}_p{\upharpoonright}\gamma)^\smallfrown\langle\eta\rangle\}$ for some $\eta\neq \dot{b}_p(\gamma)$ and $v=s\cup\{\dot{b}_p{\upharpoonright}\gamma\}$. Then $q\Vdash_{\PP}\dot{b}(\gamma)=\eta$, contradicting the definition of $\dot{b}_p$. 
Now suppose that $\gamma$ is a successor. Then $\gamma=\beta+1$ and 
%Suppose that $\gamma<\delta$ is least with $b_p\upharpoonright (\gamma+1)\notin t$. 
$\dot{b}_p{\upharpoonright} \gamma=r^\smallfrown \langle\alpha\rangle$ 
for some $r\in t$ with $\len(r)=\beta$. In particular, $p\Vdash_\PP \dot{b}(\beta)=\alpha$. We distinguish two cases. 

First suppose that $r\in s$. 
If $r$ has a successor $r^\smallfrown \langle \eta\rangle$ in $t$, 
then this successor is unique and $\alpha\neq \eta$, since we have $r^\smallfrown\langle \alpha\rangle=\dot{b}_p{\upharpoonright} \gamma \notin t$ by the assumption on $\gamma$. 
Then $p\Vdash_{\mathbb{P}} \dot{b}(\beta)=\eta$, contradicting the fact that $p\Vdash_\PP \dot{b}(\beta)=\alpha$. 
If $r$ has no successor in $t$, let $\eta$ be an ordinal below $\kappa$ with $\eta\neq\alpha$. 
Let $u=t\cup\{r^\smallfrown \langle \eta\rangle\}$, $v=s$
%\cup\{r^\smallfrown \langle \eta\rangle\}$ 
and $q=(u,s)$. 
Then $q\Vdash_{\mathbb{P}} \dot{b}(\beta)=\eta$, contradicting the fact that $p\Vdash_\PP \dot{b}(\beta)=\alpha$. 

Second, suppose that $r\notin s$. 
If $r$ is non-terminal in $t$, 
then $r$ has exactly two successors 
%Then there are $\zeta,\eta<\kappa$ with $\zeta\neq \eta$ and 
$r^\smallfrown \langle \zeta\rangle$, $r^\smallfrown \langle \eta\rangle$ 
%are the unique successors of $r$ 
in $t$ with $\zeta, \eta\neq \alpha$. 
Then $p\Vdash_{\mathbb{P}} \dot{b}{\upharpoonright}\gamma\in t$, contradicting the fact that $p\Vdash_\PP \dot{b}(\beta)=\alpha$. 
If $r$ is terminal in $t$, 
let $\zeta, \eta$ be distinct ordinals below $\kappa$ with $\zeta, \eta\neq \alpha$. 
Let $u=t\cup\{r^\smallfrown \langle \zeta\rangle, r^\smallfrown \langle \eta\rangle\}$, $v=s$ 
and $q=(u,s)$. 
Then $q\Vdash_{\mathbb{P}} \dot{b}(\beta)\neq\alpha $, contradicting the fact that $p\Vdash_\PP \dot{b}(\beta)=\alpha$. 
%If there is a unique $\beta$ with $r^\smallfrown \beta\in t$, let $u=t$ and $v=s\cup\{r^\smallfrown \beta\}$. Then $(u,v)\leq (t,s)$ and $b_{u,v}=\beta$, contradicting the assumption that $b_{t,s}(\gamma)=\alpha\neq\beta$. 
%Then there are $\zeta,\eta<\kappa$ with $\zeta \neq \eta$ and $r^\smallfrown \langle \zeta \rangle, r^\smallfrown \langle \eta \rangle \in t$. 
%If there are $\beta\neq\gamma$ with $r^\smallfrown\beta\in t$ and $r^\smallfrown \gamma\in t$, then $(t,s)\Vdash \dot{b}(\gamma)\in \{\beta,\gamma\}$, contradicting the assumption that $\gamma\in dom(b_{t,s})$ and $b_{t,s}\upharpoonright (\gamma+1)\notin t$. 
%If there is no $\beta$ with $r^\smallfrown\beta\in t$, choose some $\beta\neq\alpha$. Let $u=t\cup\{r^\smallfrown\beta\}$ and $v=s\cup\{r^\smallfrown\beta\}$. Then $(u,v)\leq(t,s)$ and $b_{u,v}(\gamma)=\beta$, contradicting the assumption that $b_{t,s}(\gamma)=\alpha\neq\beta$. 
\end{proof} 

%\todo{replace $\DD$ with $\PP^*$ everywhere} 
Let $\mathbb{P}^*$ denote the set of conditions $p=(t,s)\in \mathbb{P}$ such that $\len(t)$ is a limit ordinal and $\len(\dot{b}_p)=\len(t)$. 
% is a limit. 
%of limit length $\gamma:=|(t,s)|=|t|$ and with $|b_{t,s}|=\gamma$. 
%Notice that $\gamma$ is the maximal possible height of $b_{t,s}$. 

\begin{lemma} \label{dense subset of P} 
$\mathbb{P}^*$ is dense in $\mathbb{P}$. 
\end{lemma}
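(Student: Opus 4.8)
The plan is to fix an arbitrary condition $p=(t,s)\in\mathbb{P}$ and construct an extension $q\leq p$ lying in $\mathbb{P}^*$ by a standard $\omega$-step construction, exactly parallel to the density argument for $E$ inside the proof of Lemma \ref{distinct branches are mutually generic}. The point is to force the decided part of the branch to ``catch up'' with the height of the tree, so that their lengths agree at a limit stage.

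First I would build a decreasing sequence $p=q_0\geq q_1\geq q_2\geq\cdots$ in $\mathbb{P}$ such that for every $n<\omega$ we have $\len(q_n)<\len(q_{n+1})$ and $q_{n+1}$ decides $\dot{b}{\upharpoonright}\len(q_n)$, that is, $\len(q_n)\subseteq\dom{\dot{b}_{q_{n+1}}}$. Each step is possible because for each $\alpha<\len(q_n)<\kappa$ the set of conditions deciding $\dot{b}(\alpha)$ is dense, and since $\mathbb{P}$ is ${<}\kappa$-closed I can decide all of these ${<}\kappa$ many values by a single extension of $q_n$ and then lengthen the tree once more to make the length strictly increase.

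Then let $q=\inf_{n<\omega}q_n=(t_q,s_q)$, which exists because $\mathbb{P}$ is ${<}\kappa$-closed, and set $\gamma=\len(q)=\sup_n\len(q_n)$. Since the lengths $\len(q_n)$ form a strictly increasing $\omega$-sequence, $\gamma$ is a limit ordinal, and every node of $t_q$ has length ${<}\gamma$. Because $q\leq q_{n+1}$ decides $\dot{b}{\upharpoonright}\len(q_n)$ for every $n$, the condition $q$ decides $\dot{b}{\upharpoonright}\gamma$, and hence $\len(\dot{b}_q)\geq\gamma$.

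The only remaining point, and the step I expect to be the crux, is the reverse inequality $\len(\dot{b}_q)\leq\gamma$, i.e. ruling out that the infimum accidentally decides $\dot{b}(\gamma)$. Here I would invoke Lemma \ref{decided part of branch}: if $\len(\dot{b}_q)>\gamma$, then $\gamma+1\subseteq\dom{\dot{b}_q}$ with $\gamma+1$ a successor, so part (2) of that lemma yields $\dot{b}_q{\upharpoonright}(\gamma+1)\in t_q$. But $\dot{b}_q{\upharpoonright}(\gamma+1)$ has length $\gamma+1>\gamma$, contradicting that every node of $t_q$ has length ${<}\gamma$. Hence $\len(\dot{b}_q)=\gamma=\len(t_q)$ with $\gamma$ a limit ordinal, so $q\in\mathbb{P}^*$ and $q\leq p$, which proves that $\mathbb{P}^*$ is dense in $\mathbb{P}$.
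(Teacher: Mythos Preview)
Your proof is correct and follows essentially the same approach as the paper: both build a decreasing $\omega$-sequence of conditions where $q_{n+1}$ decides $\dot b{\upharpoonright}\len(q_n)$ with strictly increasing lengths, take the infimum $q$, and then appeal to Lemma~\ref{decided part of branch} for the reverse inequality $\len(\dot b_q)\leq\len(q)$. The paper states that last step in one line, whereas you spell out the contradiction via part~(2) of Lemma~\ref{decided part of branch} explicitly; this is a harmless elaboration rather than a different argument.
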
 

\begin{proof} Suppose that $p\in \mathbb{P}$ and let $p_0=p=(t_0,s_0)$. We choose successively for each $n\in\omega$ a condition $p_{n+1}=(t_{n+1},s_{n+1})$ that decides $\dot{b}{\upharpoonright}\len(t_n)$ with $p_{n+1}\leq p_n$ and $\len(t_{n})<\len(t_{n+1})$. 
Let $t=\bigcup_{n\in\omega} t_n$, $s=\bigcup_{n\in\omega} s_n$ and 
$q=(t,s)$. 
By the construction, $\len(q)$ is a limit and $\len(q)\leq \len(\dot{b}_q)$. 
%We \todo{check definition of l (+/-1) here} 
Moreover, we have $\len(b_q)\leq \len(q)$ by Lemma \ref{decided part of branch} and hence $q\in \PP^*$. 
%Suppose that $p=(t,s)$ has length $\gamma$. Then $\gamma$ is a limit ordinal and $p$ decides $\dot{b}\upharpoonright \gamma$. 
\end{proof} 

%Notice that every decreasing sequence $((t_{\alpha},s_{\alpha}))_{\alpha<\gamma}$ in $\mathbb{D}$ of length $\gamma<\kappa$ has a greatest lower bound $(t,s)\in \mathbb{D}$, where $t$ is the closure of $\bigcup_{\alpha<\gamma} t_{\alpha}$ in ${}^{<\kappa}2$ and $s=\bigcup_{\alpha<\gamma} s_{\alpha}$. 

We will expand $\PP$ to determine the quotient forcing in $V[G]$ for a branch 
%$b$ 
$\dot{b}^G\in [T_G]$. The precise statement is given in Lemma \ref{quotient forcing of a branch} below. 

% or more precisely, for the complete subforcing \todo{need?} of $\BB(\Add{\kappa}{1})$ induced by a name for this branch, 
%where $G$ is a $\PP$-generic filter over $V$. 
%\todo{check again with definition 1.17} More precisely, we will consider a $\PP$-name $\dot{b}$ for such a branch and define a forcing $\QQ$ with the following properties. The forcing $\QQ$ is equivalent to $\PP$, so that one obtains a corresponding $\QQ$-name $\dot{b}_{\QQ}$. Moreover, the forcing contains a complete subforcing $\QQ_0$ such that $\dot{b}_{\QQ}$ generates the same extension of $V$ as the restriction of the $\QQ$-generic filter to $\QQ_0$. The following lemmas will further show that the quotient forcing for $\QQ_0$ in $\QQ$ is equivalent to $\Add{\kappa}{1}$. 
%We then show that the name generates a complete subforcing $\QQ_0$ of $\QQ$ that 

Suppose that $\dot{b}$ is a $\PP$-name for a branch in $T_{\dot{G}}$, where $\dot{G}$ is a name for the $\PP$-generic filter, in the sense that this is forced by $\one_\PP$. 
Let 
$$\mathbb{Q}=\{(\dot{b}_p,q)\mid p\in\mathbb{P}^*\text{ and }(q=p\text{ or }q={1}_{\PP})\}$$ 
and for all $(u,p), (v,q)\in \mathbb{P}$, let $(u,p)\leq (v,q)$ if $v\subseteq u$ and $p\leq_{\mathbb{P}} q$. 
Moreover, let
$$\mathbb{Q}_0=\{(\dot{b}_p,\one_\PP)\mid p\in\mathbb{P}^*\}$$ 
$$\mathbb{Q}_1=\{(\dot{b}_p,p)\mid p\in\mathbb{P}^*\}.$$ 
%Suppose that $\mathbb{Q}=\mathbb{Q}_0\cup\mathbb{Q}_1$ is partially ordered by reverse inclusion in the first coordinate. 
Then $\QQ=\QQ_0\cup \QQ_1$, 
%\subseteq\{\one_{\QQ}\}$ 
%\{(\emptyset, \emptyset)\}$ 
$\mathbb{Q}_1$ is a dense subforcing of $\QQ$ and $\QQ_0\cap \QQ_1$ contains at most $\one_{\QQ}$. 
We further consider the map $e\colon \PP^*\rightarrow \QQ_1$, $e(p)=(\dot{b}_p,p)$. 
Since $e$  an isomorphism, $\PP^*$ is dense in $\PP$ and $\QQ_1$ is dense in $\QQ$, it follows that the forcings 
%$p\mapsto (b_{t,s},(t,s))$ is an isomorphism of $\mathbb{D}$ onto $\mathbb{Q}_1$ and 
%$g\colon \Add{\kappa}{1}\rightarrow \QQ_0$, $g(p)=(p,1_{\mathbb{P}})$ are isomorphisms. 
%$r\mapsto (r,\emptyset)$ is an isomorphism between $Add(\kappa,1)$ and $\mathbb{Q}_0$. 
%\end{enumerate-(1)} 
%\end{lemma} 
%\begin{proof} 
%The proof is straightforward.  
%\end{proof} 
$\PP$, $\QQ$ are sub-equivalent. 
%The forcings $\mathbb{P}$ and $\mathbb{Q}$ are forcing equivalent by Lemma \ref{dense subset of P} and \todo{citation isn't working} Lemma \ref{equivalence of P and Q}. 
%Since $\mathbb{D}$ is dense in $\mathbb{P}$, we immediately obtain the following. 

\begin{lemma} \label{compatible conditions for perfect set}
The map $\pi=\pi_{\QQ,\QQ_0}\colon \QQ\rightarrow \QQ_0$, $\pi(\dot{b}_p,r)=(\dot{b}_p,\one_\PP)$ is a projection. 
\end{lemma}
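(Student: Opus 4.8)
The plan is to verify the three conditions in the definition of a projection (Definition~\ref{definition: quotient forcing}): that $\pi$ preserves $\leq$, that $\pi[\QQ]$ is dense in $\QQ_0$, and the pullback property. The first two are immediate. If $(\dot{b}_{p},r)\leq(\dot{b}_{p'},r')$ in $\QQ$, then by definition of the order $\dot{b}_{p'}\subseteq\dot{b}_{p}$, whence $(\dot{b}_{p},\one_\PP)\leq(\dot{b}_{p'},\one_\PP)$, so $\pi$ is $\leq$-preserving; and $\pi$ restricts to the identity on $\QQ_0$, so its range is all of $\QQ_0$. Everything therefore reduces to the pullback property: given $q\in\QQ$ and $p\leq\pi(q)$ in $\QQ_0$, I must produce $\bar{q}\leq q$ in $\QQ$ with $\pi(\bar{q})\leq p$.

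Write $q=(\dot{b}_{p_0},r)$ with $p_0\in\mathbb{P}^*$ and $r\in\{p_0,\one_\PP\}$, and $p=(\dot{b}_{p_1},\one_\PP)$ with $p_1\in\mathbb{P}^*$; the hypothesis $p\leq\pi(q)=(\dot{b}_{p_0},\one_\PP)$ says exactly that $\dot{b}_{p_0}\subseteq\dot{b}_{p_1}$. If $r=\one_\PP$, i.e. $q\in\QQ_0$, I simply take $\bar{q}=p$: then $\bar{q}\leq q$ and $\pi(\bar{q})=p$. The substantive case is $q\in\QQ_1$, i.e. $r=p_0$; here any $\bar{q}\leq q$ must have second coordinate $\leq_\PP p_0\neq\one_\PP$, hence $\bar{q}\in\QQ_1$. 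So it suffices to find $p_2\in\mathbb{P}^*$ with $p_2\leq_\PP p_0$ and $\dot{b}_{p_1}\subseteq\dot{b}_{p_2}$, and then set $\bar{q}=(\dot{b}_{p_2},p_2)$: indeed $\dot{b}_{p_0}\subseteq\dot{b}_{p_1}\subseteq\dot{b}_{p_2}$ and $p_2\leq_\PP p_0$ give $\bar{q}\leq q$, while $\dot{b}_{p_1}\subseteq\dot{b}_{p_2}$ gives $\pi(\bar{q})=(\dot{b}_{p_2},\one_\PP)\leq p$.

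The construction of $p_2$ is the heart of the argument and amounts to grafting $\dot{b}_{p_1}$ onto $p_0$ as a non-branching spine. Writing $p_0=(t_0,s_0)$, $\mu_0=\len(t_0)=\len(\dot{b}_{p_0})$ and $\mu_1=\len(\dot{b}_{p_1})$ (both limits, with $\mu_0\leq\mu_1$), I would set
$$t_2=t_0\cup\{\dot{b}_{p_1}{\upharpoonright}\eta\mid \mu_0\leq\eta<\mu_1\},\qquad s_2=s_0\cup\{\dot{b}_{p_1}{\upharpoonright}\eta\mid \mu_0\leq\eta<\mu_1\},$$
and $p_2=(t_2,s_2)$. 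By Lemma~\ref{decided part of branch}, $\dot{b}_{p_0}{\upharpoonright}\beta\in t_0$ for every $\beta<\mu_0$, so $t_2$ is a downward closed tree; the adjoined nodes form a single $\subseteq$-increasing chain above $\dot{b}_{p_0}=\dot{b}_{p_1}{\upharpoonright}\mu_0$, each with exactly one successor in $t_2$, so no node acquires a third successor, $s_2$ correctly marks the one-successor nodes, and $|t_2|<\kappa$ by regularity of $\kappa$. Since every adjoined node has length $\geq\mu_0=\len(t_0)$ and $t_0$ has no node of length $\geq\mu_0$, the adjoined nodes are disjoint from $t_0$, whence $s_2\cap t_0=s_0$ and $p_2\leq_\PP p_0$. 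Finally, because the whole chain from $\dot{b}_{p_0}$ up to $\mu_1$ is marked non-branching and the ordering of $\PP$ preserves the unique successors of marked nodes, every extension of $p_2$ keeps the branch running along $\dot{b}_{p_1}$; hence $\dot{b}_{p_1}\subseteq\dot{b}_{p_2}$, and combining $\len(\dot{b}_{p_2})\geq\mu_1$ with the bound $\len(\dot{b}_{p_2})\leq\len(t_2)=\mu_1$ from Lemma~\ref{decided part of branch} yields $\len(\dot{b}_{p_2})=\mu_1=\len(t_2)$, so $p_2\in\mathbb{P}^*$.

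The step I expect to be the main obstacle is the treatment of the frontier node $\dot{b}_{p_0}$ of length $\mu_0$: one must ensure that marking the grafted spine non-branching is compatible with the requirement $s_2\cap t_0=s_0$ imposed by $p_2\leq_\PP p_0$, and that the resulting $p_2$ genuinely forces $\dot{b}(\mu_0)$ and beyond to follow $\dot{b}_{p_1}$. This hinges on the fact that for $p_0\in\mathbb{P}^*$ the decided path $\dot{b}_{p_0}{\upharpoonright}\beta$ ($\beta<\mu_0$) lies in $t_0$ while the limit node $\dot{b}_{p_0}$ sits at the open frontier of $t_0$, so the spine can be attached without altering the branching data already recorded in $s_0$; combining this with the characterization of $\dot{b}_p$ in Lemma~\ref{decided part of branch} and the preservation of successors of marked nodes under the ordering $(t,s)\leq(u,v)\iff u\subseteq t,\ s\cap u=v$ is exactly the delicate point. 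The remaining verifications that $p_2\in\PP$ and $p_2\leq_\PP p_0$ are routine bookkeeping about trees of sequences.
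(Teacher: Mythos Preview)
Your proof is correct and follows essentially the same approach as the paper: verify the easy conditions, then for the pullback property graft $\dot{b}_{p_1}$ onto $p_0$ as a non-branching spine above the frontier $\dot{b}_{p_0}$. The paper's construction of $\bar p=(\bar t,\bar s)$ is exactly your $p_2=(t_2,s_2)$. The one difference is that the paper, after building $\bar p$, invokes the density of $\PP^*$ (Lemma~\ref{dense subset of P}) to pass to some $r\leq\bar p$ in $\PP^*$ and sets $w=(\dot b_r,r)$, whereas you argue directly that $p_2\in\PP^*$ by combining $\dot b_{p_1}\subseteq\dot b_{p_2}$ with the bound $\len(\dot b_{p_2})\leq\len(t_2)$ from Lemma~\ref{decided part of branch}. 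Your shortcut is valid and in fact shows that the paper's appeal to density is not strictly needed here: the grafted condition already lies in $\PP^*$.
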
 
\begin{proof} 
By the definition, $\pi$ is a homomorphism with respect to $\leq$ and it is surjective onto $\QQ_0$. 
%it it sufficient to prove that for all $q\in \QQ$ and all $p\leq \pi(q)$, there is a condition $\bar{q}\leq q$ with $\pi(\bar{q})\leq p$. 

To prove the remaining requirement for projections, first suppose that $u=(\dot{b}_p,p)\in \mathbb{Q}_1$ and $v=(\dot{b}_q,1_{\mathbb{P}})\in \mathbb{Q}_0$ are conditions with $v\leq \pi(u)$. In particular, $\dot{b}_p\subseteq \dot{b}_q$ and hence $\len(p)\leq\len(q)$. It is sufficient to show that $u,v$ are compatible in $\mathbb{Q}$, since for any extension $w\leq u,v$, we have $\pi(w)\leq v$ by the definition of $\pi$ and since $v\in \QQ_0$. 

% of the form $q=(c,\emptyset)$ with $b_{t,s}\subseteq c$ are compatible in $\mathbb{Q}$. 
To see that $u$, $v$ are compatible, suppose that $p=(t,s)$. 
Since $p\in \mathbb{P}^*$, $\len(p)$ is a limit and $\dot{b}_p$ is cofinal in $t$ by the definition of $\PP^*$. 
Let $$\bar{t}=t\cup \{\dot{b}_q\upharpoonright\alpha \mid  \len(\dot{b}_p)\leq\alpha<\len(\dot{b}_q)\}$$ 
$$\bar{s}=s\cup \{\dot{b}_q\upharpoonright \alpha \mid \len(\dot{b}_p)\leq\alpha<\len(\dot{b}_q)\}$$ 
and $\bar{p}=(\bar{t},\bar{s})$. 
Then $\bar{p}\in\PP$ and $\bar{p}\leq p$. Moreover, it follows from Lemma \ref{decided part of branch} that 
%$w\Vdash_{\mathbb{P}} \dot{b} 
$\dot{b}_q\subseteq \dot{b}_{\bar{p}}$. 

We can choose a condition $r\leq \bar{p}$ with $r\in \mathbb{P}^*$, since $\PP^*$ is dense in $\PP$, and let $w=(\dot{b}_r,r)\in \mathbb{Q}_1$. 
Since $u=(\dot{b}_p,p)$ and $r\leq p$, we have 
%$\dot{b}_q\subseteq \dot{b}_{\bar{p}}\subseteq b_r$, we have 
$w\leq u$. 
Since $v=(\dot{b}_q,1_{\mathbb{P}})$ and $\dot{b}_q\subseteq \dot{b}_{\bar{p}}\subseteq \dot{b}_r$, we have $w\leq v$, and in particular, $u$, $v$ are compatible. 
%Since $v\in \QQ_0$, it follows from $w\leq v$ that $\pi(w)\leq v$ by the definition of $\pi$, proving the claim. 

Second, suppose that $u=(\dot{b}_p,\one_{\PP})\in \mathbb{Q}_0$ and $v$ is as above. Since $(\dot{b}_p,\one_{\PP})\leq u$, the required statement follows from the property of $(\dot{b}_p,p)$ that we just proved. 
%Hence $u$, $v$ are compatible, proving the claim. 
%Since $\dot{b}_q\subseteq \dot{b}_{\bar{p}}\subseteq \dot{b}_r$, we have $w\leq v$. 
%Then $b_p\subseteq b_q\subseteq b_r$. 
%Since $(t,s)\in\mathbb{D}$, $|b_{t,s}|=\gamma$ for some limit ordinal $\gamma$ and $|c|=\delta$ for some limit ordinal $\delta\geq\gamma$. 
%Let $u=t\cup \{c\upharpoonright \alpha\mid\alpha<\delta\}$ and $v=s\cup\{c\upharpoonright(\alpha+1)\mid \alpha<\delta\}$. Then $b_{u,v}=c$ and hence $(u,v)\in\mathbb{D}$. Then $r=(c,(u,v))\leq p,q$ (and $r\in\mathbb{Q}_1$). 
\end{proof} 

\begin{lemma} \label{complete subforcing for perfect set} 
$\mathbb{Q}_0$ is a complete subforcing of $\mathbb{Q}$. 
\end{lemma}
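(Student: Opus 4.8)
The plan is to show that the inclusion map $\mathbb{Q}_0 \hookrightarrow \mathbb{Q}$ is a complete embedding and then invoke the standard fact, recorded above, that a subforcing is complete exactly when the identity on it is a complete embedding. Throughout I will use the projection $\pi = \pi_{\mathbb{Q},\mathbb{Q}_0}$ from Lemma \ref{compatible conditions for perfect set}, together with two elementary observations about it: first, $\pi$ restricts to the identity on $\mathbb{Q}_0$, since $\pi(\dot{b}_p,\one_\PP) = (\dot{b}_p,\one_\PP)$ by definition; and second, $q \leq \pi(q)$ holds for every $q = (\dot{b}_p, r) \in \mathbb{Q}$, because $(\dot{b}_p,r) \leq (\dot{b}_p,\one_\PP)$ follows from $\dot{b}_p \subseteq \dot{b}_p$ together with $r \leq_\PP \one_\PP$.

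The inclusion obviously preserves $\leq$, as $\mathbb{Q}_0$ carries the order induced from $\mathbb{Q}$. For preservation of $\perp$, I would argue that compatibility in $\mathbb{Q}$ reflects down to $\mathbb{Q}_0$: if $u, v \in \mathbb{Q}_0$ have a common extension $w \in \mathbb{Q}$, then applying the order-preserving map $\pi$ gives $\pi(w) \leq \pi(u) = u$ and $\pi(w) \leq \pi(v) = v$, so $\pi(w) \in \mathbb{Q}_0$ witnesses that $u, v$ are compatible already in $\mathbb{Q}_0$. Hence incompatibility in $\mathbb{Q}_0$ implies incompatibility in $\mathbb{Q}$, as required.

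It remains to produce a reduction in $\mathbb{Q}_0$ for each $q \in \mathbb{Q}$, and I claim $\pi(q)$ works. Let $r \leq \pi(q)$ with $r \in \mathbb{Q}_0$. Since $\pi$ is a projection and $r \leq \pi(q)$ lies in $\pi[\mathbb{Q}] = \mathbb{Q}_0$, there is some $\bar{q} \leq q$ in $\mathbb{Q}$ with $\pi(\bar{q}) \leq r$. Combining $\bar{q} \leq \pi(\bar{q}) \leq r$ with $\bar{q} \leq q$ shows that $\bar{q}$ is a common extension of $q$ and $r$, so $r$ is compatible with $q$. Thus $\pi(q)$ is a reduction of $q$, which completes the verification that the inclusion is a complete embedding and hence that $\mathbb{Q}_0$ is a complete subforcing of $\mathbb{Q}$.

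I expect no serious obstacle here: the statement is essentially a formal consequence of the projection constructed in the previous lemma, and the only point demanding genuine care is tracking the direction of the two orderings (the coordinatewise reverse inclusion of the trees $\dot b_p$ and the reverse extension order on $\PP$), so as to be sure that one has $q \leq \pi(q)$ rather than the reverse and that the defining clause of a projection is applied with the correct inequality.
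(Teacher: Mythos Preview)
Your proof is correct and is essentially the same argument as the paper's: both rely on the projection $\pi$ from Lemma~\ref{compatible conditions for perfect set} together with the inequality $q\leq\pi(q)$. The only cosmetic difference is that the paper verifies completeness via the maximal-antichain characterization (showing the downward closure of a maximal antichain in $\mathbb Q_0$ is dense in $\mathbb Q$), whereas you verify the complete-embedding characterization directly by exhibiting $\pi(q)$ as a reduction; the two routes are interchangeable and use exactly the same ingredients.
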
 
\begin{proof} 
%It follows from the definition of $\mathbb{Q}$ and $\mathbb{Q}_0$ that for all $p,q\in \mathbb{Q}_0$, $p\leq_{\mathbb{Q}_0} q$ if and only if $p\leq_{\mathbb{Q}} q$, and $p\perp_{\mathbb{Q}_0} q$ if and only if $p\perp_{\mathbb{Q}}q$. 
%The order and compatibility in $\mathbb{Q}_0$ is equal to the order and compatbility in $\mathbb{Q}$ 
It is sufficient to show that every maximal antichain $A$ in $\mathbb{Q}_0$ is maximal in $\mathbb{Q}$. 
Let 
$$D_0=\{p\in \mathbb{Q}_0\mid \exists q\in A\ p\leq q\}$$ 
$$D=\{p\in \mathbb{Q}\mid \exists q\in A\ p\leq q\}.$$ 

It is sufficient to show that 
%\begin{claim*} 
$D$ is dense in $\mathbb{Q}$, since this implies that $A$ is a maximal antichain in $\mathbb{Q}$. 
%\end{claim*} 
%\begin{proof} 
To see that $D$ is dense, suppose that $u\in \mathbb{Q}$. If $u\in \mathbb{Q}_0$, then there is a condition $v\leq u$ in $D_0\subseteq D$, since $D_0$ is dense in $\QQ_0$ by the assumption that $A$ is maximal in $\QQ_0$. 
Now suppose that $u=(\dot{b}_p,p)\in \mathbb{Q}_1$. 
Since $D_0$ is dense in $\mathbb{Q}_0$, there is some $v=(\dot{b}_q,1_{\mathbb{P}})\in D_0$ with $\dot{b}_p\subseteq \dot{b}_q$. 
Since $v\leq \pi(u)$ and $\pi$ is a projection by Lemma \ref{compatible conditions for perfect set}, there is some $w\leq u$ with $\pi(w)\leq v$. 
%Then 
%There is a common extension 
%$w\leq u,v$. 
% by Lemma \ref{compatible conditions for perfect set}. 
Then $w\leq \pi(w)\leq v\in D_0$ and hence $w\in D$ by the definition of $D$, proving that $D$ is dense in $\QQ$. 
%\end{proof} 
%Since $D$ is dense in $\QQ$, 
%To see that every dense set $D\subseteq \mathbb{Q}_0$ is predense in $\mathbb{Q}$, suppose that $p=(b_{t,s},(t,s))\in\mathbb{Q}_1$. Choose $q=(c,\emptyset)\leq_{\mathbb{Q}}(b_{t,s},\emptyset)$ with $q\in D$. Then $p,q$ are compatible by the previous argument. Hence $D$ is predense in $\mathbb{Q}$. 
\end{proof} 

Let $e\colon \PP^*\rightarrow \QQ_1$, $e(p)=(\dot{b}_p,p)$ be the isomorphism between $\PP^*$ and $\QQ_1$ that was given after the definition of $\QQ$ above. 
%the restriction of the $\QQ$-generic filter to $\QQ_0$ and $\dot{b}_{\QQ}^H$ generate the same extension of $V$. 
If $G$ is a $\mathbb{P}$-generic filter over $V$, 
%We work with $\mathbb{Q}$ instead of $\mathbb{P}$ as follows. 
%Let 
then the upwards closure 
$$H=\{q\in \mathbb{Q}\mid \exists p\in G\ e(p)\leq q\}$$ 
of $e[G]$ in $\QQ$ 
%Then $H$ 
is a $\QQ$-generic filter over $V$. In the following, we will write $T_H=T_G$, where $T_G$ is the perfect tree adjoined by $G$ that is given after the definition of $\PP$ above. 

%\todo[inline]{define the Boolean completion in the intro?}

%\todo{do we need this again later?} 
%We will use the following notation for quotient forcings given by names. 
%If $\RR$ is a forcing, let $\BB(\RR)$ denote the Boolean completion of the \todo{need this? write further above?} separative quotient of $\RR$, which will in fact be the Boolean completion in all cases below, since we will only apply this to separative forcings. 
%but since \todo{check} we will only apply this to separative forcings, it will simply be the Boolean completion. 
%Moreover, if $\RR$ is a separative forcing and $\sigma$ is an $\RR$-name, then we will assume that $\RR$ is a subset of $

Since it is convenient to work with complete Boolean algebras, we will now check that $\PP$ is separative. 

\begin{lemma} 
$\PP$ is a separative partial order. 
\end{lemma}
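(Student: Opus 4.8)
The plan is to show directly that $\PP$ from Definition \ref{definition of the forcing adding a perfect tree} is separative, i.e. that whenever $p\not\leq q$ in $\PP$, there is an extension $r\leq p$ that is incompatible with $q$. Writing $p=(t_p,s_p)$ and $q=(t_q,s_q)$, recall that $p\leq q$ means $t_q\subseteq t_p$ and $s_p\cap t_q=s_q$. So the failure $p\not\leq q$ splits into two cases: either (i) $t_q\not\subseteq t_p$, or (ii) $t_q\subseteq t_p$ but $s_p\cap t_q\neq s_q$. I would handle these two cases separately and in each produce a witnessing extension $r\leq p$ with $r\perp q$.

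First I would treat case (i). Pick a node $u\in t_q\setminus t_p$ of minimal length; by downward closure of $t_q$ its immediate predecessor (if any) already lies in $t_p$. The idea is to extend $p$ by adding a node that \emph{blocks} $u$ from ever entering the tree: concretely, I would add to $t_p$ the immediate predecessor's other potential successors, or mark the relevant predecessor node as non-branching in $s_p$ in a way incompatible with the presence of $u$, producing $r=(t_r,s_r)\leq p$ with $u\notin t_r$ and such that the marking of $r$ forbids $u$. Then any common extension of $r$ and $q$ would have to contain $t_q\ni u$ while respecting that $r$'s splitting/non-splitting pattern excludes $u$; this is impossible, so $r\perp q$. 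I expect the routine bookkeeping here to be exactly analogous to the node-surgery arguments already used in the proofs of Lemma \ref{decided part of branch} and Lemma \ref{compatible conditions for perfect set}.

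For case (ii), where $t_q\subseteq t_p$ but the markings disagree, there is a node $u\in t_q$ witnessing $s_p\cap t_q\neq s_q$, i.e. $u\in s_p\setminus s_q$ or $u\in s_q\setminus s_p$ (with $u\in t_q$). Using that $s$ records exactly the non-branching non-terminal nodes, a marking disagreement at $u$ forces a disagreement about the number of direct successors of $u$ that any common extension would have to realize: one of $p,q$ demands $u$ eventually has a unique successor while the other demands two (or that $u$ branches). I would extend $p$ below $u$ so as to commit to $p$'s requirement in a way that no extension of $q$ can match, again by adding the appropriate successor nodes and updating the marking set, yielding $r\leq p$ with $r\perp q$.

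The main obstacle, such as it is, will be case (ii): I need to be careful that the marking set $s$ only constrains \emph{non-terminal} nodes, so a disagreement $u\in s_p\triangle s_q$ is only meaningful once $u$ is forced to be non-terminal, and I must extend to make $u$ non-terminal in an incompatible fashion while staying inside $\PP$ (respecting clause (b) that every node has at most two direct successors and clause (c) relating $s$ to one-successor nodes). Managing these consistency constraints is the only delicate point; the separativity itself then follows immediately, and I would close by remarking that this justifies fixing the Boolean completion $\BB(\PP)$ used in the subsequent quotient-forcing analysis.
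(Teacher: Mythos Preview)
Your case split matches the paper's, but you miss a key simplification in case (ii): when $t_q\subseteq t_p$ and $s_p\cap t_q\neq s_q$, the conditions $(t_p,s_p)$ and $(t_q,s_q)$ are \emph{already} incompatible, so no surgery is needed. Indeed, any common extension $(y,x)$ would satisfy $x\cap t_p=s_p$ and $x\cap t_q=s_q$, whence $s_p\cap t_q=(x\cap t_p)\cap t_q=x\cap t_q=s_q$, a contradiction. Your concern about terminal nodes and the meaning of the marking is therefore moot in this case; the purely set-theoretic constraint on $s$ does all the work.

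In case (i) your blocking argument is sound but the paper takes a different route: rather than blocking the minimal missing node via successor-counting, it picks any $w\in t_q\setminus t_p$, assumes compatibility (so $(t_p\cup t_q,\,s_p\cup s_q)$ is a condition), prunes everything strictly above $w$, and then \emph{flips} the $s$-membership of $w$ itself. Since $w\in t_q$ but the new marking disagrees with $s_q$ at $w$, incompatibility follows immediately by the case-(ii) argument. This avoids the sub-case analysis on whether $u^-\in s_p$ and how many successors $u^-$ already has; your approach works but requires checking each of those branches to stay inside the constraints (b) and (c) of Definition~\ref{definition of the forcing adding a perfect tree}.
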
 
\begin{proof} 
It is easy to see that $\PP$ is a partial order. 
To show that $\PP$ is separative, suppose that $(t,s)$, $(v,u)$ are conditions in $\PP$ with $(t,s)\not\leq (v,u)$. 

We first assume that $v\subseteq t$. Then $s\cap v\neq u$. We claim that $(t,s)$, $(v,u)$ are already incompatible. Otherwise there is a common extension $(y,x)$, so that $x\cap t=s$ and $y\cap v=u$. However, this implies that $s\cap v=(x\cap t)\cap v=x\cap v=u$, contradicting the fact that $s\cap v\neq u$. 

We now assume that $v\not\subseteq t$ and choose some $w\in v\setminus t$. 
We can assume that $(t,s)$, $(v,u)$ are compatible, so that $(t\cup v, s\cup u)$ is a condition. 
We define $y\subseteq t\cup v$ by removing all nodes strictly above $w$. 
To define $x$, we first let $\bar{x}=(y\setminus \{w\})\cap (s\cup u)$. 
Let $x=\bar{x}$ if $w\in u$ and  $x=\bar{x}\cup\{w\}$ otherwise. 
The choice of $x$ implies that $(y,x)$, $(v,u)$ are incompatible, since $w\in x\Leftrightarrow w\notin u$. 
This is sufficient, since $(y,x)\leq (t,s)$. 
\end{proof} 

Moreover, it follows from the previous lemma and Lemma \ref{decided part of branch} that $\QQ$ is also a separative partial order. 

If $\RR$ is a complete Boolean algebra and $\sigma$ is an $\RR$-name for an element of $\kk$, as a special case of the notation given in Definition \ref{definition: generated Boolean subalgebra}, we will write $\BB(\sigma)=\BB^{\RR}(\sigma)$ for the complete Boolean subalgebra of $\RR$ that is generated by the Boolean values $\llbracket \sigma(\alpha)=\beta\rrbracket_{\RR}$ for ordinals $\alpha,\beta<\kappa$. 

%\todo[inline]{define $\BB(\PP)$ in the intro? we need $\RR\subseteq \BB(\RR)$, since $\dot{c}$ might be an $\RR$-name that we understand as a $\BB(\RR)$-name } 

We will use the following terminology for quotient forcings relative to elements of $\kk$ in a generic extension. 

\begin{definition} 
Suppose that $\RR$ is a separative forcing, $\SSS$ is any other forcing, 
%$\dot{c}$ is an $\RR$-name for an element of $\kk$ and
$G$ is $\RR$-generic over $V$ and $c\in V[G]$ is a set that can be coded as a subset of a ground model set in an absolute way. 
%$c\in (\kk)^{V[G]}$. 
% and $\SSS$ is a forcing. 
We say that \emph{$c$ has $\SSS$ as a quotient in $V[G]$} if there is a $\RR$-name $\dot{c}$ with $\dot{c}^G=c$ such that for the $\BB(\dot{c})$-generic filter $G_0=G\cap \BB(\dot{c})$, the quotient forcing $[\BB(\RR)/\BB(\dot{c})]^{G_0}$ is equivalent to $\SSS$ in $V[G_0]$. 
\end{definition} 
%notation for quotient forcings given by names. 
%If $\RR$ is a forcing, let $\BB(\RR)$ denote the Boolean completion of the \todo{need this? write further above?} separative quotient of $\RR$, which will in fact be the Boolean completion in all cases below, since we will only apply this to separative forcings. 
%but since \todo{check} we will only apply this to separative forcings, it will simply be the Boolean completion. 
%If $\BB$ is a complete Boolean algebra and $\dot{c}$ is an $\BB$-name for an element of $\kk$, we will write $\BB(\dot{c})=\BB^{\BB}(\dot{c})$ for the complete Boolean subalgebra of $\BB$ that is generated by the Boolean values $\llbracket \dot{c}(\alpha)=\beta\rrbracket_{\BB}$ for ordinals $\alpha,\beta<\kappa$. 

\begin{lemma} \label{quotient forcing of a branch} 
$\one_{\BB(\dot{b})}$ forces that the quotient forcing $\BB(\PP)/\BB(\dot{b})$ 
%for $\BB(\dot{b})$ in $\BB(\PP)$ 
is sub-equivalent to $\Add{\kappa}{1}$. 
\end{lemma}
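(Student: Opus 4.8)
The plan is to transfer the problem from $\BB(\PP)$ to the forcing $\QQ$ together with its complete subforcing $\QQ_0$, to compute the quotient concretely, and then to invoke Lemma \ref{forcing equivalent to Add(kappa,1)}\,(1). First I would identify $\BB(\dot b)$ with the complete subalgebra generated by $\QQ_0$. Since $e\colon\PP^*\to\QQ_1$ is an isomorphism, $\PP^*$ is dense in $\PP$ (Lemma \ref{dense subset of P}) and $\QQ_1$ is dense in $\QQ$, the forcings $\PP$ and $\QQ$ are sub-equivalent, so $\BB(\PP)\cong\BB(\QQ)$ and this isomorphism carries the $\PP$-name $\dot b$ to the corresponding $\QQ$-name for the generic branch. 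Each condition $(\dot b_p,\one_\PP)\in\QQ_0$ decides $\dot b{\upharpoonright}\len(\dot b_p)=\dot b_p$, while conversely
$$\llbracket\dot b(\alpha)=\beta\rrbracket_{\BB(\QQ)}=\sup\{(\dot b_p,\one_\PP)\mid p\in\PP^*,\ \dot b_p(\alpha)=\beta\},$$
again using density of $\PP^*$. Hence $\QQ_0$ is dense in $\BB(\dot b)$ and $\BB(\dot b)$ is exactly the complete subalgebra generated by $\QQ_0$; together with Lemma \ref{complete subforcing for perfect set} this realizes the relation $\BB(\dot b)\lessdot\BB(\PP)$. A direct computation shows that the projection $\pi$ of Lemma \ref{compatible conditions for perfect set} coincides with the natural projection onto $\BB(\QQ_0)=\BB(\dot b)$, so by the standard facts recorded after Definition \ref{definition: quotient forcing} the quotient $[\BB(\PP)/\BB(\dot b)]^{G_0}$, for $G_0$ the $\BB(\dot b)$-generic filter, may be computed as the quotient of $\QQ$ relative to $\pi$.

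Next I would describe this quotient explicitly. Writing $b=\dot b^{G_0}$ for the generic branch and using the dense subforcing $\QQ_1$ of $\QQ$, the quotient $\{q\in\QQ\mid\pi(q)\in G_0\}$ is densely represented, via $e^{-1}$, by
$$R=\{p\in\PP^*\mid \dot b_p=b{\upharpoonright}\len(p)\}.$$
I would then verify in $V[G_0]$ that $R$ is non-atomic, ${<}\kappa$-closed and of size $\kappa$. For ${<}\kappa$-closure, given a decreasing sequence $\langle p_\xi\mid\xi<\eta\rangle$ in $R$ with $\eta<\kappa$ a limit, its infimum $p$ in $\PP$ exists, $\len(p)=\sup_\xi\len(p_\xi)$ is a limit, and since $\dot b_{p_\xi}=b{\upharpoonright}\len(p_\xi)$ for each $\xi$ we get $b{\upharpoonright}\len(p)\subseteq\dot b_p$; Lemma \ref{decided part of branch} bounds $\len(\dot b_p)\leq\len(p)$, whence $\dot b_p=b{\upharpoonright}\len(p)$, so $p\in\PP^*$ and $p\in R$. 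Non-atomicity follows by extending any $p=(t,s)\in R$ in two incompatible ways off the branch $b$, placing a node not below $b$ into $s$ versus making it splitting, exactly as in the proof that $\PP$ is separative; since neither extension alters $\dot b_p$, both remain in $R$. Finally $|R|=\kappa$, as $R\subseteq\PP$ has size at most $\kappa$ while being non-atomic and ${<}\kappa$-closed.

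Finally, since $\QQ_0$ is non-atomic, ${<}\kappa$-closed and of size $\kappa$, Lemma \ref{forcing equivalent to Add(kappa,1)}\,(1) shows that $\QQ_0$, and hence $\BB(\dot b)=\BB(\QQ_0)$, is sub-equivalent to $\Add\kappa1$, so forcing with $\BB(\dot b)$ preserves $\kappa^{<\kappa}=\kappa$. Thus Lemma \ref{forcing equivalent to Add(kappa,1)}\,(1) applies once more, now in $V[G_0]$, and yields that $R$, and therefore the quotient $[\BB(\PP)/\BB(\dot b)]^{G_0}$, is sub-equivalent to $\Add\kappa1$. As $G_0$ was an arbitrary $\BB(\dot b)$-generic filter, $\one_{\BB(\dot b)}$ forces the conclusion. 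I expect the main obstacle to be the ${<}\kappa$-closure of $R$: one must check that taking infima in $\PP$ keeps the condition inside $\PP^*$ and does not decide the branch beyond $b$, which is precisely the point controlled by Lemma \ref{decided part of branch}; the identification of $\pi$ with the natural projection, needed to match the two notions of quotient, is the only other step requiring care.
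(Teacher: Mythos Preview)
Your overall plan matches the paper's proof: pass from $\BB(\PP)$ to $\QQ$ via the dense embedding $e$, identify $\BB(\dot b)$ with the complete subalgebra generated by $\QQ_0$, observe that the projection $\pi$ of Lemma~\ref{compatible conditions for perfect set} agrees with the natural projection so that the quotient can be computed relative to $\pi$, and then verify that the resulting quotient is non-atomic, ${<}\kappa$-closed and of size $\kappa$ in $V[G_0]$, so that Lemma~\ref{forcing equivalent to Add(kappa,1)} applies. Your closure argument and the identifications are correct.

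The genuine gap is in your non-atomicity argument. You propose to extend an arbitrary $p=(t,s)\in R$ in two incompatible ways by adjoining a node $w$ off the branch $b$ and then deciding whether $w\in s$. The difficulty is that such a $w$ need not exist and cannot always be added. Consider the condition $p=(t,s)$ with $t=s=\{b{\upharpoonright}\alpha\mid\alpha<\gamma\}$ for a limit $\gamma$; one checks directly (and via Lemma~\ref{decided part of branch}) that $p\in\PP^*$ with $\dot b_p=b{\upharpoonright}\gamma$, so $p\in R$. Every node of $t$ is on the branch and lies in $s$, so in any extension each $b{\upharpoonright}\alpha$ must keep its unique successor $b{\upharpoonright}(\alpha+1)$; you therefore cannot introduce a sibling off the branch at any level below $\gamma$ without violating the extension relation. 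Going above $\gamma$ forces you to commit at the node $b{\upharpoonright}\gamma$, and for suitable names $\dot b$ (for instance the name for the right-most branch of $T_{\dot G}$) the choice $b{\upharpoonright}\gamma\notin s$ can force $\dot b(\gamma)\neq b(\gamma)$, so the resulting conditions drop out of $R$. Your sketch does not address this case.

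The paper avoids this difficulty entirely: it invokes Lemma~\ref{distinct branches are mutually generic} to conclude that every $\QQ$-generic extension contains an $\Add{\kappa}{1}$-generic over $V[\dot b_{\QQ}]$, which immediately rules out atoms in the quotient (an atom would yield a generic $G$ with $V[G]=V[G_0]$, contradicting the existence of such a Cohen generic). This replaces your ad hoc construction with a single uniform argument, and is the point where the proofs genuinely diverge. You could keep the rest of your write-up and simply substitute this appeal to Lemma~\ref{distinct branches are mutually generic} for your non-atomicity paragraph.
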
 
\begin{proof} 
%\todo{DELETE 2x!!! check with definition in section 1.2} 
Let $\dot{b}_{\QQ}$ denote the $\QQ$-name induced by the $\PP$-name $\dot{b}$ via the sub-isomorphism $e\colon \PP^*\rightarrow \QQ$ defined above. Since $e$ induces an isomorphism $\BB(\PP)\cong\BB(\QQ)$ on the Boolean completions, 
% as witnessed by the isomorphism that is induced by $e$, 
it is sufficient to prove the claim for $\QQ$, $\dot{b}_{\QQ}$ instead of $\PP$, $\dot{b}$. 
Moreover, it follows from the definition of $\QQ_0$ that $\BB(\dot{b}_\QQ)$ is equal to the complete subalgebra of $\BB(\QQ)$ generated by $\QQ_0$. 
%$V[\dot{b}_{\QQ}^H]=V[H\cap \QQ_0]$ for any $\QQ$-generic filter $H$ over $V$. 
%$\one_\QQ$ forces that the quotient forcing $\QQ/\QQ_0$ for $\QQ_0$ in $\QQ$ forces that 
Since $\QQ_0$ is a complete subforcing of $\QQ$ by Lemma \ref{complete subforcing for perfect set}, it is therefore sufficient to prove that 
$\QQ_0$ forces that the quotient forcing $\QQ/\QQ_0$ 
%for $\BB(\dot{b}_\QQ)$ in $\BB(\QQ)$ 
is equivalent to $\Add{\kappa}{1}$. 

It follows from Lemma \ref{distinct branches are mutually generic} that $\QQ$ forces that there is an $\Add{\kappa}{1}$-generic filter over $V[\dot{b}_{\QQ}]$ in $V[\dot{G}]$, where $\dot{G}$ is a name for the $\QQ$-generic filter, and therefore $\QQ$ forces that the quotient forcing $\QQ/\QQ_0$ is non-atomic. 

We have that $\pi\colon \QQ\rightarrow \QQ_0$ is a projection (with  $\pi{\upharpoonright}\QQ_0=\mathrm{id}_{\QQ_0}$) by Lemma \ref{compatible conditions for perfect set} and $\QQ_0$ is a complete subforcing of $\QQ$ by Lemma \ref{complete subforcing for perfect set}. 
% (and moreover $\pi{\upharpoonright}\QQ_0=\mathrm{id}_{\QQ_0}$). 
Since 
%\todo{state as lemma before?} 
moreover $\pi(q)\geq q$ for all $q\in \QQ$, it is easy to check that $\QQ_0$ forces that the quotient forcing $\QQ/\QQ_0$ given in Definition \ref{pull back names} and the quotient forcing $(\QQ/\QQ_0)^{\pi}$ with respect to $\pi$ given in Definition \ref{definition: quotient forcing} are equal. % by \cite[Remark 5.2]{MR2768691}
Hence we can consider $(\QQ/\QQ_0)^{\pi}$ instead of $\QQ/\QQ_0$. 

%Suppose that $\dot{b}^H=c$. 
%$H$ is $\mathbb{Q}$-generic over $V$ and 
%Suppose that $p_0\in \mathbb{Q}$ and $p_0\Vdash \dot{c}\in [T_{\dot{H}}]$, where $\dot{H}$ is the canonical name for the $\mathbb{Q}$-generic filter. 
%and $c$ is a branch in $T_H$, t
%Then the quotient forcing in $V[H]$ for 
%$c$ given by
%the complete subalgebra \todo{need?} of $\BB(\Add{\kappa}{1})$ induced by $\dot{b}$ is equivalent to $\Add{\kappa}{1}$. 
%\end{lemma} 
%\begin{proof} 
%By Lemma \ref{compatible conditions for perfect set} and Lemma \ref{complete subforcing for perfect set} and by 
Now suppose that $G_0$ is $\QQ_0$-generic over $V$ and $b=\dot{b}^{G_0}$. 
By the definition of the quotient forcing with respect to $\pi$ in Definition \ref{definition: quotient forcing}, we have 
%for $c$ in $\QQ$ 
$$[(\QQ/\QQ_0)^{\pi}]^{G_0}=\{(\dot{b}_p,q)\in\QQ\mid \pi(\dot{b}_p,q)\in G_0\}=\{(\dot{b}_p,q)\in \QQ\mid \dot{b}_p\subseteq b\}.$$ 
It follows from the definitions of $\PP^*$ and $\QQ$ that the last set in the equation is a ${<}\kappa$-closed subset of $\QQ$. 
%By Definition \ref{definition: quotient forcing}, 
%Therefore, the quotient forcing is equal to the ${<}\kappa$-closed subset $\{(b_p,p)\in \QQ\mid b_p\subseteq c\}$ of $\QQ$. 
%This forcing is $<\kappa$-closed and i
%there is an $\Add{\kappa}{1}$-generic filter over $V[c]$ in $V[G]$ by Lemma \ref{distinct branches are mutually generic} and hence the quotient forcing is non-atomic. 
Since we already argued that the quotient forcing is non-atomic, it is sub-equivalent to $\Add{\kappa}{1}$ by Lemma \ref{forcing equivalent to Add(kappa,1)}. 
\end{proof} 

The next result shows that the statement of the previous lemma also holds for names for sequences of length ${<}\kappa$ 
%that consist 
of branches in $T_G$. 
%as the next result shows. 
%In the statement of the next lemma, let 
For the statement of the result, we assume that $\gamma<\kappa$, $\dot{G}$ is a $\PP$-name for the $\PP$-generic filter and 
%that is given after Definition \ref{definition of the forcing adding a perfect tree}. 
$\sigma$ is a $\PP$-name for a sequence of length $\gamma$ 
%that consists 
of distinct branches in $T_{\dot{G}}$, in the sense that this is forced by $\one_\PP$. 

\begin{lemma} \label{quotient forcing for a sequence of branches} 
$\one_{\BB(\sigma)}$ forces that the quotient forcing $\BB(\PP)/\BB(\sigma)$ 
%for $\BB(\dot{b})$ in $\BB(\PP)$ 
is sub-equivalent to $\Add{\kappa}{1}$. 
\end{lemma}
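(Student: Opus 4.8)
The plan is to repeat the proof of Lemma~\ref{quotient forcing of a branch} coordinatewise, with $\dot b$ replaced by $\sigma=\langle\dot b_i\mid i<\gamma\rangle$ and the decided part $\dot b_p$ replaced by the tuple $\sigma_p=\langle(\dot b_i)_p\mid i<\gamma\rangle$, where $(\dot b_i)_p=\{(\alpha,\beta)\mid p\Vdash \dot b_i(\alpha)=\beta\}$. First I would let $\PP^*$ now denote the set of $p=(t,s)\in\PP$ such that $\len(t)$ is a limit, $\len((\dot b_i)_p)=\len(t)$ for every $i<\gamma$, and $(\dot b_i)_p\neq(\dot b_j)_p$ for all $i\neq j$. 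Using $\PP^*$, I define $\QQ=\QQ_0\cup\QQ_1$ by $\QQ_0=\{(\sigma_p,\one_\PP)\mid p\in\PP^*\}$ and $\QQ_1=\{(\sigma_p,p)\mid p\in\PP^*\}$, ordered by $(u,p)\leq(v,q)$ if $v_i\subseteq u_i$ for all $i<\gamma$ and $p\leq_\PP q$. Exactly as before, $e(p)=(\sigma_p,p)$ is an isomorphism of $\PP^*$ onto the dense subforcing $\QQ_1$, so $\PP\eqsim\QQ$; moreover $\BB(\sigma)$, transported to $\BB(\QQ)$ via the induced isomorphism $\BB(\PP)\cong\BB(\QQ)$, is the complete subalgebra generated by $\QQ_0$. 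Hence it suffices to show that $\QQ_0$ is a complete subforcing of $\QQ$ and that $\QQ_0$ forces $\QQ/\QQ_0$ to be sub-equivalent to $\Add{\kappa}{1}$.

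Next I would establish the coordinatewise analogues of the structural lemmas. Lemma~\ref{decided part of branch} applies to each $\dot b_i$ separately, so each $(\dot b_i)_p$ sits inside the tree $t$ up to its length. For density of $\PP^*$ in $\PP$ (the analogue of Lemma~\ref{dense subset of P}) I would first use that $\one_\PP$ forces the $\dot b_i$ pairwise distinct together with the ${<}\kappa$-closure of $\PP$ to pass to a condition forcing, for each of the fewer than $\kappa$ many pairs $i\neq j$, a coordinate on which $\dot b_i$ and $\dot b_j$ disagree, and then run the $\omega$-chain construction of Lemma~\ref{dense subset of P} to reach a limit length at which all $\gamma$ branches are decided cofinally; the infimum then lies in $\PP^*$. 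For the projection $\pi(\sigma_p,q)=(\sigma_p,\one_\PP)$ (the analogue of Lemma~\ref{compatible conditions for perfect set}), given $(\sigma_p,p)\in\QQ_1$ and $(\sigma_q,\one_\PP)\in\QQ_0$ with $(\dot b_i)_p\subseteq(\dot b_i)_q$ for all $i$, I would graft onto $(t,s)$ the tails $\{(\dot b_i)_q\upharpoonright\alpha\mid\len(t)\leq\alpha<\len((\dot b_i)_q)\}$ for all $i$, marking every grafted node as non-branching. Because $p\in\PP^*$ forces the branches pairwise distinct already below $\len(t)$, these $\gamma$ tails are pairwise disjoint end-extensions, so the result is a legal condition $\bar p\leq p$ with $(\dot b_i)_q\subseteq(\dot b_i)_{\bar p}$ for every $i$ by Lemma~\ref{decided part of branch}; the compatibility of the two conditions and the complete-subforcing claim (the analogue of Lemma~\ref{complete subforcing for perfect set}) then follow verbatim.

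Finally I would verify that $\QQ_0$ forces $\QQ/\QQ_0$ to be non-atomic and ${<}\kappa$-closed of size $\kappa$, and conclude by Lemma~\ref{forcing equivalent to Add(kappa,1)}. Closure and size follow from the identity $[(\QQ/\QQ_0)^\pi]^{G_0}=\{(\sigma_p,q)\in\QQ\mid\sigma_p\subseteq\sigma^{G_0}\}$ exactly as in Lemma~\ref{quotient forcing of a branch}. For non-atomicity I would apply Lemma~\ref{many distinct branches are mutually generic} to $\gamma+1$ branches: fixing a name $\dot b_\gamma$ for a branch of $T_{\dot G}$ with $\one_\PP\Vdash\dot b_\gamma\neq\dot b_i$ for all $i<\gamma$, the induced $\Add{\kappa}{\gamma+1}$-generic filter exhibits $\dot b_\gamma$ as $\Add{\kappa}{1}$-generic over $V[\sigma^{\dot G}]$ inside $V[\dot G]$, so the quotient adds a new subset of $\kappa$ and therefore has no atoms.

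The main obstacle will be the projection step, where several tails must be glued onto $(t,s)$ at once, and I expect the only real work is the density argument that justifies building $\PP^*$ with pairwise distinct decided parts. Without that requirement two tails could share the node $(\dot b_i)_p=(\dot b_j)_p$ at level $\len(t)$ and split immediately, making that node branching, so that $\bar p$ would fail to force $\dot b_i\supseteq(\dot b_i)_q$. Securing pairwise distinctness below $\len(t)$ for all fewer-than-$\kappa$ pairs simultaneously, using ${<}\kappa$-closure rather than the single $\omega$-chain of Lemma~\ref{dense subset of P}, is the one place where the bookkeeping genuinely differs from the single-branch case; the remaining verifications transfer mechanically.
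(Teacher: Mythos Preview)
Your proposal is correct and follows essentially the same approach as the paper, which simply says the proof is analogous to that of Lemma~\ref{quotient forcing of a branch} with $\sigma$ in place of $\dot b$ and with correspondingly modified $\QQ$, $\QQ_0$. You have supplied the details the paper omits, in particular the pairwise-distinctness clause in $\PP^*$ that makes the simultaneous grafting step go through, and the use of Lemma~\ref{many distinct branches are mutually generic} with $\gamma+1$ branches for non-atomicity; both are exactly the right refinements.
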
 
\begin{proof} 
%\todo{is this ok?} 
The proof is analogous to the proof of Lemma \ref{quotient forcing of a branch}, but instead of working with a name $\dot{b}$ for a branch in $T_{\dot{G}}$, we work with the name $\sigma$ for a sequence of branches in $T_{\dot{G}}$. As in the definitions of $\QQ$, $\QQ_0$ before Lemma \ref{compatible conditions for perfect set}, we can define variants of these forcings with respect to $\sigma$ instead of $\dot{b}$ and thus obtain the required properties as in the proofs of Lemma \ref{compatible conditions for perfect set} and Lemma \ref{complete subforcing for perfect set}. 
\end{proof} 

The previous two lemmas imply that $\dot{b}^G$ and $\sigma^G$ have $\Add{\kappa}{1}$ as a quotient in $V[G]$ for every $\PP$-generic filter $G$ over $V$. 

\begin{lemma} \label{perfect set with good quotients} 
Suppose that $\lambda$ is an uncountable regular cardinal, $\mu>\lambda$ is inaccessible and 
$G$ is $\Add{\lambda}{1}$-generic over $V$. 
Then in $V[G]$, there is a perfect subtree $T$ of $\ltll$ such that for every $\gamma<\lambda$, every sequence $\langle x_i\mid i<\gamma\rangle$ of distinct branches of $T$ is $\Add{\lambda}{\gamma}$-generic over $V$ and has $\Add{\lambda}{1}$ as a quotient in $V[G]$. 
%each $x\in C$ is $\Add{\kappa}{1}$-generic over $V$ and has $\Add{\kappa}{1}$ as a quotient in $V[G]$. 
%the quotient forcing for $x$ is equivalent to $\Add{\kappa}{1}$. 
\end{lemma}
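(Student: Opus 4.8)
The plan is to run the entire development of this section with $\kappa:=\lambda$; under the running assumption $\lambda^{<\lambda}=\lambda$ this is legitimate and makes every earlier lemma available, while the inaccessible $\mu$ above $\lambda$ is not actually needed for this particular conclusion. First I would note that $\PP$ is non-atomic, ${<}\lambda$-closed and of size $\lambda$, so Lemma \ref{forcing equivalent to Add(kappa,1)} gives $\PP\eqsim\Add{\lambda}{1}$, and hence $\PP\simeq\Add{\lambda}{1}$ by Lemma \ref{equivalence of forcings is transitive}; in particular there is a forcing $\RR$ densely embedding into both. The given $\Add{\lambda}{1}$-generic filter $G$ over $V$ then induces an $\RR$-generic filter and through it a $\PP$-generic filter $G'$ over $V$ with $V[G']=V[G]$. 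Putting $T:=T_{G'}=\bigcup_{(t,s)\in G'}t\subseteq\ltll$ produces the required tree, since $\PP$ adds a perfect (binary-splitting, ${<}\lambda$-closed) subtree of $\ltll$.

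For the genericity clause I would fix $\gamma<\lambda$ and an arbitrary sequence $\langle x_i\mid i<\gamma\rangle$ of distinct branches of $T$. Each branch canonically determines the filter $g_i=\{s\in\ltll\mid s\subseteq x_i\}$, and Lemma \ref{many distinct branches are mutually generic}, read with $\kappa=\lambda$, says exactly that $\prod_{i<\gamma}g_i$ is $\Add{\lambda}{\gamma}$-generic over $V$. This is precisely the assertion that the sequence is $\Add{\lambda}{\gamma}$-generic over $V$, and nothing further is required here.

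The remaining clause, that $\langle x_i\mid i<\gamma\rangle$ has $\Add{\lambda}{1}$ as a quotient in $V[G]$, is where the real work lies and is the main obstacle. Lemma \ref{quotient forcing for a sequence of branches} is stated only for a name $\sigma$ that is forced \emph{by $\one_\PP$} to enumerate $\gamma$ distinct branches of $T_{\dot G}$, whereas a priori the given sequence comes only with some $\PP$-name $\tau$ with $\tau^{G'}=\langle x_i\mid i<\gamma\rangle$ together with a condition $p\in G'$ forcing $\tau$ to enumerate $\gamma$ distinct branches. To bridge this gap I would fix a canonical $\PP$-name $\rho$ that $\one_\PP$ forces to enumerate $\gamma$ distinct branches of $T_{\dot G}$ (available since $\one_\PP$ forces $T_{\dot G}$ to be perfect and hence binary splitting), and then mix, defining $\sigma$ by $\llbracket\sigma=\tau\rrbracket\geq p$ and $\llbracket\sigma=\rho\rrbracket\geq\neg p$ in $\BB(\PP)$. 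Then $\one_\PP$ forces $\sigma$ to enumerate $\gamma$ distinct branches of $T_{\dot G}$, while $\sigma^{G'}=\tau^{G'}=\langle x_i\mid i<\gamma\rangle$ because $p\in G'$.

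With such a $\sigma$ in hand, Lemma \ref{quotient forcing for a sequence of branches} yields that $\one_{\BB(\sigma)}$ forces $\BB(\PP)/\BB(\sigma)\eqsim\Add{\lambda}{1}$, and since $\eqsim$ implies $\simeq$ by Lemma \ref{equivalence of forcings is transitive}, the definition of having a forcing as a quotient is met: $\sigma^{G'}=\langle x_i\mid i<\gamma\rangle$ has $\Add{\lambda}{1}$ as a quotient in $V[G']=V[G]$, exactly as in the observation recorded just after that lemma. Thus the only delicate point is the mixing step that replaces an arbitrary sequence of branches by a name forced by $\one_\PP$ to enumerate distinct branches; once it is carried out, the two quotient lemmas of this section supply everything else with no new forcing-theoretic input.
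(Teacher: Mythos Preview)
Your proof is correct and follows the same approach as the paper: translate the $\Add{\lambda}{1}$-generic into a $\PP$-generic via sub-equivalence, take $T=T_{G'}$, and invoke Lemmas \ref{many distinct branches are mutually generic} and \ref{quotient forcing for a sequence of branches}. The paper's proof is terser and simply cites these lemmas without spelling out the mixing step you describe; that step is implicitly handled earlier (see the line ``We can assume that $p_0=\one_\PP$\dots'' in the proof of Lemma \ref{distinct branches are mutually generic}) and is taken for granted by the remark following Lemma \ref{quotient forcing for a sequence of branches}, so your explicit treatment of it is a welcome clarification rather than a different argument. Your observation that the inaccessible $\mu$ plays no role is also correct---the paper's own proof never mentions it, and the genuine requirement is $\lambda^{<\lambda}=\lambda$, which is the hypothesis actually used when Lemma \ref{forcing equivalent to Add(kappa,1)} is applied.
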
 
\begin{proof} 
Since $\PP$ is sub-equivalent to $\Add{\lambda}{1}$, there is a $\PP$-generic filter $H$ over $V$ with $V[G]=V[H]$. 
%We work in 
Let $C=[T_H]^{V[H]}$, where $T_H$ is the tree given after Definition \ref{definition of the forcing adding a perfect tree}. 

We first assume that $\gamma=1$. 
By Lemma \ref{distinct branches are mutually generic}, every $x\in C$ is $\Add{\lambda}{1}$-generic over $V$ and by Lemma \ref{quotient forcing of a branch}, every $x\in C$ has $\Add{\lambda}{1}$ as a quotient in $V[G]$.  

The proof is analogous for arbitrary $\gamma<\lambda$. By Lemma \ref{many distinct branches are mutually generic}, any sequence $\vec{x}=\langle x_i\mid i<\gamma\rangle$ of distinct elements of $C$ is $\Add{\lambda}{\gamma}$-generic over $V$ 
and by 
%it can be shown as in the proof of 
Lemma \ref{quotient forcing for a sequence of branches}, 
$\vec{x}$ has  $\Add{\lambda}{1}$ as a quotient in $V[G]$. 
% shows that there is a name $\sigma$ for $\vec{x}$ such that $\one_{\BB(\sigma)}$ forces that $\BB(\Add{\kappa}{1})/\BB(\sigma)$ is equivalent to $\Add{\kappa}{1}$. 
\end{proof} 
%\begin{question} 
%Does \todo{to do} the same proof show that $Col(\kappa,\gamma)$ ($Col(\kappa,<\lambda)$) adds a perfect set of factoring $Col(\kappa,\gamma)$-generics ($Col(\kappa,<\lambda)$-generics)? 
%\end{question} 

%\todo[inline]{STATE A VERSION FOR ${<}\kappa$ many branches?} 

In the next proof, we will  use the following notation $\Col{\lambda}{X}$ for subforcings of the Levy collapse $\Col{\lambda}{{<}\mu}$. 
Suppose that $\lambda<\mu$ are cardinals and $X\subseteq\mu$ is not an ordinal (to avoid a conflict with the notation for the standard collapse). We then write 
$$ \Col{\lambda}{X}=\{p\in \Col{\lambda}{{<}\mu}\mid \dom{p}\subseteq X\times \lambda\}.$$ 
Let further 
$G_X=G\cap \Col{\lambda}{X}$ and $G_\gamma=G\cap\Col{\lambda}{{<}\mu}$ for any $\Col{\lambda}{{<}\mu}$-generic filter $G$ over $V$ and any $\gamma<\mu$. 
%Let $Col(\kappa,S)=\{p\in Col(\kappa,<\lambda)\mid dom(p)\subseteq S\times \kappa\}$ and $G_S=G\cap Col(\kappa,S)$ for $S\subseteq\lambda$. Since $Col(\kappa,\mu)$ absorbs all $<\kappa$-closed forcings of size $\leq\mu$ if $\mu^{<\kappa}=\mu$ by [Fuchs: Maximality principles:..., Lemma 2.2], $Col(\kappa,(\mu,\lambda))$ is forcing equivalent to $Col(\kappa,<\lambda)$ for all $\mu<\lambda$. 

The notation $\Col{\lambda}{X}$ will be used for intervals $X$, for which we use the standard notation 
$$(\alpha,\gamma)=\{\beta\in \Ord\mid \alpha<\beta<\gamma\}$$ 
$$[\alpha,\gamma)=\{\beta\in \Ord\mid \alpha\leq\beta<\gamma\}.$$ 

Moreover, we will use the following consequence of Lemma \ref{forcing equivalent to Add(kappa,1)} in the next proof. 
Suppose that $\lambda$ is regular and $\mu>\lambda$ is inaccessible. 
%If $\mu$ is a regular cardinal with $\kappa\leq \mu<\lambda$, $\mu^{<\kappa}=\mu$,\todo{we don't need $\mu$} 
If $\RR$ is a separative ${<}\lambda$-closed forcing of size ${<}\mu$ and $\gamma<\mu$ is an ordinal, then $\RR\times \Col{\lambda}{{<}\mu}$ and $\Col{\lambda}{[\gamma,\mu)}$ are sub-equivalent. 

\begin{theorem} \label{perfect subsets of definable sets} 
Suppose that $\lambda$ is an uncountable regular cardinal, $\mu>\lambda$ is inaccessible and $G$ is $\Col{\lambda}{{<}\mu}$-generic over $V$. 
Then in $V[G]$, every subset of $\ltl$ that is definable from an element of ${}^{\lambda}V$ has the perfect set property. 
% in every extension by $Col(\kappa,<\lambda)$ for $\lambda>\kappa$ inaccessible. 
\end{theorem}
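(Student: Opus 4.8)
The plan is to run Solovay's argument for the perfect set property, replacing the naive perfect set of generics by the good-quotient perfect set produced in Lemma \ref{perfect set with good quotients}; as the discussion after Lemma \ref{a forcing with bad quotient} indicates, this replacement is exactly what is needed to push the argument past the $\bf\Sigma^1_1$ level, where non-closed quotient forcings obstruct the classical proof. First I would reduce to a ground-model parameter. Write $A=A^\lambda_{\varphi,z}$ as in Definition \ref{definition of Aphi}, with $z\in{}^\lambda V$ in $V[G]$. Since $\mu$ is inaccessible, $\Col{\lambda}{{<}\mu}$ has the $\mu$-chain condition, so each $z(\alpha)$ is decided by an antichain of size ${<}\mu$; as $\mu$ is regular and $\lambda<\mu$, the union of the supports of these antichains is a set of size ${<}\mu$, hence bounded below $\mu$, say by $\delta$, so $z\in V[G_\delta]$. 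Factoring $\Col{\lambda}{{<}\mu}\cong\Col{\lambda}{{<}\delta}\times\Col{\lambda}{[\delta,\mu)}$ and using that $\Col{\lambda}{[\delta,\mu)}$ is sub-equivalent to $\Col{\lambda}{{<}\mu}$ over $V[G_\delta]$ (by the absorption fact stated before the theorem, $\mu$ remaining inaccessible over $V[G_\delta]$), I may rename $V[G_\delta]$ as $V$ and assume $z\in V$.

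Next I would set up a local criterion along a good perfect set. Working over the factor of $\Col{\lambda}{{<}\mu}$ sub-equivalent to $\Add{\lambda}{1}$ (such a factor exists by absorption), fix the perfect tree $T$ of mutually generic branches from Lemma \ref{perfect set with good quotients}. For a branch $x\in[T]$, Lemma \ref{quotient forcing of a branch} gives that $V[x]\subseteq V[G]$ with $V[G]$ an extension of $V[x]$ by $\Add{\lambda}{1}$ followed by the collapse, hence by a forcing sub-equivalent to the homogeneous $\Col{\lambda}{{<}\mu}$. Since $x,z\in V[x]$, homogeneity yields
\[
x\in A\iff\one_{\Col{\lambda}{{<}\mu}}\Vdash^{V[x]}\varphi(\check x,\check z).
\]
Because "$|\dot A|>\check\lambda$" is definable from $z\in V$, homogeneity of $\Col{\lambda}{{<}\mu}$ also forces $\one$ to decide $|\dot A|$; so either $|A|\le\lambda$ and we are done, or $\one\Vdash|\dot A|>\check\lambda$ and I must produce a perfect subset.

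Combining the displayed equivalence with the forcing theorem over $V$ shows that for $x\in[T]$ one has $x\in A\iff\exists\alpha<\lambda\,(x{\upharpoonright}\alpha\in U)$, where $U=\{p\in\Add{\lambda}{1}\mid p\Vdash^V[\one\Vdash\varphi(\dot x_{\mathrm{gen}},\check z)]\}\in V$ and $\dot x_{\mathrm{gen}}$ names the $\Add{\lambda}{1}$-generic branch. Thus $[T]\cap A$ is precisely the set of branches of $T$ meeting $U$: if some node of $T$ lies in $U$, the branches through it form a perfect subset of $A$ by perfectness of $T$, and more generally, if $A$ contains any $\Add{\lambda}{1}$-generic real over $V$, then $U\neq\emptyset$, and applying Lemma \ref{perfect set with good quotients} below a condition $p\in U$ produces a perfect tree all of whose branches extend $p$ and therefore lie in $A$. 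This is the favorable direction and is clean; the role of the good quotients is to guarantee that the quotient attached to each branch is ${<}\lambda$-closed and homogeneous, which is what makes the displayed equivalence available.

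The main obstacle is the converse: showing that a definable set of size ${>}\lambda$ must in fact yield such a perfect subset. The natural route is an unfolding argument—one builds a splitting tree of conditions and names for elements of $A$ (splitting is always possible since $\one$ forces $|\dot A|>\lambda$ below every condition) and must then realize a whole perfect set of these elements simultaneously inside $A$ in $V[G]$, certifying membership of every branch, including new limit branches, through the local criterion. The delicate point, and precisely the reason the good quotients of Lemma \ref{perfect set with good quotients} are indispensable rather than the naive tree, is that the quotient forcings attached to the constructed branches must remain ${<}\lambda$-closed: Lemma \ref{a forcing with bad quotient} exhibits quotients that fail to be ${<}\lambda$-closed, and this is exactly where a direct generalization of Solovay's proof breaks down. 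Equivalently, by Lemma \ref{characterization of perfect set property by game} it suffices to prove that the perfect set game $F_\lambda(A)$ is determined in $V[G]$, and the analysis of $[T]\cap A$ via $U$ together with the unfolding is meant to supply the winning strategy for the relevant player.
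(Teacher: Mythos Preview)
Your reduction to $z\in V$ is fine, and the homogeneity equivalence you display is correct for branches $b$ of the good tree, since Lemma \ref{perfect set with good quotients} guarantees that $V[G]$ is a $\Col{\lambda}{{<}\mu}$-extension of $V[b]$. The forward implication---if $U\neq\emptyset$ then a good tree below any $p\in U$ gives a perfect subset of $A$---is also correct.

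The gap is exactly where you place it, and your sketch does not close it. There are two problems. First, the assertion ``if $A$ contains any $\Add{\lambda}{1}$-generic real over $V$ then $U\neq\emptyset$'' tacitly assumes that the quotient of that generic inside $V[G]$ is sub-equivalent to $\Col{\lambda}{{<}\mu}$; without this your homogeneity step does not apply, and Lemma \ref{a forcing with bad quotient} is precisely the warning that arbitrary $\Add{\lambda}{1}$-generics need not have well-behaved quotients. Second, and more fundamentally, you have not argued that $|A|>\lambda$ forces $A$ to contain any $\Add{\lambda}{1}$-generic over $V$ at all---you only get elements of $A$ outside $V$. The unfolding you describe builds a tree of $\Col{\lambda}{{<}\mu}$-conditions and names, but its limit branches are then evaluated by the full collapse generic $G$, not by $\Add{\lambda}{1}$-generics with controlled quotients, so the $U$-criterion gives no grip on them.

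The paper's proof avoids the $U$-criterion and does not try to put the generic real itself into $A$. Instead it produces, over a suitable intermediate model $V[G_{\nu+1}]$, an $\Add{\lambda}{1}$-name $\sigma$ for a \emph{new} element that is forced into $A$ in every further $\Col{\lambda}{{<}\mu}$-extension. The existence of such a $\sigma$ is the first Claim in the proof, established by a rearrangement of generics: one chooses $\nu$ so that $A$ acquires a new element between $V[G_\gamma]$ and $V[G_\nu]$, observes that $\Col{\lambda}{[\gamma,\nu+1)}$ becomes sub-equivalent to $\Add{\lambda}{1}$ once $\nu$ is collapsed, and swaps a fresh $\Add{\lambda}{1}$-generic $g$ for $G_{[\gamma,\nu+1)}$ to transport that new element into $V[G_{\nu+1}\times g]\setminus V[G_{\nu+1}]$. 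Only then does the good tree enter: replacing $g$ by a $\PP$-generic, each branch $b$ is $\Add{\lambda}{1}$-generic with $\Add{\lambda}{1}$-quotient (Lemma \ref{quotient forcing of a branch}), so the full extension is a $\Col{\lambda}{{<}\mu}$-extension of $V[G_{\nu+1}][b]$ and $\sigma^b\in A$. The perfect subset of $A$ is then extracted from the injective continuous map $b\mapsto\sigma^b$, not from the set of branches directly; this is why framing the problem via $\dot{x}_{\mathrm{gen}}$ and $U$ misaligns with what is actually needed.
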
 
\begin{proof} 
%For \todo{check for $\mathsf{PSP}$ for sets definable from parameters in $Ord^{\kappa}$} $\kappa=\omega$ this was proved by Solovay. Suppose that $\kappa$ is uncountable. 
%and $G$ is $Col(\kappa,<\lambda)$-generic over $V$. 
Suppose that $\varphi(x,y)$ is a formula with two free variables and $z\in \Ord^{\lambda}$. Using the set $A_{\varphi,z}^\lambda$ given in Definition \ref{definition of Aphi}, let 
$$(A_{\varphi,z}^\lambda)^{V[G]}=\{x\in (\ltl)^{V[G]}\mid V[G]\vDash \varphi(x,z)\}.$$ 
Moreover, for any 
%transitive 
subclass $M$ of $V[G]$, let 
$$A^M=(A_{\varphi,z}^\lambda)^{V[G]}\cap M.$$ 
To prove the perfect set property for $A^{V[G]}$ in $V[G]$, suppose that in $V[G]$, $A^{V[G]}$ has size $\lambda^+$. 
%$|A|^{V[G]}=(\kappa^+)^{V[G]}$. 
We will show that $A$ has a perfect subset in $V[G]$. 
%We will write $G_\gamma=G\cap \Col{\kappa}{{<}\gamma}$ for any $\gamma<\lambda$. 
%$\gammaG\cap $ 
% in $V[G]$. 
%We will use $A$ as a term for the set defined by $\phi$ from $\alpha$ and $y$ in $V[G]$ and in intermediate extensions which contain $\alpha$ and $y$. 

Since $\Col{\lambda}{{<}\mu}$ has the $\mu$-cc, there is some $\gamma<\mu$ with $z\in V[G_\gamma]$. 
Since $A^{V[G]}$ has size $\lambda^+$ in $V[G]$, there is some ordinal $\nu$ with $\gamma<\nu<\mu$ and $A^{V[G_\gamma]}\neq A^{V[G_{\nu}]}$. 
%are cofinally many ordinals $\delta$ with $\gamma<\delta<\mu$ with $A^{V[G_\gamma]}\neq A^{V[G_{\delta}]}$. 
% since otherwise the size of $A^{V[G]}$ in $V[G]$ is at most $\kappa$ and this contradicts the assumption. 
Moreover, it follows from the definition of $A^M$ that this inequality 
%$A^{V[G_\gamma]}\neq A^{V[G_{\delta}]}$ 
remains true when $\nu$ 
%\geq \gamma$ 
increases. Let $\nu$ be a cardinal with $\gamma<\nu<\mu$, $\nu^{<\lambda}=\nu$ and $A^{V[G_\gamma]}\neq A^{V[G_{\nu}]}$. 
%$$A^{V[G_\gamma]}\neq A^{V[G_{\mu+1}]}.$$ 
%Such ordinals exist, since otherwise the size of $A^{V[G]}$ in $V[G]$ can be at most $\kappa$, contradicting the assumption. 
%Moreover, we can assume that $\mu$ is a regular cardinal with $\mu^{<\kappa}=\mu$ by increasing $\mu$. 
%Since $|A|^{V[G]}=(\kappa^+)^{V[G]}$, there is a regular cardinal $\mu$ with $\gamma<\mu<\lambda$ and $\mu^{<\kappa}=\mu$ such that $A^{V[G_\gamma]}\neq A^{V[G_\mu]}$. 

%\todo{is this used?} Since the forcing $\Col{\kappa}{{<}\mu+1}$ 
%is separative, ${<}\kappa$-closed, has size $\mu$ and collapses $\mu$ to have size $\kappa$,  
%it is equivalent to $\Col{\kappa}{\mu}$ by Lemma \ref{forcing equivalent to Add(kappa,1)}. 

The forcing $\Col{\lambda}{[\nu+1,\mu)}$ is sub-equivalent to $\Add{\lambda}{1}\times \Col{\lambda}{{<}\mu}$ by the remarks before the statement of this theorem. 
Hence there is an $\Add{\lambda}{1}\times \Col{\lambda}{{<}\mu}$-generic filter $g\times h$ over $V[G_{\nu+1}]$ with $V[G]=V[G_{\nu+1}\times g\times h]$. 

\begin{claim*} 
$A^{V[G_{\nu+1}]}\neq A^{V[G_{\nu+1}\times g]}$. 
\end{claim*} 
\begin{proof} 
We will prove the claim by writing the extension $V[G]$ with the generic filters added in a different order. 
% and we will use this to prove the claim by factoring the extension $V[G]$ in the following different ways. 
For the original generic filter $G$, we have 
$$ V[G]=V[G_\gamma\times G_{[\gamma,\nu+1)}\times G_{(\nu,\mu)}],$$ 
but we can also write $V[G]$ as 
$$V[G]= V[G_\gamma\times G_{[\gamma,\nu+1)}\times g\times h]$$ 
by the choice of $g$, $h$ above. 

Since $\nu^{<\lambda}=\nu$ and $\nu$ has size $\lambda$ in $V[G_{\nu+1}]$, $\Col{\lambda}{[\gamma,\nu+1)}$ is a non-atomic ${<}\lambda$-closed forcing of size $\lambda$. Hence it is sub-equivalent to $\Add{\lambda}{1}$ in $V[G_{\nu+1}]$ 
%in $V[G_{\mu+1}]$ 
by Lemma \ref{forcing equivalent to Add(kappa,1)}.  
%and non-atomic, and hence it is equivalent to $\Add{\kappa}{1}$. 
%This allows us to work with a $\Col{\kappa}{[\gamma,\mu+1)}$-generic filter instead of an $\Add{\kappa}{1}$-generic filter. 
% since it is $<\kappa$-closed and has size $\kappa$ in $V[G_{\mu+1}]$. 
It follows that there is a $\Col{\lambda}{[\gamma,\nu+1)}$-generic filter $k$ over $V[G_{\nu+1}]$ with $$V[G_{\nu+1}\times g]=V[G_{\nu+1}\times k].$$  
Hence we can write $V[G]$ as 
$$V[G]= V[G_\gamma\times G_{[\gamma,\nu+1)}\times k\times h]$$ 
by replacing $g$ with $k$ in the factorization above. By changing the order, we trivially obtain 
$$V[G]= V[G_\gamma\times k \times G_{[\gamma,\nu+1)}\times h]$$ 

We have $A^{V[G_\gamma]}\neq A^{V[G_{\nu+1}]}$ by the choice of $\nu$. By the last factorization of $V[G]$, this implies that 
$$A^{V[G_{\gamma}]}
%neq A_{\varphi,y}^{V[G]}\cap V[G_\gamma\times k]=
\neq A^{V[G_\gamma\times k]}$$
by homogeneity of the forcings. 
%Since $\Coll{\kappa}{[\gamma,\mu+1)}$ and $\Coll{\kappa}{(\mu,\lambda)}$ are homogeneous, this remains true when $G_{[\gamma,\mu+1)}$ is replaced with $k$ and $G_{(\mu,\lambda)}$ is replaced with $G_{[\gamma,\mu+1)}\times h$. 
%any other $\Coll{\kappa}{(\gamma,\lambda)}$-generic filter. 
%Hence $A^{V[G_{\gamma}]}\neq A_{\varphi}^{V[G]}\cap V[G_\gamma\times k]=A^{V[G_\gamma\times k]}$. 
Hence we can find some $x\in A^{V[G_\gamma\times k]}\setminus A^{V[G_{\gamma}]}=A^{V[G_\gamma\times k]\setminus V[G_{\gamma}]}$. 
In particular, $x\notin V[G_{\gamma}]$. 
%A^{V[G_\gamma\times k]}=
%collapse forcings are homogeneous, this holds for any $Col(\kappa,(\gamma,\mu+1))$-generic extension of $V[G_{\gamma}]$. 
%Hence $A^{V[G_\gamma]}\neq A^{V[G_{\gamma}\times h]}$. 
Since the filters $G_{[\gamma,\nu+1)}$ and $k$ are mutually generic over $V[G_{\gamma}]$ by the choice of $k$, we have $V[G_{\nu+1}]\cap V[G_{\gamma}\times k]= V[G_{\gamma}]$. 
However, this implies that $x$ cannot be in $V[G_{\nu+1}]$, since it is not in $V[G_{\gamma}]$. 
Since we also have  
$$x\in V[G_\gamma\times k]\subseteq V[G_{\nu+1}\times k]=V[G_{\nu+1}\times g],$$ 
it now follows that $x\in V[G_{\nu+1}\times g]\setminus V[G_{\nu+1}]$ and thus $x\in A^{V[G_{\nu+1}\times g]}\setminus A^{V[G_{\nu+1}]}$, proving the claim. 
%$$x\in A^{V[G_\gamma\times k]}\setminus A^{V[G_{\mu+1}]}\subseteq A^{V[G_{\mu+1}\times k]}\setminus A^{V[G_{\mu+1}]} = A^{V[G_{\mu+1}\times g]}\setminus A^{V[G_{\mu+1}]}.$$  
%where $[\gamma,\delta)=\{\alpha\in Ord\mid \gamma\leq\alpha<\delta\}$. 
%The forcing $\Add{\kappa}{1}\times \Coll{\kappa}{(\gamma,\mu+1)}$ is equivalent to $\Coll{\kappa}{(\gamma,\mu+1)}$ and 
\end{proof} 

We have 
$$V[G]= V[G_{\nu+1}\times g\times h]$$ 
by the choice of $g$, $h$ above. 
We now choose an $\Add{\lambda}{1}$-name $\sigma$ witnessing the previous claim. More precisely, $\sigma$ is an $\Add{\lambda}{1}$-name in $V[G_{\nu+1}]$ for a new element of $\ltl$ such that $\one_{\Add{\lambda}{1}}$ forces that $\sigma\in A_{\varphi,y}$ in every further $\Col{\lambda}{{<}\mu}$-generic extension. Such a name exists by the maximality principle applied to $\Add{\lambda}{1}$. 
%Suppose that $\sigma$ is an $\Add{\kappa}{1}$-name for a new element of $A_{\varphi}^{V[G]}$ over $V[G_{\mu+1}]$. 
%Since $\Add{\kappa}{1}$ is homogeneous, we can choose $\sigma$ so that is forced by $\one_{\Add{\kappa}{1}}$ that $\sigma$ is an element of $A_\varphi$ in all further $\Col{\kappa}{<\lambda}$-generic extensions, by the maximality principle. 

Since the forcing $\PP$ given in Definition \ref{definition of the forcing adding a perfect tree} is sub-equivalent to $\Add{\lambda}{1}$, we can replace the $\Add{\lambda}{1}$-generic filter $g$ with a $\PP$-generic filter. 
Since the definition of $\PP$ is absolute between models with the same $V_\lambda$, the definition of $\PP$ yields the same forcing in $V$ and $V[G_{\nu+1}\times h]$. 
Let $g_\PP$ be a $\PP$-generic filter over $V[G_{\nu+1}\times h]$ with 
$$V[G_{\nu+1}\times g_\PP\times h]=V[G_{\nu+1}\times g\times h].$$ 
%Suppose that $\bar{g}\times \bar{h}$ is $\mathbb{P}\times \Col{\kappa}{<\lambda}$-generic over $V[G_{\mu+1}]$ with $V[G]=V[G_{\mu+1}\times \bar{g}\times\bar{h}]$. 
%\todo{change} The definition of $\PP$ is absolute between $V[G_{\mu+1}]$ and $V[G_{\mu+1}\times h]$. 

\begin{claim*} 
In $V[G]$, the set $[T_{g_\PP}]$ is a perfect subset of $A^{V[G]}$. 
\end{claim*} 
\begin{proof} 
Since $T_{g_\PP}$ is a perfect tree and therefore $[T_{g_\PP}]$ is a perfect set, it is sufficient to show that it is a subset of $A^{V[G]}$. 
%We have that e

Every branch $b$ in $T_{g_\PP}$ 
%,c$ are distinct branches in $T_{\bar{g}}$. 
is $\Add{\lambda}{1}$-generic over $V[G_{\nu+1}\times h]$ by Lemma \ref{distinct branches are mutually generic} applied to forcing with $\PP$ over the model $V[G_{\nu+1}\times h]$. 
%Note that we force with the forcing $\PP$ as defined $V[G_{\mu+1}\times h]$, but since the definition of $\PP$ is absolute between models with the same $V_\kappa$, this is equal to the forcing $\PP$ defined in $V$. 
%in definition of $\PP$ is absolute between $V[G_{\mu+1}]$ and $V[G_{\mu+1}\times h]$. 
Moreover, every branch $b$ in $T_{g_\PP}$ has $\Add{\lambda}{1}$ as a quotient in $V[G_{\nu+1}\times g_\PP \times h]$ over $V[G_{\nu+1}\times h]$ 
%$V[G_{\mu+1}\times h]$ 
%is equivalent to $\Add{\kappa}{1}$ 
by Lemma \ref{quotient forcing of a branch} applied to the same situation. 
%$V[G_{\mu+1}\times h]$. 
It follows that every branch $b$ in $T_{g_\PP}$ has $\Add{\lambda}{1}\times \Col{\lambda}{{<}\mu}$ and hence also $\Col{\lambda}{{<}\mu}$ as a quotient in $V[G]$ over $V[G_{\nu+1}\times h]$. 
%over $V[G_{\mu+1}]$ is equivalent to $\Add{\kappa}{1}\times \Col{\kappa}{{<}\lambda}$ and hence equivalent to $\Col{\kappa}{{<}\lambda}$. 

Since we identify the branch $b$ with an $\Add{\lambda}{1}$-generic filter over $V[G_{\nu+1}\times h]$ that is given by Lemma \ref{distinct branches are mutually generic}, we will also write $\sigma^b$. 
By the choice of $\sigma$ and by the previous statements, we have 
$$\sigma^b\in (A_{\varphi,y}^\kappa)^{V[G_{\nu+1}\times g_\PP\times h]}=A^{V[G]},$$ 
proving the claim. 
\end{proof} 
The last claim completes the proof of Theorem \ref{perfect subsets of definable sets}, since the set $[T_{g_\PP}]$ witnesses the perfect set property of $A^{V[G]}$. 
\end{proof} 

From the last result, we immediately obtain the consistency of the perfect set property for all subsets of $\ltl$ with $\mathsf{DC}_\lambda$. 
%\todo{say more?} 
For instance, it is consistent relative to the existence of an inaccessible cardinal that this is the case in the $\lambda$-Chang model $\mathsf{C}^{\lambda}= L(\Ord^{\lambda})$. 
We further obtain the following global version of the perfect set property. 

\begin{theorem} \label{perfect set property at all regulars} 
Suppose that there is a proper class of inaccessible cardinals. Then there is a class generic extension 
%$V[G]$ 
of $V$ in which 
%such that in $V[G]$, 
for every infinite regular cardinal $\lambda$, the perfect set property holds for every subset of $\ltl$ that is definable from an element of ${}^{\lambda}V$. 
%$\mathsf{PSP}^{\kappa}_{od}$ holds for all infinite regular cardinals $\kappa$ in a class forcing extension. 
\end{theorem}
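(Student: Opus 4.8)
The plan is to obtain the model by a reverse Easton class iteration of Levy collapses driven by the inaccessibles of $V$, and then to reduce the global statement, one regular cardinal at a time, to the local Theorem \ref{perfect subsets of definable sets}. Enumerate the inaccessible cardinals of $V$ increasingly as $\langle \mu_\alpha \mid \alpha\in\Ord\rangle$ and put $\mu_{-1}=\omega$. I would take $\PP$ to be the Easton-support iteration of length $\Ord$ whose nontrivial stages are indexed by the $\mu_\alpha$ with $\alpha\geq -1$, the stage at $\mu_\alpha$ forcing with $\Col{\mu_\alpha}{{<}\mu_{\alpha+1}}$ as computed at that point (so the first stage is $\Col{\omega}{{<}\mu_0}$). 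As is standard for such iterations, initial segments are small (the part of $\PP$ below $\mu_\alpha$ has size ${\leq}\mu_\alpha$), tails are highly closed and weakly homogeneous (the part above stage $\mu_\alpha$ is ${<}\mu_{\alpha+1}$-closed), and $\PP$ is a tame, $\ZFC$-preserving, definable class forcing. In a generic extension $V[G]$ the infinite regular cardinals are exactly $\omega$ together with the cardinals $\mu_\alpha$, and the stage-$\mu_\alpha$ collapse makes $\mu_{\alpha+1}$ equal to $(\mu_\alpha)^+$.

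Fix such a $G$ and an infinite regular cardinal $\lambda=\mu_\alpha$ of $V[G]$, and let $\mu=\mu_{\alpha+1}$; the case $\lambda=\omega$ is handled identically, using Solovay's theorem \cite[Theorem 2]{MR0265151} in place of Theorem \ref{perfect subsets of definable sets}. Write $W=V[G{\upharpoonright}\mu_\alpha]$ for the extension by the stages of $\PP$ below $\mu_\alpha$. Since the forcing producing $W$ has size ${\leq}\mu_\alpha<\mu$, the cardinal $\mu$ is still inaccessible in $W$, and the stages below $\mu_\alpha$ make $\lambda$ a regular cardinal with $\lambda^{<\lambda}=\lambda$ in $W$. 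The stage-$\mu_\alpha$ forcing is exactly $\Col{\lambda}{{<}\mu}$ over $W$; writing $g$ for its generic filter, we have $V[G]=W[g][G^{>\alpha}]$, where $G^{>\alpha}$ is generic for a forcing that is ${<}\mu$-closed, hence (as $\mu=\lambda^+$ in $W[g]$) ${<}\lambda^+$-closed, and weakly homogeneous over $W[g]$.

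Let $A=(A^\lambda_{\varphi,z})^{V[G]}$ be a subset of $\ltl$ that is definable in $V[G]$ from a parameter $z\in{}^\lambda V$. As the tail $G^{>\alpha}$ is ${<}\lambda^+$-closed it adds no new $\lambda$-sequences, so $(\ltl)^{V[G]}=(\ltl)^{W[g]}$, and of course $z\in W\subseteq W[g]$. Using that $\PP$ is a definable class iteration with a definable forcing relation and that the tail is weakly homogeneous, I would check that for every $x\in(\ltl)^{W[g]}$ one has $x\in A$ if and only if $\one$ forces $\varphi(\check x,\check z)$ in the tail forcing over $W[g]$. This presents $A$ as a subset of $(\ltl)^{W[g]}$ definable over $W[g]$ from $z$ (together with the ordinal $\mu_\alpha$), so Theorem \ref{perfect subsets of definable sets}, applied with ground model $W$ and collapse $\Col{\lambda}{{<}\mu}$, yields that $A$ has the perfect set property in $W[g]$. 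It remains to transfer this to $V[G]$: if $A$ has a perfect subset $[T]$ in $W[g]$, then since the ${<}\lambda^+$-closed tail adds no new branches and preserves the ${<}\lambda$-closure and the splitting of $T$, the set $[T]$ is still a perfect subset of $A$ in $V[G]$; and if $|A|\leq\lambda$ in $W[g]$, then, as the tail preserves $\lambda^+$ and fixes $A$, still $|A|\leq\lambda$ in $V[G]$.

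The main obstacle is the class-forcing bookkeeping underlying the reduction: one must arrange a single iteration all of whose tails are simultaneously sufficiently closed, weakly homogeneous, and tame, so that over each intermediate model $W[g]$ the relation $\one\Vdash_{\text{tail}}\varphi(\check x,\check z)$ is definable; it is precisely this definability, combined with homogeneity and the closure of the tail, that converts the $V[G]$-definable set $A$ into a set definable over $W[g]$ to which the already-proved local theorem applies.
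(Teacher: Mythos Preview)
Your overall strategy is the same as the paper's---build a class-length Easton-support iteration of Levy collapses, factor at each regular cardinal, and invoke Theorem~\ref{perfect subsets of definable sets} (or Solovay's theorem at $\omega$) for the local step, using closure and homogeneity of the tail to transfer the conclusion upward. That part is fine.

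The gap is in your indexing of the iteration. You enumerate only the inaccessibles of $V$ and claim that in $V[G]$ ``the infinite regular cardinals are exactly $\omega$ together with the cardinals $\mu_\alpha$.'' This fails whenever the class of inaccessibles is not closed. If $\gamma$ is a limit ordinal with $\kappa=\sup_{\beta<\gamma}\mu_\beta$ singular (hence $\kappa<\mu_\gamma$), then your iteration collapses everything below $\kappa$ in the stages $\mu_\beta$ for $\beta<\gamma$, does nothing in the interval $(\kappa,\mu_\gamma)$, and resumes at $\mu_\gamma$ with $\Col{\mu_\gamma}{{<}\mu_{\gamma+1}}$. So all the $V$-cardinals in $(\kappa,\mu_\gamma]$ survive as distinct regular cardinals of $V[G]$, and for none of them (except $\mu_\gamma$ itself) do you have a Levy collapse of an inaccessible down to the successor. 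The perfect set property at, say, $(\kappa^+)^{V[G]}$ is simply not established, and indeed there is no reason to expect it to hold there.

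The paper avoids this by indexing the iteration along the \emph{closure} $C$ of the inaccessibles (together with $\omega$). At a singular limit point $\kappa_\alpha\in C$, the next stage collapses from $\nu_\alpha$---defined as the least regular cardinal $\geq\kappa_\alpha$ that survives $\PP_\alpha$---up to the next inaccessible $\kappa_{\alpha+1}$. This guarantees that the $\nu_\alpha$ exhaust \emph{all} infinite regular cardinals of the extension, and that at each one the relevant stage really is $\Col{\nu_\alpha}{{<}\kappa_{\alpha+1}}$ with $\kappa_{\alpha+1}$ inaccessible, so Theorem~\ref{perfect subsets of definable sets} applies. Your argument goes through once you make this adjustment.
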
 
\begin{proof} 
Let $C$ be the closure of the class of inaccessible cardinals and $\omega$ and let $\langle \kappa_{\alpha}\mid \alpha\geq 1\rangle$ be the order-preserving enumeration of $C$. 
%the class of all inaccessible cardinals, their limit points and the additional element $\omega$. 

We define the following Easton support iteration $\langle \mathbb{P}_{\alpha},\dot{\mathbb{P}}_{\alpha}\mid \alpha\in \Ord\rangle$ with bounded support at regular limits and full support at singular limits. 
% with bounded support at regular limits is and unbounded support at singular limits. 
Let $\mathbb{P}_0=\{\one\}$. 
%In every successor step \todo{add notation for successor steps} the next interval between successive inaccessible cardinals is collapsed. 
%If $\alpha=0$ or $\alpha$ is a successor, let $\dot{\PP}_{\alpha}$ be a $\PP_{\alpha}$-name for $\Col{\kappa_{\alpha}}{{<}\kappa_{\alpha+1}}$. 
If $\alpha>0$, let $\dot{\nu}_\alpha$ be a $\PP_\alpha$-name for the least regular cardinal $\nu\geq \kappa_\alpha$ 
%with $\kappa_\alpha\leq\nu<\kappa_{\alpha+1}$ 
that is not collapsed by $\PP_\alpha$ and let $\dot{\PP}_{\alpha}$ be a $\PP_{\alpha}$-name for $\Col{\dot{\nu}_\alpha}{{<}\kappa_{\alpha+1}}$. 
%Such a name $\dot{\nu}_\alpha$ exists, 
%Now suppose that $\alpha$ is a limit. If $\kappa_\alpha$ is regular and therefore inaccessible, let $\dot{\PP}_{\alpha}$ be a $\PP_{\alpha}$-name for $\Col{\kappa_{\alpha}}{{<}\kappa_{\alpha+1}}$. If $\kappa_\alpha$ is singular, let $\dot{\nu}_\alpha$ be a $\PP_\alpha$-name for the least regular cardinal $\nu$ with $\kappa_\alpha< \nu<\kappa_{\alpha+1}$ that is not collapsed by $\PP_\alpha$ and let $\dot{\PP}_{\alpha}$ be a $\PP_{\alpha}$-name for $\Col{\dot{\nu}_\alpha}{{<}\kappa_{\alpha+1}}$. 
%Suppose that $\gamma$ is a regular limit and let $\kappa=\cup_{\alpha<\gamma} \kappa_{\alpha}$. Let $\dot{\mathbb{P}}_{\gamma}=Col(\kappa_{\gamma},<\kappa_{\gamma+1})^{\check{}}$. 
%Suppose that $\gamma$ is a singular limit and let $\dot{\kappa}$ denote a $\mathbb{P}_{\gamma}$-name for the least cardinal $\kappa>\sup_{\alpha<\gamma}\kappa_{\alpha}$ which is not collapsed. Let $\dot{\mathbb{P}}_{\gamma}=Col(\kappa,<\kappa_{\gamma})^{\check{}}$. 
Moreover, we can assume that the names $\dot{\PP}_\alpha$ are chosen in a canonical fashion, so that the iteration is definable. 

Let $\PP$ be the iterated forcing defined by this iteration and let further $\dot{\PP}^{(\alpha)}$ be a $\PP_\alpha$-name for the tail forcing of the iteration at stage $\alpha$. 
% as in \todo{cite Jech: factor lemma}. 
It follows from the definition of the iteration that $\one_{\PP_\alpha} \Vdash_{\PP_\alpha} \dot{\PP}^{(\alpha)}$ is ${<}\kappa_\alpha$-closed 
%for all $\alpha\in\Ord$ 
and that $\PP_\alpha$ is strictly smaller than $\kappa_{\alpha+1}$ for all $\alpha\in\Ord$. 

Now suppose that $G$ is $\PP$-generic over $V$. 
We will write $G_\alpha=G\cap \PP_\alpha$ and $\PP^{(\alpha)}=(\dot{\PP}^{(\alpha)})^{G_\alpha}$ for $\alpha\in\Ord$. Moreover, let $\nu_\alpha=\dot{\nu}^{G_\alpha}$ for $\alpha\geq 1$ and $\vec{\nu}=\langle \nu_\alpha\mid \alpha\geq 1\rangle$. 

\begin{claim*} 
\begin{enumerate-(1)} 
\item 
If $\kappa_\alpha$ is inaccessible in $V$, then $\kappa_\alpha$ remains regular in $V[G]$, $\nu_\alpha=\kappa_\alpha$ and $\kappa_\alpha^{+V[G]}=\kappa_{\alpha+1}$. 
\item 
If $\kappa_\alpha$ is a singular limit in $V$, then $\nu_\alpha>\kappa_\alpha$ and $\kappa_\alpha^{+V[G]}=\nu_\alpha$. 
\end{enumerate-(1)} 
\end{claim*} 
\begin{proof} 
If $\kappa_\alpha$ is inaccessible in $V$, it follows from the $\Delta$-system lemma that $\PP_\alpha$ has the $\kappa_\alpha$-cc. The remaining claims easily follow from this and the fact that $\PP^{(\alpha+1)}$ is ${<}\kappa_{\alpha+1}$-closed. 

If $\kappa_\alpha$ is a singular limit in $V$, then 
$\nu_\alpha$ is the least regular cardinal strictly above $\kappa_\alpha$ in $V[G_\alpha]$ by the definition of $\dot{\nu}_\alpha$. 
Moreover, $\nu_\alpha$ is not collapsed in $V[G]$, since $\PP^{(\alpha+1)}$ is ${<}\kappa_{\alpha+1}$-closed. 
\end{proof} 
%\begin{claim*} 
%f $\kappa_\alpha$ is a singular limit, then $\nu_\alpha>\kappa_\alpha$ and $\kappa_\alpha^{+V[G]}=\nu_\alpha$. 
%\end{claim*} 
%\begin{proof} 
%It follows from the definition of $\nu_\alpha$ that $\nu_\alpha$ is the least regular cardinal strictly above $\kappa_\alpha$ in $V[G_\alpha]$. 
%Let $C^*=\{\nu_\alpha\mid \alpha\in\Ord\}$. 
%Let $C_0$ denote the class of cardinals $\kappa_\alpha$, where $\alpha=0$ or $\alpha$ is a successor. Let $C_1$ denote the class of ordinals $\nu_\alpha$, where $\alpha$ is a limit. 

By the previous claim, $\vec{\nu}$ enumerates the class of infinite regular cardinals in $V[G]$. 
%We first show that $C\subseteq \mathrm{Card}^{V[G]}$. It is sufficient to show that for all $\alpha$ such that $\kappa_\alpha$ is a regular cardinal in $V$, $\kappa_\alpha$ is also a regular cardinal in $V[G]$. 
%If $\alpha=0$, then $\kappa_0=\omega$ and hence the claim follows from the definition. 
%If $\alpha$ is a successor, then it is easy to see from the definition above that $\PP_\alpha$ has the $\kappa_\alpha$-cc and $\one_{\PP_\alpha} \Vdash_{\PP_\alpha} \dot{\PP}^{(\alpha)}$ is ${<}\kappa_\alpha$-closed and hence $\kappa_\alpha$ is a regular cardinal in $V[G]$. 
%Now suppose that $\alpha$ is a limit. Since we assume that $\kappa_\alpha$ is regular, we have $\kappa_\alpha=\alpha$. Using the $\Delta$-system lemma, it is easy to see that $\PP_\alpha$ has the $\kappa_\alpha$-cc. 
%This implies that $\one_{\PP_\alpha}\Vdash_{\PP_\alpha}\dot{\nu}_\alpha=\kappa_\alpha$. 
%Since moreover $\one \Vdash_{\PP_\alpha} \dot{\PP}^{(\alpha)}$ is ${<}\kappa_{\alpha+1}$-closed, 
%Hence $\PP$ preserves the regularity of $\kappa_\alpha$. 
%If $\alpha$ is a limit and $\kappa_\alpha$ is inaccessible, then $\kappa_\alpha=\alpha$, $\PP_\alpha$ has the $\kappa_\alpha$-c.c. and $\one \Vdash_{\PP_\alpha} \dot{\PP}^{(\alpha)}$ is $<\kappa_\alpha$-closed. Hence $\PP$ preserves the regularity of $\kappa_\alpha$. 
%We now show that $\mathrm{Card}^{V[G]}\subseteq C$. 
Therefore, we suppose that $\alpha\geq 1$, $\kappa=\nu_\alpha$ and $A$ is a subset of $\kk$ in $V[G]$ that is definable from an element of ${}^{\kappa}V$. 

\begin{claim*} 
$A$ has the perfect set property in $V[G]$. 
\end{claim*} 
\begin{proof} 
Since $\one_{\PP_{\alpha}}$ forces that 
%\Vdash_{\PP_{\alpha}} 
$\dot{\PP}_{\beta}$ is homogeneous for all $\beta\in\Ord$, the tail forcing $\PP^{(\beta)}$ is homogeneous for all $\beta\in\Ord$. 
Since $\PP^{(\alpha+1)}$ is homogeneous, $A$ is an element of $V[G_{\alpha+1}]$. 
Since $\kappa=\nu_\alpha$, $\dot{\PP}_\alpha$ is a name for $\Col{\kappa}{{<}\kappa_{\alpha+1}}$ and hence $A$ has the perfect set property in $V[G_{\alpha+1}]$ by \cite[Theorem 2]{MR0265151} for $\kappa=\omega$ and by Theorem \ref{perfect subsets of definable sets} for $\kappa>\omega$. 
Since $\PP^{(\alpha+1)}$ is $\kappa_{\alpha+1}$-closed, this implies that $A$ has the perfect set property in $V[G]$. 
\end{proof} 

The last claim completes the proof of Theorem \ref{perfect set property at all regulars}. 
%The claim follows from Solovay's theorem \cite{MR0265151}, Theorem \ref{perfect subsets of definable sets} and from the closure of the tail forcings. 
%If $\alpha$ is a limit and $\kappa_\alpha$ is singular, then $\PP_\alpha$ has the $\kappa_\alpha$-c.c. and $\one \Vdash_{\PP_\alpha} \dot{\PP}^{(\alpha)}$ is $<\kappa_\alpha$-closed. Hence $\PP$ preserves the regularity of $\kappa_\alpha$. 
%
%If \todo{this is all nonsensical. rewrite} $\alpha$ is a singular limit, then $\kappa_{\alpha}$ is preserved (as a cardinal), since $|\mathbb{P}_{\leq\alpha}|<\kappa_{\alpha}$ and $\mathbb{P}_{>\alpha}$ is $<\kappa_{\alpha+1}$-closed. Each $\kappa_{\alpha+1}$ is preserved (as a cardinal), since $\mathbb{P}_{\leq\alpha+1}$ has the $\kappa_{\alpha}$-c.c. and $\mathbb{P}_{>\alpha+1}$ is $<\kappa_{\alpha+1}$-closed. So $C$ is the class of regular cardinals in the extension. 
%To see that $PSP^{\kappa_{\alpha}}_{od}$ holds, we factor $\mathbb{P}$ as $\mathbb{P}_{<\alpha}*Col(\kappa_{\alpha},<\kappa_{\alpha+1})*\dot{\mathbb{P}}_{>\alpha}$. Then $PSP^{\kappa_{\alpha}}_{od}$ holds in the $\mathbb{P}_{<\alpha}*Col(\kappa_{\alpha},<\kappa_{\alpha+1})$-extension by Theorem ???. \todo{link} Since $\dot{\mathbb{P}}_{>\alpha}$ is $<\kappa_{\alpha+1}$-closed and homogeneous, it does not add new subsets of ${}^{\kappa}\kappa$ definable from ordinals and subsets of $\kappa$. Hence $PSP^{\kappa_{\alpha}}_{od}$ holds in the extension. 
\end{proof} 

We further remark that the conclusion of Theorem \ref{perfect subsets of definable sets} has the following consequence. 
We define the \emph{Bernstein property} for a subset $A$ of $\ltl$ to mean that $A$ or its complement in $\ltl$ have a perfect subset. 

\begin{lemma} \label{perfect set property and Bernstein property} 
%\todo{\ \ \ \ \ \ $\lambda$?} 
Suppose that $\lambda$ is an uncountable regular cardinal 
%with $\kappa^{<\kappa}=\kappa$ 
and all subsets of $\ltl$ that are definable from elements of $\Ordl$ have the perfect set property. 
Then the following statements hold. 
\begin{enumerate-(1)} 
\item 
All subsets of $\ltl$ that are definable from elements of $\Ordl$ have the Bernstein property. 
\item 
There is no well-order on $\ltl$ that is definable from an element of $\Ordl$. 
\end{enumerate-(1)} 
\end{lemma}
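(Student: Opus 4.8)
The plan is to handle the two statements in turn, deriving the second from the first by a definable Bernstein-style construction.

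For the first statement, I would observe that if $A=A_{\varphi,z}^\lambda$ is definable from $z\in\Ordl$, then its complement satisfies $\ltl\setminus A=A_{\neg\varphi,z}^\lambda$ and so is definable from the same parameter; hence the perfect set property applies to both $A$ and $\ltl\setminus A$. I would then split into cases according to the perfect set property for $A$. If $A$ has a perfect subset, we are immediately in the first disjunct of the Bernstein property. Otherwise $|A|\leq\lambda$, and since $|\ltl|=\lambda^\lambda=2^\lambda>\lambda$ by Cantor's theorem, the complement has size $2^\lambda>\lambda$; applying the perfect set property to $\ltl\setminus A$ then yields a perfect subset of the complement, which is the second disjunct. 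This disposes of the first statement, and no counting of perfect sets is needed here.

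For the second statement I would argue by contradiction: assuming a well-order $\prec$ of $\ltl$ that is definable from some $z\in\Ordl$, I would construct a set $A$, definable from $z$ and $\lambda$, that fails the Bernstein property, contradicting the first statement. The construction is the classical Bernstein recursion, made definable by $\prec$. First I would fix a canonical definable bijection $\ltll\to\lambda$, available since $\lambda^{<\lambda}=\lambda$; coding each perfect tree $T\subseteq\ltll$ by its characteristic function then identifies the perfect subtrees of $\ltll$, and hence the perfect subsets of $\ltl$, with certain elements of $\ltl$. Restricting $\prec$ to these codes gives a definable enumeration $\langle P_\alpha\mid\alpha<\theta\rangle$ of all perfect sets with $\theta\leq 2^\lambda$. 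Then, by recursion on $\alpha<\theta$, I would choose the two $\prec$-least elements $a_\alpha,b_\alpha$ of $P_\alpha$ not already lying in $\{a_\beta,b_\beta\mid\beta<\alpha\}$; this is possible because each perfect set has size $2^\lambda$ while fewer than $2^\lambda$ points have been used so far. Putting $A=\{a_\alpha\mid\alpha<\theta\}$, every perfect set $P_\alpha$ contains the point $a_\alpha\in A$ and the point $b_\alpha\notin A$, so neither $A$ nor $\ltl\setminus A$ contains a perfect set. As each step refers only to $\prec$, the fixed coding, and $\lambda$, the set $A$ is definable from an element of $\Ordl$, contradicting the first statement.

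The main obstacle will be bookkeeping rather than any deep idea: I must ensure both that the collection of perfect sets can be definably well-ordered in length at most $2^\lambda$, and that at each stage enough points of $P_\alpha$ remain unused. Both rest on the cardinal arithmetic $|\ltl|=|P_\alpha|=2^\lambda$ together with the bound of $2^\lambda$ on the number of perfect trees, and it is precisely here that the hypothesis $\lambda^{<\lambda}=\lambda$ enters, allowing a perfect tree $T\subseteq\ltll$ to be coded by an element of $\ltl$ and thereby keeping the enumeration both of length $\leq 2^\lambda$ and definable from $\prec$.
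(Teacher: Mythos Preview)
Your approach matches the paper's: the first claim is dispatched exactly as you do (the paper just calls it ``immediate''), and for the second the paper also invokes the standard Bernstein construction along the definable well-order, without spelling out the details you provide.

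One point deserves attention. You rely on $\lambda^{<\lambda}=\lambda$ and refer to it as a ``hypothesis'', but the lemma as stated only assumes that $\lambda$ is uncountable regular; the paper's standing convention reserves that cardinal-arithmetic assumption for $\kappa$, not $\lambda$. This is not fatal, because the perfect set hypothesis itself forces $\lambda^{<\lambda}=\lambda$: if some $\mu<\lambda$ had $\lambda^{\mu}>\lambda$, then the definable set $\{x\in\ltl: x{\upharpoonright}[\mu,\lambda)\equiv 0\}$ would have size $\lambda^{\mu}>\lambda$ yet admit no perfect subset, since any perfect tree must split cofinally while all branches of such a subset agree beyond level $\mu$. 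So your bookkeeping is sound, but you should justify $\lambda^{<\lambda}=\lambda$ from the actual hypotheses rather than import it. (The paper's own one-line proof is silent on this, so you are in good company, but your write-up singles the point out explicitly and therefore invites the question.)
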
 
\begin{proof} 
The first claim is immediate. 
%The assumptions imply the Bernstein property for all subsets $A$ of ${}^{\kappa}\kappa$ that are definable from elements of ${}^{\kappa}\kappa$. 
% i.e. $A$ or ${}^{\kappa}\kappa\setminus A$ has a perfect subset. 
To prove the second claim, suppose towards a contradiction that there is a well-order on $\ltl$ that is definable from an element of $\Ordl$. 
%Since $\kappa^{<\kappa}=\kappa$, 
Using a standard construction, one can then construct a definable Bernstein set by induction. 
%along a definable well-ordering of ${}^{\kappa}\kappa$, using an enumeration of all perfect subsets of ${}^{\kappa}\kappa$. 
\end{proof}

We finally use the previous results to prove a result about definable functions on $\kk$. 
In the statement of the next result, let $[X]^\gamma_{\neq}$ denote the set of sequences $\langle x_i\mid i<\gamma\rangle$ of distinct elements of $X$ for any set $X$ and any ordinal $\gamma$. 
%of $n$-element subsets of $\kk$ with the set of ordered $n$-tuples in $\kk$. 

\begin{theorem}\label{function on 2^kappa continuous on a perfect set} 
Suppose that $\lambda$ is an uncountable regular cardinal, 
%with $\lambda^{<\lambda}=\lambda$, 
$\RR$ is a ${<}\lambda$-distributive forcing and $\gamma<\lambda$. 
% and $V$ is a model of $\ZFC$.
\begin{enumerate-(1)}
\item 
Suppose that  $G$ is $\Add{\lambda}{1}\times \RR$-generic over $V$. Then in $V[G]$, for every function
$f\colon [{}^{\lambda}\lambda]^\gamma_{\neq}\mapsto{}^{\lambda}\lambda$ that is definable from an element of $V$, there is a perfect subset $C$ of $\ltl$ such that $f{\upharpoonright} [C]^\gamma_{\neq}$ is continuous. 
\item 
Suppose that  $G$ is $\Add{\lambda}{\lambda^+}\times \RR$-generic over $V$. Then in $V[G]$, for every function
$f\colon [{}^{\lambda}\lambda]^\gamma_{\neq}\mapsto{}^{\lambda}\lambda$ that is definable from an element of ${}^{\lambda}V$, there is a perfect subset $C$ of $\ltl$ such that $f{\upharpoonright} [C]^\gamma_{\neq}$ is continuous.
%\item 
%Suppose that $\lambda\geq\kappa^+$ and $H$ is $\Add{\kappa}{\lambda}$-generic over $V$. Then in $V[H]$, for every function
%$f\colon [{}^{\kappa}\kappa]^n \mapsto {}^{\kappa}\kappa$ that is definable from ordinals and from a subset of $\kappa$, there is a perfect set $C$ such that $f{\upharpoonright} [C]^n$ is continuous.
%\item Suppose that $\lambda>\kappa$ is inaccessible and $H$ is $\Col{\kappa}{<\lambda}$-generic over $V$. Then in $V[H]$, for every function
%$f\colon [{}^{\kappa}\kappa]^n \mapsto {}^{\kappa}\kappa$ that is definable from ordinals and from subsets of $\kappa$ [WHY NOT $\Ordk$?], there is a perfect set $C$ such that $f{\upharpoonright} [C]^n$ is continuous. 
\end{enumerate-(1)}
\end{theorem}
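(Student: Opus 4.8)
The plan is to reduce both parts to a single $\PP$-generic extension, where $\PP$ is the tree-adding forcing of Definition \ref{definition of the forcing adding a perfect tree}, and to exploit the fact, isolated in Lemma \ref{quotient forcing for a sequence of branches}, that the quotient forcing associated to a sequence of distinct branches is sub-equivalent to the weakly homogeneous forcing $\Add{\lambda}{1}$. For part (1), since $\Add{\lambda}{1}\times\RR$ is a product I would force $\RR$ first: let $h$ be $\RR$-generic and set $W=V[h]$. As $\RR$ is ${<}\lambda$-distributive, $W$ and $V$ have the same ${}^{<\lambda}\lambda$, so $\PP^{W}=\PP^{V}$, and after replacing $\Add{\lambda}{1}$ by the sub-equivalent $\PP$ (Lemma \ref{forcing equivalent to Add(kappa,1)}) the second factor becomes a $\PP$-generic filter $g$ over $W$. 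Put $T=T_g$ and $C=[T]$, so that $V[G]=W[T]$. By Lemmas \ref{many distinct branches are mutually generic} and \ref{quotient forcing for a sequence of branches} applied over $W$, every $\vec x=\langle x_i\mid i<\gamma\rangle\in[C]^\gamma_{\neq}$ is $\Add{\lambda}{\gamma}$-generic over $W$ via a filter $\bar g_{\vec x}$ (with $x_i=\bigcup(\bar g_{\vec x})_i$), and the quotient $\QQ_{\vec x}$ of $W[T]$ over $W[\vec x]$ is sub-equivalent to $\Add{\lambda}{1}$, hence weakly homogeneous. The purpose of forcing $\RR$ first is precisely to absorb it into the ground model $W$ so that this quotient becomes homogeneous.

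Now fix $\vec x\in[C]^\gamma_{\neq}$ and $\alpha<\lambda$. Since $W[T]$ is a ${<}\lambda$-closed extension of $W$, it adds no new $\alpha$-sequences of ordinals, so $t_0:=f(\vec x){\upharpoonright}\alpha\in W$. Because $f$ is definable from $z\in V\subseteq W$, the statement ``$f(\vec x){\upharpoonright}\alpha=t_0$'' has all its parameters in $W[\vec x]$, so by weak homogeneity of $\QQ_{\vec x}$ the weakest condition $\one_{\QQ_{\vec x}}$ forces it over $W[\vec x]$. The quotient $\QQ_{\vec x}$, the name for $f$, and this forcing statement are all \emph{uniformly definable} from $\vec x,z,\alpha,t_0$; hence the assertion ``$\one_{\QQ_{\vec x}}\Vdash \dot f(\vec x){\upharpoonright}\alpha=\check t_0$'' is a formula $\Psi(\vec x,z,\alpha,t_0)$ that holds in $W[\bar g_{\vec x}]$ and is therefore forced by some $\bar p\in\bar g_{\vec x}$ in $\Add{\lambda}{\gamma}$. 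As conditions in $\Add{\lambda}{\gamma}$ have support of size ${<}\lambda$, there is $\beta<\lambda$ such that the information in $\bar p$ is contained in $\langle x_i{\upharpoonright}\beta\mid i<\gamma\rangle$.

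For the continuity step I would then transfer: if $\vec y\in[C]^\gamma_{\neq}$ satisfies $y_i{\upharpoonright}\beta=x_i{\upharpoonright}\beta$ for all $i<\gamma$, then $\vec y$ is again $\Add{\lambda}{\gamma}$-generic over $W$ with $\bar p\in\bar g_{\vec y}$, so $\Psi(\vec y,z,\alpha,t_0)$ holds in $W[\vec y]$; evaluating at the quotient generic that recovers $W[T]$ gives $f(\vec y){\upharpoonright}\alpha=t_0=f(\vec x){\upharpoonright}\alpha$. Thus $\vec x\mapsto f(\vec x){\upharpoonright}\alpha$ is locally constant, which is exactly the continuity of $f{\upharpoonright}[C]^\gamma_{\neq}$.

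For part (2) the only extra work is to turn the parameter $z\in{}^\lambda V$ into a ground-model parameter. Forcing $\RR$ first as above, $W=V[h]$ satisfies $\lambda^{<\lambda}=\lambda$ and $\Add{\lambda}{\lambda^+}$ is $\lambda^+$-cc over $W$; since $z$ is a $\lambda$-sequence, a nice-name argument yields $S\subseteq\lambda^+$ with $|S|\le\lambda$ and $z\in W[g{\upharpoonright}S]$. Setting $W'=W[g{\upharpoonright}S]$ (so $z\in W'$ and $W'\models\lambda^{<\lambda}=\lambda$) and factoring the remaining $\Add{\lambda}{\lambda^+\setminus S}\cong \Add{\lambda}{1}\times\RR'$, where $\RR'$ is a further ${<}\lambda$-closed Cohen product, places us in the situation of part (1) over the ground model $W'$ with $z$ now a ground-model parameter, and the argument applies verbatim. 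I expect the main obstacle to be the decidedness step of the second paragraph: a priori $f(\vec x){\upharpoonright}\alpha$ could depend on the quotient generic, which would destroy the transfer to nearby $\vec y$. Forcing $\RR$ (and, in part (2), absorbing $z$ together with all but one Cohen coordinate) into the ground model is exactly what reduces the quotient to the homogeneous forcing $\Add{\lambda}{1}$ and removes this dependence.
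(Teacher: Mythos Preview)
Your proposal is correct and follows essentially the same route as the paper's proof. Two small refinements the paper adds: it remarks at the outset that one may assume $\lambda^{<\lambda}=\lambda$ by passing to an intermediate model (needed to invoke Lemma~\ref{forcing equivalent to Add(kappa,1)} for $\PP\eqsim\Add{\lambda}{1}$, which you use implicitly), and instead of arguing that the quotient $\QQ_{\vec x}$ is \emph{uniformly} definable it simply takes $\Psi(\vec x,z,\alpha,t)$ to be the fixed formula ``$\one\Vdash_{\Add{\lambda}{1}}\varphi(\vec x,z,\alpha,t)$'' evaluated in $W[\vec x]$---legitimate since every such quotient is sub-equivalent to $\Add{\lambda}{1}$---which sidesteps the uniformity issue entirely.
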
 
\begin{proof} 
We can assume that $\lambda^{<\lambda}=\lambda$ by replacing $V$ with an intermediate model. 

To prove the first claim, it suffices to consider the trivial forcing $\RR=\{\one\}$, since it is easy to see that this implies the claim for arbitrary ${<}\lambda$-distributive forcings $\RR$. 

By 
%\todo{check the statement} 
Lemma \ref{perfect set with good quotients}, there is a perfect subset $C$ of $\ltl$ in $V[G]$ 
such that for every sequence $\vec{x}=\langle x_i\mid i<\gamma\rangle$ of distinct elements of $C$, $\vec{x}$ is $\Add{\lambda}{\gamma}$-generic over $V$ and has $\Add{\lambda}{1}$ as a quotient in $V[G]$ over $V$. 

Suppose that in $V[G]$, we have a function $f\colon [\ltl]^\gamma_{\neq}\rightarrow \ltl$ that is definable from an element of $V$. Then there is a formula $\varphi(\vec{x},y,\alpha,t)$ and some $y\in V$ such that for all $\vec{x}\in[\ltl]^\gamma_{\neq}$ in $V[G]$, $\alpha<\lambda$ and $t\in \ltll$, we have 
$$ f(\vec{x}){\upharpoonright} \alpha=t \Leftrightarrow V[G]\vDash \varphi(\vec{x},y,\alpha,t). $$ 
%where $\varphi$ is a formula with an ordinal parameter, which we omit. 
%Let $V[x]$ denote the least transitive model of $\mathsf{ZF}$ which contains $x$ as an element and $V$ as a subclass. 
Moreover, let $\psi(\vec{x},y,\alpha,t)$ denote the formula 
$$\one_{\Add{\lambda}{1}} \Vdash_{\Add{\kappa}{1}} \varphi(\vec{x},y,\alpha,t).$$ 

For each sequence of distinct elements of $C$ of length $\gamma$, we consider the $\Add{\lambda}{\gamma}$-generic extension $V[\vec{x}]$ of $V$. 
Since $\vec{x}$ has $\Add{\lambda}{1}$ as a quotient in $V[G]$, we have for all $\alpha<\lambda$ and $t\in\ltll$ that 
$$ f(\vec{x}){\upharpoonright} \alpha=t \Leftrightarrow \one_{\Add{\lambda}{1}} \Vdash_{\Add{\lambda}{1}}^{V[\vec{x}]} \varphi(\vec{x},y,\alpha,t)\Leftrightarrow V[\vec{x}]\vDash \psi(\vec{x},y,\alpha,t).$$ 
In particular, it follows that $f(\vec{x})\in V[\vec{x}]$. 
% for all $\vec{x}\in [C]^n$. 
%Suppose that $\psi(x,y)$ holds in $V[G]$ if and only if $f(x)=y$, where $\psi$ is a formula with an ordinal parameter, which we omit.
% in the forcing language with a predicate for $V$. 

\begin{claim*} 
$f{\upharpoonright} [C]^\gamma_{\neq}$ is continuous.
\end{claim*} 
\begin{proof} 
Let $\sigma$ be an $\Add{\lambda}{\gamma}$-name for the sequence of $\Add{\lambda}{1}$-generic subsets of $\lambda$ added by the $\Add{\lambda}{\gamma}$-generic filter. 
%with $\sigma^{\vec{x}}=\vec{x}$ for all $\vec{x}\in [C]^n$.

For every $\vec{x}\in [C]^\gamma_{\neq}$ and every $\alpha<\lambda$, there is a condition $\vec{p}=\langle p_i\mid i<\gamma\rangle$ in the $\Add{\lambda}{\gamma}$-generic filter added by $\vec{x}$ with 
$$\vec{p}\Vdash_{\Add{\lambda}{\gamma}}^{V} \psi(\sigma,y,\alpha,f(\vec{x}){\upharpoonright}\alpha).$$ 
Since $\vec{p}$ is in the generic filter added by $\vec{x}$, we have $p_i\subseteq x_i$ for all $i<\gamma$. 
Now suppose that $\vec{y}=\langle y_i\mid i<\gamma\rangle$ is a sequence of distinct elements of $C$ with $p_i\subseteq y_i$ for all $i<\gamma$. 
By the choice of $\vec{p}$ and the fact that $\vec{y}$ is $\Add{\lambda}{\gamma}$-generic over $V$ and has $\Add{\lambda}{1}$ as a quotient in $V[G]$, we have $f(\vec{x}){\upharpoonright} \alpha= f(\vec{y}){\upharpoonright} \alpha$. 
% for all $n$-tuples $\vec{y}=(y_0,\dots,y_{n-1})$ of distinct elements of $C$ with $p_i\subseteq y_i$ for all $i<n$. 
%Since any such $n$-tuple $\vec{y}$ is $\Add{\kappa}{n}$-generic over $V$. 
It follows that $f{\upharpoonright} [C]^\gamma_{\neq}$ is continuous. 
\end{proof}

To prove the second claim, it suffices to consider the trivial forcing $\RR$, as in the first claim. 
%since it is easy to see that this implies the claim for arbitrary ${<}\kappa$-distributive forcings $\RR$. 
Suppose that $f$ is defined from the parameter $y\in{}^{\lambda}V$. 
We write $G_\alpha=G\cap\Add{\lambda}{\alpha}$ for $\alpha<\lambda^+$. 
Since $\Add{\lambda}{\lambda^+}$ is $\lambda^+$-cc, there is some $\alpha<\lambda^+$ with $y\in V[G_\alpha]$. Since $V[G]$ is an $\Add{\lambda}{\lambda^+}$-generic extension of $V[G_\alpha]$, the claim now follows from the first claim. 
\end{proof}

%%%%%%%%%%%%%%%%%%%%%%%%%%%%%%%%%%%%%%%%%%%%%%%%%%%%%%%%%%%%%%%%
%%%%%%%%%%%%%%%%%%%%%%%%%%%%%%%%%%%%%%%%%%%%%%%%%%%%%%%%%%%%%%%%
\section{The almost Baire property}  \label{section: The almost Baire property} 

In the first part of this section, we define an analogue to the Baire property. 
%that can consistently hold for all sufficiently definable subsets of ${}^{\kappa}\kappa$. 
This property is characterized by a Banach-Mazur type game (see \cite[Section 8.H]{MR1321597}) of uncountable length. 
%$\kappa$. 
%We always assume that $\kappa$ is an uncountable regular cardinal with $\kappa^{<\kappa}=\kappa$. 

%%%%%%%%%%%%%%%%%%%%%%%%%%%%%%%%%%%%%%%%%%%%%%%%%%%%%%%%%%%%%%%%
\subsection{Banach-Mazur games} \label{subsection: Banach-Mazur games}

In this section, we assume that $\nu$ is an infinite regular cardinal with $\nu^{<\nu}=\nu$ (we write $\nu$ instead of $\kappa$, since $\nu=\omega$ is allowed). 
The \emph{standard topology} (or \emph{bounded topology}) on ${}^{\nu}\nu$ is generated by the basic open sets 
$$N_t=\{x\in {}^{\nu}\nu\mid t\subseteq x\}$$ 
for $t\in {}^{<\nu}\nu$. 
The analogue to the Baire property will be defined using the following types of functions. 

\begin{definition} 
Suppose that $f\colon {}^{<\nu}\nu\rightarrow {}^{<\nu}\nu$ is given. 
\begin{enumerate-(1)} 
\item 
$f$ is a \emph{homomorphism} if for all $s\subsetneq t$ in ${}^{<\nu}\nu$, we have $f(s)\subsetneq f(t)$. 
\item 
$f$ is \emph{continuous} if for every limit $\gamma<\nu$ and every strictly increasing sequence $\langle s_\alpha\mid \alpha<\gamma\rangle$ in ${}^{<\nu}\nu$, we have 
$$f(\bigcup_{\alpha<\gamma} s_\alpha)= \bigcup_{\alpha<\gamma} f(s_\alpha).$$ 
\item 
%\begin{enumerate-(i)} 
%\item 
%$f$ is a \emph{homomorphism} if $f(s)\subseteq f(t)$ for all $s\subseteq t$ in ${}^{<\kappa}\kappa$. 
%\item 
$f$ is \emph{dense} if for all $s\in {}^{<\nu}\nu$, the set $$\{f(s^\smallfrown \langle \alpha\rangle) \mid \alpha<\nu\}$$ is dense above $f(s)$ in the sense that for any $t\supseteq f(s)$, there is some $\alpha<\nu$ with $f(s^\smallfrown\langle \alpha\rangle)\supseteq t$. 
\item 
If $f$ is a homomorphism, let $f^*$ denote the function $f^*\colon {}^{\nu}\nu\rightarrow {}^{\nu}\nu$ defined by 
$$f^*(x)=\bigcup_{\alpha<\nu}f(x{\upharpoonright} \alpha).$$ 
\end{enumerate-(1)} 
%\item 
%$[f]=\{\bigcup_{\alpha<\kappa} f(x\upharpoonright \alpha)\mid x\in {}^{<\kappa}\kappa\}$. 
%\item \todo{it's not clear if we need all this\ \\ 
%better: write all this without separate items} 
%$A$ is \emph{large} if there is a dense map $f\colon {}^{<\kappa}\kappa\rightarrow {}^{<\kappa}\kappa$ with $[f]\subseteq A$. 
%\item 
%$A$ has the \emph{almost Baire property} if for every $t\in {}^{<\kappa}\kappa$, $A\cap N_t$ is $\kappa$-meager in $N_t$ or $A\cap N_t$ is large in $N_t$. 
%\end{enumerate-(i)} 
\end{definition} 
%\begin{definition} 
%A map $f\colon {}^{<\kappa}\kappa\rightarrow {}^{<\kappa}\kappa$ is \emph{successor-dense} if for all $s\in {}^{<\kappa}\kappa$, the set $\{f(s^\smallfrown \langle \alpha\rangle\mid \alpha<\kappa\}$ is a dense subset of $N_{f(s)}$. 
%\end{definition} 

By using such functions on ${}^{<\nu}\nu$, we can characterize comeager subsets of ${}^{\nu}\nu$, which were defined in Definition \ref{definition: kappa-Borel etc}, as follows. 

\begin{lemma} \label{characterization of dense continuous homomorphism} 
Suppose that $\nu$ is an infinite cardinal with $\nu^{<\nu}=\nu$ and $t\in {}^{<\nu}\nu$. 
A subset $A$ of ${}^{\nu}\nu$ is comeager in $N_t$ if and only if there is a dense continuous homomorphism $f\colon {}^{<\nu}\nu\rightarrow {}^{<\nu}\nu$ with $f(\emptyset)=t$ and $\ran{f^*}\subseteq A$. 
\end{lemma}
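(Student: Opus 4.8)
The plan is to prove the two implications separately, since they call for genuinely different techniques.

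For the forward direction I would assume $A$ is comeager in $N_t$, so that the complement $N_t\setminus A$ is a union $\bigcup_{\alpha<\nu}M_\alpha$ of sets nowhere dense in $N_t$; then $D_\alpha=N_t\setminus\overline{M_\alpha}$ is a family of dense open subsets of $N_t$ with $\bigcap_{\alpha<\nu}D_\alpha\subseteq A$. I would build $f$ by recursion on $\len(s)$: put $f(\emptyset)=t$; at limit length set $f(s)=\bigcup_{\eta<\len(s)}f(s{\upharpoonright}\eta)$, which forces continuity and keeps the length below $\nu$ by regularity of $\nu$; and for a node $s$ of length $\alpha$, to define its successors enumerate all $r\supseteq f(s)$ as $\langle r_\beta\mid\beta<\nu\rangle$ (possible since $\nu^{<\nu}=\nu$) and choose $f(s^\smallfrown\langle\beta\rangle)\supseteq r_\beta$ with $N_{f(s^\smallfrown\langle\beta\rangle)}\subseteq D_\alpha$, using that $D_\alpha$ is dense open. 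This makes $f$ a dense continuous homomorphism with $f(\emptyset)=t$, and each branch $x$ satisfies $f^*(x)\in N_{f(x{\upharpoonright}(\alpha+1))}\subseteq D_\alpha$ for every $\alpha$, hence $f^*(x)\in\bigcap_\alpha D_\alpha\subseteq A$, giving $\ran{f^*}\subseteq A$.

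For the backward direction I would assume a dense continuous homomorphism $f$ with $f(\emptyset)=t$ and $\ran{f^*}\subseteq A$, and show $N_t\setminus A$ is meager. It suffices to produce dense open sets $W_\xi\subseteq N_t$ with $\bigcap_{\xi<\nu}W_\xi\subseteq\ran{f^*}$, for then the comeager set $\bigcap_\xi W_\xi$ lies inside $A$. I would construct the $W_\xi$ together with a refining system of antichains $\mathcal A_\xi$ of pairwise disjoint basic open sets and a commitment, for each $w\in\mathcal A_\xi$, of a domain node $s_w$ with $f(s_w)=w$: at stage $\xi+1$ refine each $N_w$ ($w\in\mathcal A_\xi$) by density of $f$ above $f(s_w)$ into pieces $N_{f(s_w^\smallfrown\langle\gamma\rangle)}$, committing $s_{w'}=s_w^\smallfrown\langle\gamma\rangle$ to each new piece, and at limit $\xi$ thread the nested committed nodes and use continuity of $f$ to obtain the limit position. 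Arranging that $\len(w)\ge\xi$ for $w\in\mathcal A_\xi$, any $x\in\bigcap_\xi W_\xi$ lies, for each $\xi$, in a unique $N_w$ with $w\in\mathcal A_\xi$; the committed nodes $s_w$ are nested along $\xi$, so their union is a branch $\hat x$ with $f^*(\hat x)=\bigcup_\xi w=x$, whence $x\in\ran{f^*}$.

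The main obstacle is exactly this coherence in the backward direction. The naive candidate $\bigcap_\xi\bigcup\{N_{f(s)}\mid\len(f(s))\ge\xi\}$ need not be contained in $\ran{f^*}$: density only lets one extend the image past an arbitrary target, so a node whose image is an initial segment of a fixed point $y$ may have all of its successors overshoot and diverge from $y$, leaving no single domain branch whose image converges to $y$. The refining-antichain construction circumvents this by never fixing the target in advance and only ever extending nodes already committed to, so the positions automatically nest and the branch is read off at the end. I expect the technical core to be verifying that each $W_\xi$ is dense open, that the committed nodes remain nested through limit stages (where continuity of $f$ is essential), and that the image lengths tend to $\nu$.
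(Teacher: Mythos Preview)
Your forward direction is essentially the paper's argument. For the backward direction your approach is workable, but it differs from the paper's and carries an unnecessary burden. One small point first: the successor images $\{f(s_w^\smallfrown\langle\gamma\rangle)\mid\gamma<\nu\}$ need not be pairwise incomparable, so to get the disjoint antichains you rely on (and hence the uniqueness of the committed node for each $x$) you must extract a maximal antichain from this dense family at each successor step; you state disjointness but do not enforce it.

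More to the point, the paper avoids the entire refining-antichain construction. Instead of manufacturing dense open sets whose intersection sits inside $\ran{f^*}$, it shows directly that $N_t\setminus\ran{f^*}$ is covered by $\nu$ many nowhere dense sets: for each $s\in{}^{<\nu}\nu$ let $C_s$ be the set of $x\in N_t$ for which $s$ is $\subseteq$-maximal among nodes with $f(s)\subseteq x$. Each $C_s$ is nowhere dense in $N_{f(s)}$ by density of $f$ (given $u\supseteq f(s)$, pick $\gamma$ with $f(s^\smallfrown\langle\gamma\rangle)\supseteq u$; then $N_{f(s^\smallfrown\langle\gamma\rangle)}$ misses $C_s$). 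And if $x\in N_t\setminus\ran{f^*}$, any maximal chain in $K_x=\{s\mid f(s)\subseteq x\}$ must have length ${<}\nu$ and, by continuity of $f$, a top element, so $x\in C_s$ for that $s$. Thus the ``overshoot'' obstacle you isolate is not circumvented but absorbed: it says exactly that some $C_s$ may be nonempty, and what matters is that each $C_s$ is small. Your construction buys an explicit description of a comeager subset of $\ran{f^*}$; the paper's buys a two-line argument.
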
 
\begin{proof} 
To prove the first implication, suppose that $A$ is comeager in $N_t$. 
Then there is a sequence $\langle U_\alpha\mid \alpha<\nu \rangle$ of dense open subsets of $N_t$ with $\bigcap_{\alpha<\nu} U_\alpha\subseteq A$. 
Since any intersection of strictly less than $\nu$ many dense open subsets of $N_t$ is again dense open in $N_t$, we can assume that $U_\beta\subseteq U_\alpha$ for all $\alpha<\beta<\nu$. 

We now define $f(s)$ by induction on $\len(s)$. 
%For $\len(s)=0$, we define 
Let $f(\emptyset)=t$. 
In the successor case, suppose that $\len(s)=\gamma$ and that $f(s)$ is defined. 
%Let $N^{f(s)}=\{t\in {}^{<\kappa}\kappa\mid s\subseteq t,\ s\neq t\}$. 
Since $U_\gamma$ is a dense open subset of $N_t$, the set 
$$K=\{u\supsetneq f(s)\mid N_u\subseteq U_\gamma\}$$ 
 is dense above $f(s)$ in the sense that for every $t\supseteq f(s)$, there is some $v\in K$ with $t\subseteq v$. 
%subset $N$ of $N^{f(s)}$ of size $\kappa$ such that for all $t\in N$, $N_t\subseteq U_\gamma$ and $t\neq f(s)$. 
Since $\nu^{<\nu}=\nu$, we can choose an enumeration $\langle t_\alpha\mid \alpha<\nu\rangle$ of $K$. 
%of a dense subset of $N^{f(s)}$ such that $N_{t_\alpha}\subseteq U_\gamma$ for all $\alpha<\kappa$. 
We then define $f(s^\smallfrown \langle \alpha\rangle) = t_\alpha$ for all $\alpha<\nu$. 
In the limit case, suppose that $\len(s)=\gamma$ is a limit and that $f(s{\upharpoonright} \bar{\gamma})$ is defined for all $\bar{\gamma}<\gamma$. We then define $f(s)=\bigcup_{\bar{\gamma}<\gamma} f(s{\upharpoonright} \bar{\gamma})$.

It follows from the construction that $f$ satisfies the required properties and that $\ran{f^*}\subseteq \bigcap_{\alpha<\nu} U_\alpha\subseteq A$. 

To prove the reverse implication, suppose that $f$ satisfies the conditions stated above. 
%suppose that $f\colon {}^{<\kappa}\kappa\rightarrow {}^{<\kappa}\kappa$ is a dense continuous injective homomorphism with $\ran{f^*}\subseteq A$. 
For any $x\in {}^{\nu}\nu$, let 
$$K_x=\{s\in{}^{<\nu}\nu\mid t\subseteq s,\ f(s)\subseteq x\}.$$ 
Since $f(\emptyset)=t$, $K_x$ is nonempty for any $x\in N_t$. 

\begin{claim*} 
For any $x\in N_t$, if $K_x$ has no maximal elements, then $x\in A$. 
%Suppose that $x\in {}^{\kappa}\kappa$ and for all $s\in {}^{<\kappa}\kappa$ with $f(s)\subseteq x$, there is some $t\in {}^{<\kappa}\kappa$ with $s\subsetneq t\subseteq x$ and $f(t)\subseteq x$. Then $x\in A$. 
\end{claim*} 
\begin{proof} 
Since $K_x$ is nonempty and has no maximal elements, we can build a strictly increasing sequence $\langle s_\alpha\mid \alpha<\nu\rangle$ in ${}^{<\nu}\nu$ with $s_0=t$ and $f(s_\alpha)\subseteq x$ for all $\alpha<\nu$. By the definition of $f^*$, this implies that $x\in \ran{f^*}\subseteq A$. 
\end{proof} 

For any $s\in {}^{<\nu}\nu$ with $t\subseteq s$, we now consider the set $C_s$ of $x\in N_t$ such that $s$ is a maximal element of $K_x$. 
%Let $C_s=\{x\in {}^{\kappa}\kappa\mid f(s)\subseteq x\text{ and for all }t\in{}^{<\kappa}\kappa \text{ with }s\subsetneq t\subseteq x, f(t)\not\subseteq x\}$. 
It is easy to see that $C_s$ is a closed nowhere dense subset of $N_{f(s)}$. 
Since $N_t \setminus A \subseteq \bigcup_{s\supseteq t} C_s$ by the previous claim, it follows that $A$ is comeager in $N_t$. 
%\todo{delete this} Suppose that $f\colon {}^{<\kappa}\kappa\rightarrow {}^{<\kappa}\kappa$ is a continuous successor-dense injective homomorphism with $[f]\subseteq A$. 
%Let $U_\alpha=\bigcup_{t\in {}^{\alpha}\kappa} N_{f(t)}$. 
%We claim that $U_\alpha$ is open dense $\alpha<\kappa$. 
%If $\alpha$ is a successor, this follows from the assumption that $f$ is successor-dense. 
%If $\alpha$ is a limit, $U_\alpha=\bigcap_{\bar{\alpha}<\alpha} U_{\bar{\alpha}}$, since $f$ is continuous, hence $U_\alpha$ is open dense by the inductive assumption. 
%Since $[f]=\bigcap_{\alpha<\kappa} U_\alpha$, $A$ is comeager. 
\end{proof} 

We now define an asymmetric version of the Baire property, using the functions above. 
%To avoid such counterexamples, we consider the following assymmetric analogy to the Baire property. 

\begin{definition} \label{definition of almost Baire} 
A subset $A$ of ${}^{\nu}\nu$ is \emph{almost $\nu$-Baire} (\emph{almost Baire}) if there is a dense homomorphism $f\colon {}^{<\nu}\nu\rightarrow {}^{<\nu}\nu$ with one of the following properties. 
\begin{enumerate-(a)} 
\item 
$\ran{f^*}\subseteq A$. 
\item 
$\ran{f^*}\subseteq {}^{\nu}\nu \setminus A$, $f$ is continuous and $f(\emptyset)=\emptyset$. 
\end{enumerate-(a)} 
\end{definition} 

Since every homomorphism is continuous for $\nu=\omega$, it follows immediately from Lemma \ref{characterization of dense continuous homomorphism} that a subset $A$ of ${}^{\omega}\omega$ is almost Baire if and only if there is some $t\in {}^{<\omega}\omega$ such that $A$ is comeager in  $N_t$ or ${}^{\omega}\omega\setminus A$ is comeager. 
% in ${}^{\omega}\omega$. 
It can be easily seen that this implies that for every class $\Gamma$ of subsets of ${}^{\omega}\omega$ that is closed under continuous preimages, the almost Baire property for all sets in $\Gamma$ is equivalent to the Baire property for all sets in $\Gamma$. 
%\begin{lemma} 
%A subset $A$ of ${}^{\omega}\omega$ is almost Baire if and only if it is comeager or there is some $t\in {}^{<\omega}\omega$ such that $A\cap N_t$ is comeager in $N_t$. 
%\end{lemma} 
%\begin{proof} 
%This follows from Lemma \ref{characterization of dense continuous homomorphism}. 
%\end{proof} 

The continuity in the definition of almost Baire is necessary by the next result. To state this result, let $\Cl_\nu$ denote the set 
$$\Cl_\nu=\{x\in {}^{\nu}\nu\mid \exists C\subseteq\nu \text{ club } \forall i\in C\ x(i)\neq 0\}$$ 
of functions coding elements of the club filter on $\nu$ as characteristic functions, and 
$$\Ns_\nu=\{x\in {}^{\nu}\nu\mid \exists C\subseteq\nu \text{ club } \forall i\in C\ x(i)= 0\}$$ 
%$$\Ns_\nu=\{x\in {}^{\nu}\nu\mid \nu\setminus x\in \Cl_\nu\}$$ 
the set of functions coding elements of the non-stationary ideal on $\nu$. 
% (more precisely, $\Cl_\nu$ is the set of functions coding elements of the club filter as characteristic functions). 
%The above examples for non-$\kappa$-Baire sets are analogous for this notion. 
%Let $\Cl_\kappa$ denote the set of $x\in {}^{\kappa}\kappa$ such that there is a club $C$ in $\kappa$ with $x(i)\neq 0$ for all $i\in C$. 
%Let $\Ns_\kappa$ denote the set of $x\in {}^{\kappa}\kappa$ such that there is a club $C$ in $\kappa$ with $x(i)= 0$ for all $i\in C$. 

\begin{lemma} \label{almost Baire property for omega} 
%\todo{this doesn't really work. only on ${}^{\kappa}2$. define the club filter appropriately?} 
$\Cl_\nu$ and $\Ns_\nu$ are almost Baire subsets of ${}^{\nu}\nu$, but for every dense continuous homomorphism $f\colon {}^{<\nu}\nu\rightarrow {}^{<\nu}\nu$, we have $\ran{f^*}\cap \Cl_\nu\neq\emptyset$ and $\ran{f^*}\cap \Ns_\nu\neq \emptyset$. 
\end{lemma}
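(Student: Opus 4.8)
The plan is to prove the two positive assertions (that $\Cl_\nu$ and $\Ns_\nu$ are almost Baire) separately from the two intersection assertions, because the former genuinely exploit discontinuity via clause (a) of Definition \ref{definition of almost Baire}, whereas the latter concern only continuous homomorphisms.

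For the positive part I would build, for $\Cl_\nu$, a dense homomorphism $f\colon {}^{<\nu}\nu\to{}^{<\nu}\nu$ with $\ran{f^*}\subseteq \Cl_\nu$ by planting the value $1$ along a club of controlled positions on every branch. Concretely, using $\nu^{<\nu}=\nu$, at a successor node I enumerate the extensions of $f(s)$ as $\langle t_\alpha\mid\alpha<\nu\rangle$ and set $f(s^\smallfrown\langle\alpha\rangle)=t_\alpha^\smallfrown\langle 1\rangle$, while at a limit $\gamma$ I set $f(s)=(\bigcup_{\bar\gamma<\gamma} f(s{\upharpoonright}\bar\gamma))^\smallfrown\langle 1\rangle$. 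Density is immediate from the successor clause, and the limit clause is exactly what makes $f$ discontinuous. For a branch $x$, writing $\ell_\beta=\len(f(x{\upharpoonright}\beta))$ and $m_\gamma=\sup_{\bar\gamma<\gamma}\ell_{\bar\gamma}$ for limit $\gamma$, the appended $1$'s occupy the positions $m_\gamma$; I would then check that $\gamma\mapsto m_\gamma$ is strictly increasing and continuous on the limit ordinals, so that $\{m_\gamma\}$ is a club on which $f^*(x)$ is nonzero, whence $f^*(x)\in\Cl_\nu$. The set $\Ns_\nu$ is handled by the identical construction with $0$ in place of $1$.

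For the intersection part I would fix an arbitrary dense continuous homomorphism $f$ and build a single branch $x$ by recursion so that $f^*(x)$ is nonzero on a club. At stage $\alpha$, with $u_\alpha=f(x{\upharpoonright}\alpha)$ of length $\ell_\alpha$, I want position $\ell_\alpha$ of $f^*(x)$ to be nonzero; since $\{f((x{\upharpoonright}\alpha)^\smallfrown\langle\beta\rangle)\mid\beta<\nu\}$ is dense above $u_\alpha$, I can choose $x(\alpha)=\beta$ with $f((x{\upharpoonright}\alpha)^\smallfrown\langle\beta\rangle)\supseteq u_\alpha^\smallfrown\langle 1\rangle$, which decides $f^*(x)(\ell_\alpha)=1$ permanently (all later values of $f$ along $x$ extend $u_{\alpha+1}$). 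Continuity of $f$ makes $\beta\mapsto\ell_\beta$ continuous at limits, so $C=\{\ell_\alpha\mid\alpha<\nu\}$ is a club, and by construction $f^*(x)$ is nonzero on $C$, giving a point of $\ran{f^*}\cap\Cl_\nu$. Running the same recursion with $0$ in place of $1$ yields a branch with $f^*(x)$ zero on a club, i.e. a point of $\ran{f^*}\cap\Ns_\nu$.

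The only real obstacle is verifying the club-ness claims. In the positive part the symbol appended at each limit node is precisely what makes the marked positions form a closed set — and a continuous homomorphism would force all extensions past a limit node to agree, destroying density — so the discontinuity is essential, in keeping with the second assertion. In the intersection part the delicate point is that continuity of $f$ is exactly what guarantees $\beta\mapsto\ell_\beta$ is continuous and hence $C$ closed; without it $\{\ell_\alpha\}$ could omit its limit points. I expect the bookkeeping of which coordinate is locked in at which stage (namely that $f^*(x)(\ell_\alpha)$ is already decided at step $\alpha+1$ and never changes) to be the one place that needs care. I note that via Lemma \ref{characterization of dense continuous homomorphism} one gets that $\ran{f^*}$ is comeager in $N_{f(\emptyset)}$, but since $\Cl_\nu$ is not comeager this yields no shortcut and the explicit branch construction is genuinely required.
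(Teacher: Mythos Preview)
Your proof is correct and follows essentially the same approach as the paper. For the intersection assertions you build a branch $x$ by recursion, choosing $x(\alpha)$ via density so that $f^*(x)(\ell_\alpha)=1$ (respectively $0$), and then use continuity of $f$ to conclude that $\{\ell_\alpha\}$ is a club; this is exactly the paper's argument, though the paper is terser about why continuity is the crucial hypothesis. For the positive assertions the paper simply writes ``it is easy to see,'' whereas you spell out an explicit discontinuous dense homomorphism appending a fixed value at every limit stage and verify that the marked positions $m_\gamma$ form a club---this is a correct and natural realisation of what the paper leaves implicit.
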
 
\begin{proof} 
%\todo{write more?? no...} 
It is easy to see that $\Cl_\nu$ and $\Ns_\nu$ are almost Baire subsets of ${}^{\nu}\nu$. 

Since the remaining claims are symmetric, it is sufficient to prove that $\ran{f^*}\cap \Cl_\nu\neq\emptyset$. 
% by symmetry. 
%To show that $\ran{f^*}\cap \Cl_\kappa\neq\emptyset$, 
We define a sequence $\langle x(\gamma)\mid \gamma<\nu \rangle$ with values in $\nu$ by the following induction. 
%and $\langle t_\gamma\mid \gamma<\kappa\rangle$ with $t_\gamma=\langle x(\alpha)\mid \alpha<\gamma\rangle$ by induction. 
%We define a sequence $\langle t_\alpha \mid \alpha<\kappa \rangle$ by induction. 
Suppose that $\gamma<\nu$, $s=\langle x(\alpha)\mid \alpha<\gamma\rangle$ is already defined and $\len(f(s))=\delta$. 
If $\gamma$ is a successor, since $f$ is dense, there is some $\eta<\nu$ such that $f(s^\smallfrown \langle \eta\rangle)(\delta)=1$. 
% for $t=s^\smallfrown \langle \eta\rangle$. 
If $\gamma$ is a limit, the same conclusion follows from the additional assumption that $f$ is continuous. 
% (if $f$ is not continuous, this only works for successors $\gamma$). 
In both cases, we let $x(\gamma)=\eta$. 

By the construction, we have $x\in \ran{f^*}\cap \Cl_\nu$ and hence $\ran{f^*}\cap \Cl_\nu\neq\emptyset$, proving the claim. 
%The argument for $\ran{f^*}\cap \Ns_\kappa\neq\emptyset$ is analogous. 
\end{proof} 

The motivation for the definition of the almost Baire property comes from its connection with the following game. 
%The following game characterizes the almost Baire property. 
%shows the relationship of the almost Baire property with the Banach-Mazur game. 

\begin{definition} \label{definition:  Banach-Mazur game}
%\todo{conflict of notation with the perfect set game?} 
The \emph{Banach-Mazur game} $G_\nu(A)$ of length $\nu$ for a subset $A$ of ${}^{\nu}\nu$ is defined as follows. 
%The game has length $\kappa$. 
The first (even) player, player I, plays an element of ${}^{<\nu}\nu$ in each even round. 
% and the second (odd) player plays in all odd rounds. 
The second (odd) player, player II, plays an element of ${}^{<\nu}\nu$ in each odd round. 
Together, they play a strictly increasing sequence $\vec{s}=\langle s_\alpha\mid \alpha<\nu\rangle$ with $s_\alpha\in {}^{<\nu}\nu$ for all $\alpha<\nu$. 
Thus the sequence of moves of both players defines a sequence 
$$\bigcup_{\alpha<\nu} s_\alpha=x=\langle x(i)\mid i<\nu \rangle \in {}^{\nu}\nu$$ 
and the first player wins this run if $x\in A$. 

The Banach-Mazur game of length $\nu$ with these rules, but without a specific winning set, is denoted by $G_\nu$. 
Moreover, for any $t\in {}^{<\nu}\nu$, 
the game $G^{t}_\nu(A)$ is defined as $G_\nu(A)$ but with the additional requirement that $t\subseteq s_0$ for the first move $s_0$ of player I. 
\end{definition} 

We will also consider the games $G_{\nu}^2(A)$ and  $G_{\nu}^{2,(s,t)}(A)$ for $(s,t)\in ({}^{<\nu}\nu)^2$ with $\len(s)=\len(t)$ that are defined in analogy with $G_\nu(A)$. In these games, the players play elements $(u,v)$ of $({}^{<\nu}\nu)^2$ with $\len(u)=\len(v)$ and $A$ is a subset of $({}^{\nu}\nu)^2$. 
%\todo{check below} 
It is easy to check that all results for $G_\nu$ in this section also hold for $G_\nu^2$, since the proofs can be easily modified to work for this game. 

%The next two results relate the Banach-Mazur game with the almost Baire property. 
The next two results show the equivalence between the determinacy of $G_\nu(A)$ and the almost Baire property for $A$. 

\begin{lemma} \label{characterization of dense function} 
The following are pairs of equivalent statements for any subset $A$ of ${}^{\nu}\nu$. 
\begin{enumerate-(1)} 
\item 
\begin{enumerate-(a)} 
\item 
Player I has a winning strategy in $G_\nu(A)$. 
\item 
There is a dense homomorphism $f\colon {}^{<\nu}\nu\rightarrow {}^{<\nu}\nu$ with $\ran{f^*}\subseteq A$. 
\end{enumerate-(a)} 
\item 
\begin{enumerate-(a)} 
\item 
Player II has a winning strategy in $G_\nu(A)$. 
\item 
There is a dense continuous homomorphism $f\colon {}^{<\nu}\nu\rightarrow {}^{<\nu}\nu$ with $\ran{f^*}\subseteq {}^{\nu}\nu\setminus A$ and $f(\emptyset)=\emptyset$. 
\end{enumerate-(a)} 
\end{enumerate-(1)} 
\end{lemma}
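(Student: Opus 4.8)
The plan is to prove both biconditionals by translating between strategies for the Banach–Mazur game and dense homomorphisms on ${}^{<\nu}\nu$, using the hypothesis $\nu^{<\nu}=\nu$ to code the opponent's moves by ordinal coordinates. The guiding principle is that appending a coordinate $\alpha$ to an argument $s\in{}^{<\nu}\nu$ of $f$ should record a single move of the \emph{opponent} of the player whose strategy is being encoded, while the value $f(s)$ records the position reached after the encoded player has responded. For a fixed position $p\in{}^{<\nu}\nu$ I would fix once and for all a surjective enumeration $\langle t^p_\alpha\mid\alpha<\nu\rangle$ of $\{t\in{}^{<\nu}\nu\mid t\supsetneq p\}$, which exists since $\nu^{<\nu}=\nu$; the density of $f$ will then correspond exactly to the surjectivity of these enumerations, i.e.\ to the fact that the opponent is free to play any legal move.

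For part (1), I would first pass from a winning strategy $\tau$ for player~I to a homomorphism, defining $f$ by recursion on $\len(s)$. I set $f(\emptyset)=\tau(\emptyset)$, player~I's first move, which is unconstrained — this is why no condition $f(\emptyset)=\emptyset$ appears in (1). Given $f(s)=p$, I let player~II play $t^p_\alpha$ and take $f(s^\smallfrown\langle\alpha\rangle)$ to be player~I's $\tau$-response, which strictly extends $t^p_\alpha\supsetneq p$; this makes $f$ a homomorphism, and surjectivity of the enumerations makes it dense. At limit $\len(s)$ I let $f(s)$ be player~I's move dictated by $\tau$ at the corresponding limit round, which strictly extends $\bigcup_{\delta<\len(s)}f(s\upharpoonright\delta)$; since $\tau$ may add information here, $f$ need not be continuous, which matches the statement of (1). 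For any $x\in{}^{\nu}\nu$, $f^*(x)$ is the outcome of the run in which player~II plays the moves coded by $x$ and player~I follows $\tau$, so $\ran{f^*}\subseteq A$. Conversely, given a dense homomorphism $f$ with $\ran{f^*}\subseteq A$, player~I plays along $f$: starting from $s_0=f(\emptyset)$ and, after each move $s$ of player~II, using density to pick $\alpha$ with $f(b^\smallfrown\langle\alpha\rangle)\supseteq s$ (where $b$ is the branch built so far) and playing $f(b^\smallfrown\langle\alpha\rangle)$, while at a limit round playing $f(b\upharpoonright\delta)$ for the current length $\delta$, which is a legal increasing move. The final play equals $f^*(b)\in A$, so this strategy wins.

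Part (2) is the same translation for player~II, whose moves occur only at successor rounds, so the coordinates of $s$ now code player~I's moves and the recursion is forced to be continuous at limits. From a winning strategy $\sigma$ for player~II I would set $f(\emptyset)=\emptyset$ (the position before player~I's first move, player~II not yet having moved), let $f(s^\smallfrown\langle\alpha\rangle)$ be the $\sigma$-response to player~I playing $t^{f(s)}_\alpha$, and, crucially, set $f(s)=\bigcup_{\delta<\len(s)}f(s\upharpoonright\delta)$ at limits, reflecting that player~II makes no move at a limit round. This yields a dense \emph{continuous} homomorphism with $f(\emptyset)=\emptyset$ and $\ran{f^*}\subseteq{}^{\nu}\nu\setminus A$. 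Conversely, given such an $f$, player~II plays along its branch as in part~(1); the condition $f(\emptyset)=\emptyset$ lets player~II absorb player~I's first move, and continuity guarantees that at a limit round the accumulated position is exactly $f(b\upharpoonright\lambda)$, so that whatever player~I plays next still extends this branch point and player~II can continue by density. The outcome is $f^*(b)\in{}^{\nu}\nu\setminus A$.

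The main obstacle, and the point requiring care, is the limit-stage bookkeeping, where the asymmetry of $G_\nu$ enters. Because player~I moves at limit rounds, the homomorphism extracted from a player~I strategy may jump at limits (no continuity), whereas the homomorphism extracted from a player~II strategy must be continuous, since player~II is passive at limits; conversely, continuity is exactly what prevents player~I from derailing player~II's branch at a limit in the proof of (2)(b)$\Rightarrow$(2)(a), and $f(\emptyset)=\emptyset$ is exactly what lets player~II get started. I would verify in each direction that the inductively built branch $b$ has length $\nu$, that the positions $f(b\upharpoonright\delta)$ are cofinal among the moves of the encoded player, and hence that the realized play equals $f^*(b)$; these verifications are routine but must be recorded to conclude both $\ran{f^*}\subseteq A$ (resp.\ $\subseteq{}^{\nu}\nu\setminus A$) and that the constructed strategies are winning.
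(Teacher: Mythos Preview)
Your proposal is correct and follows essentially the same route as the paper: encode the opponent's moves as the successive coordinates of $s\in{}^{<\nu}\nu$ (via an enumeration of extensions, using $\nu^{<\nu}=\nu$) and let $f(s)$ be the position after the encoded player has replied, maintaining alongside each $s$ the corresponding partial run. The paper only spells out part~(1) and declares part~(2) analogous; your explicit discussion of why continuity and $f(\emptyset)=\emptyset$ appear exactly for player~II is a welcome addition but not a different argument.
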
 
\begin{proof} 
%\todo{double check this proof} 
%\todo{more about the second equivalence?} 
We will only prove the first equivalence, since the proof of the second equivalence is analogous. 

%\todo{read this again?} 
To prove the first implication, suppose that player I has a winning strategy $\sigma$ in $G_\nu(A)$. 
For all $t\in {}^{<\nu}\nu$, by induction on $\len(t)$, we will define $f(t)$ and 
%We define $f(t)$ by induction on $\len(t)=\gamma_t$ and 
partial runs 
$$\vec{s}_t=\langle s_t(\alpha)\mid \alpha<2\cdot \len(t)+1 \rangle$$ 
according to $\sigma$ such that $\vec{s}_t\subseteq \vec{s}_u$ for all $t\subseteq u$ and $f(t)(\alpha)=s_t(2\cdot \alpha)$ for all $\alpha<\len(t)$. 
%$\langle f(t)(\alpha)\mid \alpha<\gamma_t\rangle$. 
%Let $f(\emptyset)=\emptyset$. 

We begin by considering the first move $v=\sigma(\emptyset)$ of player I according to $\sigma$ and defining $f(\emptyset)=v$ and $\vec{s}_{\emptyset}=\langle v\rangle$. 
In the successor step, suppose that $t\in {}^{<\nu}\nu$ and $f(t)$, $\vec{s}_t$ are defined. 
Moreover, suppose that $\langle u_\alpha\mid \alpha<\nu\rangle$ is an enumeration of the possible responses of player II to $\vec{s}_t$ 
%Suppose that $\langle u_\alpha\mid \alpha<\nu\rangle$ is an enumeration of these moves. 
and that for each $\alpha<\nu$, $v_\alpha$ is the response of player I to $\vec{s}_t^\smallfrown\langle u_\alpha\rangle$ according to $\sigma$. 
Let $\vec{s}_{t^\smallfrown\langle\alpha\rangle}= \vec{s}_{t}^\smallfrown \langle u_\alpha, v_\alpha\rangle$ 
and $f(t^\smallfrown \langle \alpha\rangle)= v_\alpha$. 
%=s_{t^\smallfrown\langle \alpha\rangle}(2\cdot \len(t)).$$ 

In the limit step, suppose that $\len(t)$ is a limit and that $\vec{s}_{t\upharpoonright \alpha}$ and $f(t{\upharpoonright} \alpha)$ are defined for all $\alpha<\len(t)$. 
If $v$ is the response of player I to $\bigcup_{\alpha<\len(t)}\vec{s}_{t\upharpoonright\alpha}$ according to $\sigma$, 
let $\vec{s}_{t}=(\bigcup_{\alpha<\len(t)}\vec{s}_{t\upharpoonright\alpha})^\smallfrown \langle v\rangle$ and $f(t)=v$. 
This completes the definition of $f$ and by the construction, $f$ is a dense homomorphism with $\ran{f^*}\subseteq A$. 

To prove the second implication, suppose that $f\colon {}^{<\nu}\nu\rightarrow {}^{<\nu}\nu$ is a dense homomorphism with $\ran{f^*}\subseteq A$. 
We will define a winning strategy $\sigma$ for player I in $G_\nu(A)$. 
%It is sufficient to define $\sigma(\vec{s})$ for all partial runs $\vec{s}$ according to $\sigma$ of even length. 
To this end, by induction on $\len(\vec{s})$, we will define $t_{\vec{s}}, \sigma(\vec{s})\in \nu^{<\nu}$ for all partial runs $\vec{s}$ of even length according to $\sigma$ such that 
$\len(t_{\vec{s}{\upharpoonright}2\cdot\alpha})=\alpha$, $t_{\vec{s}{\upharpoonright}2\cdot\alpha}\subseteq t_{\vec{s}{\upharpoonright}2\cdot\beta}$ 
and $\sigma(\vec{s}{\upharpoonright}2\cdot \alpha)=f(t_{\vec{s}{\upharpoonright}2\cdot \alpha})$
for all $\alpha$, $\beta$ with $2\cdot \alpha\leq2\cdot\beta\leq \len(\vec{s})$. 
%$t_{\vec{s}{\upharpoonright}\alpha}\subsetneq t_{\vec{s}{\upharpoonright}\beta}$ for all even $\alpha<\beta\leq\len(\vec{s})$ and 
%$f(t_{\vec{s}{\upharpoonright}\alpha})\subseteq \sigma(\vec{s}{\upharpoonright}\alpha)$ for all even $\alpha\leq \len(\vec{s})$. 
%and $f(t_{\vec{s}{\upharpoonright}\alpha})\subseteq \vec{s}(\alpha)$ 
%$\vec{s}(2\cdot\alpha) \subseteq f(t_{\vec{s}{\upharpoonright}\alpha})\subseteq \vec{s}(2\cdot\alpha+2)$
%$t_{\vec{s}{\upharpoonright}(\alpha+2)}\subseteq\sigma(\vec{s}{\upharpoonright}\alpha)$ 
%$\sigma(\vec{s})$
%$\len(t_{\vec{s}{\upharpoonright}2\cdot \alpha})=\alpha$ for all $\alpha$ with $2\cdot \alpha\leq\len(\vec{s})$ and $\sigma((\vec{s}{\upharpoonright}2\cdot \alpha)^\smallfrown\langle f(t_{\vec{s}{\upharpoonright}\alpha})\rangle)=\vec{s}{\upharpoonright}(2\cdot \alpha+2)$ for all $\alpha$ with $2\cdot \alpha<\len(\vec{s})$. 

We begin by defininig $\sigma(\emptyset)=f(\emptyset)$. 
In the successor step, suppose that $\len(\vec{s})$ is even and that $t_{\vec{s}{\upharpoonright}\alpha}$, $\sigma(\vec{s}{\upharpoonright}\alpha)$ are defined for all even $\alpha\leq\len(\vec{s})$. 
Moreover, suppose that $u$ is a possible move of player II extending the partial run $\vec{s}^\smallfrown \langle \sigma(\vec{s})\rangle$, so that $\sigma(\vec{s})\subsetneq u$. 
%$\sigma(\vec{s})=\vec{s}^\smallfrown\langle u\rangle$ and $u\subsetneq v$. 
Since $f$ is dense, there is  some $\alpha<\nu$ with $u\subsetneq f(t_{\vec{s}}^\smallfrown\langle \alpha\rangle)$. Let 
$t_{\vec{s}^\smallfrown \langle\sigma(\vec{s}), u\rangle}=t_{\vec{s}}^\smallfrown\langle\alpha\rangle$ and 
$\sigma(\vec{s}^\smallfrown \langle\sigma(\vec{s}), u\rangle)=f(\vec{s}^\smallfrown \langle\sigma(\vec{s}), u\rangle)$. 
%and let $\sigma(\vec{s})=\vec{s}^\smallfrown\langle f(t_{\vec{s}})\rangle$. 

In the limit step, suppose that $l(\vec{s})=\gamma$ is a limit and $t_{\vec{s}{\upharpoonright}\alpha}$, $\sigma(\vec{s}{\upharpoonright}\alpha)$ are defined for all even $\alpha<\len(\vec{s})$. 
Let $t_{\vec{s}}=\bigcup_{\alpha<\gamma} t_{\vec{s}\upharpoonright\alpha}$ and $\sigma(\vec{s})=f(t_{\vec{s}})$. 
It is now easy to check that $\sigma$ is a a winning strategy for player I in $G_\nu(A)$. 
\end{proof} 
%\begin{lemma} \label{characterization of dense continuous function} 
%The following statements are equivalent. 
%\begin{enumerate-(1)} 
%\item 
%Player II has a winning strategy in $G_\nu(A)$. 
%\item 
%There is a dense continuous homomorphism $f\colon {}^{<\nu}\nu\rightarrow {}^{<\nu}\nu$ with $f(\emptyset)=\emptyset$ and $\ran{f^*}\subseteq {}^{\nu}\nu\setminus A$. 
%\end{enumerate-(1)} 
%\end{lemma} 
%\begin{proof} 
%This is analogous to the proof of Lemma \ref{characterization of dense function}. 
%\end{proof} 

In the next result, we will consider the following stronger type of strategy for $G_\nu$ that only relies on the union of the previous moves. 

\begin{definition} 
A \emph{tactic} in $G_\nu$ is a strategy $\sigma$ such that there is a map $\bar{\sigma}\colon {}^{<\nu}\nu\rightarrow {}^{<\nu}\nu$ with the property that 
$$\sigma(\vec{s})=\bar{\sigma}(\bigcup_{\alpha<\gamma} s_\alpha)$$ 
for all $\vec{s}=\langle s_\alpha\mid \alpha<\gamma\rangle\in \dom{\sigma}$. 
\end{definition} 

The next result, which follows from \cite[Lemma 7.3.2]{Kovachev-thesis}, relates the Banach-Mazur game of length $\nu$ with the $\nu$-Baire property. 

\begin{lemma} \label{characterization of Baire property by game} 
%\todo{see (cite dissertation of Kovacev)} 
Suppose that $A$ is a subset of ${}^{\nu}\nu$ and $t\in {}^{<\nu}\nu$. 
\begin{enumerate-(1)} 
\item \label{II wins if the set is meager} 
(Kovachev) The following conditions are equivalent. 
\begin{enumerate-(a)} 
\item 
$A$ is meager in $N_t$.
\item 
Player II has a winning strategy in $G^t_\nu(A)$. 
\item 
Player II has a winning tactic in $G^t_\nu(A)$. 
\end{enumerate-(a)}  
\item \label{I wins if the set is not meager} 
If $A\cap N_t$ is $\nu$-Baire, then the following conditions are equivalent. 
\begin{enumerate-(a)} 
\item 
$A$ is meager in $N_t$. 
\item 
Player I does not have a winning strategy in $G^t_\nu(A)$. 
\end{enumerate-(a)} 
\item \label{I wins if the set is somewhere comeager} 
If $\nu=\omega$, then the following conditions are equivalent. 
\begin{enumerate-(a)} 
\item 
$A$ is comeager in $N_u$ for some $u\supseteq t$. 
%$u\in {}^{<\nu}\nu$ with $t\subseteq u$. 
%\cap N_u$ 
\item 
Player I has a winning strategy in $G^t_\omega(A)$. 
% if and only if there is some $u\in {}^{<\nu}\nu$ with $t\subseteq u$ such that $A$ is comeager in $N_u$. 
\end{enumerate-(a)} 
\end{enumerate-(1)} 
\end{lemma}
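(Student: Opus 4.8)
The plan is to deduce every equivalence from the two characterizations already available: Lemma \ref{characterization of dense continuous homomorphism}, which turns comeagerness in $N_t$ into the existence of a dense continuous homomorphism $f$ with $f(\emptyset)=t$ and $\ran{f^*}\subseteq A$, and Lemma \ref{characterization of dense function}, which turns winning strategies in $G_\nu(A)$ into (continuous) dense homomorphisms. First I would record the evident localized versions of Lemma \ref{characterization of dense function} for $G^t_\nu(A)$: player I has a winning strategy in $G^t_\nu(A)$ if and only if there is a dense homomorphism $f$ with $\ran{f^*}\subseteq A$ and $f(\emptyset)\supseteq t$, and player II has a winning strategy in $G^t_\nu(A)$ if and only if there is a dense continuous homomorphism $f$ with $f(\emptyset)=t$ and $\ran{f^*}\subseteq {}^\nu\nu\setminus A$. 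These follow by carrying the side condition $s_0\supseteq t$ through the inductions in the proof of Lemma \ref{characterization of dense function}, exactly as the paper notes for the variant $G^2_\nu$.

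Granting this, I would prove part \ref{II wins if the set is meager} as a cycle (a)$\Rightarrow$(c)$\Rightarrow$(b)$\Rightarrow$(a). The implication (b)$\Rightarrow$(a) is immediate: a winning strategy for player II yields, by the localized Lemma \ref{characterization of dense function}, a dense continuous homomorphism $f$ with $f(\emptyset)=t$ and $\ran{f^*}\subseteq {}^\nu\nu\setminus A$, so ${}^\nu\nu\setminus A$ is comeager in $N_t$ by Lemma \ref{characterization of dense continuous homomorphism}, i.e. $A$ is meager in $N_t$; and (c)$\Rightarrow$(b) holds because a tactic is a strategy. The crux is (a)$\Rightarrow$(c): from meagerness, Lemma \ref{characterization of dense continuous homomorphism} again supplies a dense continuous homomorphism $f$ with $f(\emptyset)=t$ and $\ran{f^*}\subseteq{}^\nu\nu\setminus A$, and from $f$ I would build a winning tactic $\bar\sigma$. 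Given the current union $u\supseteq t$ of all moves, the set $\{s\in{}^{<\nu}\nu\mid f(s)\subseteq u\}$ is a chain under $\subseteq$ (since $f$ is a strict homomorphism, its values along the set are comparable initial segments of $u$) and is closed under unions of increasing subchains (since $f$ is continuous); hence it has a $\subseteq$-largest element $s_u$, of length $\leq\len(u)<\nu$. By density there is a least $\alpha<\nu$ with $f(s_u^\smallfrown\langle\alpha\rangle)\supseteq u$, and maximality of $s_u$ forces this extension to be proper, so I would set $\bar\sigma(u)=f(s_u^\smallfrown\langle\alpha\rangle)$.

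To see the tactic wins, I would track the strictly increasing ordinals $\len(s_u)$ across player II's $\nu$ many moves: each move strictly increases $s_u$, all values remain below $\nu$, and $\nu$ is regular, so their supremum is $\nu$; thus the nodes $s_u$ converge to a branch $y\in{}^\nu\nu$ with $f^*(y)=x$ for the real $x=\bigcup_\alpha s_\alpha$ produced by the run, whence $x\in\ran{f^*}\subseteq{}^\nu\nu\setminus A$ and player II wins. The essential point, and the only place continuity is genuinely used, is that a tactic sees only $u$ and must reconstruct $s_u$ from it at limit stages; continuity of $f$ is exactly what makes this reconstruction well defined and correct. This is the main obstacle and the only content that goes beyond the already established lemmas.

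Parts \ref{I wins if the set is not meager} and \ref{I wins if the set is somewhere comeager} would then be short. For \ref{I wins if the set is not meager}, the direction (a)$\Rightarrow$(b) is immediate from part \ref{II wins if the set is meager}, since player I and player II cannot both have winning strategies; for the converse I would argue contrapositively, writing $A\cap N_t=U\triangle M$ with $U$ open and $M$ meager and replacing $U$ by $U\cap N_t$, so that non-meagerness of $A\cap N_t$ forces $U\neq\emptyset$, hence some $N_u\subseteq U$ with $u\supseteq t$ has $A$ comeager in $N_u$; Lemma \ref{characterization of dense continuous homomorphism} and the localized Lemma \ref{characterization of dense function} then give player I a winning strategy in $G^t_\nu(A)$. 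Part \ref{I wins if the set is somewhere comeager} is the same equivalence without the Baire hypothesis: (a)$\Rightarrow$(b) again runs through Lemma \ref{characterization of dense continuous homomorphism} and the localized Lemma \ref{characterization of dense function}, while (b)$\Rightarrow$(a) uses that for $\nu=\omega$ every homomorphism is automatically continuous, so the dense homomorphism furnished by player I's winning strategy is continuous and Lemma \ref{characterization of dense continuous homomorphism} yields comeagerness of $A$ in $N_{f(\emptyset)}$ with $f(\emptyset)\supseteq t$.
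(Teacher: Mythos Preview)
Your overall strategy---routing everything through the homomorphism characterizations in Lemmas \ref{characterization of dense continuous homomorphism} and \ref{characterization of dense function}---is sound, and parts \ref{I wins if the set is not meager} and \ref{I wins if the set is somewhere comeager} are correct. The paper, by contrast, simply cites Kovachev for part \ref{II wins if the set is meager} and then derives the other parts by a direct role-switching argument (player I in $G^t_\nu(A)$ simulates player II in $G^u_\nu({}^\nu\nu\setminus A)$); your approach is a legitimate alternative and arguably more self-contained.

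There is, however, a genuine gap in your proof of (a)$\Rightarrow$(c) in part \ref{II wins if the set is meager}. You claim that $K_u=\{s\in{}^{<\nu}\nu\mid f(s)\subseteq u\}$ is a chain because the values $f(s)$ for $s\in K_u$ are comparable initial segments of $u$. That reasoning is a non sequitur: comparability of the values does not imply comparability of the arguments. Nothing in the definition of a dense continuous homomorphism prevents $f(\langle 0\rangle)=f(\langle 1\rangle)$, or more generally prevents two incomparable $s,s'$ from having $f(s),f(s')\subseteq u$; indeed the explicit $f$ built in the proof of Lemma \ref{characterization of dense continuous homomorphism} enumerates \emph{all} of a dense set $K$ and so is typically not injective. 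Without the chain property your tactic is ill-defined, and even after choosing a maximal $s_u$ by some canonical rule you lose the monotonicity of $s_u$ along a run that your winning argument relies on.

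Two easy fixes are available. One is to go back to the construction of $f$ and arrange that $f$ preserves incomparability (for each $s$, choose the values $f(s^\smallfrown\langle\alpha\rangle)$ to be pairwise incomparable extensions of $f(s)$, still dense above $f(s)$; this uses $\nu^{<\nu}=\nu$); then $K_u$ really is a chain and your tactic works verbatim. The other is to bypass $f$ entirely and build the tactic directly from a decreasing sequence $\langle U_\alpha\mid\alpha<\nu\rangle$ of dense open sets with $\bigcap_\alpha U_\alpha\subseteq N_t\setminus A$: given $u\supseteq t$, respond with the least $v\supsetneq u$ with $N_v\subseteq U_{\len(u)}$. The lengths appearing are unbounded in $\nu$, so the decreasing hypothesis gives $x\in\bigcap_\alpha U_\alpha$.
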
 
\begin{proof} 
%\ref{II wins if the set is meager} 
The first claim is proved in \cite[Lemma 7.3.2]{Kovachev-thesis}. Since the remaining claims are easy consequences of this, we only sketch the proofs.  
%\ref{I wins if the set is not meager} 
%Suppose first that $A$ is meager in $N_t$. Then player II has a winning strategy in $G^t_\nu(A)$ by \ref{II wins if the set is meager}, so player I does not have a winning strategy. 

For the second claim, suppose that $A$ is not meager in $N_t$. Since $A$ is $\nu$-Baire, $A\cap N_u$ is comeager in $N_u$ for some $u\supseteq t$. 
%$u\in {}^{<\nu}\nu$ with $t\subseteq u$. 
%\todo{maybe define this more generally} Let $G_\nu^t$ denote $G_\nu$ with the additional requirement that the first move strictly extends $t$. 
By the first claim, there is a winning strategy $\sigma$ for player II in $G_\nu^u(N_u\setminus A)$.
% by \ref{II wins if the set is meager}. 
This means that player II succeeds with playing in $A$. 
Since it is harder for player II to win because she or he does not play at limits, we easily obtain a winning strategy $\tau$  for player I in $G^t_\nu(A)$ with the first move $u$ from $\sigma$. 
%Let $u$ be the first move of $\tau$. 
%It is straighforward to define $\tau$ from $\sigma$, similar to the proof of Lemma \ref{strategy from almost strategy}. 
%Suppose that $\langle U_\alpha\mid \alpha<\nu\rangle$ is a sequence of open dense sets such that $A\cap (\bigcap_{i<\nu} U_\alpha) =\emptyset$. 
%The second player can win by playing into $U_\alpha$ in step $\alpha$. 

For the third claim, suppose that $A$ is comeager in $N_u$ for some $u\supseteq t$. Since player II has a winning strategy in $G^u_\omega({}^{\nu}\nu\setminus A)$ by the first claim, we obtain a winning strategy for player I in $G^t_\omega(A)$ with the first move $u$ by switching the roles of the players. 
The reverse implication follows similarly from the first claim. 
%\ref{I wins if the set is somewhere comeager} 
%The game obtained by fixing the first move of player I and switching the roles of player I and II after this move corresponds to $G_\nu$ for $\nu=\omega$. Hence \ref{I wins if the set is somewhere comeager} follows from \ref{II wins if the set is meager}. 
%The last claim follows from the first claim, since $G_\omega$ is symmetric 
\end{proof} 

This shows together with Lemma \ref{characterization of dense continuous homomorphism} that for any class $\Gamma$ of subsets of the Baire space ${}^{\omega}\omega$ that is closed under continuous preimages, the statement that $G_\omega(A)$ is determined for all sets $A\in \Gamma$ is equivalent to the statement that all sets in $\Gamma$ have the property of Baire. 
%implies that the Baire property for all subsets of the Baire space ${}^{\omega}\omega$ definable from elements of ${}^{\omega}\Ord$ is precisely characterized by the determinacy of the Banach-Mazur game. 

Moreover, the previous result shows that $G_\nu(A)$ is determined for every $\nu$-Baire subset $A$ of ${}^{\nu}\nu$. 
The game is also  
%the game $G_\nu(A)$ 
determined for some $\Sigma^1_1$ subsets of ${}^{\nu}\nu$ that are not $\nu$-Baire, 
since it is easy to see that 
%\begin{example} 
player I has a winning strategy in $G_\nu(A)$ if $A$ is one of the sets $\Cl_\nu$, 
%on $\nu$ and for the non-stationary ideal 
$\mathrm{NS}_\nu$ that are defined after Definition \ref{definition of almost Baire}. 
%on $\nu$. 
%\end{example} 
This leads to the question for which definable subsets $A$ of $\kk$ the Banach-Mazur game is determined. We study this question in the next section.

%%%%%%%%%%%%%%%%%%%%%%%%%%%%%%%%%%%%%%%%%%%%%%%%%%%%%%%%%%%%%%%%
\subsection{The almost Baire property for definable sets}
%Consistency of the almost Baire property} 
\label{subsection: consistency of the almost Baire property} 

%\todo[inline]{maybe call: The almost Baire property for definable sets?} 

As before, we always assume that $\kappa$ is an uncountable regular cardinal with $\kappa^{<\kappa}=\kappa$. 

In this section, we will prove that it is consistent for the Banach-Mazur game $G_\kappa$ to be determined for all subsets of ${}^{\kappa}\kappa$ that are definable from elements of $\Ordk$. 
This will also imply that it is consistent that the almost Baire property holds for all such sets by the results in the previous section. 
%This is proved via determinacy of the game $G_\kappa$. 
% of length $\kappa$. 
%Banach-Mazur game is \todo{AND THE ALMOST Baire property holds} determined for all sufficiently definable subsets $A$ of ${}^{\kappa}\kappa$. 

The following notions will be used to construct strategies for the first player in $G_\kappa$. 

\begin{definition} 
\begin{enumerate-(i)}
\item 
An \emph{almost strategy} 
%for \todo{can we also define this for player II?} 
for player I in $G_\kappa$ 
%for a subset $A$ of ${}^{\kappa}\kappa$ 
is a partial strategy $\sigma$ such that $\dom{\sigma}$ is dense in the following sense. 
Suppose that $\gamma<\kappa$ is odd, 
$\vec{s}=\langle s_\alpha\mid \alpha<\gamma\rangle$ is a strictly increasing sequence in ${}^{<\kappa}\kappa$ according to $\sigma$ 
%$\delta=\gamma+1<\kappa$, 
and $\bigcup_{\alpha<\gamma} s_\alpha \subsetneq v$. 
% and $\len(u)<\len(v)$. 
Then there is some $w\in {}^{<\kappa}\kappa$ with $v \subseteq w$ and $\vec{s}^\smallfrown \langle w\rangle\in \dom{\sigma}$. 
%$\langle s_\alpha\mid \alpha<\gamma\rangle^\smallfrown \langle u\rangle \in \dom{\sigma}$. 
%\item 
%An \emph{almost tactic} is a partial tactic that is an almost strategy. 
\item 
If $\sigma$, $\tau$ are partial strategies for player I in $G_\kappa$, then $\tau$ \emph{expands} $\sigma$ if for every run $\vec{s}=\langle s_\alpha\mid \alpha<\kappa\rangle$ according to $\tau$, there is a run $\vec{t}=\langle t_\alpha\mid \alpha<\kappa\rangle$ according to $\sigma$ with the same \emph{outcome} $\bigcup_{\alpha<\kappa} s_\alpha= \bigcup_{\alpha<\kappa} t_\alpha$. 
\item 
Suppose that $A$ is a subset of ${}^{\kappa}\kappa$. 
A partial strategy $\sigma$ for player I in $G_\kappa(A)$ is \emph{winning} if for every run $\vec{s}=\langle s_\alpha\mid \alpha<\kappa\rangle$ according to $\sigma$, the outcome $\bigcup_{\alpha<\kappa} s_\alpha$ is in $A$. 
\end{enumerate-(i)} 
\end{definition} 

The next result shows that to construct a winning strategy for player I in $G_\kappa(A)$, it is sufficient to construct a winning almost strategy. 
%for player I. 
%\todo{ok, or as definition?} 
In the statement, we call a definition or a formula \emph{$V_\kappa$-absolute} if it is absolute to outer models $W\supseteq V$ with $(V_\kappa)^W=V_\kappa$. 

\begin{lemma} \label{strategy from almost strategy} 
There is a $V_\kappa$-absolute definable function that 
%that is absolute to outer models with the same $V_\kappa$ and that, in every such model, 
maps every almost strategy $\sigma$ for player I in $G_\kappa$ to a strategy $\tau$ that expands $\sigma$ and moreover, this property of $\sigma$, $\tau$ is $V_\kappa$-absolute. 
%\todo{WHAT KIND OF ABSOLUTENESS?} If $\sigma$ is an almost strategy for player I in $G_\kappa$, then there is a strategy $\tau$ for player I in $G_\kappa$ that expands $\sigma$. 
%an expansion $\tau$ of $\sigma$ that is 
%Suppose that $A$ is a subset of ${}^{\kappa}\kappa$ and that $\sigma$ is a winning almost strategy for player I in $G_\kappa(A)$. 
%Suppose that for every run $\vec{s}=\langle s_\alpha\mid \alpha<\kappa\rangle$ following $\sigma$, the outcome $\bigcup_{\alpha<\kappa} s_\alpha$ is in $A$. 
%Then there is a winning strategy $\tau$ for player I in $G_\kappa(A)$. 
\end{lemma}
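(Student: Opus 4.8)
The plan is to build $\tau$ by a recursion along partial runs, carrying an auxiliary bookkeeping assignment $\rho$ that sends each partial $\tau$-run $\vec{s}$ to a partial run $\rho(\vec{s})$ of the same length that is played according to $\sigma$, is monotone in $\vec{s}$, and has the same outcome $\bigcup\vec{s}=\bigcup\rho(\vec{s})$ at every position immediately following a move of player I (and hence also at limits). The strategy $\tau$ will then simply copy $\sigma$: at a position $\vec{s}$ of even length where $\rho(\vec{s})\in\dom{\sigma}$, player I plays $\tau(\vec{s})=\sigma(\rho(\vec{s}))$, and I extend $\rho$ by $\rho(\vec{s}^\smallfrown\langle\tau(\vec{s})\rangle)=\rho(\vec{s})^\smallfrown\langle\sigma(\rho(\vec{s}))\rangle$. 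Since $\sigma(\rho(\vec{s}))$ properly extends $\bigcup\rho(\vec{s})\supseteq\bigcup\vec{s}$, this is a legal move and restores the equality of outcomes. Whenever the opponent then plays some $u$ at the following (odd) round, I would invoke the density of $\dom{\sigma}$ for the odd-length run $\rho(\vec{s}^\smallfrown\langle\tau(\vec{s})\rangle)$ with target $v=u$, obtaining $w\supseteq u$ with $\rho(\vec{s}^\smallfrown\langle\tau(\vec{s})\rangle)^\smallfrown\langle w\rangle\in\dom{\sigma}$; setting $\rho$ of the new position $\vec{s}^\smallfrown\langle\tau(\vec{s}),u\rangle$ to this value keeps $\rho$ in $\dom{\sigma}$ at the next player-I position, so that $\tau$ can move again. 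The temporary overshoot from $u$ to $w$ is absorbed by player I's next $\sigma$-move.

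The step I expect to be the main obstacle is the behaviour at limit rounds, and this is exactly where the asymmetry of the game is used. At a limit $\alpha$ the natural candidate $\rho(\vec{s}{\upharpoonright}\alpha)=\bigcup_{\gamma<\alpha}\rho(\vec{s}{\upharpoonright}\gamma)$ is a run of limit length according to $\sigma$ with $\bigcup\rho(\vec{s}{\upharpoonright}\alpha)=\bigcup(\vec{s}{\upharpoonright}\alpha)$, but it need not lie in $\dom{\sigma}$, and the density hypothesis only supplies extensions at odd (player II) positions. Since in $G_\kappa$ it is player I who moves at the even ordinal $\alpha$, I would let player I play a fixed canonical move $q\supsetneq\bigcup(\vec{s}{\upharpoonright}\alpha)$ and set $\rho(\vec{s}{\upharpoonright}(\alpha+1))=\rho(\vec{s}{\upharpoonright}\alpha)^\smallfrown\langle q\rangle$; because $\rho(\vec{s}{\upharpoonright}\alpha)\notin\dom{\sigma}$, player I is free to play $q$ and the run remains according to $\sigma$. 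The position $\rho(\vec{s}{\upharpoonright}(\alpha+1))$ now has odd length, so at the opponent's next move density applies and returns us to $\dom{\sigma}$ at round $\alpha+2$, after which the recursion continues as in the successor case. The two points that make this legitimate are that ``according to $\sigma$'' for a partial strategy means obeying $\sigma$ only where it is defined, and that player I, not player II, acts at limits.

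It then remains to check that $\tau$ expands $\sigma$ and that everything is $V_\kappa$-absolute. For expansion, given any run $\vec{s}$ of length $\kappa$ according to $\tau$, the union $R=\bigcup_{\alpha<\kappa}\rho(\vec{s}{\upharpoonright}\alpha)$ is a run of length $\kappa$ according to $\sigma$, and taking unions along the cofinally many odd rounds where the outcomes agree gives $\bigcup R=\bigcup_{\alpha<\kappa}s_\alpha$, so $R$ witnesses expansion. For absoluteness, I would make every choice canonical, taking $q$ and the density witness $w$ least in a fixed definable well-order of $V_\kappa$, so that the recursion defining $\tau$ and $\rho$ refers only to $\sigma$ and to partial runs, which are elements of $V_\kappa$ since their moves lie in ${}^{<\kappa}\kappa$ and their length is ${<}\kappa$. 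Consequently the defining recursion, the statement that $\sigma$ is an almost strategy (a density property quantifying only over such runs), and the value of $\tau$ at each position are computed identically in any outer model $W$ with $(V_\kappa)^W=V_\kappa$; and for any $\tau$-run in $W$ the same $\rho$-recursion produces a $\sigma$-run with the same outcome, so $\tau$ still expands $\sigma$ in $W$. This yields the required $V_\kappa$-absolute definable function.
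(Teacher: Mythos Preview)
Your approach is essentially the paper's: your bookkeeping $\rho$ is exactly the paper's ``revised run'' $\mathrm{rev}$, with the same invariant that the even entries of $\rho(\vec{s})$ agree with those of $\vec{s}$ and the odd entries are the $\prec$-least overshoots supplied by density, and with $\tau(\vec{s})=\sigma(\rho(\vec{s}))$ at every even position where $\sigma$ is defined. The only difference is at limits: the paper simply sets $\tau(\vec{t})=\sigma(\mathrm{rev}(\vec{t}))$, tacitly relying on the fact that the almost strategies actually used later (Lemma~\ref{trees and strategies}) are always defined at limit positions, whereas you additionally spell out a fallback move when $\rho(\vec{s}{\upharpoonright}\alpha)\notin\dom{\sigma}$; this is harmless and arguably more careful, but not a genuinely different route.
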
 
\begin{proof} 
We fix a wellordering $\prec$ of $\klk$. 
We will define $\tau$ by induction on the length of partial runs. 
To this end, for any partial run $\vec{t}=\langle t_\alpha\mid \alpha<\gamma\rangle$ that is according to $\tau$, as defined up to this stage, we will define a \emph{revised partial run} $\mathrm{rev}(\vec{t})=\langle r_\alpha\mid \alpha<\gamma\rangle$ according to $\sigma$ with 
$r_\alpha=t_\alpha$ for all even $\alpha<\gamma$ 
and let $\tau(\vec{t})=\sigma(\mathrm{rev}(\vec{t}))$. 

In the successor step, suppose that the construction has been carried out for some even ordinal $\gamma<\kappa$ 
and that $\vec{t}=\langle t_\alpha\mid \alpha<\gamma+2\rangle$ is a partial run. 
If $\vec{t}$ is not according to $\tau$, then we give $\tau(\vec{t})$ the $\prec$-least possible value. 
If $\vec{t}$ is according to $\tau$, then $\vec{t}{\upharpoonright}\gamma$ is according to $\tau$ and hence $\sigma(\mathrm{rev}(\vec{t}{\upharpoonright}\gamma))=\tau(\vec{t}{\upharpoonright}\gamma)=t_\gamma$ by the induction hypothesis for $\gamma$.  
Since $\sigma$ is an almost strategy, there is some $u\supsetneq t_{\gamma+1}$ with $\vec{t}^\smallfrown \langle t_\gamma, u\rangle\in \dom{\sigma}$. 
For the $\prec$-least such $u$, we let 
$$\mathrm{rev}(\vec{t})=\mathrm{rev}(\vec{t}{\upharpoonright}\gamma)^\smallfrown\langle t_\gamma, u\rangle.$$ 

In the limit step, suppose that $\vec{t}=\langle t_\alpha\mid \alpha<\gamma\rangle$ is a partial run of limit length $\gamma<\kappa$ and that the construction has been carried out strictly below $\gamma$. If $\vec{t}$ is not according to $\tau$, then we give $\tau(\vec{t})$ the $\prec$-least possible value. 
If $\vec{t}$ is according to $\tau$, then we let 
$$\mathrm{rev}(\vec{t})=\bigcup_{2\cdot \alpha<\gamma} \mathrm{rev}(\vec{t}{\upharpoonright}2\cdot \alpha).$$ 
Moreover, let $\tau(\vec{t})=\sigma(\mathrm{rev}(\vec{t}))$. 

It is easy to see that the construction of the function and its required properties are absolute to any 
model of set theory with the same $V_\kappa$ that also contains $\prec$. 
\end{proof}

We now collect some definitions that are relevant for the following proofs. 
The subsets $S$ of $\Add{\kappa}{1}^2$ introduced below will 
%We will work with a subset $S$ of $\Add{\kappa}{1}^2$ that 
represent two-step iterated forcings that are sub-equivalent to $\Add{\kappa}{1}$. 
%subforcing of $\Add{\kappa}{1}\times \Add{\kappa}{1}$. 

\begin{definition} \label{perfect subset of kappa^kappa x kappa^kappa}  
A set $S$ is called a \emph{level subset} of $\Add{\kappa}{1}^2$ 
%$({}^{<\kappa}\kappa)^2$} 
%is called a \emph{level subset} 
if it consists of pairs $(s,t)\in\Add{\kappa}{1}^2$ with 
%${}^{<\kappa}\kappa\times {}^{<\kappa}\kappa$ 
$\len(s)=\len(t)$. 
% for all $(s,t)\in S$. 
We further define the following properties, which such a set might have. 
%Suppose that $S$ is a level subset of $({}^{<\kappa}\kappa)^2$. 
%${}^{<\kappa}\kappa\times {}^{<\kappa}\kappa$ 
\begin{enumerate-(a)} 
\item 
$S$ is \emph{closed} if for every strictly increasing sequence $\langle (s_\alpha, t_\alpha) \mid \alpha<\gamma \rangle$ in $S$, there is some $(s,t) \in S$ with 
$s\supseteq \bigcup_{\alpha<\gamma} s_\alpha$ and $t\supseteq \bigcup_{\alpha<\gamma} t_\alpha.$ 
%$s_\alpha\subseteq s$ and $t_\alpha\subseteq t$ for all $\alpha<\gamma$. 
%which extends $(s_\alpha $ for all $\alpha<\gamma$. 
\item 
$S$ is \emph{limit-closed} if for every strictly increasing sequence $\langle (s_\alpha, t_\alpha) \mid \alpha<\gamma \rangle$ in $S$,  
$s=\bigcup_{\alpha<\gamma} s_\alpha$ and $t=\bigcup_{\alpha<\gamma} t_\alpha$, we have $(s,t)\in S$. 
\item 
%\todo{maybe ask for limit-closed?}
$S$ is \emph{perfect} if it is closed and every element of $S$ has incompatible successors in $S$. 
%and for every $t\in S$, there are incompatible nodes above $t$ in $S$. 
%\end{enumerate-(i)} 
%\item 
%$p$ is closed under initial segments, 
%\item 
%$S$ has no maximal elements and 
%\item 
%$S$ is $<\kappa$-closed in the following sense: 
%if $\langle s_\alpha \mid \alpha<\gamma \rangle$ is a strictly increasing sequence in $S$, then there is some $s \in p$ which extends $s_\alpha $ for all $\alpha<\gamma$. 
%$\gamma<\kappa$ and $(s_\alpha, t_\alpha)\in S$ for all $\alpha<\gamma$, 
%then $(s,t)\in S$ for $s=\bigcup_{\alpha<\gamma} s_\alpha$ and $t=\bigcup_{\alpha<\gamma} t_\alpha$. 
%\end{enumerate-(i)} 
%
%Suppose that $S$ is a level subset of $({}^{<\kappa}\kappa)^2$. 
%\begin{enumerate-(i)} 
\end{enumerate-(a)} 
%\item 
Moreover, we let 
%\todo{\ \ \ \ \ \ \ \ used?} 
$\splitt(S)$ denote the set of \emph{splitting nodes}, i.e. the elements of $S$ with incompatible direct successors in $S$, and 
%\item 
%\todo{\ \ \ \ \ \ \ \ used?} 
$\succsplit(S)$ the set of direct successors of splitting nodes. 
%elements of $\splitt(S)$. 
%in $S$ of nodes $(s,t)\in \splitt(S)$. 
\end{definition} 

Note that for subtrees, the notions of closure and limit closure that we have just defined are equivalent. 

The next definitions will be used below to define a forcing that adds a 
%\todo{or just an almost strategy?} 
winning set for player I in $G_\kappa$. 
%We will define sets which will be used to prove the existence of winning strategies in $G_\kappa$. The sets have the structure of a tree. 

\begin{definition} \label{definition: S-tree} 
Suppose that $S$ is a level subset of $\Add{\kappa}{1}^2$. 
%${}^{<\kappa}\kappa\times {}^{<\kappa}\kappa$ 
%\todo{we don't need closure under initial segments, right?}
An \emph{$S$-tree} $p$ consists of pairs $(s,t)$ such that $s$, $t$ are strictly increasing sequences with $\len(s)=\len(t)$ and the following conditions hold for all $(s,t), (u,v)\in p$ and all $\alpha<\len(s)$. 
%if the elements of $p$ are of the form $(s,t)$ and for all $(s,t)\in p$ 
\begin{enumerate-(a)} 
%\item \label{S-node 1}
%$s,t\in {}^{\gamma}({}^{<\kappa}\kappa)$ for some $\gamma<\kappa$, %=\gamma_{s,t}<\kappa$, 
%\item \label{S-node 2}
%$s,t$ are strictly increasing with respect to inclusion, 
% sequences in ${}^{\gamma}({}^{<\kappa}\kappa)$ with $\gamma<\kappa$. 
%\item \label{S-node 3}
%\todo{\ \ \ \ \ \ \ \ used?} $(s(\alpha), t(\alpha))\in \succsplit(S)$. 
% for all $\alpha<\len(s)$, 
%$\alpha<\gamma_{s,t}$, 
\item 
$(s(\alpha), t(\alpha))\in S$. 
\item \label{S-node 4}
$(s{\upharpoonright} \alpha, t{\upharpoonright} \alpha)\in p$. 
% for all $\alpha<\len(s)$, 
%$\alpha<\gamma_{s,t}$ and 
%$p$ is closed under coordinate-wise restriction to ordinals. 
\item \label{S-node 5}
%\todo{CHANGE: Lengths not necessarily the same} 
If $\gamma,\delta<\len(s)$
%$\len(s)=\gamma$ and $\len(u)=\delta$ 
are even, $\bigcup \ran{s{\upharpoonright}\gamma} = \bigcup \ran{u{\upharpoonright}\delta}$ and $\bigcup \ran{t{\upharpoonright}\gamma} = \bigcup \ran{v{\upharpoonright}\delta}$, then $s(\gamma)=u(\delta)$ and $t(\gamma)=v(\delta)$. 
%\item \label{S-node 5}
%%MISTAKEN PREVIOUS VERSION: If $\len(s)=\len(u)=\gamma$ is even, $\bigcup (s\upharpoonright\gamma) = \bigcup (u\upharpoonright\gamma)$ and $\bigcup (t\upharpoonright\gamma) = \bigcup (v\upharpoonright\gamma)$, then $s(\gamma)=u(\gamma)$ and $t(\gamma)=v(\gamma)$. 
%%ORIGINAL DEFINITION: $s\upharpoonright \gamma=u\upharpoonright \gamma$ and $t\upharpoonright \gamma=v\upharpoonright \gamma$ 
\end{enumerate-(a)} 
%The pairs $(s,t)$ with the properties \ref{S-node 1}, \ref{S-node 2} and \ref{S-node 3} are called \emph{$S$-nodes}. 
\end{definition} 

\begin{remark} 
The condition in Definition \ref{definition: S-tree} \ref{S-node 5} can be replaced with the following statement. 
%the conditions $\bigcup \ran{s{\upharpoonright}\gamma} = \bigcup \ran{u{\upharpoonright}\delta}$ and $\bigcup \ran{t{\upharpoonright}\gamma} = \bigcup v{\upharpoonright}\delta$ can be replaced with the conditions 
If $\gamma<\len(s)$ is even, $s{\upharpoonright} \gamma=u{\upharpoonright} \gamma$ and $t{\upharpoonright} \gamma=v{\upharpoonright} \gamma$, then $s(\gamma)=u(\gamma)$ and $t(\gamma)=v(\gamma)$. 
Using this alternative definition, 
%\todo{check below} 
one can prove analogous results to all that follows. 
%results in this section. 
\end{remark} 

%\max \{\sup_{\alpha\in dom(s)} |s(\alpha)|,\ \sup_{\alpha\in dom(t)} |t(\alpha)|\}$ for $p\in\mathbb{P}$. 

%The following fullness condition is used to define an almost strategy for player I in $G_\kappa$ from an $S$-tree. 

The $S$-trees of size ${<}\kappa$ will be the conditions in a forcing that adds an $S$-tree with the following properties. 

\begin{definition} 
Suppose that $S$ is a level subset of $\Add{\kappa}{1}^2$ and $p$ is an $S$-tree. 
%${}^{<\kappa}\kappa\times {}^{<\kappa}\kappa$. 
\begin{enumerate-(a)} 
\item 
Let $\len(p)=\sup_{(s,t)\in p} \len(s)$ and 
$$\he(p)=\sup_{(s,t)\in p, \alpha< \len(s) }  \len(s(\alpha)).$$ 
\item 
%\todo{\ \ \ \ \ check where \ \\ 
%\ \ \ \ \ this is used?} 
An $S$-tree $p$ is called \emph{superclosed} if  
\begin{enumerate-(i)} 
%\item 
%$p$ is closed under initial segments, 
\item 
%$p$ is $<\kappa$-closed, i.e. 
if $\langle (s_\alpha, t_\alpha) \mid \alpha<\gamma \rangle$ is a strictly increasing sequence in $p$, then there is some $(s,t)\in p$ which extends $(s_\alpha, t_\alpha)$ for all $\alpha<\gamma$. 
\item 
$p$ has no maximal elements. 
%$\gamma<\kappa$ and $(s_\alpha, t_\alpha)\in S$ for all $\alpha<\gamma$, 
%then $(s,t)\in S$ for $s=\bigcup_{\alpha<\gamma} s_\alpha$ and $t=\bigcup_{\alpha<\gamma} t_\alpha$. 
\end{enumerate-(i)} 
\item 
%\begin{definition} 
%Suppose that $S$ is a subset of ${}^{<\kappa}\kappa\times {}^{<\kappa}\kappa$. 
An $S$-tree $p$ is called \emph{strategic} if it is superclosed and the following condition holds. 
If $(s,t)\in p$, $\len(s)=\len(t)=\gamma+1$, $\gamma$ is even and $u\supsetneq s(\gamma)$, then there are $v,w\in \klk$ with $v\supseteq u$ and $(s^\smallfrown \langle v\rangle, t^\smallfrown \langle w\rangle)\in p$. 
%\end{definition} 
\end{enumerate-(a)} 
\end{definition} 

%\todo{typesetting?} Since $s$ is strictly increasing for all $(s,t)\in p$, we have 
Note that we have $\len(s)\leq \he(s)$ for all $(s,t)\in p$, since  $s$ is strictly increasing by the definition of $S$-trees. 
We will further work with the following weak projection of superclosed $S$-trees, 
%The $S$-trees can either have size ${<}\kappa$ and these trees are used as forcing 
%we will use the following weak projection 
which differs from the standard notion of projection. 
%Note that the following differs from the standard notation. 

\begin{definition} 
If $S$ is a level subset of $\Add{\kappa}{1}^2$
%${}^{<\kappa}\kappa\times {}^{<\kappa}\kappa$ 
and $T$ is a superclosed $S$-tree, we define the following objects. 
\begin{enumerate-(a)} 
\item 
The \emph{body} $[T]$ of $T$ is the set  of $(x,y)\in \Add{\kappa}{1}^2$
%\times{}^{\kappa}\kappa$ 
such that there are $\vec{s}=\langle s_\alpha\mid \alpha<\kappa \rangle$ and $\vec{t}=\langle t_\alpha\mid \alpha<\kappa \rangle$ with $\langle (s_\alpha, t_\alpha) \mid \alpha<\gamma \rangle \in T$ for all $\gamma<\kappa$ and 
$$x=\bigcup_{\alpha<\kappa} s_\alpha,\ y=\bigcup_{\alpha<\kappa} t_\alpha.$$ 
\item 
The \emph{projection} $\proj[T]$ of $T$ is the set  of $x\in {}^{\kappa}\kappa$ such that $(x,y)\in [T]$ for some $y\in \kk$. 
%there are $\vec{s}=\langle s_\alpha\mid \alpha<\kappa \rangle\}$ and $\vec{t}=\langle t_\alpha\mid \alpha<\kappa \rangle\}$ such that $\langle (s_\alpha, t_\alpha) \mid \alpha<\gamma \rangle \in T$ for all $\gamma<\kappa$ and $x=\bigcup_{\alpha<\kappa} s_\alpha$. 
%\item 
%\todo{delete this} The \emph{sequential body} $\llbracket T \rrbracket$ of $T$ is defined as the set of $(\vec{s},\vec{t})\in {}^{\kappa}({}^{<\kappa}\kappa)\times {}^{\kappa}({}^{<\kappa}\kappa)$ such that $\vec{s}=\langle s_\alpha\mid \alpha<\kappa \rangle\}$, $\vec{t}=\langle t_\alpha\mid \alpha<\kappa \rangle\}$ and $\langle (s_\alpha, t_\alpha) \mid \alpha<\gamma \rangle \in T$ for all $\gamma<\kappa$. 
%\item 
%\todo{delete this} The \emph{sequential projection} $\proj\llbracket T \rrbracket$ of $T$ is defined as the set  of $\vec{s}\in {}^{\kappa}({}^{<\kappa}\kappa)$ such that $\vec{s}=\langle s_\alpha\mid \alpha<\kappa \rangle\}$ and there is some $\vec{t}=\langle t_\alpha\mid \alpha<\kappa \rangle\}$ such that $\langle (s_\alpha, t_\alpha) \mid \alpha<\gamma \rangle \in T$ for all $\gamma<\kappa$. 
\end{enumerate-(a)} 
\end{definition} 

%Let $G_\kappa$ denote the Banach-Mazur game of length $\kappa$. 
%he following will be used to prove the existence of winning strategies in $G_\kappa$. 

The strategic $S$-trees are defined for the following purpose. 

\iffalse 
\begin{lemma} \label{trees and strategies} 
%Suppose that $p$ is a subset of ${}^{<\kappa}\kappa$. 
Suppose that $S$ is a perfect level subset of $({}^{<\kappa}\kappa)^2$ 
%${}^{<\kappa}\kappa\times {}^{<\kappa}\kappa$ 
and $T$ is a strategic $S$-tree. 
%The following conditions are equivalent. 
%\begin{enumerate} 
%\item 
%$p$ is a full $S$-tree. 
%\item
Then there is an almost tactic $\sigma$ for player I in $G^2_\kappa([T])$ such that $\sigma$ is winning in every generic extension of $V$ by $<\kappa$-distributive forcing. 
%Then there is an almost strategy $\sigma$ for player I in $G_\kappa$ such that the following conditions hold in all generic extensions of $V$ via $<\kappa$-distributive forcings. 
\end{lemma} 
\begin{proof} 

\end{proof} 
\fi 

\begin{lemma} \label{trees and strategies} 
Suppose that $S$ is a perfect level subset of $\Add{\kappa}{1}^2$ 
and $T$ is a strategic $S$-tree. 
Then there is a winning strategy for player I in $G_\kappa(\proj[T])$ that remains so in all outer models $W\supseteq V$ with $(V_\kappa)^W=V_\kappa$. 
\iffalse 
\begin{enumerate-(a)} 
\item \label{generic full tree 1} 
Every partial run $\vec{s}$ for $\sigma$ is a strictly increasing sequence in \todo{define above?} $\pro(T)=\{u\in {}^{<\kappa}S\mid \exists v\in {}^{<\kappa}S\ (u,v)\in T\}$. 
\item \label{generic full tree 2} 
$\sigma$ is a winning almost strategy for player I in $G_\kappa(\proj[T])$. 
\end{enumerate-(a)} 
In particular, there is a winning strategy for player I in $G_\kappa(\proj[T])$ by Lemma \ref{strategy from almost strategy}. 
\fi
\end{lemma}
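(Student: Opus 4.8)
The plan is to exhibit a winning \emph{almost strategy} $\sigma$ for player~I in $G_\kappa(\proj[T])$ and then feed it into Lemma~\ref{strategy from almost strategy} to obtain a genuine winning strategy. The guiding principle is that a run of $G_\kappa$ and a branch of $T$ are to be identified: player~I maintains a \anf{shadow} node $(\bar s,\bar t)\in T$ whose first coordinate $\bar s$ agrees entry-by-entry with the sequence of moves played so far, playing at each even round the next first-coordinate entry dictated by $T$. The parity conventions match up exactly with the definition of a strategic $S$-tree, where even entries of a node correspond to player~I's moves and odd entries to player~II's: the strategic clause is stated for a node of length $\gamma+1$ with $\gamma$ \emph{even}, i.e.\ just after player~I has moved, and it guarantees that any extension $u\supsetneq s(\gamma)$ proposed by player~II can be absorbed by some odd entry $v\supseteq u$ of $T$. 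The single point where the identification is not literal --- player~II's move need only be \emph{contained in}, not equal to, the odd entry supplied by $T$ --- is exactly the slack that the almost-strategy formalism is designed to accommodate.

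In detail, I would define $\sigma$ simultaneously with an order-preserving assignment $\vec s\mapsto n(\vec s)=(\vec s,\bar t)\in T$ on even-length positions, by recursion on the length, keeping the invariant that the first coordinate of $n(\vec s)$ equals $\vec s$. At the root we set $n(\emptyset)=(\emptyset,\emptyset)$; since $T$ is superclosed it has no maximal elements, so player~I may extend to a length-$1$ node $(\langle a\rangle,\langle b\rangle)\in T$ and play $\sigma(\emptyset)=a$. At a successor stage, suppose $\vec s\in\dom\sigma$ with node $(\bar s,\bar t)$ of even length $\gamma$ and player~I's move $a=\sigma(\vec s)$ recorded by a node $(\bar s{}^\smallfrown\langle a\rangle,\bar t{}^\smallfrown\langle b\rangle)\in T$ of length $\gamma+1$. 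For any legal reply $m\supsetneq a$ of player~II, the strategic clause applied with $u:=m$ yields $v\supseteq m$ and $w$ with $(\bar s{}^\smallfrown\langle a,v\rangle,\bar t{}^\smallfrown\langle b,w\rangle)\in T$; this both verifies the density requirement in the definition of an almost strategy (the witness being $v\supseteq m$) and provides the node at the new even-length position $\vec s{}^\smallfrown\langle a,v\rangle$, from which player~I again extends by one level using the absence of maximal nodes. At a limit of even length $\gamma$, the nodes $n(\vec s{\upharpoonright}\beta)$ form a strictly increasing sequence in $T$; superclosure produces an upper bound, and downward closure of $T$ (clause~\ref{S-node 4} of the definition of an $S$-tree) then delivers the length-$\gamma$ node $(\vec s,\bigcup_\beta\bar t_\beta)$, which I take as $n(\vec s)$. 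Finally, for any run $\langle s_\alpha\mid\alpha<\kappa\rangle$ according to $\sigma$, all its even initial segments lie in $\dom\sigma$, so the associated nodes cohere to a branch $(x,y)\in[T]$ whose first coordinate $x=\bigcup_\alpha s_\alpha$ is the outcome; thus $x\in\proj[T]$ and $\sigma$ is winning.

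Lemma~\ref{strategy from almost strategy} now supplies a strategy $\tau$ that expands $\sigma$; since expansion preserves outcomes, every run according to $\tau$ has the same outcome as some run according to $\sigma$, so $\tau$ is winning for player~I in $G_\kappa(\proj[T])$. For the absoluteness clause, note that all moves are elements of $\klk\subseteq V_\kappa$ and that $T$, being built from sequences of elements of $S\subseteq V_\kappa$, is itself contained in $V_\kappa$; hence the properties of $T$ used above (superclosed, strategic, downward closed) and the whole construction of $\sigma$ and $n$ are $V_\kappa$-absolute, and in any outer model $W\supseteq V$ with $(V_\kappa)^W=V_\kappa$ the set $\dom\sigma$, the function $n$ and the verification that $\sigma$ is a winning almost strategy are unchanged. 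Since the function $\sigma\mapsto\tau$ of Lemma~\ref{strategy from almost strategy} and the relation \anf{$\tau$ expands $\sigma$} are $V_\kappa$-absolute, $\tau$ still expands the still-winning $\sigma$ in $W$, and any branch of $T$ witnessing membership in $\proj[T]$ in $V$ witnesses it in $W$ as well; therefore $\tau$ remains winning in $W$. The main obstacle is conceptual rather than computational: one must recognize that player~II's moves cannot be constrained to be genuine tree entries and must be rounded up, so that $T$ yields directly only an almost strategy, and one must arrange the limit step to land exactly on a length-$\gamma$ node (via superclosure together with downward closure) rather than on a longer extension. Once these points are set up correctly, the passage to a full strategy and the absoluteness are routine applications of Lemma~\ref{strategy from almost strategy}.
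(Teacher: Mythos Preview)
Your proposal is correct and follows essentially the same approach as the paper: build a winning almost strategy for player~I by maintaining a shadow branch in $T$ alongside the run, then apply Lemma~\ref{strategy from almost strategy}. The differences are bookkeeping only---the paper tracks the shadow at odd-length positions and, rather than invoking ``no maximal elements'' plus a choice, uses the uniqueness clause (Definition~\ref{definition: S-tree}~\ref{S-node 5}) to see that the even-index entry extending the current node is uniquely determined. One small omission: for your construction of $\sigma$ and $n$ to be $V_\kappa$-absolute as you claim in the final paragraph, the choices of $v,w$ from the strategic clause (and of the one-level extension) must be made canonically; the paper fixes a wellordering $\prec$ of $\klk$ at the outset for exactly this purpose, and you should do the same.
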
 
\begin{proof} 
We fix a wellordering $\prec$ of $\klk$. %$V_\kappa$. 
It is sufficient to construct a winning almost strategy for player I in $G_\kappa(\proj[T])$ by Lemma \ref{strategy from almost strategy}, and this will be done as follows, by induction on $\delta<\kappa$. 
We will define $\sigma$ for partial runs of length strictly below $\delta$, and will simultaneously, for each partial run $\vec{s}$ according to $\sigma$ with 
%\todo{check}
odd length $\len(\vec{s})\leq\delta$, define a sequence $\vec{t}_{\vec{s}}$ with $(\vec{s},\vec{t}_{\vec{s}})\in T$ and $\vec{t}_{\vec{s}{\upharpoonright}\alpha}\subseteq \vec{t}_{\vec{s}}$ for all 
%\todo{check}
odd $\alpha<\len(\vec{s})$. 
%We will define, by simultaneous induction, $\sigma(\vec{s})$ for any run $\vec{s}$ according to $\sigma$ of even length, and $\vec{t}_{\vec{s}}$ for any run $\vec{s}$ according to $\sigma$ of odd length, with $(\vec{s},\vec{t}_{\vec{s}})\in T$ and $\vec{t}_{\vec{s}{\upharpoonright}\alpha}\subseteq \vec{t}_{\vec{s}}$ for all odd $\alpha<\len(\vec{s})$. 

In the successor step, we assume that the construction has been carried out up to $\delta=2\gamma+1$ for some $\gamma<\kappa$ and that 
$\vec{s}=\langle s_\alpha\mid \alpha<\delta\rangle$ 
is a partial run 
%\todo{"compatible with" everywhere?} 
according to $\sigma$. 
Let 
$$\Psi(u)\Longleftrightarrow u \supseteq s_{2\gamma}\text{ and } \exists v\supseteq \vec{t}_{\vec{s}}(2\gamma)\ (\vec{s}^\smallfrown\langle u\rangle, \vec{t}_{\vec{s}}^\smallfrown\langle v\rangle)\in T\}$$ 
Since $T$ is 
strategic, the set 
$D=\{u\mid \Psi(u)\}$ is dense above $s_{2\gamma}$, in the sense that for every $u\supseteq s_{2\gamma}$, there is some $v\supseteq u$ with $\Psi(v)$. 
Since we are constructing an almost strategy, it is sufficient to define $\sigma(\vec{s}^\smallfrown\langle u\rangle)$ for all $u\in D$. 

Given $u\in D$, let $v\supseteq \vec{t}_{\vec{s}}(2\gamma)$ be $\prec$-least with $(\vec{s}^\smallfrown\langle u\rangle, \vec{t}_{\vec{s}}^\smallfrown\langle v\rangle)\in T\}$. 
%Since $T$ is full, for densely many $u\supseteq s_{2\gamma}$, there is some $v\supseteq \vec{t}_{\vec{s}}(2\gamma)$ with $(\vec{s}^\smallfrown\langle u\rangle, \vec{t}_{\vec{s}}^\smallfrown\langle v\rangle)\in T$, and we can choose the $\prec$-least $v$ for each such $u$. Since we would like to define an almost strategy, it is sufficient to define $\sigma(\vec{s}^\smallfrown\langle u\rangle)$ for each such $u$. 
Since $T$ is an $S$-tree and by Definition \ref{definition: S-tree} \ref{S-node 5}, there is a unique pair $(u^*,v^*)$ with 
$$(\vec{s}{}^\smallfrown\langle u,u^*\rangle, \vec{t}_{\vec{s}}{}^\smallfrown\langle v,v^*\rangle)\in T.$$ 
Now let $\sigma(\vec{s}^\smallfrown\langle u\rangle)=u^*$ and 
$\vec{t}_{\vec{s}^\smallfrown\langle u, u^* \rangle}=\vec{t}_{\vec{s}}^\smallfrown\langle v,v^*\rangle$. 

In the limit step, we assume that the construction has been carried out strictly below $\gamma$ for some $\gamma\in\Lim$ and that 
$\vec{s}=\langle s_\alpha\mid \alpha<\gamma\rangle$ 
is a partial run according to $\sigma$. 

We first let $\vec{t}=\bigcup_{\alpha<\gamma}\vec{t}_{\vec{s}{\upharpoonright}2\alpha+1}$. 
Since $T$ is superclosed, there is a pair $(u,v)$ with $(\vec{s}^\smallfrown\langle u\rangle,\vec{t}^\smallfrown\langle v\rangle)\in T$, and moreover this pair is unique, since $T$ is an $S$-tree and by Definition \ref{definition: S-tree} \ref{S-node 5}. 
Let $\sigma(\vec{s})=u$ and 
$\vec{t}_{\vec{s}^\smallfrown\langle u \rangle}=\vec{t}_{\vec{s}}^\smallfrown\langle v\rangle$. 

This completes the construction of $\sigma$. To prove that $\sigma$ wins, suppose that $\vec{s}=\langle s_\alpha\mid \alpha<\kappa \rangle$ is a run according to $\sigma$ and let $\vec{t}=\bigcup_{\alpha<\kappa}\vec{t}_{\vec{s}{\upharpoonright}2\alpha+1}$. 
Then $\langle (\vec{s}{\upharpoonright}2\alpha+1, \vec{t}_{\vec{s}{\upharpoonright}2\alpha+1})\mid \alpha<\kappa\rangle$ witnesses that the outcome $\bigcup_{\alpha<\kappa}s_\alpha$ is in $\proj[T]$ and hence player I wins, proving the claim. 
\end{proof} 

%\todo[inline]{state that "closed" (define) is sufficient instead of "limit-closed"} 

\begin{definition} \label{definition of P} 
Suppose that $S$ is a perfect level subset of $\Add{\kappa}{1}^2$. 
The forcing $\PP_S$ consists of all $S$-trees of size strictly less than $\kappa$, ordered by reverse inclusion. 
\end{definition} 

If $G$ is a $\PP_S$-generic filter over $V$, we will write $T_G=\bigcup G$. 
Moverover, for any perfect level subset $S$ of $\Add{\kappa}{1}^2$, we will write $\pi_S\colon S\rightarrow \Add{\kappa}{1}$ for the projection to the first coordinate.

In the situation below, we will additionally assume that 
%$S$ is a perfect level subset of $\Add{\kappa}{1}^2$ such that 
$\pi_S\colon S\rightarrow \Add{\kappa}{1}$ is a projection. 
It is then easy to see that the forcing $\PP_S$ is non-atomic, ${<}\kappa$-closed and has size $\kappa$, and is hence sub-equivalent to $\Add{\kappa}{1}$ by Lemma \ref{forcing equivalent to Add(kappa,1)}. 

%If $G$ is $\PP$-generic over $V$, let $T_G=\bigcup G$. 

\begin{lemma} \label{the generic tree is full} 
%\todo{actually this only proves something weaker than full, is it sufficient? fullness seems to correspond to strong projection. do we get this too?\ \\  maybe re-define fullness?} 
If $S$ is a perfect level subset of $\Add{\kappa}{1}^2$ such that $\pi_S\colon S\rightarrow \Add{\kappa}{1}$ is a projection and $G$ is $\PP_S$ -generic over $V$, then $T_G$ is a 
%\todo{notation: strategic perfect tree?} 
strategic $S$-tree. 
\end{lemma}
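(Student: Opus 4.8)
The plan is to verify, for $T_G=\bigcup G$, the three defining properties of a strategic $S$-tree in turn: that $T_G$ is an $S$-tree, that it is superclosed, and that it satisfies the strategic splitting condition. Throughout I would use that $\PP_S$ is non-atomic, ${<}\kappa$-closed of size $\kappa$ (as noted just before the statement), so in particular $\PP_S$ is ${<}\kappa$-distributive. The first property is immediate: the clauses (a)--(c) of Definition \ref{definition: S-tree} each refer to at most two nodes, and since $G$ is a filter any two nodes of $T_G$ lie in a common condition $p\in G$; as $p$ is an $S$-tree, the relevant instance of (a)--(c) holds there, hence in $T_G$.

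The key device for the remaining two properties is to phrase each as meeting a family of dense \emph{open} subsets of $\PP_S$ lying in $V$, exploiting that for a node $n\in S$ one has $q\Vdash \check n\in\dot{T}$ iff $n\in q$, so that a condition can fail to contain $n$ only by forcing $\check n\notin \dot{T}$ or by leaving open the option of adjoining $n$ later. For no maximal elements I would fix $n=(s,t)\in S$ and set
$$D_n=\{q\in\PP_S \mid n\text{ has a proper successor in }q\}\cup\{q\in\PP_S\mid q\Vdash \check n\notin\dot{T}\}.$$
This is downward closed, since both alternatives persist under strengthening, and it is dense: given $p$, if $p\not\Vdash\check n\notin\dot{T}$ I pass to $p'\le p$ with $n\in p'$ and adjoin a proper successor of $n$. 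Meeting $D_n$ for every $n\in S$ gives the property, because if $n\in T_G$ then no condition of $G$ forces $\check n\notin\dot{T}$, so the witness in $G\cap D_n$ is of the first kind. For closure under limits I would first note that, by ${<}\kappa$-distributivity, any strictly increasing sequence $\vec b=\langle (s_\alpha,t_\alpha)\mid\alpha<\gamma\rangle$ of limit length $\gamma<\kappa$ in $T_G$ already lies in $V$; for each such $\vec b\in V$ I set
$$D_{\vec b}=\{q\mid (\textstyle\bigcup_\alpha s_\alpha,\ \bigcup_\alpha t_\alpha)\in q\}\cup\{q\mid q\Vdash\exists\alpha\,(s_\alpha,t_\alpha)\notin\dot{T}\},$$
again open, and dense by building a decreasing $\gamma$-chain of conditions that successively absorb the $(s_\alpha,t_\alpha)$ (using ${<}\kappa$-closure at the limit) and then adjoining the limit node, whose restrictions are by then present. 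Meeting $D_{\vec b}$ for the branch actually realized by $G$ yields the extension required by superclosure (i).

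For the strategic condition I would argue similarly, but exploiting that the new node sits at an \emph{odd} coordinate, where clause (c) of Definition \ref{definition: S-tree} imposes no constraint. Given $n=(s,t)\in S$ with $\len(s)=\gamma+1$, $\gamma$ even, and $u\supsetneq s(\gamma)$, the hypothesis that $\pi_S\colon S\to\Add{\kappa}{1}$ is a projection, applied to $(s(\gamma),t(\gamma))\in S$ and $u\supseteq\pi_S(s(\gamma),t(\gamma))$, produces $(v,w)\in S$ with $(v,w)\supseteq(s(\gamma),t(\gamma))$ and $v\supseteq u$; then $(s^\smallfrown\langle v\rangle,t^\smallfrown\langle w\rangle)$ may be freely adjoined, the new coordinate $\gamma+1$ being odd. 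The corresponding dense open set (existence of such a successor in $q$, or $q\Vdash\check n\notin\dot{T}$) is met by $G$, which gives the strategic clause for $T_G$.

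The main obstacle is the density arguments in the even and limit cases, where adjoining a successor or a limit node must respect clause (c) of Definition \ref{definition: S-tree} — the determinism of player I's (even) moves by the union of the previous moves. This is exactly where I would use that $S$ is perfect (so that, by closedness of $S$, the union of a chain in $S$ extends to a node of $S$ having proper successors) together with the projection property of $\pi_S$ (to steer the first coordinate); when an even move is already forced by a node of $p$ with matching union-history, I would instead reuse that forced value, which is itself a node of $S$. In every case I would take care that each extension adjoins fewer than $\kappa$ nodes, so as to remain a condition of $\PP_S$. The strategic clause is by comparison easy, precisely because it only ever demands a successor at an odd coordinate, free of the clause (c) constraint.
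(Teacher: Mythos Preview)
Your proposal is correct and follows essentially the same approach as the paper. In fact you give considerably more detail than the paper does: the paper dismisses the superclosed part as ``a straightforward density argument'' without spelling out the two dense sets or the appeal to ${<}\kappa$-distributivity, and for the strategic clause it argues exactly as you do --- use that $\pi_S$ is a projection to find $(v,w)\in S$ above $u$, then observe that adjoining $(s^\smallfrown\langle v\rangle,t^\smallfrown\langle w\rangle)$ to any $q\leq p$ yields a condition because the new coordinate sits at the odd index $\gamma+1$, so clause~(c) imposes no constraint.
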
 
\begin{proof} 
Since every condition in $\PP_S$ is an $S$-tree, it follows immediately that $T_G$ is again an $S$-tree. 
Moreover, since $S$ is perfect, it can be shown by a straightforward density argument that $T_G$ is superclosed. 
%Since $S$ is perfect, every condition $p$ can be extended to some $q\leq p$ so that any terminal node in $p$ is extended to incompatible nodes in $q$. Similarly, we can add an upper bound in $q$ to a given cofinal branch in $p$ of length $<\kappa$. Hence genericity of $G$ implies that $T_G$ is perfect. 

To see that $T_G$ is strategic, suppose that $(s,t)\in T_G$, $\len(s)=\len(t)=\gamma+1$, $\gamma$ is even and $u\supsetneq s(\gamma)$. 
Then there is some $p\in G$ with $(s,t)\in p$. 
Since $\pi_S\colon S\rightarrow \Add{\kappa}{1}$ is a projection by our assumption, there is some $(v,w)\in S$ with $u\subseteq v$. 
We now claim that the set 
$$D=\{q
%\in \PP_S\mid q
\leq p\mid 
%\exists (v,w)\in \Add{\kappa}{1}^2\ v\supseteq u\ \&\ 
(s^\smallfrown \langle v\rangle, t^\smallfrown \langle w\rangle)\in q\}$$ 
is dense below $p$. 
To see this, suppose that $q\leq p$. 
Since $\gamma$ is even, it is easy to check that $q\cup\{s^\smallfrown \langle v\rangle,t^\smallfrown \langle w\rangle\}$ is again a condition in $\PP_S$, and thus $D$ is dense below $p$. It follows immediately that $T_G$ is strategic. 
%then there are $v,w\in \klk$ with $v\supseteq u$ and $(s^\smallfrown \langle v\rangle, t^\smallfrown \langle w\rangle)\in p$. 
\end{proof} 
%\todo{move to earlier as Definition} 
%If $\QQ\lessdot \RR$, we define a \emph{nice projection} $\pi\colon \RR\rightarrow \QQ$ to mean that $\pi$ is a projection such that $\pi(q)\geq q$ for all $q\in \QQ$ and $\pi{\upharpoonright}\QQ=\mathrm{id}_\QQ$. 

%In the application below, the set is $S$ derived from a complete subforcing of $\BB(\Add{\kappa}{1})$ as in the next result. In the next proof, we will use the following fact, which follows immediately from the definition of quotient forcings. If $\PP$, $\QQ$ are forcings, $\PP$ is separative and $\pi\colon\QQ\rightarrow \PP$ is a projection, then for all $(p,q)\in \PP\times \QQ$, $$(p,\check{q})\in \PP*(\QQ/\PP)^\pi \Longleftrightarrow p\leq\pi(q).$$ 

%Since $\Add{\kappa}{1}$ is separative, we have that for all $(u,v)\in\Add{\kappa}{1}^2$ $$(\iota(u),\check{v})\in \QQ*\dot{\QQ} \Longleftrightarrow \iota(u)\leq\pi\nu(v)$$ by the definition of $\dot{\QQ}$ as a quotient forcing. 

%\todo[inline]{IN THE NEXT PROOF, MAYBE $\iota$ NEEDS TO BE A SPECIFIC SUB-ISOMORPHISM, AND THE STATEMENT NEEDS TO BE CHANGED} 

In the next lemma, we will write $\QQ_p$ for the subforcing 
$$\QQ_p=\{q\in\QQ\mid q\leq p\}$$ 
of a forcing $\QQ$ below a condition $p\in\QQ$. 

\begin{lemma} \label{two step iteration coded by a set S}
%\todo{\ \ \ \ \ consistent with \ \\ \ \ \ \ \ application \ \\ \ \ \ \ \ below?}
Suppose that $\RR$ is a complete Boolean algebra and $\QQ$ is a complete subalgebra such that $\QQ$, $\RR$, $\Add{\kappa}{1}$ are sub-equivalent. 
Moreover, suppose that $p\in\QQ$, $r\in \Add{\kappa}{1}$ and $\iota\colon \Add{\kappa}{1}_r\rightarrow \QQ_p$ is a sub-isomorphism. 
%Then there is a sub-isomorphism $\iota\colon \Add{\kappa}{1}\rightarrow \QQ$ and 
Then there is 
a perfect limit-closed level subset $S$ of $\Add{\kappa}{1}_r^2$ 
such that 
%\footnote{See Definition \ref{definition: equivalent and similar forcings} and Definition \ref{pull back names}.} 
\label{two step iteration coded by a set S 2} 
$\pi_S$ is a projection and 
%\item 
\label{two step iteration coded by a set S 3} 
%$$\Vdash_{\QQ} \RR/\QQ \simeq (S/\Add{\kappa}{1})^{\pi_S}.$$ 
%\end{enumerate-(a)} 
%\todo{define notation above} 
%\todo{check notation with iota} 
$$\Vdash_{\Add{\kappa}{1}_r} \RR_p/\QQ_p^{(\iota)} \simeq S/\Add{\kappa}{1}_r^{\pi_S}.$$ 
%In particular, $S$ ordered by reverse inclusion and $\QQ*\dot{\QQ}$ have the same generic extensions. 
%is \todo{DEFINE THIS} $2$-step equivalent equivalent to $\QQ*\dot{\QQ}$. 
% (see Definition \ref{definition: equivalent and similar forcings}). 
%In particular, $\RR*\dot{\RR}$ is equivalent to $\QQ*\dot{\QQ}$ by \ref{R is equal to Q^pi} and to $S$ ordered by reverse inclusion by \ref{S is dense in R*dot(R)}. 
\end{lemma}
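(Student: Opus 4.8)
The plan is to realise $S$ as a dense sub-poset of the two-step iteration $\Add{\kappa}{1}_r * \dot{\mathbb{U}}$, where $\dot{\mathbb{U}}=(\RR_p/\QQ_p)^{(\iota)}$ is the $\Add{\kappa}{1}_r$-name obtained from the quotient $\RR_p/\QQ_p$ (with respect to the natural projection $\pi\colon\RR_p\to\QQ_p$ of Definition \ref{definition of natural projection}) by the pullback $(\cdot)^{(\iota)}$ along $\iota$. Since $\iota$ witnesses $\Add{\kappa}{1}_r\eqsim\QQ_p$ and $\QQ_p\lessdot\RR_p$, the standard quotient facts recalled after Definition \ref{definition: quotient forcing} give $\Add{\kappa}{1}_r*\dot{\mathbb{U}}\simeq\QQ_p*(\RR_p/\QQ_p)\simeq\RR_p$, so it suffices to encode this iteration as a level subset of $\Add{\kappa}{1}_r^2$. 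First I would fix $\pi$ and record that, because $\QQ$, $\RR$, $\Add{\kappa}{1}$ are sub-equivalent, $\Add{\kappa}{1}_r*\dot{\mathbb{U}}$ is sub-equivalent to $\Add{\kappa}{1}$ and in particular has a non-atomic ${<}\kappa$-closed dense subset of size $\kappa$ by Lemma \ref{forcing equivalent to Add(kappa,1)}.

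The key step is to produce such a dense subset consisting of \emph{level-coded} conditions, in direct analogy with the set exhibited in the proof of Lemma \ref{a forcing with bad quotient}. Concretely, I would show that the set
$$S=\{(s,t)\in\Add{\kappa}{1}_r^2\mid \len(s)=\len(t)\text{ and } s\Vdash_{\Add{\kappa}{1}_r}\check t\in\dot{\mathbb{U}}\},$$
ordered by componentwise reverse inclusion and identified with $\{(s,\check t)\mid(s,t)\in S\}\subseteq\Add{\kappa}{1}_r*\dot{\mathbb{U}}$, is dense in $\Add{\kappa}{1}_r*\dot{\mathbb{U}}$. Given a condition $(s,\dot u)$ of the iteration, one successively refines $s$ and reads off longer and longer initial segments of a value for $\dot u$, arranging at each step that the decided part is a check name whose length matches $\dom{s}$; taking unions at limits over a cofinal $\omega$-chain (as in Lemma \ref{a forcing with bad quotient}) produces a pair $(s,t)\in S$ below $(s,\dot u)$. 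This is where I expect the main difficulty: making the length-matching go through at limit stages is exactly the point where one needs $\dot{\mathbb{U}}$ to be (forced to be) sufficiently closed, so that the descending sequence of decided values has a lower bound that is again a check name --- the property that fails for the bad quotient of Lemma \ref{a forcing with bad quotient} and holds here because the iteration has a ${<}\kappa$-closed dense subset.

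Having $S$ dense in the iteration, I would verify the structural requirements. That $S$ is a level subset is immediate from the clause $\len(s)=\len(t)$. Limit-closedness follows from the matched-length closure just used: if $\langle(s_\alpha,t_\alpha)\mid\alpha<\gamma\rangle$ is strictly increasing in $S$ with unions $s=\bigcup_\alpha s_\alpha$, $t=\bigcup_\alpha t_\alpha$, then $\len(s)=\len(t)$ and $s\Vdash\check t\in\dot{\mathbb{U}}$, so $(s,t)\in S$. Perfectness follows from non-atomicity of the iteration together with the density of $S$: below any node one finds two conditions of $S$ whose coordinates are incompatible. For $\pi_S$, the projection to the first coordinate, I would check the two clauses of Definition \ref{definition: quotient forcing}: $\pi_S[S]$ is dense in $\Add{\kappa}{1}_r$ since every $s$ has an extension $s'$ with $(s',t)\in S$ for some $t$, and the lifting property for $(s,t)\in S$ and $s'\le s$ holds because $(s',\dot u)$, with $\dot u$ the value coded by $t$, is a condition of the iteration that $S$ refines.

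Finally, the quotient identity is read off from the iteration structure. The first-coordinate projection of $\Add{\kappa}{1}_r*\dot{\mathbb{U}}$ onto $\Add{\kappa}{1}_r$ restricts to $\pi_S$ on the dense subset $S$, and the quotient of $\Add{\kappa}{1}_r*\dot{\mathbb{U}}$ relative to this projection is by definition $\dot{\mathbb{U}}$; hence, for an $\Add{\kappa}{1}_r$-generic $G$, the quotient $S/G$ is a dense subforcing of $\dot{\mathbb{U}}^G$, giving $\Vdash_{\Add{\kappa}{1}_r}S/\Add{\kappa}{1}_r^{\pi_S}\simeq\dot{\mathbb{U}}=\RR_p/\QQ_p^{(\iota)}$, as required. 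The only genuinely delicate point throughout is the limit step of the density argument; everything else is bookkeeping with the definitions.
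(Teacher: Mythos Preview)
Your plan has the right shape, but there is a genuine gap, and it is not where you think it is.

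First a bookkeeping point: as written, your $S$ has a type mismatch. The name $\dot{\mathbb U}=(\RR_p/\QQ_p)^{(\iota)}$ is a name for a subforcing of $\RR_p$, so ``$s\Vdash\check t\in\dot{\mathbb U}$'' with $t\in\Add{\kappa}{1}_r$ is not literally meaningful. To say what you intend one must fix a second sub-isomorphism $\nu\colon\Add{\kappa}{1}\to\RR$ (which the paper does) and read the condition as ``$s\Vdash\nu(t)\in\dot{\mathbb U}$''. Once this is done, a short computation with the natural projection $\pi\colon\RR_p\to\QQ_p$ shows that your membership condition is equivalent to $\iota(s)\leq\pi\nu(t)$.

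The real problem is your limit-closedness argument. You assert that from $s_\alpha\Vdash\check t_\alpha\in\dot{\mathbb U}$ one gets $s\Vdash\check t\in\dot{\mathbb U}$ for the unions, but this amounts to deducing $\iota(s)\leq\pi\nu(t)$ from $\iota(s_\alpha)\leq\pi\nu(t_\alpha)$ for all $\alpha$. Since $\iota,\nu$ preserve infima while the natural projection $\pi$ only satisfies $\pi(\bigwedge r_\alpha)\leq\bigwedge\pi(r_\alpha)$, you obtain $\iota(s)\leq\bigwedge_\alpha\pi\nu(t_\alpha)$ and $\pi\nu(t)\leq\bigwedge_\alpha\pi\nu(t_\alpha)$, which does \emph{not} yield $\iota(s)\leq\pi\nu(t)$. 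Equivalently: in any extension the quotient $\dot{\mathbb U}^G=\{r\in\RR_p:\pi(r)\in G^{(\iota)}\}$ need not be closed under infima of chains---this is exactly the bad-quotient phenomenon you cite from Lemma~\ref{a forcing with bad quotient}, and it strikes here, not only in the density step. The ${<}\kappa$-closed dense subset of the iteration helps you find \emph{some} lower bound; it does not force the coordinatewise union $(s,t)$ to satisfy the defining condition.

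The paper avoids this by defining $S$ differently: $(s,t)\in S$ iff there is an \emph{intertwined sequence} $\langle(s_\alpha,t_\alpha)\rangle$ cofinal in $(s,t)$ with $\pi\nu(t_{\alpha+1})\leq\iota(s_\alpha)\leq\pi\nu(t_\alpha)$ for all $\alpha$. This two-sided interleaving is exactly what makes limit-closedness automatic (concatenate witnesses), while two separate claims show that this $S$ and the set $\{(u,v):\iota(u)\leq\pi\nu(v)\}$ are mutually dense, which is what yields the quotient equivalence. Your density argument is essentially the paper's second claim, but your $S$ needs to be replaced by the intertwined-sequence version to get the limit-closedness the lemma demands.
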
 
\begin{proof} 
%\todo{\ \ \ \ \ rewrite because of \ \\ \ \ \ \ \ $\iota$? }
Since $\Add{\kappa}{1}_r$ is isomorphic to $\Add{\kappa}{1}$, we can assume that $r=\one_{\Add{\kappa}{1}}$ and $p=\one_\QQ$. 
Let $\QQ_0=\iota[\QQ]$ (note that $\iota$ necessarily preserves infima) 
%There is a sub-isomorphism of $\Addd{\kappa}{1}$ into $\QQ$ by Lemma \ref{forcing equivalent to Add(kappa,1)}. 
%In fact, since $\QQ$ is a complete Boolean algebra, it is easy to see that there is a sub-isomorphism $\iota\colon\Add{\kappa}{1}\rightarrow \QQ$ onto a dense subset $\QQ_0$ of $\QQ$ that additionally preserves infima. 
%We further
and fix an arbitrary sub-isomorphism $\nu\colon\Add{\kappa}{1}\rightarrow \RR$. 
Moreover, we let $\pi\colon\RR\rightarrow \QQ$ denote the natural projection as given in Definition \ref{definition of natural projection}. 
Since $\pi(r)\geq r$ for all $r\in \RR$, it is then easy to show that $\RR/\QQ= \RR/\QQ^\pi$. 

Since $\pi$, $\nu$ are projections, it follows that $\pi \nu\colon \Add{\kappa}{1}\rightarrow \QQ$ is also a projection. 
Hence we can define 
$$\dot{\QQ}=\Add{\kappa}{1}/\QQ^{\pi \nu}.$$ 
%Since 
%\todo{CHECK!!!!!!!!!!!!!!!}$$
%\Vdash_{\QQ} [(\QQ*\dot{\QQ})/\QQ]^{\pi_{\QQ*\dot{\QQ}} } = \dot{\QQ}$$ 
%and by the choice of $\nu$, 
Moreover, since $\nu$ is a sub-isomorphism, $\QQ$ forces that $\nu\colon \dot{\QQ}\rightarrow (\RR/\QQ)^{\pi}$ is a sub-isomorphism. 
%we have 
%$\Vdash_{\QQ}\ddot{\QQ}\sim\dot{\QQ}$. 
Thus by Lemma \ref{equivalence of forcings is transitive}, it is sufficient 
%\todo{by the previous lemma (cite?)} 
to prove the existence of a set $S$ as above with  
$$\Vdash_{\Add{\kappa}{1}} \dot{\QQ}^{(\iota)} \simeq S/\Add{\kappa}{1}^{\pi_S}$$ 
and we will prove this in the following claims. 

We will write $\mathrm{Lim}$ for the class of limit ordinals. 
%Let \begin{center}  
%$S=\{(s,t)\in \Add{\kappa}{1}^2 \mid \len(s)=\len(t)\in\mathrm{Lim},\ \inf^\RR_{\alpha<\len(s)} s{\upharpoonright}\alpha = \inf^\RR_{\alpha<\len(s)} \pi \nu(t{\upharpoonright} \alpha)\}$ \end{center} 
%\forall \alpha<\len(s)\ \exists \beta<\len(s)\ s{\upharpoonright} \beta\Vdash_{\QQ} \check{t}{\upharpoonright} \alpha\in \ddot{\QQ}\}.$$
For any pair $(s,t)\in \Add{\kappa}{1}^2$ with $\len(s)=\len(t)\in\mathrm{Lim}$, we further say that 
%$$\vec{s}=
$$\langle (s_\alpha, t_\alpha)\mid \alpha<\cof{\len(s)}\rangle$$ 
% \vec{t}=\langle t_\alpha\mid \alpha<\cof{\len(s)}\rangle$$ 
is an \emph{intertwined sequence for $(s,t)$} if 
$$s=\bigcup_{\alpha<\cof{\len(s)}}s_\alpha,\ \  t= \bigcup_{\alpha<\cof{\len(s)}}t_\alpha$$ 
and $\pi\nu(t_{\alpha+1})\leq\iota(s_\alpha)\leq \pi\nu(t_\alpha)$ for all for all $\alpha<\cof{\len(s)}$. 
% and $\iota(s_{\alpha+1})\leq \pi\nu(t_\alpha)$. 
%$\pi(t{\upharpoonright}\beta)\leq s{\upharpoonright}\alpha$ for all $\alpha\leq\beta<\cof{\len(s)}$ and $s{\upharpoonright}\beta\leq \pi(t{\upharpoonright}\alpha)$ for all $\alpha<\beta<\cof{\len(s)}$. 
We now consider the subset $S$ of $\Add{\kappa}{1}^2$ that consists of all pairs $(s,t)\in \Add{\kappa}{1}^2$ with $\len(s)=\len(t)\in\mathrm{Lim}$ such that there is an intertwined sequence for $(s,t)$. 
%\todo{need?} Let $\SSS=\{(s,\check{t})\mid (s,t)\in S\}$. 

\begin{claim*} 
For every $(s,t)\in S$, there is some $(u,v)\leq (s,t)$ with  
$(\iota(u),\check{v})\in\QQ_0*\dot{\QQ}$. 
\end{claim*} 
\begin{proof} 
Suppose that 
%$\vec{s}=
$\langle (s_\alpha, t_\alpha)\mid \alpha<\cof{\len(s)}\rangle$ 
% $\vec{t}=\langle t_\alpha\mid \alpha<\cof{\len(s)}\rangle$ are 
is an intertwined sequence for $(s,t)$. 
%We choose any $v\leq t$ in $\Addd{\kappa}{1}$. 
% and let $u=\pi\nu(v)$. 
%\todo{write in the intro: we use Kunen's 2-step iteration} 
%Since $\vec{s}$, $\vec{t}$ are intertwined and 
Since $\pi\nu$ is order-preserving, we have 
$$\pi\nu(t)\leq \pi\nu(t_{\alpha+1})\leq \iota(s_\alpha)$$ 
for all $\alpha<\cof{\len(s)}$ 
and hence $\pi\nu(t)\leq\iota(s)$ by the assumption that $\iota$ preserves infima. 

Since $\QQ_0$ is dense in $\QQ$, there is some $u\in\Add{\kappa}{1}$ with $\iota(u)\leq\pi\nu(t)$. 
Then 
$$\iota(u)\leq\pi\nu(t)\leq\iota(s)$$ 
and since $\iota$ is a sub-isomorphism, this implies that $u\leq s$ and hence $(u,t)\leq(s,t)$. 
Thus by the remark before the claim, $(u,t)$ witnesses the conclusion of the claim. 
%Since $(\pi\nu(t),\check{t})\in\Add{\kappa}{1}*\dot{\QQ}$ by the definition of $\dot{\QQ}$ as a quotient forcing, this proves the claim. 
\end{proof} 

\begin{claim*} 
For every $(u,v)\in\Add{\kappa}{1}^2$ with $(\iota(u),\check{v})\in \QQ_0*\dot{\QQ}$, there is some $(s,t)\leq (u,v)$ in $S$. 
\end{claim*} 
\begin{proof} 
We can assume that $\len(u)>\len(v)$ by extending $u$. 
%Suppose that $\langle \gamma_\alpha\mid \alpha<\cof{\len(s)}\rangle$ is cofinal in $\len(s)$. 
We will construct an intertwined sequence 
%$\vec{s}=
$\langle (s_n, t_n)\mid n<\omega\rangle$
% $\vec{t}=\langle t_n\mid n<\omega\rangle$ in $\Add{\kappa}{1}^2$ 
by induction.
%$(s_n,t_n)$ by induction on $n<\omega$. 
%Since $\QQ$ is separative, for all $(u,v)\in \QQ\times \Add{\kappa}{1}$, we have $(u,\check{v})\in  \Add{\kappa}{1}*\dot{\QQ}$ if and only if $u\leq \pi(v)$. 

We choose $(s_0,t_0)=(u,v)$, so that $\iota(s_0)\leq\pi\nu(t_0)$ by the remark before the first claim. 
Now suppose that we have already constructed $(s_n,t_n)$ with $\iota(s_n)\leq\pi\nu(t_n)$. 
Since $\pi\nu$ is a projection, there is some $t_{n+1}\leq t_n$ with $\pi\nu(t_{n+1})\leq \iota(s_n)$, and we can further assume that $\len(t_{n+1})>\len(s_n)$. 
Moreover, since $\QQ_0$ is dense in $\QQ$, there is some $s_{n+1}\in\Add{\kappa}{1}$ with $\iota(s_{n+1})\leq\pi\nu(t_n)$, and we can further assume that $\len(s_{n+1})>\len(t_{n+1})$. 
Then 
$$\iota(s_{n+1})\leq\pi\nu(t_n)\leq\iota(s_n)$$ 
and since $\iota$ is a sub-isomorphism, this implies that $s_{n+1}\leq s_n$ and hence $(s_{n+1},t_{n+1})\leq(s_n,t_n)$. 
%For $s=\bigcup_{n<\omega}s_n$, $t=\bigcup_{n\in\omega}t_n$, it follows from the construction that $\langle s_n\mid n\in\omega\rangle$, $\langle t_n\mid n\in\omega\rangle$ are intertwined sequences for $(s,t)$ and hence $(s,t)\in S$. 

Letting $s=\bigcup_{n<\omega}s_n$, $t=\bigcup_{n\in\omega}t_n$, we have $\len(s)=\len(t)$ and 
%$\vec{s}$, $\vec{t}$ are 
there is an intertwined sequence for $(s,t)$ by the construction. 
Thus $(s,t)\leq(u,v)$ and $(s,t)\in S$. 
\end{proof} 

Since $\QQ_0$ is non-atomic, it follows immediately from the two previous claims that $S$ is perfect. Moverover, since the projection onto the first coordinate of $\QQ*\dot{\QQ}$ is a projection in the sense of Definition \ref{definition: quotient forcing}, the claims show that $\pi_S\colon S\rightarrow \Add{\kappa}{1}$ is also a projection.

\begin{claim*} 
$S$ is limit-closed. 
\end{claim*} 
\begin{proof} 
Suppose that $\langle (s_\alpha, t_\alpha)\mid \alpha<\cof{\gamma}\rangle$ is a strictly increasing sequence in $S$ and 
$$s=\bigcup_{\alpha<\cof{\len(s)}}s_\alpha,\ \  t= \bigcup_{\alpha<\cof{\len(s)}}t_\alpha.$$ 
%such that 
%$\len(s_\alpha)=\len(t_\alpha)=\gamma_\alpha$ and 
For each $\alpha<\cof{\gamma}$, we choose an element $(u_\alpha,v_\alpha)$ of an intertwined sequence for $(s_{\alpha+1},t_{\alpha+1})$ 
%We choose someof such a sequence 
%an intertwined sequence for $(s_{\alpha+1},t_{\alpha+1})$ 
with $\len(u_\alpha)>\len(s_\alpha)$. 
% for each $\alpha<\cof{\gamma}$. 
It follows that $\langle (u_\alpha, v_\alpha)\mid \alpha<\cof{\len(\gamma)}\rangle$ is an intertwined sequence for $(s,t)$. 
\end{proof} 

%\todo[inline]{read the rest of the proof again} 

\begin{claim*}
$\Vdash_{\Add{\kappa}{1}} \dot{\QQ}^{(\iota)} \simeq (S/\Add{\kappa}{1})^{\pi_S}$. 
\end{claim*}
\begin{proof} 
We consider the forcing 
$$T=\{(s,t)\in\Add{\kappa}{1}\mid (s,t)\in S\text{ or }(\iota(s),\check{t})\in \QQ_0*\dot{\QQ}\}.$$ 
We first claim that $\Add{\kappa}{1}$ forces that $S/\Add{\kappa}{1}^{\pi_S}$ is a dense subforcing of $T/\Add{\kappa}{1}^{\pi_T}$. 
To prove this, assume that $G$ is $\Add{\kappa}{1}$-generic over $V$ and 
$$(s,t)\in[S/\Add{\kappa}{1}^{\pi_S}]^G,$$ 
%\todo{better??} By the remark before the first claim, this implies that 
so that $\pi_S(s,t)=s\in G$. 
Since $\pi_S$ is a projection and by the claims above, the set 
$$D=\{u\leq s\mid \exists v\ (u,v)\leq (s,t),\ (\iota(u),\check{v})\in\QQ_0*\dot{\QQ}\}$$ 
is dense below $s$ in $\Add{\kappa}{1}$. 
Letting $u\in G\cap D$, there is some $v$ with $(u,v)\leq (s,t)$ and $(\iota(u),\check{v})\in\QQ_0*\dot{\QQ}$. 
Since $(\iota(u),\check{v})\in\QQ_0*\dot{\QQ}$ and $\QQ_0$ is separative, we have $\iota(u)\leq \pi\nu(v)$ by the definition of $\QQ_0$. 
%the remarks before the first claim. 
We now write $G^{(\iota)}$ for the upwards closure of $\iota[G]$ in $\QQ$. 
Since 
%$p\leq u$ is in $G$, we have 
$u\in G$, we have $\iota(u)\in G^{(\iota)}$, $ \pi\nu(v)\in G^{(\iota)}$ and hence 
$$(u,v)\in [\dot{\QQ}^{(\iota)}]^G=[\Add{\kappa}{1}/\QQ^{\pi \nu}]^G.$$ 

An analogous argument shows that $\Add{\kappa}{1}$ also forces that $\dot{\QQ}^{(\iota)}$ is a dense subforcing of $T/\Add{\kappa}{1}^{\pi_T}$. 
%proving the claim. 
\end{proof} 

The last claim completes the proof of Lemma \ref{two step iteration coded by a set S}. 
\end{proof} 

We now fix a perfect level subset $S$ of $\Add{\kappa}{1}^2$ such that $\pi_S\colon S\rightarrow \Add{\kappa}{1}$ is a projection and let $\PP=\PP_S$. Since $S$ is perfect, it is easy to see that $\PP$ is a non-atomic ${<}\kappa$-closed forcing of size $\kappa$ and hence $\PP$ and $\Add{\kappa}{1}$ are sub-equivalent by Lemma \ref{forcing equivalent to Add(kappa,1)}. 

In the remainder of this section, we will consider $\PP$-names $\dot{f}$, $\dot{g}$ such that 
%$\one_\PP$ forces that $\dot{f},\dot{g}\colon \kappa\rightarrow \klk$ are strictly increasing and 
%$\dot{f}$, $\dot{g}$ are $\PP$-names with 
$$\one_\PP\Vdash_{\PP} \dot{f},\dot{g}\colon \kappa\rightarrow \klk,\ \forall \alpha<\kappa\ (\dot{f}{\upharpoonright}\alpha, \dot{g}{\upharpoonright}\alpha)\in T_{\dot{G}},$$ 
where $\dot{G}$ is a fixed name for the $\PP$-generic filter. 
We will call such pairs $(\dot{f},\dot{g})$ \emph{adequate} and will always 
%\todo{IS THE NOTATION BELOW OK?} 
assume below that $(\dot{f},\dot{g})$, $(\dot{h},\dot{k})$ are such pairs. 
%for all $\alpha<\kappa$ 
%$\one\Vdash_{\PP}(\dot{f},\dot{g})\in \llbracket T_{\dot{G}}\rrbracket$. 
%and that $\dot{b}$, $\dot{c}$ are $\PP$-names with 
%$$\one\Vdash_{\PP} \bigcup_{\alpha<\kappa} \dot{f}(\alpha)=\dot{b},\ \one\Vdash_{\PP} \bigcup_{\alpha<\kappa} \dot{g}(\alpha)=\dot{c}.$$ 

The aim of the next lemmas is to show that for any adequate pair $(\dot{f},\dot{g})$, there is a dense subforcing of $\PP$ that projects onto a forcing for adding $\bigcup\ran{\dot{f}}$, $\bigcup\ran{\dot{g}}$ with a nice quotient forcing. 
This follows a similar line of reasoning as the arguments for the perfect set property in Section \ref{subsection perfect set property}. 
%$\bigcup\ran{\dot{f}}$ is added by a \todo{check this} complete subforcing of $\PP$ that has a good quotient. 

\begin{definition} \label{definition of P star} 
%\todo{$\DD$ really depends on $\dot{b}$, $\dot{c}$, $\dot{f}$, $\dot{g}$} 
Let $\PP^*_{\dot{f},\dot{g}}$ be the subforcing of $\PP$ consisting of the conditions $p$ such that the following statements hold for some $\gamma_p<\kappa$ and some $f_p, g_p\in {}^{<\kappa}\Add{\kappa}{1}$. 
%with the properties 
\begin{enumerate-(a)} 
\item \label{condition for P*-1} 
$\len(p)=\he(p)=\gamma_p\in \Lim$. 
\item \label{condition for P*-2} 
$p\Vdash_\PP \dot{f}{\upharpoonright} \gamma_p= f_p,\ \dot{g}{\upharpoonright} \gamma_p= g_p$. 
\item \label{condition for P*-3} 
$(f_p{\upharpoonright}\alpha,g_p{\upharpoonright}\alpha)\in p$ for all $\alpha<\gamma_p$. 
%\item \label{condition for P*-3} 
%$\forall \alpha<\gamma_p\ (f_p{\upharpoonright}\alpha,g_p{\upharpoonright}\alpha)\in p.$ 
\end{enumerate-(a)} 
Let further $\PP^\diamond_{\dot{f},\dot{g}}$ be the subforcing of $\PP$ 
%^*_{\dot{f},\dot{g}}$ 
consisting of the conditions $p$ that satisfy requirements \ref{condition for P*-1} and \ref{condition for P*-2}. 
Moreover, let $s_p=\bigcup\ran{f_g{\upharpoonright}\gamma_p}$ and $t_p=\bigcup\ran{g_p{\upharpoonright}\gamma_p}$ for any $p\in \PP^\diamond_{\dot{f},\dot{g}}$. 
\end{definition} 

We will also denote the corresponding values for an adequate pair $(\dot{h},\dot{k})$ and any $q\in \PP^\diamond_{\dot{h},\dot{k}}$ by $h_p, k_p\in{}^{<\kappa}\Add{\kappa}{1}$ and $u_p, v_p\in \klk$. 

\begin{lemma} \label{P-diamond is dense}
$\PP^\diamond_{\dot{f},\dot{g}}\cap \PP^\diamond_{\dot{h},\dot{k}}$ is a dense subforcing of $\PP$. 
\end{lemma}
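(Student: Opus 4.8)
The plan is to mirror the proof of Lemma \ref{dense subset of P}: starting from an arbitrary condition, I would build a descending $\omega$-chain of $S$-trees that simultaneously decides longer and longer initial segments of all four names $\dot f,\dot g,\dot h,\dot k$, while forcing the length and height to overtake one another, and then take the union of the chain. Recall that the order on $\PP=\PP_S$ is reverse inclusion, so strengthening a condition means enlarging the $S$-tree.

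Fix $p\in\PP$. I would construct $p=p_0\geq p_1\geq p_2\geq\cdots$ in $\PP$ so that for every $n$, the condition $p_{n+1}$ decides $\dot f{\upharpoonright}\len(p_n)$, $\dot g{\upharpoonright}\len(p_n)$, $\dot h{\upharpoonright}\len(p_n)$ and $\dot k{\upharpoonright}\len(p_n)$, and moreover $\he(p_n)<\len(p_{n+1})$. For the first task, deciding $\dot f{\upharpoonright}\len(p_n)$ amounts to deciding the ${<}\kappa$ many values $\dot f(\alpha)$ for $\alpha<\len(p_n)$ (and likewise for the other three names); since $\PP$ is ${<}\kappa$-closed, all these decisions can be realized by a descending sequence of length at most $\len(p_n)<\kappa$, taking lower bounds at limit stages. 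For the second task, I would use that $S$ is perfect: any $S$-tree of size ${<}\kappa$ can be thickened to one of size ${<}\kappa$ whose length exceeds any prescribed bound below $\kappa$, by extending its branches using the closure of $S$ at limit levels and the existence of successors in $S$. Letting $p_{n+1}$ be a common strengthening of the conditions produced by the two tasks yields the chain, and since $\len(p_n)\leq\he(p_n)<\len(p_{n+1})$, the lengths strictly increase.

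Finally I set $q=\bigcup_{n<\omega}p_n$. Then $q\in\PP$, since it has size ${<}\kappa$ (a countable union of sets of size ${<}\kappa$, using that $\kappa$ is regular uncountable) and each clause of Definition \ref{definition: S-tree} for a pair, resp.\ a pair of pairs, of $q$ is witnessed inside a single $p_m$, so $q$ is an $S$-tree; in particular the coherence condition \ref{S-node 5} is inherited. Writing $\gamma=\len(q)$, the interleaving gives $\len(q)=\sup_n\len(p_n)=\sup_n\he(p_n)=\he(q)=\gamma$, and $\gamma\in\Lim$ because the $\len(p_n)$ strictly increase, so clause \ref{condition for P*-1} of Definition \ref{definition of P star} holds. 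Since $q\supseteq p_{n+1}$ for all $n$, the condition $q$ inherits every decision: for each $\alpha<\gamma$ there is $n$ with $\alpha<\len(p_n)$, so $q$ decides $\dot f(\alpha),\dot g(\alpha),\dot h(\alpha),\dot k(\alpha)$; assembling these values gives clause \ref{condition for P*-2} for both adequate pairs with the common ordinal $\gamma$. Hence $q\in\PP^\diamond_{\dot f,\dot g}\cap\PP^\diamond_{\dot h,\dot k}$ and $q\leq p$, and density follows as $p$ was arbitrary. The only step beyond routine bookkeeping is the length-extension claim, namely that an $S$-tree can always be enlarged to strictly increase its length while remaining an $S$-tree of size ${<}\kappa$; this rests on the perfectness of $S$ together with $\pi_S$ being a projection, and is essentially the structural fact already used to see that $\PP_S$ is non-atomic.
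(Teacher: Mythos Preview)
Your proof is correct and follows essentially the same approach as the paper: build a descending $\omega$-chain that interleaves deciding initial segments of the four names with pushing the length past the previous height, then take the union. The paper's version is slightly terser (it decides up to $\he(p_n)$ rather than $\len(p_n)$, which is a harmless variant since both suprema coincide at the end), and it does not spell out the length-extension step, but the argument is the same.
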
 
\begin{proof} 
%To prove that $\DD$ is dense in $\PP$
Note that in general, we have $\len(p)\leq\he(p)$ for all $p\in\PP$ by the definition of the length and the height. 
To prove the claim, we assume that $p$ in $\PP$ and construct a strictly decreasing sequence $\langle p_n\mid n\in\omega\rangle$ in $\PP$ with $p_0=p$ as follows. 

If $p_n$ is defined and $\he(p_n)=\alpha$, we choose a condition $p_{n+1}$ with $\len(p_{n+1})>\alpha$ that decides $\dot{f}{\upharpoonright}\alpha$, $\dot{g}{\upharpoonright}\alpha$, $\dot{h}{\upharpoonright}\alpha$ and $\dot{k}{\upharpoonright}\alpha$. 
%$\dot{b}{\upharpoonright}\alpha$ and $\dot{c}{\upharpoonright}\alpha$. 
Then $p^\diamond=\bigcup_{n\in\omega} p_n$ is a condition in $\PP$ with $p^\diamond\leq p$ that satisfies requirements \ref{condition for P*-1} and \ref{condition for P*-2} in Definition \ref{definition of P star} for both $(\dot{f},\dot{g})$ and $(\dot{h},\dot{k})$, and thus $p^\diamond\in \PP^\diamond_{\dot{f},\dot{g}}\cap \PP^\diamond_{\dot{h},\dot{k}}$. 
%$\PP^{\diamond}_{\dot{f},\dot{g}}\cap \PP^{\diamond}_{\dot{h},\dot{k}}$ with $p^\diamond\leq p$. 
%Then $p\in \PP^*$. 
%Choose $p_{\alpha+1}\leq p_{\alpha}$ decicing $\dot{f}_b\upharpoonright |p_{\alpha}|$, $\dot{b}\upharpoonright |p_{\alpha}|$ with $|p_{\alpha}|<|p_{\alpha+1}|$ for $\alpha<\omega^2$ and let $p_{\beta}=\bigcup_{\alpha<\beta} p_{\alpha}$ for limits $\alpha<\omega^2$. Let $p=\bigcup_{\alpha<\omega^2} p_{\alpha}$. Then $p\leq p_0$ and $p\in\mathbb{D}$. 
\end{proof} 
%\begin{claim*} 
%$\PP^*_{\dot{f},\dot{g}}\cap \PP^*_{\dot{h},\dot{k}}$ is dense in $\PP$. 
%\end{claim*} 

Using the following lemma, we will see that $\PP^*_{\dot{f},\dot{g}}$ is also a dense subforcing of $\PP$. 

\begin{lemma} \label{values decided by P-diamond}
Suppose that $p$ is a condition in $\PP^\diamond_{\dot{f},\dot{g}}$ and $\beta,\gamma\leq\gamma_p$ are even. 
Moreover, suppose that $q\leq p$ is a condition in $\PP$ and $(s,t)\in q$ with $\len(s)=\len(t)>\beta$ and 
%the following conditions. 
%\begin{enumerate-(i)} 
%\item \label{properties of ranges 1} 
\begin{center} 
$\bigcup \ran{s{\upharpoonright}\beta} = \bigcup \ran{f_p{\upharpoonright}\gamma},\ \bigcup \ran{t{\upharpoonright}\beta} = \bigcup \ran{g_p{\upharpoonright}\gamma}.$ 
\end{center} 
%\item \label{properties of ranges 2} 
Then %$s(\beta)=f_p(\gamma)$, $t(\beta)=g_p(\gamma)$ and 
$q\Vdash_{\PP} \dot{f}(\gamma)=s(\beta),\ \dot{g}(\gamma)=t(\beta).$ 
%\end{enumerate-(i)} 
\end{lemma}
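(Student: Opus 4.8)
The plan is to pass to a generic extension and reduce the whole statement to the coherence condition \ref{S-node 5} in Definition \ref{definition: S-tree}, which is exactly the property of $S$-trees forcing the even-indexed successor of a node to depend only on the union of the path leading up to it. To this end I would fix a $\PP$-generic filter $H$ over $V$ with $q\in H$ and write $f=\dot{f}^H$, $g=\dot{g}^H$ and $T=T_H=\bigcup H$. Since $s(\beta)$ and $t(\beta)$ are ground-model objects and $H$ is an arbitrary generic filter containing $q$, it suffices to prove $f(\gamma)=s(\beta)$ and $g(\gamma)=t(\beta)$ in $V[H]$. By Lemma \ref{the generic tree is full}, $T$ is a strategic $S$-tree, so in particular it satisfies condition \ref{S-node 5}.

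Next I would locate the two relevant nodes of $T$. Since $p\in\PP^\diamond_{\dot{f},\dot{g}}$ forces $\dot{f}{\upharpoonright}\gamma_p=f_p$ and $\dot{g}{\upharpoonright}\gamma_p=g_p$ (Definition \ref{definition of P star}) and $q\leq p$, the same is forced by $q$; as $\gamma\leq\gamma_p$, in $V[H]$ we get $f{\upharpoonright}\gamma=f_p{\upharpoonright}\gamma$ and $g{\upharpoonright}\gamma=g_p{\upharpoonright}\gamma$. By adequacy of $(\dot{f},\dot{g})$, the pair $(f{\upharpoonright}(\gamma+1),g{\upharpoonright}(\gamma+1))$ is a node of $T$ of length $\gamma+1$, whose value at position $\gamma$ is $(f(\gamma),g(\gamma))$. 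For the second node, $(s,t)\in q\subseteq T$, and since $T$ is a tree and $\len(s)>\beta$, the initial segment $(s{\upharpoonright}(\beta+1),t{\upharpoonright}(\beta+1))$ is a node of $T$ of length $\beta+1$, whose value at position $\beta$ is $(s(\beta),t(\beta))$.

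It then remains to check the hypotheses of \ref{S-node 5} for these two nodes at the even positions $\gamma$ and $\beta$. The union hypotheses are immediate from the hypothesis of the lemma, since
$$\bigcup\ran{f{\upharpoonright}\gamma}=\bigcup\ran{f_p{\upharpoonright}\gamma}=\bigcup\ran{s{\upharpoonright}\beta},$$
and the analogous equality holds for $g$ and $t$; these are precisely the quantities $\bigcup\ran{(f{\upharpoonright}(\gamma+1)){\upharpoonright}\gamma}$ and $\bigcup\ran{(s{\upharpoonright}(\beta+1)){\upharpoonright}\beta}$ compared in \ref{S-node 5}. As $\gamma$ and $\beta$ are even, condition \ref{S-node 5} applied inside the single $S$-tree $T$ yields $f(\gamma)=s(\beta)$ and $g(\gamma)=t(\beta)$, which is what we wanted; the argument is uniform in the two cases $\gamma<\gamma_p$ and $\gamma=\gamma_p$.

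I expect the only delicate point, and hence the main (modest) obstacle, to be the bookkeeping that certifies that both pairs really are nodes of the \emph{same} $S$-tree $T$ to which \ref{S-node 5} applies: on the $(f,g)$-side this rests on the adequacy of $(\dot{f},\dot{g})$ combined with $q\leq p$ fixing $\dot{f},\dot{g}$ below $\gamma_p$, while on the $(s,t)$-side it rests on $q\subseteq T$ together with the downward closure of the tree $T$. Once these memberships and the evenness of $\gamma,\beta$ are verified, the transfer of the union equalities and the application of \ref{S-node 5} are routine.
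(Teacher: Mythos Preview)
Your proof is correct and follows essentially the same approach as the paper's own argument: pass to a generic filter containing $q$, use adequacy to place $(\dot f^H{\upharpoonright}(\gamma+1),\dot g^H{\upharpoonright}(\gamma+1))$ into $T_H$, observe that $(s,t)\in q\subseteq T_H$, and then apply condition \ref{S-node 5} of Definition \ref{definition: S-tree} to the two nodes inside the single $S$-tree $T_H$. The paper's version is terser and does not bother restricting $(s,t)$ to $\beta+1$ or invoking Lemma \ref{the generic tree is full} (the $S$-tree property of $T_H$ already follows from the fact that every condition is an $S$-tree), but your extra bookkeeping is harmless.
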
 
\begin{proof} 
%We fix an even ordinal $\beta<\gamma_p$ and a
We assume that $G$ is any $\PP$-generic filter over $V$ with $q\in G$ and let  
$(u,v)=
%(f_p{\upharpoonright}\alpha+1,g_p{\upharpoonright}\alpha+1)=
(\dot{f}^G{\upharpoonright}\gamma+1,\dot{g}^G{\upharpoonright}\gamma+1)$. 
Since $(\dot{f},\dot{g})$ is an adequate pair, it follows that  
%before Definition \ref{definition of P star}, we have 
$(u,v)\in T_G$. 
%Moreover, we have  $\bigcup \ran{s{\upharpoonright}\beta} = \bigcup \ran{u{\upharpoonright}\gamma}$ and $\bigcup \ran{t{\upharpoonright}\beta} = \bigcup \ran{v{\upharpoonright}\gamma}$ by the choice of $(u,v)$ and by the condition \ref{properties of ranges 1}. 
Thus $(s,t), (u,v)$ are elements of the same $S$-tree $T_G$ and therefore $s(\beta)=u(\gamma)$ and $t(\beta)=v(\gamma)$ by Definition \ref{definition: S-tree} \ref{S-node 5}, as required. 
%and thus \ref{properties of ranges 2} holds. 
\end{proof}

\begin{lemma} \label{dense subforcing of P_S} 
%\todo{$\DD$ really depends on $\dot{b}$, $\dot{c}$, $\dot{f}$, $\dot{g}$} 
$\PP^*_{\dot{f},\dot{g}}\cap \PP^*_{\dot{h},\dot{k}}$ is a dense subforcing of $\PP$. 
%=\DD_{\dot{f},\dot{g}}$ 
\end{lemma}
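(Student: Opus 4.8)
The plan is to show that every condition $p\in\PP$ has an extension lying in $\PP^*_{\dot f,\dot g}\cap\PP^*_{\dot h,\dot k}$, which suffices since this intersection is a suborder of $\PP$. I would obtain such an extension in two stages: first pass to a condition lying in $\PP^\diamond_{\dot f,\dot g}\cap\PP^\diamond_{\dot h,\dot k}$ whose decided branches have controlled height, and then adjoin these branches to the tree so as to meet requirement \ref{condition for P*-3} of Definition \ref{definition of P star}.

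For the first stage I would rerun the construction of Lemma \ref{P-diamond is dense}, building a decreasing sequence $\langle q_n\mid n<\omega\rangle$ in $\PP$ with $q_0=p$ where each $q_{n+1}\leq q_n$ decides $\dot f{\upharpoonright}\he(q_n)$, $\dot g{\upharpoonright}\he(q_n)$, $\dot h{\upharpoonright}\he(q_n)$ and $\dot k{\upharpoonright}\he(q_n)$ and satisfies $\len(q_{n+1})>\he(q_n)$, but with one extra demand: I also require $\he(q_{n+1})$ to exceed the supremum of the lengths of all the values $\dot f(\beta)$, $\dot g(\beta)$, $\dot h(\beta)$, $\dot k(\beta)$ for $\beta<\he(q_n)$ just decided. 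This is possible because that supremum is an ordinal below $\kappa$ and the height of a condition can be increased by lengthening values, using that $\pi_S$ is a projection. Setting $q^\diamond=\bigcup_{n<\omega}q_n$ and $\gamma=\sup_n\he(q_n)=\sup_n\len(q_n)\in\Lim$ (the two suprema agree since $\len(q_n)\leq\he(q_n)<\len(q_{n+1})$), I get $q^\diamond\in\PP^\diamond_{\dot f,\dot g}\cap\PP^\diamond_{\dot h,\dot k}$ with $\gamma_{q^\diamond}=\gamma$, and the height bumping guarantees $\len(f_{q^\diamond}(\beta))<\gamma$ for every $\beta<\gamma$, and likewise for $g_{q^\diamond}$, $h_{q^\diamond}$, $k_{q^\diamond}$.

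In the second stage I would set
$$q=q^\diamond\cup\{(f_{q^\diamond}{\upharpoonright}\alpha,g_{q^\diamond}{\upharpoonright}\alpha)\mid \alpha<\gamma\}\cup\{(h_{q^\diamond}{\upharpoonright}\alpha,k_{q^\diamond}{\upharpoonright}\alpha)\mid\alpha<\gamma\}.$$
Since $\gamma<\kappa$ and $q^\diamond$ has size ${<}\kappa$, the set $q$ again has size ${<}\kappa$; moreover $\len(q)=\gamma$, while the controlled heights give $\he(q)=\gamma$, so requirement \ref{condition for P*-1} holds. Downward closure of $q$ is immediate because restrictions of the adjoined branch nodes are again branch nodes, and restrictions of $q^\diamond$-nodes lie in $q^\diamond$. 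To see that $q$ is an $S$-tree I would argue that for any $\PP$-generic $G$ with $q^\diamond\in G$ one has $q\subseteq T_G$: the nodes of $q^\diamond$ lie in $T_G$ since $q^\diamond\in G$, and each adjoined node lies in $T_G$ because $q^\diamond\Vdash(\dot f{\upharpoonright}\alpha,\dot g{\upharpoonright}\alpha)\in T_{\dot G}$ by adequacy together with $q^\diamond\Vdash\dot f{\upharpoonright}\gamma=f_{q^\diamond}$, and symmetrically for $(\dot h,\dot k)$. As $T_G$ is an $S$-tree, the two universally quantified $S$-tree requirements, namely that every level lies in $S$ and the coherence condition \ref{S-node 5} of Definition \ref{definition: S-tree}, are inherited by the subset $q$, and these equalities of ground model objects then hold back in $V$. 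Requirement \ref{condition for P*-2} passes from $q^\diamond$ to $q$ since $q\leq q^\diamond$, and requirement \ref{condition for P*-3} holds by construction; hence $q\in\PP^*_{\dot f,\dot g}\cap\PP^*_{\dot h,\dot k}$ and $q\leq p$.

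The main obstacle is exactly the verification of the coherence condition \ref{S-node 5} for $q$, where old nodes of $q^\diamond$ are mixed with the newly adjoined branch nodes; the content here is that the even-indexed values of a decided branch are forced to agree with the corresponding values of any tree node whose earlier levels have the same union, which is precisely Lemma \ref{values decided by P-diamond}. The generic-tree argument above packages this uniformly, but one could instead invoke Lemma \ref{values decided by P-diamond} directly, case by case, to match the adjoined values against those of $q^\diamond$. The secondary subtlety, that adjoining branches might inflate the height past $\len(q)$ and thereby break requirement \ref{condition for P*-1}, is exactly what the height-bumping in the first stage is designed to prevent.
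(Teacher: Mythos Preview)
Your proof is correct and takes a genuinely different route from the paper's.

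The paper proceeds by first proving directly, via an induction establishing auxiliary statements $\Psi_\gamma$ and repeated appeals to Lemma~\ref{values decided by P-diamond}, that for any $p\in\PP^\diamond_{\dot f,\dot g}$ the decided values $f_p(\gamma),g_p(\gamma)$ already occur as coordinates of nodes in $p$ itself. From this it follows that $p\cup\{(f_p{\upharpoonright}\alpha,g_p{\upharpoonright}\alpha)\mid\alpha<\gamma_p\}$ is again a condition with the same $\gamma_p$ (no height bumping needed). The paper then applies this claim twice in succession, first for $(\dot f,\dot g)$ and then for $(\dot h,\dot k)$.

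Your approach differs in two ways. First, you adjoin both branches simultaneously rather than sequentially. Second, and more interestingly, you verify the coherence condition \ref{S-node 5} of Definition~\ref{definition: S-tree} by the generic absoluteness trick: since $q^\diamond$ forces each adjoined node into $T_{\dot G}$, one has $q\subseteq T_G$ for any generic $G\ni q^\diamond$, and the universal $S$-tree conditions are inherited by subsets and are absolute to $V$. This neatly handles all cross-interactions (old--new and between the two branches) at once, where the paper instead does an explicit case analysis inside the $\Psi_\gamma$ induction.

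What each buys: the paper's argument yields the stronger structural fact that the decided branch values already live in $p$, which in particular makes your height-bumping step redundant. Your argument is shorter for the stated goal and treats both adequate pairs uniformly. One minor point: your justification that $\he(q_{n+1})$ can be pushed up ``using that $\pi_S$ is a projection'' is a bit terse; the cleanest way to see it is that at odd positions nodes can be freely extended in $S$ (as in the proof of Lemma~\ref{the generic tree is full}), so one can append a single long value. But as noted, this precaution is not actually needed.
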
 
\begin{proof} 
%In the first step of the proof, let $\PP^\diamond_{\dot{f},\dot{g}}$ denote the set of conditions $p$ in $\PP$ that satisfy requirements \ref{condition for P*-1} and \ref{condition for P*-2}. 

We will derive the conclusion from the next claim. 
%Using the next claim, we will see that the set of conditions that satisfy requirement \ref{condition for P*-3} for $(\dot{f},\dot{g})$ and $(\dot{h},\dot{k})$ is dense in $\PP^\diamond_{\dot{f},\dot{g}}\cap \PP^\diamond_{\dot{h},\dot{k}}$. 

\begin{claim*} \label{extension of condition by initial segments of branches}
For any condition $p\in \PP^\diamond_{\dot{f},\dot{g}}$, we have that 
$p\cup\{(f_p{\upharpoonright}\alpha,g_p{\upharpoonright}\alpha)\mid\alpha<\gamma_p\}$ is again a condition in $\PP^\diamond_{\dot{f},\dot{g}}$. 
\end{claim*} 
\begin{proof} 
We fix a condition $p\in\PP^\diamond_{\dot{f},\dot{g}}$. 
% and will first prove the following claim. CHANGE 
For any even ordinal $\gamma<\gamma_p$, let $\Psi_\gamma$ denote the statement that 
%For any even ordinal $\alpha<\gamma$, 
there exist an even ordinal $\beta<\gamma_p$ and some $(s,t)\in p$ with $\len(s)=\len(t)>\beta$ that satisfy the following conditions. 
\begin{enumerate-(a)} 
\item \label{properties of ranges 1} 
$\bigcup \ran{s{\upharpoonright}\beta} = \bigcup \ran{f_p{\upharpoonright}\gamma}$ and $\bigcup \ran{t{\upharpoonright}\beta} = \bigcup \ran{g_p{\upharpoonright}\gamma}$. 
\item \label{properties of ranges 2} 
$s(\beta)=f_p(\gamma)$ and $t(\beta)=g_p(\gamma)$.
\end{enumerate-(a)} 

\begin{subclaim*} 
If $\delta\leq\gamma_p$ is an even ordinal and $\Psi_\gamma$ holds for all even ordinals $\gamma<\delta$, then $q=p\cup\{(f_p{\upharpoonright}\gamma,g_p{\upharpoonright}\gamma)\mid\gamma<\delta\}$ is 
%\todo{is it in $\PP^*$?}
a condition in $\PP^\diamond_{\dot{f},\dot{g}}$. 
\end{subclaim*} 
\begin{proof} 
It is sufficient to check that $q$ satisfies Definition \ref{definition: S-tree} \ref{S-node 5}. 
To this end, suppose that $\gamma<\delta$ is even, $(u,v)\in p$, $\len(u)=\delta$ is even, 
$ \bigcup \ran{u{\upharpoonright}\alpha}=\bigcup \ran{f_p{\upharpoonright}\gamma}$ and $ \bigcup \ran{v{\upharpoonright}\alpha}=\bigcup \ran{g_p{\upharpoonright}\gamma} $. 
Now let $\beta<\gamma_p$ and $(s,t)\in p$ witness $\Psi_\gamma$. 
It follows from condition \ref{properties of ranges 1} and Definition \ref{definition: S-tree} \ref{S-node 5} for $p$ that $u(\alpha)=s(\beta)$ and $v(\alpha)=t(\beta)$. 
Moreover, by condition \ref{properties of ranges 2}, $u(\alpha)=s(\beta)=f_p(\gamma)$ and $v(\alpha)=t(\beta)=g_p(\gamma)$, as required. 
\end{proof} 

\begin{subclaim*} \label{f_p and g_p are contained in p} 
$\Psi_\gamma$ holds for all even ordinals $\gamma<\gamma_p$. 
%For any even ordinal $\alpha<\gamma_p$, there is an even ordinal $\beta<\gamma_p$ and some $(s,t)\in p$ with $\len(s)=\len(t)>\beta$ and the following properties.  
%\begin{enumerate-(a)} 
%\item \label{properties of ranges 1} 
%$\bigcup \ran{s{\upharpoonright}\beta} = \bigcup \ran{f_p{\upharpoonright}\alpha}$ and $\bigcup \ran{t{\upharpoonright}\beta} = \bigcup \ran{g_p{\upharpoonright}\alpha}$. 
%\item \label{properties of ranges 2} 
%$s(\beta)=f_p(\alpha)$ and $t(\beta)=g_p(\alpha)$.
%\end{enumerate-(a)} 
\end{subclaim*} 
\begin{proof} 
Towards a contradiction, we assume that $\gamma<\gamma_p$ is the least even ordinal such that $\Psi_\gamma$ fails. 
%are no such $(u,v)\in p$ and $\beta<\gamma_p$. 
Since $\Psi_\alpha$ holds for all even ordinals $\alpha<\gamma$ by the minimality of $\gamma$, the previous subclaim implies that 
$$q=p\cup\{(f_p{\upharpoonright}\alpha,g_p{\upharpoonright}\alpha)\mid\alpha<\gamma\}$$ 
is 
%\todo{is it in $\PP^*$?}
a condition in $\PP$. 

Since $S$ is perfect, there is some 
%\todo{notation for u,v? because of the types?} 
$(u,v)\in S$ with $u\supseteq\bigcup \ran{f_p{\upharpoonright}\alpha}$ and $v\supseteq \bigcup \ran{g_p{\upharpoonright}\alpha}$. 
We can further assume that 
%$(u,v)$ 
$(u,v)\neq (f_p(\gamma),g_p(\gamma))$ 
%and \todo{\ \ \ \ \ \ \ \ \ \ \ necessary?} $(u,v)\in\succsplit(S)$ 
by extending $u$, $v$. 

If \ref{properties of ranges 1} holds for an even ordinal $\beta<\gamma_p$ and some $(s,t)\in p$ with $\len(s)=\len(t)>\beta$, we also have \ref{properties of ranges 2} by Lemma \ref{values decided by P-diamond}. 
Hence we can assume that there are no such $\beta<\gamma_p$ and $(s,t)\in p$.  
%even ordinal $\beta<\gamma_p$ and and no $(s,t)\in p$ with $\len(s)=\len(t)>\beta$ such that \ref{properties of ranges 2} holds. 
%$\bigcup \ran{s{\upharpoonright}\beta} = \bigcup \ran{f_p{\upharpoonright}\gamma}$ and $\bigcup \ran{t{\upharpoonright}\beta} = \bigcup \ran{g_p{\upharpoonright}\gamma}$. 
It follows that $q\cup\{(u,v)\}$ 
%$$q=p\cup\{(f_p{\upharpoonright}\alpha,g_p{\upharpoonright}\alpha)\mid\alpha<\gamma\}\cup\{(u,v)\}$$ 
is 
%\todo{is it in $\PP^*$?} 
a condition in $\PP$ by Definition \ref{definition: S-tree} \ref{S-node 5} 
and further $q\Vdash_\PP (\dot{f}(\gamma),\dot{g}(\gamma))=(u,v)$ by Lemma \ref{values decided by P-diamond}. 
However, since $q\leq p$, this contradicts the fact that $(u,v)\neq (f_p(\gamma),g_p(\gamma))$. 
\end{proof} 

The previous subclaims show that 
$r=p\cup\{(f_p{\upharpoonright}\alpha,g_p{\upharpoonright}\alpha)\mid\alpha<\gamma_p\}$ 
is a condition in $\PP$. Since moreover $p\in \PP^\diamond_{\dot{f},\dot{g}}$ and $\len(r)=\he(r)=\gamma_p$, we have $r\in\PP^\diamond_{\dot{f},\dot{g}}$. 
% thus proving the claim. 
%$p\cup\{(f_p{\upharpoonright}\alpha,g_p{\upharpoonright}\alpha)\mid\alpha<\gamma_p\}$ is a condition in $\PP$ \todo{\ \ \ \ \ is it in $\PP^*$?}, as required. 
\end{proof} 

To see that $\PP^*_{\dot{f},\dot{g}}\cap \PP^*_{\dot{h},\dot{k}}$ is a dense subforcing of $\PP$, assume that $p$ is an arbitrary condition in $\PP$. By Lemma \ref{P-diamond is dense}, there is some $q\leq p$ in $\PP^\diamond_{\dot{f},\dot{g}}\cap \PP^\diamond_{\dot{h},\dot{k}}$. 
By the previous claim applied to $(\dot{f},\dot{g})$ and $q$, we obtain some $r\leq q$ in $\PP^*_{\dot{f},\dot{g}}\cap \PP^\diamond_{\dot{h},\dot{k}}$, 
and by then applying the claim to $(\dot{h},\dot{k})$ and $r$, we obtain the required condition $s\leq r$ in $\PP^*_{\dot{f},\dot{g}}\cap \PP^*_{\dot{h},\dot{k}}$. 
% as required. 
\end{proof} 

As for $\PP$, it is easy to see that  $\PP^*_{\dot{f},\dot{g}}$ is a non-atomic ${<}\kappa$-closed forcing of size $\kappa$ and hence 
$\PP^*_{\dot{f},\dot{g}}$ and $\Add{\kappa}{1}$ are sub-equivalent by Lemma \ref{forcing equivalent to Add(kappa,1)}.

As defined before Lemma \ref{two step iteration coded by a set S}, we will write $\QQ_p$ for the subforcing 
$$\QQ_p=\{q\in\QQ\mid q\leq p\}$$ 
of a forcing $\QQ$ below a condition $p\in\QQ$ in the following lemmas. 

%We consider the subset 
%$$S^*=\{(s_p,t_p)\in \PP^*\mid p\in\PP^*\}$$ 
%of $S$. 

\begin{lemma} \label{projection onto SxS} 
Letting $\PP^*=\PP^*_{\dot{f},\dot{g}}\cap \PP^*_{\dot{h},\dot{k}}$, 
for any condition $r$ in $\PP^*$ with $(s_r,t_r)\neq (u_r,v_r)$, the map 
$$\tau_{r}\colon \PP^*_{r}\rightarrow S_{(s_{r},t_{r})}\times S_{(u_{r},v_{r})},\ \tau_r(p)=((s_p,t_p),(u_p,v_p))$$ 
%$$\rho_{p^*}\colon [\PP_{\dot{f},\dot{g}}^*]_{p^*}\rightarrow S_{(s_{p^*},t_{p^*})},\ \rho(p)=(s_p,t_p)$$ 
%$$\rho_{p^*}\colon \PP_{\dot{f},\dot{g}}^*{\upharpoonright}{p^*}\rightarrow S{\upharpoonright}(s_{p^*},t_{p^*}),\ \rho(p)=(s_p,t_p)$$ 
is a projection. 
\end{lemma}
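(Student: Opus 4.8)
The plan is to verify the three defining properties of a projection from Definition \ref{definition: quotient forcing}: that $\tau_r$ is order-preserving, that its range is dense in $S_{(s_r,t_r)}\times S_{(u_r,v_r)}$, and that it satisfies the lifting clause. After first noting for well-definedness that $(s_p,t_p),(u_p,v_p)$ lie below $(s_r,t_r),(u_r,v_r)$ in $S$ (this is where closure of $S$ is used), order-preservation is immediate: if $p\leq q$ in $\PP^*_r$, then $\len(p)\geq\len(q)$ and $p$ decides longer initial segments of each of $\dot f,\dot g,\dot h,\dot k$ consistently with $q$, so $f_p\supseteq f_q$, $g_p\supseteq g_q$, $h_p\supseteq h_q$, $k_p\supseteq k_q$; taking coordinatewise range-unions gives $s_p\supseteq s_q$, $t_p\supseteq t_q$, $u_p\supseteq u_q$, $v_p\supseteq v_q$, hence $\tau_r(p)\leq\tau_r(q)$.

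Both remaining properties will be reduced to a single lifting construction: given $p\in\PP^*_r$ and a pair $((s,t),(u,v))\leq\tau_r(p)=((s_p,t_p),(u_p,v_p))$ in the product forcing, I will produce $\bar p\leq p$ in $\PP^*$ with $\tau_r(\bar p)\leq((s,t),(u,v))$. This is exactly the lifting clause, and density of the range follows by applying it with $p=r$: since $\tau_r(r)=((s_r,t_r),(u_r,v_r))$ is the top of $S_{(s_r,t_r)}\times S_{(u_r,v_r)}$, every element of the product lies below $\tau_r(r)$, so the lifting produces an image element of $\tau_r$ below it.

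For the construction I would extend the $S$-tree $p$ by prolonging its two decided branches. Concretely, I adjoin to $p$ the nodes continuing the $(\dot f,\dot g)$-branch past $(f_p,g_p)$ until its coordinatewise range-union reaches a node of $S$ extending $(s,t)$, and likewise prolong the $(\dot h,\dot k)$-branch until it reaches $(u,v)$; this is possible because $S$ is perfect and $\pi_S\colon S\to\Add{\kappa}{1}$ is a projection, so a partial branch can be driven through any prescribed node of $S$ above its current position, while the even-indexed values forced along each branch by Definition \ref{definition: S-tree} \ref{S-node 5} are controlled using Lemma \ref{values decided by P-diamond}. I then close the resulting condition off to a limit length and decide $\dot f,\dot g,\dot h,\dot k$ accordingly, obtaining $\bar p\in\PP^*=\PP^*_{\dot f,\dot g}\cap\PP^*_{\dot h,\dot k}$ via Lemma \ref{dense subforcing of P_S}. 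By construction $s_{\bar p}\supseteq s$, $t_{\bar p}\supseteq t$, $u_{\bar p}\supseteq u$ and $v_{\bar p}\supseteq v$, so $\tau_r(\bar p)\leq((s,t),(u,v))$.

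The main obstacle is verifying that the two prolongations can be carried out simultaneously without violating Definition \ref{definition: S-tree} \ref{S-node 5}, which forces the even-indexed values of any two nodes of $\bar p$ to agree whenever their coordinatewise range-unions agree at the relevant even indices. Within a single branch this is automatic from $\dot f,\dot g$ (resp. $\dot h,\dot k$) being genuine branches, so the only danger is a cross-branch clash, and this is precisely where the hypothesis $(s_r,t_r)\neq(u_r,v_r)$ enters: it guarantees that the two branches have already separated inside $r$, and I would prolong them while keeping their range-union pairs separated (using perfectness of $S$ to split them away from one another), so that the hypothesis of \ref{S-node 5} never applies across the two branches and they may be driven to the independent targets $(s,t)$ and $(u,v)$. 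This step is the direct two-branch analogue of the compatibility argument in Lemma \ref{compatible conditions for perfect set}.
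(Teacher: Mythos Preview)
Your approach is correct and follows the same strategy as the paper. The paper's execution is considerably more direct, however: rather than an iterated prolongation, it simply adjoins to $p$ the limit nodes $(f_p,g_p)$, $(h_p,k_p)$ and their immediate successors $(f_p{}^\smallfrown\langle s\rangle, g_p{}^\smallfrown\langle t\rangle)$, $(h_p{}^\smallfrown\langle u\rangle, k_p{}^\smallfrown\langle v\rangle)$ (after first extending $(s,t),(u,v)$ so that the inclusions are strict). The resulting set $q$ is an $S$-tree because the only new even index is $\gamma_p$, and there the range-union pairs are $(s_p,t_p)$ and $(u_p,v_p)$; these have the same length $\gamma_p$ and extend $(s_r,t_r)$, $(u_r,v_r)$ respectively, which also have equal length $\gamma_r$ and are unequal by hypothesis, hence incompatible---so condition~\ref{S-node 5} is vacuous between the two new branches. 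Lemma~\ref{values decided by P-diamond} then gives $q\Vdash_\PP \dot f(\gamma_p)=s$, $\dot g(\gamma_p)=t$, $\dot h(\gamma_p)=u$, $\dot k(\gamma_p)=v$, and any extension of $q$ lying in $\PP^*$ (available by Lemma~\ref{dense subforcing of P_S}) finishes the lifting.

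In particular, you do not need perfectness of $S$ or that $\pi_S$ is a projection here, nor any active ``splitting away'' of the two branches: the separation you worry about is automatic from $(s_r,t_r)\neq(u_r,v_r)$, since all of $s_r,t_r,u_r,v_r$ (and likewise $s_p,t_p,u_p,v_p$) have the same length.
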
 
\begin{proof} 
It follows from the definition of $s_p$, $t_p$, $u_p$, $v_p$ that $\rho_r$ is order-preserving. 
For the remaining requirement on projections, suppose that $p\in \PP^*$, $p\leq r$ and $((s,t),(u,v))\in S_{(s_{r},t_{r})}\times S_{(u_{r},v_{r})}$ are given with $s_p\subseteq s$, $t_p\subseteq t$, $u_p\subseteq u$, $v_p\subseteq v$. 
We can moreover assume that these subsets are strict by extending $s$, $t$, $u$, $v$. 

By the definition of $\PP^*_{\dot{f},\dot{g}}$ and $\PP^*_{\dot{h},\dot{k}}$, we have $(f_p{\upharpoonright}\alpha, g_p{\upharpoonright}\alpha), (h_p{\upharpoonright}\alpha, k_p{\upharpoonright}\alpha)\in p$ for all $\alpha<\gamma_p$. 
Since moreover $(s_r,t_r)\neq (u_r,v_r)$, 
$$q=p\cup \{(f_p,g_p),(f_p^\smallfrown (\gamma_p,s), g_p^\smallfrown (\gamma_p,t)), (h_p,k_p),(h_p^\smallfrown (\gamma_p,s), k_p^\smallfrown (\gamma_p,t)) \}$$ 
is downwards closed and satisfies Definition \ref{definition: S-tree} \ref{S-node 5}, hence it is a condition in $\PP$. 
Finally, 
$$q\Vdash_\PP \dot{f}(\gamma_p)=s,\  \dot{g}(\gamma_p)=t,\  \dot{h}(\gamma_p)=u,\  \dot{k}(\gamma_p)=v$$ 
by Lemma \ref{values decided by P-diamond}. 
Now any condition $r\leq q$ in $\PP^*$ is as required. 
\end{proof}

In the next two lemmas, we let $\PP^*=\PP^*_{\dot{f},\dot{g}}$. 

\begin{lemma} \label{projection onto S} 
Letting $\PP^*=\PP^*_{\dot{f},\dot{g}}$, 
for any condition $r$ in $\PP^*$, the map 
$$\rho_{r}\colon \PP^*_{r}\rightarrow S_{(s_{r},t_{r})},\ \rho_r(p)=(s_p,t_p)$$ 
%$$\rho_{p^*}\colon [\PP_{\dot{f},\dot{g}}^*]_{p^*}\rightarrow S_{(s_{p^*},t_{p^*})},\ \rho(p)=(s_p,t_p)$$ 
%$$\rho_{p^*}\colon \PP_{\dot{f},\dot{g}}^*{\upharpoonright}{p^*}\rightarrow S{\upharpoonright}(s_{p^*},t_{p^*}),\ \rho(p)=(s_p,t_p)$$ 
is a projection and $(s_{r},t_{r})$ forces that the quotient forcing $[\PP^*_{r}/S_{(s_{r},t_{r})}]^{\rho_{r}}$ 
and $\Add{\kappa}{1}$ are sub-equivalent. 
\end{lemma}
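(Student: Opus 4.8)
The plan is to establish the two assertions separately, following the single-pair analogues of the two-coordinate projection Lemma~\ref{projection onto SxS} and of the branch-quotient Lemma~\ref{quotient forcing of a branch}.

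First I would verify that $\rho_r$ is a projection. Order-preservation is immediate: if $q\leq p$ in $\PP^*_r$, then $f_q$ extends $f_p$ and $g_q$ extends $g_p$, so $s_q\supseteq s_p$ and $t_q\supseteq t_p$, that is $\rho_r(q)\leq\rho_r(p)$. That $\rho_r$ lands in $S_{(s_r,t_r)}$ at all uses $(s_p,t_p)\in S$, which holds because $S$ is limit-closed (and $(s_p,t_p)\leq(s_r,t_r)$ since $p\leq r$). For the lifting property, fix $p\in\PP^*_r$ and $(s,t)\in S_{(s_r,t_r)}$ with $(s,t)\leq(s_p,t_p)$; extending $s,t$ we may assume the inclusions are strict. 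Put
$$q=p\cup\{(f_p,g_p)\}\cup\{(f_p^\smallfrown(\gamma_p,s),\,g_p^\smallfrown(\gamma_p,t))\}.$$
Exactly as in the verifications in Lemma~\ref{dense subforcing of P_S} and Lemma~\ref{projection onto SxS}, $q$ is a condition in $\PP$: downward closure holds because $(f_p{\upharpoonright}\alpha,g_p{\upharpoonright}\alpha)\in p$ for all $\alpha\leq\gamma_p$, the new top pair lies in $S$ since $(s,t)\in S$, and the coherence requirement Definition~\ref{definition: S-tree}\,\ref{S-node 5} is met by Lemma~\ref{values decided by P-diamond}, which also yields $q\Vdash_\PP\dot f(\gamma_p)=s$ and $\dot g(\gamma_p)=t$. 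Choosing $\bar p\leq q$ in $\PP^*$ by density (Lemma~\ref{dense subforcing of P_S}) gives $\bar p\in\PP^*_r$ with $f_{\bar p}(\gamma_p)=s$, so $\rho_r(\bar p)\leq(s,t)$. Density of $\ran{\rho_r}$ in $S_{(s_r,t_r)}$ is the special case $p=r$.

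Next I would analyze the quotient. Fix $G_0$ that is $S_{(s_r,t_r)}$-generic over $V$, and let $(x,y)$ be the branch it determines. By the definition of the quotient relative to a projection,
$$R:=[\PP^*_r/S_{(s_r,t_r)}]^{\rho_r}=\{p\in\PP^*_r\mid(s_p,t_p)\in G_0\}=\{p\in\PP^*_r\mid s_p\subseteq x,\ t_p\subseteq y\}.$$
This forcing has size $\kappa$, so by Lemma~\ref{forcing equivalent to Add(kappa,1)} it suffices to show that $R$ is non-atomic and ${<}\kappa$-closed in $V[G_0]$. For closure, given a decreasing sequence $\langle p_i\mid i<\delta\rangle$ in $R$ with $\delta<\kappa$, the union $p=\bigcup_{i<\delta}p_i$ is an $S$-tree of size ${<}\kappa$; since each $p_i\in\PP^*$, the main-branch nodes accumulate and $p\in\PP^*$ with $f_p=\bigcup_i f_{p_i}$. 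As $S$ is limit-closed, $(s_p,t_p)=(\bigcup_i s_{p_i},\bigcup_i t_{p_i})\in S$, and it is the greatest lower bound of the $(s_{p_i},t_{p_i})\in G_0$; hence $(s_p,t_p)\in G_0$ by genericity of the ${<}\kappa$-closed forcing $S_{(s_r,t_r)}$, so $p\in R$ is a lower bound.

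For non-atomicity I would argue as in Lemma~\ref{quotient forcing of a branch}: it is enough to find, below any $p\in R$, two incompatible extensions in $R$. Since the decided values of $\dot f,\dot g$ along the main branch are pinned to $(x,y)$, one cannot split there; instead I branch off the main branch at an odd (player~II) coordinate $\beta<\gamma_p$ by adjoining a node whose value $c$ is incompatible with $f_p(\beta)$, and then extend this off-branch node at the next even coordinate by two distinct admissible successors $a_0\neq a_1$, which exist by perfectness of $S$. Each choice gives a condition $q_j\in\PP^*$ with unchanged main branch, hence $s_{q_j}=s_p\subseteq x$ and $q_j\in R$ (passing to a $\PP^*$-extension along $(x,y)$ via the lifting property if the height must be matched); but $q_0\cup q_1$ has two even nodes of equal range and distinct values, violating Definition~\ref{definition: S-tree}\,\ref{S-node 5}, so $q_0,q_1$ are incompatible. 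Alternatively one can invoke the analogue of Lemma~\ref{distinct branches are mutually generic} to see that $R$ adds a fresh generic over $V[G_0]$. Applying Lemma~\ref{forcing equivalent to Add(kappa,1)} then completes the proof. The main obstacle is precisely this non-atomicity: membership in $R$ forces the $\dot f,\dot g$-branch to coincide with the fixed generic branch, so the usual splitting along the generated branch is unavailable and incompatibility must be manufactured off the branch and propagated through the even-coordinate coherence condition of $S$-trees; a secondary point requiring care is the use of limit-closure of $S$, both to make $\rho_r$ well-defined into $S_{(s_r,t_r)}$ and to keep the closure union inside $R$.
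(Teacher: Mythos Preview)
Your argument for the projection property of $\rho_r$ and for ${<}\kappa$-closure of the quotient is essentially the paper's argument, just spelled out in more detail; the paper simply points to the proof of Lemma~\ref{projection onto SxS} for the projection and to the definitions for closure. Your observation that limit-closure of $S$ is needed for $(s_p,t_p)\in S$ (so that $\rho_r$ is even well-defined) is a good point; this hypothesis is present in the application via Lemma~\ref{two step iteration coded by a set S} but not explicitly restated before Lemma~\ref{projection onto S}.

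The real divergence is in the non-atomicity step. The paper does not build incompatible extensions by hand; it simply invokes Lemma~\ref{projection onto SxS}: choosing a second adequate pair $(\dot h,\dot k)$ forced to differ from $(\dot f,\dot g)$, one gets a projection $\tau_{r'}\colon\PP^*_{r'}\to S_{(s_{r'},t_{r'})}\times S_{(u_{r'},v_{r'})}$ below some $r'\leq r$. Hence the quotient by the first factor still projects onto the (perfect, hence non-atomic) second factor $S_{(u_{r'},v_{r'})}$, and non-atomicity of the quotient follows immediately. Your ``alternative'' via the analogue of Lemma~\ref{distinct branches are mutually generic} is exactly this idea.

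Your direct off-branch construction, on the other hand, has two loose ends. First, when you adjoin the side nodes at positions $\beta,\beta+1$, you must ensure they do not conflict via Definition~\ref{definition: S-tree}\,\ref{S-node 5} with \emph{arbitrary} nodes of $p$, not just the main-branch nodes $(f_p{\upharpoonright}\alpha,g_p{\upharpoonright}\alpha)$; this requires choosing $(c,c')$ to avoid a set of size ${<}\kappa$ of range-union pairs, which is possible but not addressed. Second, and more seriously, your $q_j$ need not lie in $\PP^*$ (the heights of the side values may exceed $\gamma_p$), and getting extensions $\bar q_j\in R$ requires a genuine density argument in $S_{(s_r,t_r)}$: for each $j$, the set of $(s,t)\leq(s_p,t_p)$ admitting some $\bar q_j\leq q_j$ in $\PP^*$ with $(s_{\bar q_j},t_{\bar q_j})\leq(s,t)$ must be shown dense, and then genericity of $G_0$ invoked. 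The phrase ``passing to a $\PP^*$-extension along $(x,y)$ via the lifting property'' hides this, since the lifting property as you proved it applies to conditions already in $\PP^*_r$, not to the raw $q_j\in\PP$. Both gaps are repairable, but the two-pair projection route avoids them entirely.
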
 
\begin{proof} 
It can be proved as in the proof of Lemma \ref{projection onto SxS} that $\rho_r$ is a projection and moreover, it follows from Lemma \ref{projection onto SxS} that the quotient forcing $[\PP^*_{r}/S_{(s_{r},t_{r})}]^{\rho_{r}}$ is non-atomic. Since the quotient forcing had size $\kappa$ and is ${<}\kappa$-closed by the definitions of $s_p$, $t_p$ and $\PP^*_{\dot{f},\dot{g}}$, it is sub-equivalent to $\Add{\kappa}{1}$ by Lemma \ref{forcing equivalent to Add(kappa,1)}. 
\end{proof}

Our next aim is to calculate a quotient forcing for a given branch in the superclosed $S$-tree that is added by $\PP$. 
Since it is convenient to work with a separative forcing, but $\PP$ and $\PP^*$ are not separative, 
%let $\PP^*=\PP^*_{\dot{f},\dot{g}}$ and 
we will assume that $\TT$ is a dense subforcing of $\PP^*$ that is isomorphic to $\Addd{\kappa}{1}$ and that 
%$\dot{G}_\TT$ be a name for the $\TT$-generic filter and 
$\dot{T}_\TT$ is a name for the superclosed $S$-tree added by $\TT$. 
%We will now determine a quotient forcing for a given $\TT$-name $\dot{b}$ for a branch in this tree. 
%as defined after Lemma \ref{quotient forcing for a sequence of branches}. 
We will further assume that $\dot{b}$ is a $\TT$-name with $\one_{\PP}\Vdash \dot{b}=\ran{\bigcup{\dot{f}}}$ for the adequate pair $(\dot{f},\dot{g})$ considered above. 

If moreover $r$ is any condition in $\TT$, 
%$\TT$ is dense in $\PP$. 
%and if we force below $r$, we can assume that $\dot{b}$ is a $\TT_r$-name by replacing $\dot{b}$ with such a name. 
then  
$$\pi_S\rho_r\colon \PP^*_r\rightarrow \Add{\kappa}{1}_{s_r},\ \pi_S\rho_r(p)=s_p$$ 
is a projection, since $\rho_{r}\colon \PP^*_{r}\rightarrow S_{(s_{r},t_{r})}$ is a projection by Lemma \ref{projection onto S} and $\pi_S$ is a projection by the assumption on $S$. 

For any $r\in \TT$, we further choose a $\TT_r$-name $\dot{b}_r$ with $r\Vdash_\TT \dot{b}=\dot{b}_r$. 
It follows from the definition of $s_p$ that $\one_\TT$ forces that $\dot{b}_r=\bigcup_{p\in \dot{G}}s_p$, where $\dot{G}$ is a name for the $\TT$-generic filter. 
Using the fact that $\pi_S\rho_r$ is a projection, it then follows easily that $r$ forces that $\dot{b}_r$ is $\Add{\kappa}{1}$-generic over $V$. 
Moreover, since this holds for every condition $r$ in $\TT$, it follows that $\one_\TT$ forces that $\dot{b}$ is $\Add{\kappa}{1}$-generic over $V$. 

In the next lemma, we will fix a condition $r$ in $\TT$ and let $\RR=\BB(\TT_r)$, $\QQ=\BB^\RR(\dot{b})$. 
%Since we argued that $\one_\TT$ forces that $\dot{b}$ is $\Add{\kappa}{1}$-generic over $V$, it follows from the definition of $\QQ$ that  
It is clear that the map 
$$\iota\colon \Add{\kappa}{1}_{s_r}\rightarrow \QQ,\ \iota(s)=\llbracket s\subseteq \dot{b}\rrbracket$$ 
preserves $\leq$ and $\perp$, 
and since $\pi_S\rho_r$ is a projection, 
%\todo{ok?} 
we have that $\iota(s)\neq 0_\QQ$ for all $s\in\Add{\kappa}{1}_{s_r}$ and that $\ran{\iota}$ is dense in $\QQ$, so that $\iota$ is a sub-isomorphism. 

We will further consider the natural projection 
$\pi\colon \RR\rightarrow \QQ,\ \pi(p)=\inf_{p\leq q\in\QQ}q$. 
%induced by $\dot{b}$ below $r$, respectively. \todo{REWRITE?} 
Since $\TT$ is dense in $\PP^*$, $\pi{\upharpoonright}\TT_r$ and $ \pi_S \rho_r {\upharpoonright}\TT_r$ 
%$$ \pi_S \rho_r {\upharpoonright}\TT_r \colon \TT_r\rightarrow \Add{\kappa}{1}_{s_r}$$ 
%$$\pi{\upharpoonright}\TT_r\colon \TT_r\rightarrow \QQ$$ 
are projections and 
%\todo{\ \ \ \ OK? WOULD BE \ \\  \ \ \ \ NICE TO WRITE \ \\  \ \ \ \ MORE!}
it can be checked from the definitions of $\pi_s$, $\rho_r$ that 
$\pi{\upharpoonright}\TT_r= \iota\pi_S \rho_r {\upharpoonright}\TT_r $.

\begin{lemma} \label{quotients for elements of the projection} 
%Suppose that $\dot{f}$, $\dot{g}$ are as above and $\DD=\DD_{\dot{f},\dot{g}}$. 
%\todo{$\DD$ really depends on $\dot{b}$, $\dot{c}$, $\dot{f}$, $\dot{g}$} 
Suppose that $\TT$ and $\dot{b}$ are as above and $r\in \TT$. 
%-name with $\one_{\PP}\Vdash \dot{b}=\ran{\bigcup{\dot{f}}}$. 
\begin{enumerate-(1)} 
%for an element of $\proj[T_G]$. 
%\todo{suppose that $\DD$ is defined as above...} 
%\item 
%$\TT$ forces that $\dot{b}$ is $\Add{\kappa}{1}$-generic over $V$ 
\item 
If 
$\pi\colon\RR\rightarrow\QQ$ and 
$\iota\colon \Add{\kappa}{1}_{s_r}\rightarrow \QQ$ 
are as above, then 
%\todo{\ \ \ \ \ \ \ \ check defn of \ \\ \ \ \ \ \ \ \ \ $Q^{(\iota)}$ }
%the projection and the sub-isomorphism induced by $\dot{b}$ below $r$, respectively, then 
$$ 
\Vdash_{\Add{\kappa}{1}_{s_r}} (\RR_r/\QQ
%_{\iota(s_r)}]
^{\pi
%{\upharpoonright}\RR_r
})^{(\iota)} \simeq [S_{(s_r,t_r)}/\Add{\kappa}{1}_{s_r}]^{\pi_S}\times \Add{\kappa}{1}.$$ 
%has $$[S_{(s_r,t_r)}/\Add{\kappa}{1}_{s_r}]^{\pi_S} \times \Add{\kappa}{1}$$ as a quotient in $V[G]$. 

\item 
If 
$G$ is $\TT$-generic over $V$ with $r\in G$, then 
%\in \proj[\dot{T}_\TT]$ 
there is an $([S_{(s_r,t_r)}/\Add{\kappa}{1}_{s_r}]^{\pi_S})^{\dot{b}^G}\times\Add{\kappa}{1}$-generic filter $h$ over $W=V[\dot{b}^G]$ with $W[h]= V[G]$. 
\end{enumerate-(1)} 
%the quotient for $\dot{b}^G$ in $\PP_S$ is equivalent to $(S/\Add{\kappa}{1})^{\pi_{S, \Add{\kappa}{1}}} \times \Add{\kappa}{1}$. 
\end{lemma}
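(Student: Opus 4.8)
Let me work through what this lemma is asking. We have a forcing $\PP = \PP_S$ that adds a superclosed $S$-tree, and $\TT$ is a dense subforcing of $\PP^* = \PP^*_{\dot f, \dot g}$ isomorphic to $\Addd{\kappa}{1}$. We fix a condition $r \in \TT$, and set $\RR = \BB(\TT_r)$ (the Boolean completion of $\TT$ below $r$), $\QQ = \BB^\RR(\dot b)$ where $\dot b$ is the name for $\bigcup\ran{\dot f}$. We have a sub-isomorphism $\iota: \Add{\kappa}{1}_{s_r} \to \QQ$, a natural projection $\pi: \RR \to \QQ$, and crucially the identity $\pi \restriction \TT_r = \iota \pi_S \rho_r \restriction \TT_r$.

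**The two claims.** The first claim is an algebraic statement about quotient forcings:
$$\Vdash_{\Add{\kappa}{1}_{s_r}} (\RR_r/\QQ^\pi)^{(\iota)} \simeq [S_{(s_r,t_r)}/\Add{\kappa}{1}_{s_r}]^{\pi_S} \times \Add{\kappa}{1}.$$
The second claim is the generic-extension reformulation: passing to the intermediate model $W = V[\dot b^G]$, the remaining forcing to reach $V[G]$ is sub-equivalent to the product of the quotient $S/\Add{\kappa}{1}$ and a Cohen forcing $\Add{\kappa}{1}$.

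**How I would approach claim (1).** The key insight is that $\TT_r$ factors as a two-step process: first adding $\dot b$ (which is $\Add{\kappa}{1}_{s_r}$-generic), then the quotient. Since $\pi \restriction \TT_r = \iota \pi_S\rho_r \restriction \TT_r$, the Boolean subalgebra $\QQ$ corresponds exactly to the first coordinate captured by $\pi_S \rho_r$. I would first show $\RR_r/\QQ = \RR_r/\QQ^\pi$ using that $\pi(q) \geq q$ for the natural projection (this is the same observation used in Lemma \ref{two step iteration coded by a set S} and Lemma \ref{quotient forcing of a branch}). Then I would invoke Lemma \ref{two step iteration coded by a set S} directly: we have a complete subalgebra $\QQ$ of $\RR$ with $\QQ, \RR, \Add{\kappa}{1}$ all sub-equivalent (the latter by Lemma \ref{forcing equivalent to Add(kappa,1)} since $\PP^*$ is non-atomic ${<}\kappa$-closed of size $\kappa$), and the sub-isomorphism $\iota$. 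Lemma \ref{two step iteration coded by a set S} then produces a perfect limit-closed level subset $S'$ of $\Add{\kappa}{1}_{s_r}^2$ with $\pi_{S'}$ a projection and $\Vdash (\RR_r/\QQ^\pi)^{(\iota)} \simeq S'/\Add{\kappa}{1}_{s_r}^{\pi_{S'}}$. The real work is identifying this abstractly-produced $S'$ with the concrete $S_{(s_r,t_r)}$, up to the extra $\Add{\kappa}{1}$ factor. Here I would use Lemma \ref{projection onto S}, which already tells us $(s_r,t_r)$ forces that $[\PP^*_r/S_{(s_r,t_r)}]^{\rho_r}$ is sub-equivalent to $\Add{\kappa}{1}$; combining this with the projection $\pi_S\rho_r$ onto $\Add{\kappa}{1}_{s_r}$ lets me decompose the quotient as $[S_{(s_r,t_r)}/\Add{\kappa}{1}_{s_r}]^{\pi_S}$ composed with the Cohen factor, and Lemma \ref{equivalence of forcings is transitive} closes the identification.

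**How I would approach claim (2), and the main obstacle.** The second claim follows from the first by the standard quotient-forcing dictionary recorded after Definition \ref{definition: quotient forcing}: since $\dot b^G$ is $\Add{\kappa}{1}$-generic over $V$ (established in the paragraphs preceding this lemma), setting $W = V[\dot b^G]$, the tail of $G$ is generic for the realized quotient $(\RR_r/\QQ^\pi)^{\dot b^G}$ over $W$, and by claim (1) this is sub-equivalent to $([S_{(s_r,t_r)}/\Add{\kappa}{1}_{s_r}]^{\pi_S})^{\dot b^G} \times \Add{\kappa}{1}$. The desired generic filter $h$ is then obtained by transporting $G$ across the sub-isomorphism witnessing sub-equivalence, using the remark after Definition \ref{definition: equivalent and similar forcings} that $(\tau^\iota)^G = \tau^H$ for the transported generic. \emph{The main obstacle} I anticipate is bookkeeping the interaction of the three projections $\pi$, $\pi_S$, $\rho_r$ and the sub-isomorphism $\iota$ precisely enough to verify that the abstract $S'$ from Lemma \ref{two step iteration coded by a set S} really coincides with $S_{(s_r,t_r)}$ rather than merely being sub-equivalent to something in its neighborhood; in particular, extracting exactly one clean Cohen factor $\Add{\kappa}{1}$ (and not, say, absorbing it into the quotient) requires care, and I would handle it by appealing to the non-atomicity arguments and Lemma \ref{forcing equivalent to Add(kappa,1)} to absorb any ambiguity into the single $\Add{\kappa}{1}$ term.
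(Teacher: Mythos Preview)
Your approach to claim (2) is essentially the same as the paper's and is fine.

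For claim (1), however, you take an unnecessary detour that creates the very obstacle you then worry about. You propose to invoke Lemma~\ref{two step iteration coded by a set S} to produce an \emph{abstract} level set $S'$ with $(\RR_r/\QQ^\pi)^{(\iota)} \simeq S'/\Add{\kappa}{1}_{s_r}^{\pi_{S'}}$, and then try to identify $S'$ with $S_{(s_r,t_r)}$ up to a Cohen factor. But Lemma~\ref{two step iteration coded by a set S} builds $S'$ via intertwined sequences from arbitrary sub-isomorphisms; there is no reason $S'$ should literally equal $S_{(s_r,t_r)}$, and you would have to argue equivalence of the two quotients by some other route anyway.

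The paper bypasses this entirely. The identity $\pi{\upharpoonright}\TT_r = \iota\,\pi_S\rho_r{\upharpoonright}\TT_r$, which you correctly record in your setup, already gives an \emph{equality} of quotient forcings at the $\TT_r$ level:
\[
\Vdash_{\Add{\kappa}{1}_{s_r}} (\TT_r/\QQ^{\pi{\upharpoonright}\TT_r})^{(\iota)} \;=\; [\TT_r/\Add{\kappa}{1}_{s_r}]^{\pi_S\rho_r{\upharpoonright}\TT_r}.
\]
Since $\TT_r$ is dense both in $\RR_r$ and in $\PP^*_r$, each side is a dense subforcing of the corresponding larger quotient, yielding
\[
\Vdash_{\Add{\kappa}{1}_{s_r}} (\RR_r/\QQ^{\pi})^{(\iota)} \;\simeq\; [\PP^*_r/\Add{\kappa}{1}_{s_r}]^{\pi_S\rho_r}.
\]
At this point Lemma~\ref{projection onto S} (which you do cite) finishes the job: the projection $\rho_r$ factors $\PP^*_r \to S_{(s_r,t_r)} \to \Add{\kappa}{1}_{s_r}$, and since the inner quotient $[\PP^*_r/S_{(s_r,t_r)}]^{\rho_r}$ is sub-equivalent to $\Add{\kappa}{1}$, the composite quotient splits as $[S_{(s_r,t_r)}/\Add{\kappa}{1}_{s_r}]^{\pi_S}\times\Add{\kappa}{1}$.

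In short: drop the appeal to Lemma~\ref{two step iteration coded by a set S} here. That lemma is used elsewhere (in Lemma~\ref{winning strategy for Cohen subset}) to \emph{construct} $S$ in the first place; in the present lemma $S$ is already given, and the commuting-projection identity does all the work directly.
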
 
\begin{proof} 
%\todo{NEED TO REWRITE} 
%We choose an \todo{check notation above}adequate pair $(\dot{f},\dot{g})$ with $\one_{\PP_S}\Vdash \bigcup \ran{\dot{f}}=\dot{b}$ and let $\PP^*=\PP^*_{\dot{f},\dot{g}}$. 
%Since we can assume that $\dot{b}$ is a $\TT_r$-name, 
%we can define $\RR=\BB(\TT_r)$, $\QQ=\BB^\RR(\dot{b})$ and consider the natural projection 
%\todo{definition above?}
%\begin{center} 
%$\pi\colon \RR\rightarrow \QQ,\ \pi(p)=\inf_{q\in\QQ, q\geq p}q.$ 
%\end{center} 
%$\TT$ is a separative dense subforcing of $\PP^*$, 
% Suppose that $\dot{G}$ is a $\PP_S$-name for $G$ 
%\ \\ 
%\todo[inline]{DO WE NEED THIS BELOW: Using the fact that $\pi(q)\geq q$ for all $q\in \RR$, it is easy to check that $\one_{\QQ}$ forces the quotient forcings for $\QQ$ in $\RR$, for $\QQ$ as a complete subforcing of $\RR$ and with respect to $\pi$, to be equal. Hence it is sufficient to prove that $\RR/\QQ^\pi$ has the required property. } 
%$\BB(\TT)/\BB(\dot{b})=$ 
%so that we can work with $\BB(\TT)/\BB(\dot{b})^\pi$. 

%\todo[inline]{WHERE EXACTLY SHOULD WE ASSUME THAT $\pi_S$ is a projection?} 

Since we argued before this lemma that $\pi{\upharpoonright}\TT_r= \iota\pi_S \rho_r {\upharpoonright}\TT_r $, we have 
\begin{equation} \label{equation 1 for comparing quotient forcings}
\Vdash_{\Add{\kappa}{1}_{s_r}} (\TT_r/\QQ
%_{\iota(s_r)}
^{\pi{\upharpoonright}\TT_r})^{(\iota)} = [\TT_r/\Add{\kappa}{1}_{s_r}]^{\pi_S \rho_r{\upharpoonright}\TT_r}. 
\end{equation} 
%\todo{\ \ \ DEFINE notation \ \ \ \ \ \  $\QQ^{(\iota)}$} 
Moreover, since $\TT_r$ is dense in both $\RR_r$ and $\PP^*_r$, 
$\Add{\kappa}{1}_{s_r}$ forces that 
$$(\TT_r/\QQ
%_{\iota(s_r)}]
^{\pi{\upharpoonright}\TT_r})^{(\iota)}\subseteq (\RR_r/\QQ
%_{\iota(s_r)}
^{\pi
%{\upharpoonright}\RR_r
})^{(\iota)}$$ 
$$[\TT_r/\Add{\kappa}{1}_{s_r}]^{\pi_S \rho_r{\upharpoonright}\TT_r} \subseteq [\PP^*_r/\Add{\kappa}{1}_{s_r}]^{\pi_S \rho_r}$$ 
are dense subforcings. 
With equation \ref{equation 1 for comparing quotient forcings}, this shows that 
\begin{equation} \label{equation 2 for comparing quotient forcings}
\Vdash_{\Add{\kappa}{1}_{s_r}} (\RR_r/\QQ
%_{\iota(s_r)}]
^{\pi
%{\upharpoonright}\RR_r
})^{(\iota)} \simeq [\PP^*_r/\Add{\kappa}{1}_{s_r}]^{\pi_S \rho_r}. 
\end{equation} 
%\todo{ok? this could use an extra argument} 
Using Lemma \ref{projection onto S} and the properties of projections, one can now show that 
\begin{equation} \label{equation 3 for comparing quotient forcings} 
\Vdash_{\Add{\kappa}{1}_{s_r}}  [\PP^*_r/\Add{\kappa}{1}_{s_r}]^{\pi_S \rho_r} \simeq [S_{(s_r,t_r)}/\Add{\kappa}{1}_{s_r}]^{\pi_S}\times \Add{\kappa}{1}. 
\end{equation} 
By equations \ref{equation 2 for comparing quotient forcings} and \ref{equation 3 for comparing quotient forcings} and Lemma \ref{equivalence of forcings is transitive}, 
\begin{equation} \label{equation 4 for comparing quotient forcings}
\Vdash_{\Add{\kappa}{1}_{s_r}} (\RR_r/\QQ
%_{\iota(s_r)}]
^{\pi
%{\upharpoonright}\RR_r
})^{(\iota)} \simeq [S_{(s_r,t_r)}/\Add{\kappa}{1}_{s_r}]^{\pi_S}\times \Add{\kappa}{1}. 
\end{equation} 

For the second claim, it follows from the definition of $\iota$ that 
%\todo{check that $G^{(\iota)}$ is defined, and check this equation!} 
$$[(\RR_r/\QQ^{\pi})^{(\iota)}]^G=(\RR_r/\QQ^{\pi})^{G^{(\iota)}}=(\RR_r/\QQ^{\pi})^{\dot{b}^G},$$ 
where $G^{(\iota)}$ denotes the upwards closure of $\iota[G]$ in $\QQ$. 
Then by equation \ref{equation 4 for comparing quotient forcings}, 
$$(\RR_r/\QQ^{\pi})^{\dot{b}^G} \simeq ([S_{(s_r,t_r)}/\Add{\kappa}{1}_{s_r}]^{\pi_S})^{\dot{b}^G} \times \Add{\kappa}{1}.$$ 
%\todo{ WHICH PROPERTIES? CITE!}
The claim now follows from the standard properties of quotient forcings. 
\end{proof} 

\iffalse 
$\QQ/\QQ_0$ given in Definition \ref{pull back names} and the quotient forcing $(\QQ/\QQ_0)^{\pi}$ with respect to $\pi$ given in Definition \ref{definition: quotient forcing} are equal. 

The map $\pi_{\PP^*, \Add{\kappa}{1}}$ 
%\colon \PP^*\rightarrow \Add{\kappa}{1}$, $\zeta(p)=s_p$ 
is a projection by Lemma \ref{projection from D} \ref{projection from D 2}. 
The quotient forcing for an $\Add{\kappa}{1}$-generic filter $H$ is $\PP^*/H=\{p\in \PP^*\mid s_p\in G\}$. 
Let $\pi_{\PP^*, S}$, $\pi_{S, \Add{\kappa}{1}}$ and $\pi_{\PP^*,\Add{\kappa}{1}}$ denote the maps in Lemma \ref{projections from D onto S and Add}. 
Then the quotient $(\PP^*/\Add{\kappa}{1})^{\pi_{\PP^*, \Add{\kappa}{1}}}$ is equivalent to the product $(S/\Add{\kappa}{1})^{\pi_{S,\Add{\kappa}{1}}}\times \Add{\kappa}{1}$. 
%\todo{NEED TO work below some condition? the quotient is not always the same........} Since $\rho$ is the composition $\rho=\tau\circ \pi$, the quotient forcing $(\DD/\Add{\kappa}{1})^{\rho}$ with respect to the projection $\rho\colon \DD\rightarrow \Add{\kappa}{1}$ is equivalent to the product of the quotient $S/\Add{\kappa}{1}$ with respect to $\tau\colon S\rightarrow \Add{\kappa}{1}$ and the forcing $\Add{\kappa}{1}$. 
\fi 

%Suppose that $G$ is $\mathbb{P}$-generic over $V$ and $A=\proj[T_G]^{V[G]}$. 
%Then player I has a winning strategy for $G_\kappa(A)$ in $V[G]$ by Lemma \ref{trees and strategies} and  Lemma \ref{the generic tree is full}. 
%It remains to prove the determinacy of the games $G_\kappa(A_{\varphi,z})$. 

%\todo[inline]{RECALL the definition of $A_\varphi$?} 

\begin{lemma} \label{winning strategy for Cohen subset} 
Suppose that $\SSS$ is a ${<}\kappa$-distributive forcing and $F=G\times H\times I$ is $\Add{\kappa}{1}\times \Add{\kappa}{1}\times \SSS$-generic over $V$. 
Moreover, suppose that 
$$ x\in (A_{\varphi,z}^\kappa)^{V[F]}\cap \kk\cap V[G]$$ 
is $\Add{\kappa}{1}$-generic over $V$, where $\varphi(u,v)$ is a formula and $z\in V[I]$. 
Then in $V[F]$, there is a winning strategy for player I in $G_\kappa((A_{\varphi,z}^\kappa)^{V[F]})$. 
% that is a projection of a tactic. 
\end{lemma}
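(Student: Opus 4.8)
The plan is to exhibit the definable set $A=(A_{\varphi,z}^\kappa)^{V[F]}$ as a superset of the projection of a strategic $S$-tree and then appeal to Lemma~\ref{trees and strategies}. First I would absorb the distributive factor into the ground model. Since $\SSS$ is mutually generic with the first Cohen factor, the witness $x$ is $\Add{\kappa}{1}$-generic over $\bar V:=V[I]$ by the product lemma, while $z\in\bar V$ and $V[F]=\bar V[G][H]$. As $\SSS$ is ${<}\kappa$-distributive and $\Add{\kappa}{1}$ is ${<}\kappa$-closed, the models $\bar V$, $\bar V[G]$ and $V[F]$ all have the same $V_\kappa$ as $V$. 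Working over $\bar V$, let $\RR=\BB(\Add{\kappa}{1})$ be the completion of the first factor, fix a name $\dot x$ for $x$, and put $\QQ=\BB(\dot x)\lessdot\RR$; since $x$ is $\Add{\kappa}{1}$-generic, $\QQ$, $\RR$ and $\Add{\kappa}{1}$ are sub-equivalent and $\RR/\QQ$ is exactly the quotient producing $\bar V[G]$ from $\bar V[x]$. Using $x\in A$, I would extract (as in the proof of Theorem~\ref{perfect subsets of definable sets}, via the maximality principle applied to $\Add{\kappa}{1}$) a condition $q\in\Add{\kappa}{1}$ and a name $\dot\RR$ for the quotient forcing relating $\bar V[x]$ to $V[F]$, namely $\RR/\QQ\times\Add{\kappa}{1}$ (the extra factor being $H$), such that over $\bar V$ we have $q\Vdash_{\Add{\kappa}{1}*\dot\RR}\varphi(\dot x,\check z)$: the generic real below $q$ is forced into $A$ in $V[F]$.

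Next I would apply Lemma~\ref{two step iteration coded by a set S} to $\RR$, $\QQ$, the sub-isomorphism $\iota(s)=\llbracket s\subseteq\dot x\rrbracket$ and the condition $r=q$, obtaining a perfect limit-closed level subset $S$ of $\Add{\kappa}{1}_q^2$ with $\pi_S$ a projection and $\Vdash_{\Add{\kappa}{1}_q}\RR/\QQ^{(\iota)}\simeq S/\Add{\kappa}{1}_q^{\pi_S}$. Since $\PP_S$ is non-atomic, ${<}\kappa$-closed of size $\kappa$, it is sub-equivalent to $\Add{\kappa}{1}$ by Lemma~\ref{forcing equivalent to Add(kappa,1)}, so I would fix a $\PP_S$-generic $G_S$ inside $\bar V[G]$ with $\bar V[G_S]=\bar V[G]$. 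Then $T:=T_{G_S}$ is a strategic $S$-tree by Lemma~\ref{the generic tree is full}, and $T\in\bar V[G]\subseteq V[F]$.

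The crucial step is to prove $\proj[T]\subseteq A$ in $V[F]$. Fix a branch $b\in\proj[T]^{V[F]}$; as observed before Lemma~\ref{quotients for elements of the projection}, $b$ is $\Add{\kappa}{1}$-generic over $\bar V$ through $\pi_S\rho_r$, and extends $q$. By Lemma~\ref{quotients for elements of the projection} the passage from $\bar V[b]$ to $\bar V[G_S]=\bar V[G]$ is by a forcing sub-equivalent to $[S_{(s_r,t_r)}/\Add{\kappa}{1}_{s_r}]^{\pi_S}\times\Add{\kappa}{1}\simeq\RR/\QQ\times\Add{\kappa}{1}$. Adjoining $H$, the passage from $\bar V[b]$ to $V[F]$ is therefore by a forcing sub-equivalent to $\RR/\QQ\times\Add{\kappa}{1}\times\Add{\kappa}{1}$; since $\Add{\kappa}{1}\times\Add{\kappa}{1}$ is again non-atomic, ${<}\kappa$-closed of size $\kappa$ and hence sub-equivalent to $\Add{\kappa}{1}$ by Lemma~\ref{forcing equivalent to Add(kappa,1)}, this quotient is exactly the $\dot\RR$ relating $\bar V[x]$ to $V[F]$. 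As $b$ is $\Add{\kappa}{1}$-generic over $\bar V$ below $q$ with this quotient, the forced statement of the first paragraph gives $V[F]\models\varphi(b,z)$, i.e. $b\in A$. Hence $\proj[T]\subseteq A$.

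Finally, since $(V_\kappa)^{V[F]}=(V_\kappa)^{\bar V[G_S]}$, Lemma~\ref{trees and strategies} furnishes a winning strategy for player I in $G_\kappa(\proj[T])$ that remains winning in $V[F]$, and because $\proj[T]\subseteq A$ this strategy also wins $G_\kappa(A)$, as required. I expect the main obstacle to be the transfer in the third paragraph: matching the quotient forcing of an arbitrary branch $b$ with that of the witness $x$ precisely enough to move the forced instance of $\varphi$ from $x$ to $b$, including the bookkeeping that absorbs the superfluous Cohen factor via Lemma~\ref{forcing equivalent to Add(kappa,1)} and the verification that $S$, and hence $T$, can be built below $q$ so that every branch extends it.
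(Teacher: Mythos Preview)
Your outline follows the paper's proof almost exactly: absorb $\SSS$ into the ground model, set up $\RR=\BB(\Add{\kappa}{1})$ and $\QQ=\BB(\dot x)$, apply Lemma~\ref{two step iteration coded by a set S} to obtain $S$, force with $\PP_S$ to produce a strategic $S$-tree $T$, invoke Lemma~\ref{quotients for elements of the projection} to identify the quotient over any branch, and finish with Lemma~\ref{trees and strategies}. So the strategy is correct and matches the paper.

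There is one genuine step you skip that the paper does not take for granted. You simply declare that $\iota(s)=\llbracket s\subseteq\dot x\rrbracket$ is a sub-isomorphism (below your condition $q$), but this is not automatic: knowing that $\QQ$ and $\Add{\kappa}{1}$ are sub-equivalent only gives you \emph{some} sub-isomorphism, not that this particular $\iota$ is one. The paper spends a separate claim (with a subclaim that $\ran{\iota}\lessdot\RR_{\nu(r)}$) to produce a condition $r$ below which $\iota$ is injective and has dense range in $\QQ$, using precisely the hypothesis that $\dot x$ names a Cohen-generic. Note also that this $r$ has nothing to do a priori with your condition $q$ forcing $\varphi$; the paper decouples these by first using the maximality principle to arrange that $\one$ forces the relevant instance of $\varphi$, and only afterwards locates $r$. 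Your identification $r=q$ therefore needs justification, or you should separate the two roles as the paper does.

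A smaller point: your equivalence $[S_{(s_r,t_r)}/\Add{\kappa}{1}_{s_r}]^{\pi_S}\simeq\RR/\QQ$ overshoots slightly. Lemma~\ref{two step iteration coded by a set S} gives $[S/\Add{\kappa}{1}]^{\pi_S}\simeq\RR/\QQ^{(\iota)}$; the subscript $(s_r,t_r)$ only cuts down to the cone below a condition. The paper handles this by observing that the restricted quotient is the full quotient below a condition, so the forced inclusion $\dot y\in A_{\varphi,z}^\kappa$ persists. This is easy to fix once noticed, but the equivalence as you state it is not literally what Lemma~\ref{two step iteration coded by a set S} provides.
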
 
\begin{proof} 
%\todo{ok? maybe, this should be written out (just write down the forcing for the name and its quotient). but omit for now.}
We first note that $x$ is $\Add{\kappa}{1}$-generic over $V[I]$, since $G$, $I$ are mutually generic. 
%as is easy to see by an argument that involves changing the order of the generic filters. 
Therefore, by replacing $V[I]$ with $V$, the claim 
%statement of the lemma 
follows from the claim for the special case where $\SSS$ does not add any new sets, 
which we assume in the following. 
% argument. 
%that $\RR$ does not add any new sets. 

Suppose that 
%$\dot{G}$ is an $\Add{\kappa}{1}$-name for $G$ and 
$\dot{x}$ is an $\Add{\kappa}{1}$-name for $x$ such that $\one_{\Add{\kappa}{1}}$ forces that $\Vdash_{\Add{\kappa}{1}} \dot{x}\in A_{\varphi,z}$ holds and that $\dot{x}$ is $\Add{\kappa}{1}$-generic over $V$. 
Let further $\RR=\BB(\Add{\kappa}{1})$, $\QQ=\BB(\dot{x})^{\RR}$ and 
$$\nu\colon \Add{\kappa}{1}\rightarrow \QQ,\ \nu(s)=\llbracket s\subseteq \dot{x}\rrbracket^\RR.$$ 

\begin{claim*} 
There is a condition $r\in \Add{\kappa}{1}$ such that $\nu{\upharpoonright}\Add{\kappa}{1}_r\colon\Add{\kappa}{1}_r\rightarrow \QQ_{\nu(r)}$ is a sub-isomorphism. 
\end{claim*} 

\begin{proof} 
We first claim that there is a condition $r\in\Add{\kappa}{1}$ such that for all $s\leq r$ in $\Add{\kappa}{1}$ and all $\alpha<\kappa$, $ \nu(s) \neq  \nu(s^\smallfrown\langle \alpha\rangle )$. 
Otherwise 
$$D=\{s^\smallfrown\langle\alpha\rangle\mid s\in\Add{\kappa}{1},\ \alpha<\kappa,\ \exists \beta\neq \alpha\ \nu(s)= \nu(s^\smallfrown \langle \beta\rangle)\}$$ 
is dense in $\Add{\kappa}{1}$. However, by the definition of $\nu$, this contradicts the assumption that $\dot{x}$ is a name for an $\Add{\kappa}{1}$-generic over $V$. 

We now fix such a condition $r\in\Add{\kappa}{1}$. To prove the claim, it is sufficient to show that the subforcing $\UU=\{\nu(s)\mid s\in \Add{\kappa}{1},\ s\leq r\}$ is dense in $\QQ_{\nu(r)}$. 

\begin{subclaim*} 
$\UU\lessdot \RR_{\nu(r)}$. \end{subclaim*}
% and therefore of $\BB(\dot{x})$. 
\begin{proof} 
Otherwise, there is a subset $A$ of $\UU$ that is an antichain in $\RR_{\nu(r)}$ and is maximal in $\UU$, but not in $\RR_{\nu(r)}$. We can then choose some $q\in \RR_{\nu(r)}$ that is incompatible with all elements of $A$. However, if $J$ is $\RR_{\nu(r)}$-generic over $V$ with $q\in J$, then $\dot{x}^J$ cannot be $\Add{\kappa}{1}$-generic over $V$ by the choice of $A$ and $q$, contradicting the choice of $\dot{x}$. 
\end{proof} 
%and hence of $\QQ_{\nu(r)}$. 

Let  $\VVV$ denote the Boolean subalgebra of $\QQ_{\nu(r)}$ generated by $\UU$. Since $\UU$ is closed under finite conjunctions, $\UU$ is dense in $\VVV$ and it hence follows from the previous subclaim that $\VVV\lessdot \RR_{\nu(r)}$. 
%is a complete subforcing of $\QQ_{\nu(r)}$.  Moreover, since $\UU$ is closed under finite conjunctions, $\UU$ is dense in $\VVV$. 
It then follows from \cite[
%Lemma 30.25 \& 
Exercise 7.31]{MR1940513} 
applied to $\VVV$ and $\RR_{\nu(r)}$ that $\QQ_{\nu(r)}$ is a Boolean completion of $\VVV$, in particular $\VVV$ is dense in $\QQ_{\nu(r)}$. 
%first argue that is is a complete subforcing (that's easy and can be omitted) and then, that the BOOLEAN CLOSURE of a complete subforcing is actually a BOOLEAN COMPLETION. 
%Since $\BB(\dot{x})$ is generated by $\llbracket s\subseteq \dot{x}\rrbracket $ for all $s\in \Add{\kappa}{1}$, 
\end{proof} 

%\todo[inline]{REPLACE THE FOLLOWING SENTENCE\ \\ 
%1. argue that above some condition, $\iota$ is actually an isomorphism ("value 0 is not dense", "incompatible direct successors is not dense")\ \\ 
%2. argue that $\{\llbracket s\subseteq x\rrbracket \mid s\in \klk\}$ is dense in $\BB(\dot{x})$: first argue that is is a complete subforcing (that's easy and can be omitted) and then, that the BOOLEAN CLOSURE of a complete subforcing is actually a BOOLEAN COMPLETION. \ \\ 
%CITE Jech EXERCISE 7.31. 
%} 

Suppose that $r\in \Add{\kappa}{1}$ is chosen as in the previous claim and let $\iota=\nu{\upharpoonright}\Add{\kappa}{1}_r$. 
We can further assume that $r=\one_{\Add{\kappa}{1}}$, since the remaining proof is analogous for arbitrary $r$. 

%It then follows from the properties of $\dot{x}$ that $$\iota\colon \Add{\kappa}{1}\rightarrow \QQ,\ \iota(s)=\llbracket s\subseteq \dot{x}\rrbracket^\RR$$ is a continuous sub-isomorphism. \todo[inline]{ONLY BELOW A CONDITION, THAT WE SHOULD FIX} 

We further choose a $\QQ$-name $\dot{x}_\QQ$ 
%which is also an $\RR$-name since $\QQ\subseteq \RR$, 
with $\Vdash_{\RR}\dot{x}_\QQ=\dot{x}$ 
%for example $\dot{x}_{\QQ}=\{((\alpha,\beta),\llbracket \dot{x}(\alpha)=\beta\rrbracket)\mid \alpha,\beta<\kappa\}$. 
and an $\Add{\kappa}{1}$-name $\dot{y}$ for the $\Add{\kappa}{1}$-generic real, 
% and $\dot{x}_{\QQ}=\{((\alpha,\beta),\llbracket \dot{x}(\alpha)=\beta\rrbracket)\mid \alpha,\beta<\kappa\}$, 
%and let be $\dot{y}$ be an $\Add{\kappa}{1}$-name for the $\Add{\kappa}{1}$-generic real, so that 
so that $\Vdash_{\QQ}\ \iota(\dot{y})=\dot{x}_\QQ$ by the definition of $\iota$. 
% by the definition of $\iota$. 

%We can choose a $\QQ$-name $\dot{x}_\QQ$ 
%which is also an $\RR$-name since $\QQ\subseteq \RR$, 
%with $\Vdash_{\RR}\dot{x}_\QQ=\dot{x}$, for example $\dot{x}_{\QQ}=\{((\alpha,\beta),\llbracket \dot{x}(\alpha)=\beta\rrbracket)\mid \alpha,\beta<\kappa\}$. 

%\todo{CHANGE THE LEMMA ABOVE!!!!} 
By Lemma \ref{two step iteration coded by a set S}, there is 
%sub-isomorphism $\iota\colon \Add{\kappa}{1}\rightarrow \QQ$ and 
a perfect 
%\todo{need?}
limit-closed level subset $S$ of $\Add{\kappa}{1}^2$ 
such that $\pi_S$ is a projection and 
\begin{equation} \label{equation 1 in key lemma} 
\Vdash_{\Add{\kappa}{1}} \RR/\QQ^{(\iota)} \simeq S/\Add{\kappa}{1}^{\pi_S}. 
\end{equation} 

%\todo{ok?}
It follows from the properties of $\dot{x}$ stated above that 
$$\Vdash_{\QQ}\   \Vdash_{\RR/\QQ \times \Add{\kappa}{1}} \dot{x}_\QQ\in A_{\varphi,z}^\kappa.$$ 
Since $\iota$ is a sub-isomorphism and by the properties of $\dot{y}$ and $\dot{x}_\QQ$, this implies 
$$\Vdash_{\Add{\kappa}{1}}\   \Vdash_{\RR/\QQ^{(\iota)} \times \Add{\kappa}{1}} \dot{y}\in A_{\varphi,z}^\kappa$$ 
and by equation \ref{equation 1 in key lemma}, 
\begin{equation} \label{equation 2 in key lemma} 
\Vdash_{\Add{\kappa}{1}}\   \Vdash_{S/\Add{\kappa}{1}^{\pi_S} \times \Add{\kappa}{1}} \dot{y}\in A_{\varphi,z}^\kappa. 
\end{equation} 
%Suppose that $g\times h$ is $\PP_S\times\Add{\kappa}{1}$-generic over $V[I]$. 

Now suppose that 
%\todo{write $T^{(\nu)}$ below, where $\nu$ is the "natural embedding"?}
$\dot{T}$ is a $\PP_S$-name for the tree added by the $\PP_S$-generic filter. In the next claim, we will identify $\dot{T}$ with the induced $\PP_S\times \Add{\kappa}{1}$-name. 

\begin{claim*} 
$\PP_S\times \Add{\kappa}{1}$ forces that 
$\proj[\dot{T}]
%^{V[I\times J]}
\subseteq A_{\varphi,z}^\kappa$. 
%^{V[I\times J]}$. 
\end{claim*} 
\begin{proof} 
%We work in $V[J*K]$. 
%Let $\dot{J}$, $\dot{K}$ be the canonical $\PP_S* \dot{\UU}$-names for $J$, $K$. 
%Suppose that $e\in \proj[T_J]^{V[J*K]}$. 
%Then there is a $\PP_S* \dot{\UU}$-name $\dot{e}$ with $\dot{e}^{J*K}=e$ and $\one\Vdash_{\PP_S* \dot{\UU}} \dot{e}\in \proj[T_{\dot{J}}]$. 
%It follows from the homogeneity of $\dot{\UU}$ that there is such a $\PP_S$-name $\dot{e}$ with support in $\PP_S$, 
%i.e. there is a $\PP_S$-name $\dot{e}_1$ and $\dot{e}$ is obtained from $\dot{e}_1$ by adding $\one$ in the second coordinate. 
%Let $\dot{J}$ denote the canonical $\PP_S$-name for the $\PP_S$-generic filter over $V[I]$. 
Suppose that $\dot{b}$ is a $\PP_S$-name with $\one\Vdash_{\PP_S}\dot{b}\in \proj[\dot{T}]$. 
%Let 
%$$\BB_{\dot{b}}=\langle \{\llbracket \dot{b}(\alpha)=\beta\rrbracket \mid \alpha,\beta<\kappa\} \rangle^{\BB(\Add{\kappa}{1}*\Add{\kappa}{1})}.$$ 
%Then $\BB_{\dot{b}}$ is equivalent to $\Add{\kappa}{1}$ by the choice of $\dot{b}$. 
We can then find an 
%\todo{check notation above}
adequate pair $(\dot{f},\dot{g})$ with $\one_{\PP_S}\Vdash \bigcup \ran{\dot{f}}=\dot{b}$ 
and 
%\todo{cite definition above} 
let $\PP^*=\PP^*_{\dot{f},\dot{g}}$. 

Now let $\TT$ be the dense subforcing of $\PP^*$ that is introduced before Lemma \ref{quotients for elements of the projection}. 
Moreover, suppose that $G$ is $\TT$-generic over $V$ and $r\in G$. 
Since $\TT$ is dense in $\PP_S$, we can assume that $\dot{b}$ is a $\TT$-name. 
Then there is an $([S_{(s_r,t_r)}/\Add{\kappa}{1}_{s_r}]^{\pi_S})^{\dot{b}^G}\times \Add{\kappa}{1}$-generic filter $h$ over $W=V[\dot{b}^G]$ with $W[h]= V[G]$ by Lemma \ref{quotients for elements of the projection}. 

Since $\Add{\kappa}{1}^2\simeq \Add{\kappa}{1}$ and since $r$ forces that $[S_{(s_r,t_r)}/\Add{\kappa}{1}_{s_r}]^{\pi_S}$ is a complete subforcing of $[S/\Add{\kappa}{1}]^{\pi_S}$, the claim now follows from equation \ref{equation 2 in key lemma}. 
\end{proof} 

Lemma  \ref{trees and strategies} implies that $\PP_S\times \Add{\kappa}{1}$ forces that player I has a winning strategy in $G_{\kappa}(\proj[\dot{T}])$. 
Since $\PP_S\times \Add{\kappa}{1}$ is sub-equivalent to $\Add{\kappa}{1}^2$, the statement now follows from the previous claim. 
%Player I has a winning strategy for $G_{\kappa}(\proj[T_J]^{V[J*K]})$ in $V[J*K]$ by Lemma \ref{trees and strategies}. 
%This is a winning strategy for $G_\kappa(A_\varphi^{V[J* K]})$ by the previous claim. 
%This shows \ref{winning strategy for Cohen subset 1}. 
%Any projection $\sigma$ of $\tau$ is a winning strategy in $G_{\kappa}(\proj[T_J]^{V[J*K]})$ by Lemma \ref{trees and strategies} and Lemma \ref{the generic tree is full}. 
%There is a projection $\sigma$ of $\tau$ by Lemma \ref{projections of strategies} \ref{projections of strategies 1}. 
%Since $\PP_S$ is equivalent to $\Add{\kappa}{1}\times\Add{\kappa}{1}$, it follows from \ref{winning strategy for Cohen subset 1} that there is a winning strategy for player I in $G_\kappa(A_{\varphi,z}^{V[G\times H\times I])})$ in $V[G\times H\times I])$. 
%This shows \ref{winning strategy for Cohen subset 2}. 
%$<\kappa$-closed and has size $\kappa$, hence it is 
%Player I has a winning tactic for $G_{\kappa}(\proj[T_J]^{V[J*K]})$ in $V[J*K]$
%This is a winning strategy for $G_\kappa(A_\varphi^{V[J* K]})$ by the previous claim. 
%Suppose that $\nu$ is an isomorphism between dense subsets witnessing this equivalence. 
%Let $\dot{\SSS}$ denote the $\Add{\kappa}{1}*(S/\Add{\kappa}{1})^{\mu}*\Add{\kappa}{1}$-name induced by $\dot{\RR}$ via $\nu$. 
%and Lemma \ref{quotients for elements of the projection}. 
%Let $\PP=\PP_S$. 
%Let $\dot{c}$ denote the preimage of $\dot{b}$ under $\mu$. Let $c^+$ denote the trivial extension of the $\Addd{\kappa}{1}$-name $\dot{c}$ to 
%$\Add{\kappa}{1}*(S/\Add{\kappa}{1})^{\mu}*\dot{\RR}$ 
\end{proof} 

In the next proof, we will use the notation $\Col{\lambda}{X}$ for collapse forcings that was introduced before Theorem \ref{perfect subsets of definable sets}. 
We will further use the analogous notation $\Add{\lambda}{X}$ to denote the subforcing of $\Add{\lambda}{\nu}$ with support $X\subseteq \nu$ 
and let $G_X=G\cap \Add{\lambda}{X}$ for any $\Add{\lambda}{\nu}$-generic filter $G$. 

\begin{theorem} \label{determinacy result}
%\todo{\ \ \ \ \ \ \ \ write $\lambda$ \ \\ 
%\ \ \ \ \ \ \ \ instead of $\kappa$} 
%other notation for $\kappa$, since $\kappa^{{<}\kappa}$ is not used? why is not used?}  
Suppose that $\lambda$ is an uncountable regular cardinal, $\mu>\lambda$ is inaccessible and $\nu$ is any cardinal. 
%Let $\Col{\kappa}{<\lambda} \times \Add{\kappa}{\mu}$. 
Then $\Col{\lambda}{{<}\mu} \times \Add{\kappa}{\nu}$ forces that $G_\lambda(A)$ is determined for 
every subset $A$ of $\ltl$ that is definable from an element of ${}^\lambda V$. 
%Moreover player I has winning tactic in $G_\kappa(A)$ that is a projection of a tactic, or player II has a winning tactic in $G_\kappa(A)$. 
%$\lambda>\kappa$ is an inaccessible cardinal and $\mu\geq\lambda$ is a regular \todo{check Easton's theorem, maybe for all $\mu$ with $\cof{\mu}<\kappa$?} cardinal. 
%Then there is a $<\kappa$-closed forcing $\PP$ such that the following holds for every $\PP$-generic filter $G$ over $V$. 
%Suppose that $y\in \Ord^\kappa\cap V[G]$, $\varphi$ is a \todo{GIVE THE DETERMINACY a name? $\mathrm{DET}_{\Ord^{\kappa}}$?} first-order formula and 
%$$A_\varphi^{V[G]}=\{x\in {}^{\kappa}\kappa\mid \varphi(x,y)\}^{V[G]}.$$
%in $V[G]$. 
%Then $G_\kappa(A_\varphi^{V[G]})$ is determined in $V[G]$. 
% and in all further generic extensions by $<\kappa$-distributive forcings. 
\end{theorem}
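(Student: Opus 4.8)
The plan is to follow the template of Theorem~\ref{perfect subsets of definable sets}, replacing the size dichotomy ``$|A|\le\lambda$ versus $|A|=\lambda^+$'' by a dichotomy on $\lambda$-meagerness and replacing the final appeal to the perfect-tree forcing $\PP$ by an application of Lemma~\ref{winning strategy for Cohen subset}. Write $F=G\times K$ for the $\Col{\lambda}{{<}\mu}\times\Add{\lambda}{\nu}$-generic filter, fix a formula $\varphi$ and a parameter $z\in{}^\lambda V$, and set $A=(A_{\varphi,z}^\lambda)^{V[F]}$; we may assume $\lambda^{<\lambda}=\lambda$ by passing to an inner model. If $A$ is $\lambda$-meager in $V[F]$, then by Lemma~\ref{characterization of Baire property by game}\ref{II wins if the set is meager} (Kovachev, with $t=\emptyset$) player II has a winning strategy in $G_\lambda(A)$, and we are done. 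The substantial case is therefore that $A$ is non-meager, where the goal is to produce a winning strategy for player I.

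In the non-meager case I would first locate the parameter. Since $\Col{\lambda}{{<}\mu}$ is $\mu$-cc and $\Add{\lambda}{\nu}$ is $\lambda^+$-cc, there are $\gamma<\mu$ and $X\subseteq\nu$ with $|X|\le\lambda$ such that $z\in W:=V[G_\gamma\times K_X]$, and $W$ is a ${<}\lambda$-closed generic extension of $V$. The key point, exactly as in the countable Solovay argument, is that $\mu$ is inaccessible: hence $(2^\lambda)^W<\mu$, and the tail $\Col{\lambda}{[\gamma,\mu)}$ collapses this cardinal to $\lambda$, so that in $V[F]$ we have $|{}^\lambda\lambda\cap W|\le\lambda$. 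Consequently the set of $x\in\ltl$ that are $\Add{\lambda}{1}$-generic over $W$ is $\lambda$-comeager, being an intersection of $\le\lambda$ dense open sets. Since $A$ is non-meager it meets this comeager set, so there is some $x\in A$ that is $\Add{\lambda}{1}$-generic over $W$, and in particular over $V$. Using the absorption facts in Lemma~\ref{forcing equivalent to Add(kappa,1)} and the remark preceding Theorem~\ref{perfect subsets of definable sets} (two extra Cohen factors can be pulled out of the tail collapse), I would then present $V[F]$ as an $\Add{\lambda}{1}\times\Add{\lambda}{1}\times\SSS$-generic extension of $V$ in which $\SSS$ is ${<}\lambda$-closed, hence ${<}\lambda$-distributive, with $z\in V[I]$ for the $\SSS$-generic $I$ (as $W\subseteq V[I]$) and with $x$ realized by the first Cohen factor. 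Lemma~\ref{winning strategy for Cohen subset} then yields a winning strategy for player I in $G_\lambda(A)$ in $V[F]$, and determinacy follows.

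The main obstacle is precisely this factorization: arranging that the generic member $x\in A$ furnished by the comeager argument appears as a clean product Cohen factor, mutually generic with the distributive part $\SSS$ that carries $z$. This is delicate because a quotient of $\Add{\lambda}{1}$ by a definably chosen complete subalgebra need not be ${<}\lambda$-closed, as the counterexample of Lemma~\ref{a forcing with bad quotient} shows, so one cannot naively split off $x$ with a distributive complement. It is here that I expect to need, just as in the corresponding step of Theorem~\ref{perfect subsets of definable sets}, the homogeneity of the forcings together with the maximality principle, in order to replace $x$ by an $\Add{\lambda}{1}$-name $\sigma$ with $\one_{\Add{\lambda}{1}}\Vdash\sigma\in A_{\varphi,z}$ persistently; the $S$-tree machinery built into Lemma~\ref{winning strategy for Cohen subset} (through Lemma~\ref{two step iteration coded by a set S}) is then exactly what absorbs the possibly ill-behaved quotient. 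The remaining bookkeeping, in particular tracking the $\Add{\lambda}{\nu}$-coordinates and verifying the various sub-equivalences, is routine given Lemma~\ref{equivalence of forcings is transitive} and Lemma~\ref{forcing equivalent to Add(kappa,1)}.
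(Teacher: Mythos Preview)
Your outline is correct and matches the paper's approach: the same meager/non-meager dichotomy via Lemma~\ref{characterization of Baire property by game}, then in the non-meager case an appeal to Lemma~\ref{winning strategy for Cohen subset}. Your extra care in first absorbing $z$ into an intermediate model $W$ and then taking Cohen generics over $W$ (rather than over $V$) is exactly right and arguably makes this step cleaner than the paper's presentation. One small correction: the bound should be $|X|<\mu$ rather than $|X|\le\lambda$, but since $\mu$ is inaccessible this is harmless.

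The one point where your write-up does not yet land is the factorization you flag as the ``main obstacle''. You propose to present $V[F]$ as an $\Add{\lambda}{1}\times\Add{\lambda}{1}\times\SSS$-extension of $V$ with $x$ in the first Cohen factor, but there is no reason the Cohen generic $x$ you found via the comeager argument should live in any preassigned product factor over $V$. The paper's resolution is to change the ground model instead: after absorbing $x$ into $W[G_{[\gamma,\xi)}\times K_Y]$, split off one extra Cohen $h$ from the tail collapse $G_{[\xi,\mu)}$ and set $W'=W[g\times K_{(\nu\setminus X)\setminus Y}]$ (with $g$ the remaining tail). Over $W'$ the forcing $\Col{\lambda}{[\gamma,\xi)}\times\Add{\lambda}{Y}$ has size $\lambda$ and is therefore sub-equivalent to a single $\Add{\lambda}{1}$, giving a Cohen generic $k$ with $x\in W'[k]$. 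Now $V[F]=W'[k\times h]$ is an $\Add{\lambda}{1}^2$-extension of $W'$ with $\SSS$ trivial, $z\in W\subseteq W'$, and $x\in W'[k]$ Cohen over $W'$; Lemma~\ref{winning strategy for Cohen subset} applies directly. No further use of homogeneity or the maximality principle is needed at this level---those ingredients are already packaged inside Lemma~\ref{winning strategy for Cohen subset}.
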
 
\begin{proof} 
We work in 
%$V[G\times H]$, where $G\times H$ is 
an extension of $V$ by a fixed $\Col{\lambda}{{<}\mu}\times \Add{\kappa}{\nu}$-generic filter $G\times H$. 
% over $V$. 
First note that every $x\in \ltl$ is an element of $V[G_\xi\times H_X]$ for some $\xi<\mu$ and some subset $X$ of $\nu$ of size strictly less than $\mu$, 
since $\Col{\lambda}{{<}\mu}\times \Add{\kappa}{\nu}$ has the $\mu$-cc by the $\Delta$-system lemma. 
In this situation, we will say that $x$ is \emph{absorbed by $G_\xi$, $H_X$}.

Now assume that $\varphi(x,y)$ is a formula with two free variables and $z\in {}^{\lambda} V$. 
%\todo{DO WE NEED THIS?} Using the set $A_{\varphi,y}$ given in Definition \ref{definition of Aphi}, let 
%$$A_{\varphi,y}^{V[G]}=\{x\in (\kk)^{V[G]}\mid V[G]\vDash \varphi(x,y)\}.$$ 
%Moreover, 
We let 
$$A^M=(A_{\varphi,z}^\lambda)^{V[G\times H]}\cap M$$ 
for any transitive subclass $M$ of $V[G\times H]$, 
where $A_{\varphi,z}^\lambda$ is given in Definition \ref{definition of Aphi}. 

%and from Lemma \ref{forcing equivalent to Add(kappa,1)} that $\Col{\kappa}{{<}\lambda}\times \Add{\kappa}{\nu}$ is equivalent to $\Col{\kappa}{{<}\lambda}$ for all $\nu\leq \lambda$. 
%it is easy to see that the parameter $z$ can be absorbed into an intermediate model \todo{notation defined?} of the form $V[G_\nu\times H_X]$ for some $\nu<\lambda$ and some subset $X$ of $\mu$ of size strictly less then $\lambda$, and that therefore, \todo{ok?} the claim follows from the statement for the special case that the parameter is in the ground model, which we will now assume. 

Since $\Add{\lambda}{1}$ is ${<}\lambda$-closed and $P(\Add{\lambda}{1})^V$ has size $\lambda$, 
%$(2^\kappa)^V$ has size $\kappa$ in $V[G\times H]$,  
the set of $\Add{\lambda}{1}$-generic elements of $\ltl$ over $V$ is comeager. 
Therefore, if there is no $\Add{\lambda}{1}$-generic element of $\ltl$ over $V$ in $A_{\varphi,z}^\lambda$, then by Lemma \ref{characterization of Baire property by game}, player II has a winning strategy in $G_\lambda(A_{\varphi,z}^\lambda)$. 
%^{V[G\times H]})$ in $V[G\times H]$. 
We can hence assume that there is an $\Add{\lambda}{1}$-generic element $x$ of $A_{\varphi,z}^\lambda$ over $V$. 

We will rearrange the generic extension to apply Lemma \ref{winning strategy for Cohen subset}. 
To this end, we assume that $x$ is absorbed by $G_\xi$, $H_X$ as above. 
%Since $\Col{\kappa}{{<}\lambda}\times \Add{\kappa}{\mu}$ has the $\lambda$-cc, some $\nu<\kappa$ and some $X\subseteq \mu$ of size strictly less than $\lambda$ and an $\Col{\kappa}{{<}\nu}\times \Add{\kappa}{X}$-name $\dot{x}$ with $\dot{x}^{G\times H}=x$. 
It follows from Lemma \ref{forcing equivalent to Add(kappa,1)} that we can find a $ \Col{\lambda}{[\xi,\mu)}\times \Add{\lambda}{1}$-generic filter $g\times h$ with $V[G_{[\xi,\mu)}]=V[g\times h]$ 
and hence  
%we can rewrite 
the generic extension 
%$$V[G]=V[G_\nu\times G_{[\nu,\lambda)}\times H_X \times  H_{\lambda\setminus X}]$$ 
can be written as 
$$V[G\times H]=  V[ g \times H_{\mu\setminus X} \times h \times G_\xi\times H_X]. $$ 
Since the filters $g \times H_{\nu\setminus X}\times h$ and $G_\xi\times H_X$ are mutually generic, it follows that $x$ is also $\Add{\lambda}{1}$-generic over $V[g\times  H_{\nu\setminus X}\times h]$. 

Now let $W=V[g \times H_{\nu\setminus X}]$. 
Since the forcing $\Col{\lambda}{{<}\xi}\times \Add{\lambda}{X}$ has size $\lambda$ in $W$, is ${<}\lambda$-closed and non-atomic, there is an $\Add{\lambda}{1}$-generic filter $k$ over $W$ with $W[k]=W[G_\xi\times H_X]$ by Lemma \ref{forcing equivalent to Add(kappa,1)}. 
Then $V[G\times H]=W[h\times k]$ is an $\Add{\lambda}{1}^2$-generic extension of $W$ and 
%$W[k]$ contains the $\Add{\kappa}{1}$-generic $x$ over $W$. 
$$ x\in (A_{\varphi,z}^\lambda)^{W[h\times k]}\cap \ltl\cap W[h\times k].$$ 
By Lemma \ref{winning strategy for Cohen subset}, player I has a winning strategy in $G_\lambda(A_{\varphi,z}^\lambda)$. 
\end{proof} 

By Lemma \ref{characterization of dense function}, the previous result implies that the almost Baire property for the class of definable sets considered there is consistent with arbitrary values of $2^\lambda$. 
%In particular, determinacy of $G_\kappa(A)$ for all subsets $A$ of ${}^{\kappa}\kappa$ definable from elements of $\Ord^\kappa$ is consistent with $2^\kappa=\kappa^+$ and with arbitrarily large values of $2^\kappa$. 
Moreover, as in the proof of Theorem \ref{perfect set property at all regulars}, we immediately obtain the following result. 

\begin{theorem} \label{almost Baire property at all regulars} 
Suppose there is a proper class of inaccessible cardinals. Then there is a class generic extension $V[G]$ of $V$ in which for every regular cardinal $\lambda$  
and for every subset $A$ of $\ltl$ that is definable from an element of ${}^\lambda V$, $G_\lambda(A)$ is determined. 
%the perfect set property holds for all subsets of ${}^{\kappa}\kappa$ that are definable from elements of $\Ord^\kappa$. 
%$\mathsf{PSP}^{\kappa}_{od}$ holds for all infinite regular cardinals $\kappa$ in a class forcing extension. 
\end{theorem}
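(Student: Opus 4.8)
The plan is to run the global construction exactly as in the proof of Theorem \ref{perfect set property at all regulars}, replacing the appeal to the perfect set property by the determinacy result of Theorem \ref{determinacy result}. First I would reuse verbatim the Easton support iteration $\langle \PP_\alpha, \dot{\PP}_\alpha \mid \alpha \in \Ord \rangle$ defined there: with $C$ the closure of $\{\omega\}$ together with the class of inaccessibles, enumerated as $\langle \kappa_\alpha \mid \alpha \geq 1 \rangle$, with $\dot{\nu}_\alpha$ a name for the least regular cardinal $\geq \kappa_\alpha$ not collapsed by $\PP_\alpha$, and $\dot{\PP}_\alpha$ a name for $\Col{\dot{\nu}_\alpha}{{<}\kappa_{\alpha+1}}$. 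Fixing a $\PP$-generic filter $G$ and setting $\nu_\alpha = \dot{\nu}^{G_\alpha}$, the same cardinal-structure claim shows that $\vec{\nu} = \langle \nu_\alpha \mid \alpha \geq 1 \rangle$ enumerates the infinite regular cardinals of $V[G]$, that each tail forcing $\PP^{(\alpha+1)}$ is ${<}\kappa_{\alpha+1}$-closed and homogeneous, and that $\kappa_{\alpha+1}$ is inaccessible over $V[G_\alpha]$.

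Next I would fix $\alpha \geq 1$, write $\kappa = \nu_\alpha$, and let $A$ be a subset of $\kk$ in $V[G]$ that is definable from an element of ${}^{\kappa}V$. As in the perfect set proof, homogeneity of the tail forcing $\PP^{(\alpha+1)}$, together with the fact that it adds no new element of $\kk$, places $A$ in $V[G_{\alpha+1}]$ and makes it agree there with its definition; so it suffices to determine $G_\kappa(A)$ in $V[G_{\alpha+1}]$ and then lift the result. Since $\kappa = \nu_\alpha$, the forcing $\dot{\PP}_\alpha$ is (a name for) $\Col{\kappa}{{<}\kappa_{\alpha+1}}$ with $\kappa_{\alpha+1}$ inaccessible, so $V[G_{\alpha+1}]$ is precisely a model of the kind treated in the second part of this paper.

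For the local step, I would argue that $G_\kappa(A)$ is determined in $V[G_{\alpha+1}]$ by splitting into two cases. For $\kappa > \omega$ this is immediate from Theorem \ref{determinacy result}, applied over the ground model $V[G_\alpha]$ with the collapse cardinal equal to $\kappa$, with $\mu = \kappa_{\alpha+1}$, and with the trivial $\Add{\kappa}{\nu}$-factor (i.e.\ $\nu = 0$). For $\kappa = \omega$, Solovay's theorem \cite[Theorem 2]{MR0265151} gives that every subset of ${}^{\omega}\omega$ definable from an element of ${}^{\omega}\Ord$ has the Baire property in $V[G_{\alpha+1}]$, and then Lemma \ref{characterization of Baire property by game} shows that $G_\omega(A)$ is determined: player II wins when $A$ is meager and player I wins otherwise.

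The remaining, and only delicate, step is to lift determinacy from $V[G_{\alpha+1}]$ to $V[G]$. Here I would use that $\PP^{(\alpha+1)}$ is ${<}\kappa_{\alpha+1}$-closed with $\kappa < \kappa_{\alpha+1}$: such a forcing adds no new sequence of length $\kappa$ over $V[G_{\alpha+1}]$, hence no new element of $\klk$ and no new run $\langle s_\xi \mid \xi < \kappa \rangle$ of $G_\kappa$, and the outcome $\bigcup_{\xi<\kappa} s_\xi$ of any run already lies in $V[G_{\alpha+1}]$. Consequently a winning strategy for either player in $V[G_{\alpha+1}]$ remains a strategy all of whose runs have unchanged outcomes, lying in the same set $A$, and so remains winning in $V[G]$. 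This is exactly the analogue of the closure argument used to lift the perfect set property in Theorem \ref{perfect set property at all regulars}, and I expect it to be the main point to verify carefully, since it is where the combinatorics of the game, rather than an abstract body of branches, enters. Combining the paragraphs, $G_\lambda(A)$ is determined in $V[G]$ for every regular $\lambda$ and every $A$ definable from an element of ${}^{\lambda}V$.
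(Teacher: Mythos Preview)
Your proposal is correct and follows exactly the approach the paper indicates: it reruns the iteration from Theorem \ref{perfect set property at all regulars} and replaces the local appeal to Theorem \ref{perfect subsets of definable sets} by Theorem \ref{determinacy result} (with Solovay's result and Lemma \ref{characterization of Baire property by game} handling $\kappa=\omega$). Your explicit check that a winning strategy lifts through the ${<}\kappa_{\alpha+1}$-closed tail forcing is a correct unpacking of the one-line ``as in the proof of Theorem \ref{perfect set property at all regulars}'' that the paper gives.
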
 
%\begin{proof} 
%As in the proof of Theorem \ref{perfect set property at all regulars} via Solovay's theorem \cite{MR0265151} and Theorem \ref{determinacy result}. 
%\end{proof} 

%\todo[inline]{We work in $V[G]$ as above. Suppose that $f\colon {}^{\kappa}\kappa\rightarrow {}^{\kappa}\kappa$ is continuous. Is there a subset $C$ of ${}^{\kappa}\kappa$ such that player I has a winning strategy in $G_{\kappa}(C)$ and $f\upharpoonright C$ is continuous? 
%s} 

\iffalse 
\begin{lemma} 
Suppose that $\kappa$ is an uncountable regular cardinal with $\kappa^{<\kappa}=\kappa$ and all subsets of ${}^{\kappa}\kappa$ that are definable from elements of $\Ord^\kappa$ have the almost Baire property. 
Then there is no well-order on ${}^{\kappa}\kappa$ that is definable from an element of $\Ord^\kappa$. 
\end{lemma} 
\begin{proof} 
The assumptions imply the Bernstein property for all subsets $A$ of ${}^{\kappa}\kappa$ definable from elements of ${}^{\kappa}\kappa$, i.e. $A$ or ${}^{\kappa}\kappa\setminus A$ has a perfect subset. 
%\todo{cite } 
Since $\kappa^{<\kappa}=\kappa$, it is straightforward to construct a definable Bernstein set by induction along a definable well-ordering of ${}^{\kappa}\kappa$, using an enumeration of all perfect subsets of ${}^{\kappa}\kappa$. 
\end{proof} 
\fi 

Since the almost Baire property immediately implies the Bernstein property, we obtain the following result as in the proof of Lemma \ref{perfect set property and Bernstein property}. 
%We further mention the following immediate applications of the almost Baire property for definable sets. 

\begin{lemma} 
%\todo{\ \ \ \ \ \ \ \ write $\lambda$ \ \\ 
%\ \ \ \ \ \ \ \ instead of $\kappa$} 
Suppose that $\lambda$ is an uncountable regular cardinal 
%with $\kappa^{<\kappa}=\kappa$ 
and all subsets of $\ltl$ that are definable from elements of $\Ordl$ have the almost Baire property. 
Then the following statements hold. 
\begin{enumerate-(1)} 
\item 
All subsets of $\ltl$ that are definable from elements of $\Ordl$ have the Bernstein property. 
\item 
There is no well-order on $\ltl$ that is definable from an element of $\Ordl$. 
\end{enumerate-(1)} 
\end{lemma}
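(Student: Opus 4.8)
The plan is to prove the two claims in turn, following the structure of the proof of Lemma~\ref{perfect set property and Bernstein property}. For the first claim I would show directly that the almost Baire property implies the Bernstein property. Fix a subset $A$ of $\ltl$ that is definable from an element of $\Ordl$ and choose, by Definition~\ref{definition of almost Baire}, a dense homomorphism $f\colon \ltll\rightarrow \ltll$ with either $\ran{f^*}\subseteq A$, or $\ran{f^*}\subseteq \ltl\setminus A$ with $f$ continuous and $f(\emptyset)=\emptyset$. In both cases the goal is to extract a perfect subset of the relevant set $B\in\{A,\ \ltl\setminus A\}$ satisfying $\ran{f^*}\subseteq B$, which then immediately witnesses the Bernstein property for $A$.

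To extract the perfect set I would thin $f$ to a binary splitting skeleton. Using that $f$ is a dense homomorphism, for every $s\in\ltll$ one finds $\alpha\neq\beta$ with $f(s^\smallfrown\langle\alpha\rangle)\perp f(s^\smallfrown\langle\beta\rangle)$, by picking incompatible $t_0,t_1\supseteq f(s)$ and applying density. Recursively I would build a continuous homomorphism $g\colon {}^{<\lambda}2\rightarrow \ltll$ with $g(u^\smallfrown\langle 0\rangle)$, $g(u^\smallfrown\langle 1\rangle)$ always chosen so that $f(g(u^\smallfrown\langle 0\rangle))\perp f(g(u^\smallfrown\langle 1\rangle))$, and then set $T$ to be the downward closure in $\ltll$ of $\{f(g(u))\mid u\in{}^{<\lambda}2\}$. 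Since sibling images are incompatible and $f$ is a homomorphism, the splitting nodes of $T$ are cofinal, so $T$ is splitting. The key computation is that $(f\circ g)^*=f^*\circ g^*$, whence $\ran{(f\circ g)^*}\subseteq\ran{f^*}\subseteq B$, together with the fact that every branch $z\in[T]$ has the form $(f\circ g)^*(x)$ for a unique $x\in{}^{\lambda}2$ read off from the incompatible splitting choices that $z$ passes through; this gives $[T]\subseteq B$ and $T$ perfect in the sense of Definition~\ref{definition: perfect sets}.

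The main obstacle is precisely this last point in the case where $f$ is \emph{not} continuous. Then $f\circ g$ need not be continuous, so I cannot simply invoke that the image of ${}^{\lambda}2$ under a continuous homomorphism is closed, and must verify by hand that $T$ is ${<}\lambda$-closed and that no branch of $T$ escapes $\ran{(f\circ g)^*}$. This is where regularity of $\lambda$ enters: the incompatibility of sibling images forces a branch $z$ of $T$ to follow a single $x\in{}^{\lambda}2$, and since the lengths of $f(g(x\upharpoonright\beta))$ are cofinal in $\lambda$, the union of the chain recovers $z=(f\circ g)^*(x)$. In the continuous case this is automatic, and one may alternatively note that $\ltl\setminus A$ is comeager by Lemma~\ref{characterization of dense continuous homomorphism} and that a comeager set contains a perfect subset by a standard fusion argument.

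For the second claim I would argue exactly as in Lemma~\ref{perfect set property and Bernstein property}. Assuming towards a contradiction that there is a well-order of $\ltl$ definable from some $w\in\Ordl$, I would code perfect trees as elements of $\ltl$ and enumerate their codes along this well-order, constructing by transfinite recursion a set $B\subseteq\ltl$ meeting and avoiding every perfect subset of $\ltl$, at each stage choosing two fresh points from the current perfect set, which is possible since each perfect set has size $2^\lambda$ while fewer than $2^\lambda$ points have been used. The resulting $B$ is definable from $w$, hence from an element of $\Ordl$, yet neither $B$ nor its complement contains a perfect subset, contradicting the Bernstein property established in the first claim.
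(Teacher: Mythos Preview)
Your proposal is correct and follows the route the paper has in mind. The paper does not give a proof here at all: it simply asserts that ``the almost Baire property immediately implies the Bernstein property'' and then refers back to Lemma~\ref{perfect set property and Bernstein property} for part~(2). What you have done is supply the content behind the word ``immediately''.

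You are right that case~(a) of Definition~\ref{definition of almost Baire} is the only place where real work is needed, and you have correctly identified the issue: when $f$ is merely a dense homomorphism and not continuous, the tree $T$ obtained as the downward closure of $\{f(g(u))\mid u\in{}^{<\lambda}2\}$ is not obviously ${<}\lambda$-closed, and branches of $T$ are not obviously of the form $(f\circ g)^*(x)$. Your sketch of how to handle this is sound: the sibling-incompatibility built into $g$ makes $f\circ g$ preserve incomparability, so for any node $t\in T$ (or any branch) one recovers a unique $x\in{}^{\leq\lambda}2$ with $f(g(x{\upharpoonright}\alpha))\subseteq t$ for all $\alpha$; the strict increase of the lengths $\len(f(g(x{\upharpoonright}\alpha)))\geq\alpha$ then bounds the recursion below $\lambda$ and forces $t\subseteq f(g(x{\upharpoonright}\alpha^*))$ at some stage. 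An alternative packaging of the same idea goes through Lemma~\ref{characterization of dense function} and (the $\ltl$-analogue of) Lemma~\ref{characterization of perfect set property by game}~\ref{I wins if the set has a perfect subset}: a winning strategy for player~I in $G_\lambda(A)$ restricts to a winning strategy in the perfect set game, which yields a perfect subset directly.

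One small point: your coding of perfect trees by elements of $\ltl$ in part~(2) tacitly uses $\lambda^{<\lambda}=\lambda$, so that there are only $2^\lambda$ perfect trees. This is harmless here, since the almost Baire property is introduced in Section~\ref{subsection: Banach-Mazur games} under exactly that hypothesis, and the paper's own reference to Lemma~\ref{perfect set property and Bernstein property} relies on the same assumption; but it is worth stating.
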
 
\iffalse 
\begin{proof} 
The first claim is immediate. 
%The assumptions imply the Bernstein property for all subsets $A$ of ${}^{\kappa}\kappa$ that are definable from elements of ${}^{\kappa}\kappa$. 
% i.e. $A$ or ${}^{\kappa}\kappa\setminus A$ has a perfect subset. 
To prove the second claim, suppose towards a contradiction that there is a well-order on $\kk$ that is definable from an element of $\Ordk$. 
%Since $\kappa^{<\kappa}=\kappa$, 
Using a standard construction, one can then construct a definable Bernstein set by induction. 
%along a definable well-ordering of ${}^{\kappa}\kappa$, using an enumeration of all perfect subsets of ${}^{\kappa}\kappa$. 
\end{proof} 
\fi

It is further possible to obtain results for homogeneous sets for definable colorings for which player I has a winning strategy in $G_\kappa$, which extend Theorem \ref{function on 2^kappa continuous on a perfect set} and will appear in a later paper. 

\section{Implications of resurrection axioms} 
%Variants of the main results from iterated resurrection axioms}
%Consequences of Iterated Resurrection Axioms} 
\label{resurrection axioms}

In this section, we obtain versions of the main theorems from a variant of 
%$\mathrm{RA}_{<\omega}^\lambda$ 
the resurrection axiom introduced by Hamkins and Johnstone \cite{MR3194674}. 
% that was introduced above. 
As above, we assume that $\lambda$ is an uncountable regular cardinal. 
Moreover, we will use the sets $A_{\varphi,z}$ and $A_{\varphi}$ given 
%\todo{check notation}
in Definition \ref{definition of Aphi}. 
%always assume that $\kappa$ is an uncountable regular cardinal with $\kappa^{<\kappa}=\kappa$. 
% having a perfect subset.  
%The following auxiliary results feature sufficient conditions for the existence of a perfect subset of a given definable subset of $\kk$. 
Our result is motivated by the following sufficient condition for the existence of a perfect subset of a given ${\bf \Sigma}^1_1$ subset of ${}^\lambda\lambda$. 
%gives such a condition for $\Sigma^1_1$ subsets of ${}^{\lambda}\lambda$ and 
%moreover shows that this is optimal. 

\begin{lemma} \label{perfect subsets of Sigma-1-1 sets} 
\begin{enumerate-(1)} 
\item 
%Let $\nu=2^\kappa$. 
%\todo{change Aphi to Aphi,z everywhere}
Suppose that $\varphi(x,y)$ is a $\Sigma^1_1$-formula and $z\in {}^{\lambda}\Ord$ is a parameter. 
If $|A_{\varphi,z}^\lambda|>\lambda$ holds in every $\Col{\lambda}{2^\lambda}$-generic extension of $V$, then $A_{\varphi,z}^\lambda$ has a perfect subset. 
\item 
Suppose that $V=L$. 
% and $\kappa$ is an uncountable regular cardinal. 
%with $\kappa^{<\kappa}=\kappa$. 
Then there is a $\Pi^1_1$ formula $\varphi(x)$ such that $|A_\varphi^\lambda|>\lambda$ holds in every generic extension of $V$, 
but $A_\varphi^\lambda$ does not have a perfect subset. 
\end{enumerate-(1)} 
\end{lemma}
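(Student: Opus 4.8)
The plan is to reduce both parts to a generalized Mansfield--Solovay dichotomy for $\Sigma^1_1$ sets, together with the fact that in the uncountable setting constructibility is one level simpler than classically. For part (1), I would first put the set in tree form: since $\varphi$ is $\Sigma^1_1$ and $z\in\Ordl$, there is a tree $T$ on $\lambda\times\lambda$, lying in $V$ and definable from $z$, with $A_{\varphi,z}^\lambda=\proj[T]:=\{x\in\ltl\mid \exists y\in\ltl\ (x,y)\in[T]\}$, where $\proj[T]$ is re-evaluated in each model using that model's $\ltl$ but $T$ itself is a fixed object. I would then run the Mansfield perfect--kernel derivative on $T$: set $T^{(0)}=T$, at successor stages remove every node having no two extensions with incompatible first coordinates, intersect at limits, and let $T^{(\infty)}$ be the stabilized tree. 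The two facts I need are the $\lambda$-versions of the classical dichotomy: (i) if $T^{(\infty)}\neq\emptyset$, then its everywhere-first-coordinate-splitting structure lets one build, using $\lambda$ regular and $\lambda^{<\lambda}=\lambda$ to pass through limits, a perfect subtree $S\subseteq\ltll$ with $[S]\subseteq\proj[T^{(\infty)}]\subseteq A_{\varphi,z}^\lambda$; and (ii) if $T^{(\infty)}=\emptyset$, then the rank function assigning each node its removal stage gives an $\LL[T]$-definable selection, so $\proj[T]\subseteq\LL[T]$.

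Granting this, part (1) follows quickly, using that the derivative only quantifies over the fixed set $T$, so the recursion, its length (below $(2^\lambda)^+$), and hence $T^{(\infty)}$ are absolute between $V$ and any generic extension. I compute $T^{(\infty)}$ in $V$; if it is nonempty we are done by (i). If it were empty, then in any $\Col{\lambda}{2^\lambda}$-generic extension $V[G]$ we would still have $(T^{(\infty)})^{V[G]}=\emptyset$, hence $(A_{\varphi,z}^\lambda)^{V[G]}=(\proj[T])^{V[G]}\subseteq \LL[T]^{V[G]}=\LL[T]$ by (ii); since $\LL[T]\subseteq V$ and $\Col{\lambda}{2^\lambda}$ forces $|({}^{\lambda}\lambda)^{V}|=\lambda$, a fortiori $|({}^{\lambda}\lambda)^{\LL[T]}|=\lambda$ in $V[G]$, so $|(A_{\varphi,z}^\lambda)^{V[G]}|\le\lambda$, contradicting the hypothesis. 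Thus $T^{(\infty)}\neq\emptyset$ and $A_{\varphi,z}^\lambda$ has a perfect subset in $V$.

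For part (2) I would exploit that under $V=\LL$ the ordering $<_{\LL}$ on $\ltl$, and the predicate of constructibility $x\in\LL$, are $\Sigma^1_1$ (via the correspondence between $\Sigma^1_1$ subsets of $\ltl$ and $\Sigma_1$-definability over $H_{\lambda^+}=\LL_{\lambda^+}$), in contrast to the classical $\Sigma^1_2$. Using $<_\LL$ one obtains, as in the theory of thin $\Pi^1_1$ sets adapted to $\lambda$, a $\Pi^1_1$ set $B\subseteq\ltl$ that is thin yet has size $2^\lambda=\lambda^+$ in $\LL$; thinness is seen from the fact that any perfect set meets $B$ in at most $\lambda$ points, because $<_\LL$-initial segments of reals have size $\le\lambda$. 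I then let $\varphi(x)$ define $A:=B\cup\{x\in\ltl\mid x\notin\LL\}$, which is $\Pi^1_1$ since the second set is $\Pi^1_1$. In $\LL$ the second set is empty, so $A^{\LL}=B$ has no perfect subset. For robust largeness I split on a generic extension $V[G]$: if $G$ adds no new subset of $\lambda$ then $\ltl$ is unchanged, so $A^{V[G]}=B$ keeps size $\lambda^+>\lambda$; if $G$ adds a new subset of $\lambda$, then more than $\lambda$ elements of $\ltl$ become non-constructible (by counting if $\lambda^+$ is collapsed, and otherwise by translating a fixed new real along the $\lambda^+$ constructible reals), so $\{x\notin\LL\}^{V[G]}$, and hence $A^{V[G]}$, has size $>\lambda$. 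Either way $|A^{V[G]}|>\lambda$, while $A^{\LL}$ has no perfect subset, so the $\Pi^1_1$-analogue of part (1) fails.

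The main obstacle in part (1) is the generalized Mansfield--Solovay dichotomy, in particular checking that the empty-kernel case really forces $\proj[T]\subseteq\LL[T]$ and that the splitting structure of $T^{(\infty)}$ produces a genuine $\lambda$-perfect tree; both are the expected adaptations of the classical arguments but rely on $\lambda$ being regular and $\lambda^{<\lambda}=\lambda$ at the limit stages. In part (2) the work is concentrated in the two $L$-theoretic inputs, namely the $\Sigma^1_1$-definability of $<_\LL$ on $\ltl$ and the existence of a thin $\Pi^1_1$ set of size $\lambda^+$ in $\LL$, after which the union with $\{x\notin\LL\}$ and the case analysis are routine.
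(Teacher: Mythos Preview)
For part (2) your approach is essentially the paper's: both take the union of a thin large $\Pi^1_1$ set with $\{x\in\ltl:x\notin L\}$ and argue by cases. The paper simply quotes a concrete tree $T$ with $|[T]|>\lambda$ and no perfect subtree from \cite[Proposition~7.2]{Hurewicz} rather than building $B$ from $<_L$, and splits on whether $(\lambda^+)^L$ is collapsed; your split on whether a new subset of $\lambda$ appears is equivalent after your translation argument.

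For part (1) the two routes differ, and your step (i) has a genuine gap. The paper does not use the Mansfield derivative at all: from the hypothesis it extracts a $\Col{\lambda}{2^\lambda}$-name $(\sigma,\tau)$ for a \emph{new} branch of the tree $S$ representing $A$, and then runs a direct fusion, building $\langle p_u,(s_u,t_u)\mid u\in{}^{<\lambda}2\rangle$ by alternately splitting the name and extending conditions, using only the ${<}\lambda$-closedness of the collapse to pass through limits. Your (ii) is fine (with the refinement that one only obtains $x\in L[T,x{\upharpoonright}\alpha_0,y{\upharpoonright}\alpha_0]$ for some $\alpha_0<\lambda$; under $\lambda^{<\lambda}=\lambda$ there are only $\lambda$ such parameters, so the counting still gives $|A^{V[G]}|\le\lambda$). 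But the implication ``$T^{(\infty)}\ne\emptyset\Rightarrow$ perfect subset'' that you state as (i) is false in the uncountable setting as written: the Mansfield derivative does not preserve limit-closure, so $T^{(\infty)}$ can be nonempty yet contain no node of length $\ge\omega$ whatsoever. For a concrete obstruction, take $T$ with rich first-coordinate splitting below level $\omega$ and a single branch above; then $T^{(\infty)}$ stabilizes at the set of finite-length nodes, which is nonempty but has empty body. Your remark that $\lambda$ regular and $\lambda^{<\lambda}=\lambda$ handle the limits does not address this, since the problem is with the tree $T^{(\infty)}$, not with the indexing tree ${}^{<\lambda}2$.

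One can rescue your outline by noting that the hypothesis in fact yields, in $V[G]$, a branch of $T^{(\infty)}$ whose first coordinate lies outside every $L[T,s,t]$, and hence outside $V$; a name for this branch then lets one run a fusion in $V$. But that fusion is precisely the paper's argument, so the detour through the derivative buys nothing and the paper's direct approach is both shorter and sidesteps the obstacle entirely.
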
 
\begin{proof} 
For the first claim, 
it follows by standard arguments 
%as in classical descriptive set theory 
that there is a level subset $S$ of $({}^{<\lambda}\lambda)^2$ 
%${}^{<\lambda}\lambda\times {}^{<\lambda}\lambda$ 
with the property that $A_{\varphi,z}^\lambda$ is the projection of $S$ in every outer model with the same $V_\lambda$ as $V$. 
%generic extension by ${<}\kappa$-distributive forcing. 
%We identify the set $[T]$ of branches in $T$ with a subset of ${}^{\kappa}\kappa\times {}^{\kappa}\kappa$. Let $\nu=2^\kappa$. 
By the assumption, there are $\Col{\lambda}{2^\lambda}$-names $\sigma$, $\tau$ such that 
%are $\Col{\lambda}{2^\lambda}$-names such that $\one_{
$\Col{\lambda}{2^\lambda}$ forces that $(\sigma,\tau)$ is a new element of $[S]$. 
%Suppose $A=p[S]$, $S\subseteq ({}^{<\kappa}\kappa)^2$. Let $\dot{x}$, $\dot{y}$ be $Col(\kappa,2^{\kappa})$-names with 
%\begin{itemize} 
%\item $\Vdash(\dot{x},\dot{y})\in [S]$ and 
%\item $\Vdash\dot{x}\notin V$. 
%\end{itemize} 
Using these names, we can construct 
sequences 
$\langle p_u \mid u\in{}^{<\lambda}2\rangle$ of conditions in $\Col{\kappa}{2^\lambda}$ and $\langle ( s_u,t_u) \mid u\in{}^{<\lambda}2\rangle$ of nodes in $S$
such that the following conditions hold for all $u\subsetneq v$ in ${}^{<\lambda}2$. 
\begin{enumerate-(a)} 
%\item 
%$p_s\in Col(\kappa,2^{\kappa})$, 
%\item 
%$t_s, u_s\in {}^{<\kappa}\kappa$, 
\item 
$p_s\Vdash t_s\subsetneq\sigma\ \&\ u_s\subsetneq \tau$.  
\item 
$p_u\subseteq p_v$, $s_u\subsetneq s_v$ and $t_u\subsetneq t_v$. 
\item 
$t_{u^\smallfrown \langle 0\rangle}\neq t_{u^\smallfrown \langle1\rangle}$. 
\end{enumerate-(a)} 
Let $T$ denote the level subset of $({}^{<\lambda}\lambda)^2$ that is obtained as the downwards closure of the set of pairs $(s_u,t_u)$ for $u\in{}^{<\lambda}2$. 
%\times {}^{<\lambda}\lambda$
%The set $S$ of pairs $\langle t,u\rangle$ in ${}^{\kappa}\kappa\times {}^{\kappa}\kappa$ with $\len(t)=\len(u)$ such that $t\subseteq t_s$ and $u\subseteq u_s$ for some $s\in {}^{<\kappa}2$ forms a subtree of $T$. 
By the above conditions, its projection $\pro(T)=\{x\in {}^{\lambda}\lambda\mid \exists y\in {}^{\lambda}\lambda\ (x,y)\in [T]\}$ is a perfect subset of $A_{\varphi,z}^\lambda$. 
%Then $C=\{\bigcup_{\alpha<\kappa} t_{x\upharpoonright \alpha}\mid x\in {}^{\kappa}2\}$ is a perfect subset of $A$. 

For the second claim, we have a subtree $T$ of ${}^{<\lambda}\lambda$ with $|[T]|>\lambda$ and 
no perfect subtrees by \cite[Proposition 7.2]{Hurewicz}. 
%$\kappa$-Kurepa tree and 
We claim that the formula $\varphi(x)$ stating that $x\in [T]$ or $x\notin L$ satisfies the requirement. 
It follows from the choice of $T$ that $[T]$ does not have a perfect subset. 
%Let $A=[T]$ for a $\kappa$-Kurepa tree $T$, or another $\Pi^1_1({}^{\kappa}\kappa)$ set in $L$ with no perfect subset in $L$. Let $B={}^{\kappa}2\setminus L$ and $C=A\cup B$. 
To show the remaining condition, we work in a generic extension $V[G]$ of $V$. 
If $(\lambda^+)^L=\lambda^+$, then 
%we have 
$|A_{\varphi}^\lambda|\geq|[T]|\geq|[T]^L|> \lambda$. 
If $(\lambda^+)^L=\lambda$, 
then $|({}^\lambda\lambda)^L|\leq \lambda$ and hence $|A_\varphi^\lambda|>\lambda$ by the choice of $\varphi$. 
\end{proof} 

We now formulate the resurrection axiom at $\lambda$ for a given class of forcings. 
By a \emph{definable class of forcings} we will mean a class $\Gamma_{\varphi,z}=\{x\mid \varphi(x,z)\}$, where $\varphi(x,y)$ is a formula with two free variables with the property that it is provable in $\mathsf{ZFC}^-$ that $x$ is a forcing for all sets $x$, $y$ with 
%for all $x$, $y$, 
$\psi(x,y)$, 
% implies that $x$ is a forcing, 
and $z$ is a set parameter.

\begin{definition} \label{definition of resurrection axioms} 
Assuming that $\Gamma$ is a definable class of forcings, we define the \emph{resurrection axiom} $\RA^\lambda(\Gamma)$ to hold if for all $\PP\in \Gamma$, there is a $\PP$-name $\dot{\QQ}$ such that $\Vdash_\PP \dot{\QQ}\in \Gamma$ and $H_{\lambda^+}\prec^+ H_{(\lambda^+)^{V[G]}}$ holds for every $\PP*\dot{\QQ}$-generic filter $G$ over $V$. 
%$H_{\lambda^+}\prec^+ H_{(\lambda^+)^{V[G]}}$. 
\end{definition}

If $\lambda$ is a regular cardinal, we say that $\nu$ is \emph{$\lambda$-inaccessible} if $\nu>\lambda$ is regular and $\mu^{<\lambda}<\nu$ holds for all cardinals $\mu<\nu$. 
It can then be shown as in \cite[Theorem 18]{MR3194674} that the axiom $\mathrm{RA}^\lambda(\Gamma)$ for the class of forcings $\Col{\lambda}{{<}\nu}$, where $\nu$ is $\lambda$-inaccessible, is consistent from an uplifting cardinal $\mu>\lambda$ (see \cite[Definition 10]{MR3194674}). 

\begin{lemma} \label{perfect subset of definable set in collapse extension} \label{almost Baire property of definable set in collapse extension}
Suppose that $\nu$ is $\lambda$-inaccessible, $\varphi(x,y)$ is a formula and $z$ is a set parameter. Then $\Col{\lambda}{{<}\nu}$ forces the following statements. 
\begin{enumerate-(1)} 
\item 
If $|A_{\varphi,z}^\lambda|>\lambda$, then $A_{\varphi,z}^\lambda$ has a perfect subset. 
\item 
If there is an $\Add{\lambda}{1}$-generic element of ${}^{\lambda}\lambda$ in $A_{\varphi,z}^\lambda$, then player I has a winning strategy in $G_{\lambda}(A_{\varphi,z}^\lambda)$. 
\end{enumerate-(1)} 
\iffalse 
\begin{enumerate-(1)} 
\item 
%\in \Ord^\kappa$. 
% in the ground model. 
If $\Col{\lambda}{{<}\nu}$ forces that 
%If $\one\Vdash_{\Col{\kappa}{<\nu}} 
$|A_{\varphi,z}|>\lambda$, then 
%$\one_{\Col{\kappa}{<\nu}}$ 
it also forces that $A_{\varphi,z}$ has a perfect subset. 
%$A_\varphi$ has a perfect subset. 
\item 
%Let $\nu=(2^{\kappa})^+$. Suppose that $\varphi(x,y)$ is a formula and $z$ is a set. 
%\in \Ord^\kappa$. 
% in the ground model. 
If 
%$\one_{
$\Col{\lambda}{{<}\nu}$ forces that there is an $\Add{\lambda}{1}$-generic element of ${}^{\lambda}\lambda$ in $A_{\varphi,z}$, then 
%$\one_{\Col{\kappa}{<\nu}}$ 
it also forces that 
player I has a winning strategy in $G_{\lambda}(A_{\varphi,z})$. 
\end{enumerate-(1)} 
\fi 
\end{lemma}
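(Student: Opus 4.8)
The plan is to show that both statements are obtained by localizing the proofs of Theorem \ref{perfect subsets of definable sets} (for part~(1)) and Theorem \ref{determinacy result} (for part~(2)), replacing the inaccessible cardinal $\mu$ used there by the $\lambda$-inaccessible cardinal $\nu$. The crucial observation is that inaccessibility of $\mu$ entered those arguments only through two features: the $\mu$-chain condition of $\Col{\lambda}{{<}\mu}$ (via the $\Delta$-system lemma) and the absorption of ${<}\lambda$-closed forcings of size ${<}\mu$ into the collapse (via Lemma \ref{forcing equivalent to Add(kappa,1)}). Both survive the weakening to $\lambda$-inaccessibility: if $\rho^{<\lambda}<\nu$ for all cardinals $\rho<\nu$ and $\nu$ is regular, then the number of conditions of bounded support in $\Col{\lambda}{{<}\nu}$ is ${<}\nu$, so the $\Delta$-system lemma yields the $\nu$-chain condition, and any separative ${<}\lambda$-closed forcing of size ${<}\nu$ has its size collapsed to $\lambda$ in the extension and is hence sub-equivalent to $\Col{\lambda}{X}$ for a suitable interval $X$ by Lemma \ref{forcing equivalent to Add(kappa,1)}. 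As a preliminary step I would, as in the proof of Theorem \ref{function on 2^kappa continuous on a perfect set}, pass to a $\Col{\lambda}{(\lambda^{<\lambda})^V}$-generic extension of $V$ in order to assume $\lambda^{<\lambda}=\lambda$; this forcing is ${<}\lambda$-closed of size ${<}\nu$, so it preserves the $\lambda$-inaccessibility of $\nu$ and is absorbed into $\Col{\lambda}{{<}\nu}$, whence the conclusion for $V$ follows from the conclusion for the intermediate model.

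For part~(1), I would run the proof of Theorem \ref{perfect subsets of definable sets} with $\mu$ replaced by $\nu$: working in $V[G]$ with $A=A_{\varphi,z}^\lambda$ of size ${>}\lambda$, use the $\nu$-chain condition to find $\gamma<\nu$ with $z\in V[G_\gamma]$, and then a cardinal $\eta$ with $\gamma<\eta<\nu$, $\eta^{<\lambda}=\eta$ and $A^{V[G_\gamma]}\neq A^{V[G_\eta]}$ (such $\eta$ are cofinal in $\nu$, by iterating $\rho\mapsto\rho^{<\lambda}$ through $\lambda$ many steps and using $\lambda$-inaccessibility and the regularity of $\nu$). The homogeneity and factorization argument then produces an $\Add{\lambda}{1}$-name $\sigma$ forced into $A_{\varphi,z}^\lambda$ in all further $\Col{\lambda}{{<}\nu}$-extensions, and replacing the Cohen generic by a $\PP$-generic, for $\PP$ as in Definition \ref{definition of the forcing adding a perfect tree}, yields a perfect tree $T$ with $[T]\subseteq A^{V[G]}$, exactly as before.

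For part~(2), I would follow the second half of the proof of Theorem \ref{determinacy result}, namely the case in which an $\Add{\lambda}{1}$-generic element $x$ of $A_{\varphi,z}^\lambda$ over $V$ exists --- which is now the standing hypothesis. Absorbing $x$ by $G_\xi$ for some $\xi<\nu$, I would rewrite $V[G]$ as an $\Add{\lambda}{1}^2$-generic extension $W[h\times k]$ of a ground model $W$ with $\lambda^{<\lambda}=\lambda$ in which $x$ is $\Add{\lambda}{1}$-generic over $W$ and lies in one factor, using that $\Col{\lambda}{[\xi,\nu)}$ absorbs an extra copy of $\Add{\lambda}{1}$ and that $\Col{\lambda}{{<}\xi}$ has size $\lambda$ in $W$. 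Lemma \ref{winning strategy for Cohen subset}, applied with trivial $\SSS$ and with $z\in W$, then provides a winning strategy for player~I in $G_\lambda(A_{\varphi,z}^\lambda)$ in $V[G]$.

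The main obstacle will be the bookkeeping around the chain condition and the intermediate models: verifying that every place where the original proofs invoked the strong-limit property $2^\rho<\mu$ (for $\rho<\mu$) of the inaccessible $\mu$ can in fact be replaced by the weaker $\rho^{<\lambda}<\nu$, and, above all, that after the reduction to $\lambda^{<\lambda}=\lambda$ the ground model $W$ used in part~(2) genuinely satisfies the standing hypothesis $\lambda^{<\lambda}=\lambda$ required by Lemma \ref{winning strategy for Cohen subset}. Once the $\lambda^{<\lambda}=\lambda$ reduction is in place this becomes routine, since no essential use of $2^\rho<\mu$ beyond $\rho^{<\lambda}<\nu$ remains in either argument.
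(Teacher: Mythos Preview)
Your proposal is correct and follows essentially the same route as the paper's own proof: both simply observe that the $\nu$-cc (via the $\Delta$-system lemma) and the existence of unboundedly many $\eta<\nu$ with $\eta^{<\lambda}=\eta$ follow from $\lambda$-inaccessibility, and then defer to the proofs of Theorem~\ref{perfect subsets of definable sets} and Theorem~\ref{determinacy result} with $\nu$ in place of the inaccessible $\mu$. Your explicit preliminary reduction to $\lambda^{<\lambda}=\lambda$ is a detail the paper glosses over; it is a clean way to ensure that the applications of Lemma~\ref{forcing equivalent to Add(kappa,1)} in the intermediate models $W$ are legitimate, and it goes through exactly as you describe since $\lambda^{<\lambda}<\nu$ and the small ${<}\lambda$-closed collapse is absorbed and preserves the $\lambda$-inaccessibility of $\nu$.
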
 
\begin{proof} 
%Let $\mathbb{P}=\Col{\kappa}{{<}\nu}$. 
Since $\nu$ is $\lambda$-inaccessible, 
%$(2^{\lambda})^{<\lambda}=2^\lambda<\nu$, 
it follows from a standard argument using the $\Delta$-system lemma that $\Col{\lambda}{{<}\nu}$ is $\nu$-cc. 
%Since $\PP$ is homogeneous, we can assume that $\one\Vdash_{\PP}|A_{\varphi,z}|>\kappa$. 
%Then $\mathbb{P}$ adds a new element to $A$. 

For the first claim, it follows from the assumption that there is a $\Col{\lambda}{{<}\nu}$-name $\sigma$ for a new element of $A_{\varphi,z}^\lambda$. 
By the $\nu$-cc, we can assume that $\sigma$ is a $\Col{\lambda}{{<}\mu}$-name for some ordinal $\mu<\nu$. 
%Since $\mathbb{P}$ has the $(2^{\kappa})^+$-c.c., $\sigma$ is supported in $Col(\kappa,<\gamma)$ for some $\gamma<(2^{\kappa})^+$.  Note that $Col(\kappa,<\gamma)$ is equivalent to $Col(\kappa,2^{\kappa})$ for $\gamma\geq 2^{\kappa}$. 
Since $\nu$ is $\lambda$-inaccessible, it is easy to see that there are unboundedly many cardinals $\mu\in \mathrm{Card}\cap\nu$ with $\mu^{<\lambda}=\mu$. 
To prove the claim, we work in a $\Col{\lambda}{{<}\nu}$-generic extension of $V$. 
We can now show as in the proof of Theorem \ref{perfect subsets of definable sets} (for $\Col{\lambda}{{<}\nu}$ instead of $\Col{\kappa}{{<}\lambda}$) that $A_{\varphi,z}^\lambda$ has a perfect subset. 

%More precisely, we work with $\Col{\lambda}{{<}\nu}$ (instead of $\Col{\kappa}{{<}\lambda}$, as in the proof of Theorem  \ref{perfect subsets of definable sets}) 
%and let $g\times h$ be $\Add{\lambda}{1}\times \Col{\lambda}{{<}\nu}$-generic with $V[G]=V[G_{\mu+1}\times g\times h]$. 
%We then prove the claim given in the proof of Theorem \ref{perfect subsets of definable sets} and argue as in the remaining proof. 

For the second claim, it follows from the assumption that there is a $\Col{\lambda}{{<}\nu}$-name $\sigma$ for an $\Add{\kappa}{1}$-generic element of ${}^{\kappa}\kappa$ in $A_{\varphi,z}^\lambda$. 
%To prove the claim, we work in a $\Col{\lambda}{{<}\nu}$-generic extension of $V$. 
We can again argue as in the proof of Theorem \ref{determinacy result} (for $\Col{\lambda}{{<}\nu}$ instead of $\Col{\kappa}{{<}\lambda}$). 
\end{proof} 
%Let $PSP^{\kappa}_d$ denote the perfect set property for all subsets of ${}^{\kappa}\kappa$ definable from subsets of $\kappa$. ,
%We obtain an analogous result to Lemma \ref{perfect subset of definable set in collapse extension} for the almost Baire property. 

\iffalse 
\begin{lemma} \label{almost Baire property of definable set in collapse extension}
Let $\nu=(2^{\kappa})^+$. Suppose that $\varphi(x,y)$ is a formula and $z$ is a set. 
%\in \Ord^\kappa$. 
% in the ground model. 
If $\one_{\Col{\kappa}{<\nu}}$ forces that there is an $\Add{\kappa}{1}$-generic element of ${}^{\kappa}\kappa$ in $A_{\varphi,z}$, then $\one_{\Col{\kappa}{<\nu}}$ forces that 
player I has a winning strategy in $G_{\kappa}(A_{\varphi,z})$. 
\end{lemma} 
\begin{proof} 
Let $\mathbb{P}=\Col{\kappa}{<\nu}$. Since $(2^{\kappa})^{<\kappa}<\nu$, $\mathbb{P}$ is $\nu$-cc by the $\Delta$-system lemma. 
%Since $\PP$ is homogeneous, we can assume that $\one\Vdash_{\PP}|A_{\varphi}|>\kappa$. 
%Then $\mathbb{P}$ adds a new element to $A$. 
Suppose that $\sigma$ is a nice $\mathbb{P}$-name for an $\Add{\kappa}{1}$-generic element of ${}^{\kappa}\kappa$ in $A_{\varphi,z}$
Then $\sigma$ is a $\Col{\kappa}{<\gamma}$-name for some $\gamma<\nu$. 
%Since $\mathbb{P}$ has the $(2^{\kappa})^+$-c.c., $\sigma$ is supported in $Col(\kappa,<\gamma)$ for some $\gamma<(2^{\kappa})^+$.  Note that $Col(\kappa,<\gamma)$ is equivalent to $Col(\kappa,2^{\kappa})$ for $\gamma\geq 2^{\kappa}$. 
\todo{WRITE MORE!} The argument in the proof of Theorem \ref{determinacy result} shows that player I has a winning strategy in $G_{\kappa}(A_{\varphi,z})$ in any $\PP$-generic extension of $V$. 
%Let $g,h,i$ be mutually generic for $Col(\kappa,2^{\kappa})$, $Col(\kappa,2^{\kappa})$, $Col(\kappa,<(2^{\kappa})^+)$ over $V$. As in the proof of Theorem [cite: main] \todo{link}, $h$ adds a new element of $A$ over $V[g]$. 
%
%Since $Col(\kappa,2^{\kappa})$ is equivalent to $Add(\kappa,1)$ in $V[g]$, $Add(\kappa,1)$ adds a new element of $A$ over $V[g]$. We can construct a perfect subset of $A$ in a $Col(\kappa,(2^{\kappa})^+)$-generic extension as in the proof of Theorem [cite: main] \todo{link}. 
\end{proof} 
\fi

Our last result follows immediately from Lemma \ref{perfect subset of definable set in collapse extension} and the definition of the resurrection axiom.

\begin{theorem} \label{implications of resurrection} 
Suppose that 
%that $\kappa$ is a regular uncountable cardinal and 
$\Gamma$ is the class of forcings $\Col{\lambda}{{<}\nu}$, where $\nu$ is $\lambda$-inaccessible. 
%is reasonably closed at $\lambda$, 
%absolute to inaccessibles and contains the forcings $\Col{\lambda}{{<}\nu}$ for all SUCCESSOR cardinals $\nu$ with $\nu^{<\lambda}=\nu$. 
%preserving $\kappa$ that is closed under $<\kappa$-support iterations and lottery sums, is 
%Suppose that $\kappa$ is an uncountable regular cardinal and $\Gamma_\psi$ is a class of forcings as in the \todo{???????????} previous lemma. 
%reasonably closed definable class of forcings 
%that is absolute to inaccessibles 
%and contains the forcings $\Col{\kappa}{{<}\nu}$ for all cardinals $\nu$ with $\nu^{<\kappa}=\nu$. 
%Suppose that $\Gamma$ is the class of forcings preserving $\kappa$ and closed under $<\kappa$-support iterations and lottery sums that includes all forcings $\Col{\kappa}{<\nu}$, where $\nu^{<\kappa}=\nu$. 
%$<\kappa$-directed closed forcings, $<\kappa$-closed forcings or  $<\kappa$-strategically closed forcings. 
%Suppose that $n\in\omega$, 
Assuming that $\RA^{\lambda}(\Gamma)$ holds, 
% for all $n<\omega$, then 
the following statements hold for every subset $A$ of ${}^\lambda\lambda$ that is definable over $(H_{\lambda^+},\in)$ with parameters in $H_{\lambda^+}$. 
% has the following properties. 
\begin{enumerate-(1)} 
\item \label{perfect set property from resurrection} 
$A$ has the perfect set property. 
% holds for all subsets $A$ of ${}^{\kappa}\kappa$ 
% for every $\PP$-generic filter $G$ over $V$. 
\item \label{almost Baire property from resurrection} 
The game 
%\todo{conflict of notation?} 
$G_{\lambda}(A)$ is determined. 
% for all subsets $A$ of ${}^{\kappa}\kappa$ definable over $(H_{\kappa^+},\in)$ with parameters in $H_{\kappa^+}$. 
\end{enumerate-(1)} 
\end{theorem}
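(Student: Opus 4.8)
The plan is to reduce both conclusions to Lemma \ref{perfect subset of definable set in collapse extension} by transferring first-order statements about $(H_{\lambda^+},\in)$ along the elementarity supplied by the resurrection axiom. Write $A=\{x\in\ltl\mid (H_{\lambda^+},\in)\models\varphi(x,z)\}$ for a first-order $\varphi$ and $z\in H_{\lambda^+}$, and let $\psi(x,z)$ be the relativisation of $\varphi$ to the definable class $H_{\lambda^+}$, so that $A=A^\lambda_{\psi,z}$ in $V$ and, crucially, the \emph{same} formula $\psi$ defines the reinterpreted set $A^W=A^\lambda_{\psi,z}$ in any outer model $W$ (with $H_{\lambda^+}^W=H_{(\lambda^+)^W}^W$). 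The key observation is that each relevant property --- ``$|A|\le\lambda$'', ``$A$ has a perfect subset'', ``$A$ is meager'', and ``player I (respectively II) has a winning strategy in $G_\lambda(A)$'' --- is expressible by a first-order formula over $(H_{\lambda^+},\in)$ with parameter $z$: every possible witness (a surjection $\lambda\to A$, a perfect tree, a $\lambda$-sequence of nowhere dense trees covering $A$, or a strategy for $G_\lambda$) has hereditary size $\le\lambda$ and hence lies in $H_{\lambda^+}$, while verifying it quantifies only over $\ltl\subseteq H_{\lambda^+}$. (Along the way I would note that $\RA^\lambda(\Gamma)$ forces $\lambda^{<\lambda}=\lambda$ in $V$, since otherwise the transfer below would disagree with the collapse extension, where $\lambda^{<\lambda}=\lambda$; this lets us invoke Lemma \ref{characterization of Baire property by game}.)

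Next I would set up the resurrection. Since $\RA^\lambda(\Gamma)$ is to be used non-trivially, $\Gamma\neq\emptyset$, and a short argument shows there is no largest $\lambda$-inaccessible: applying $\RA^\lambda(\Gamma)$ to $\Col{\lambda}{{<}\nu}$ for a putatively largest such $\nu$ forces a $\dot\QQ\in\Gamma$ collapsing a strictly larger $\lambda$-inaccessible. Hence I may fix a $\lambda$-inaccessible $\nu_0>(2^\lambda)^V$ and put $\PP=\Col{\lambda}{{<}\nu_0}\in\Gamma$. Applying $\RA^\lambda(\Gamma)$ to $\PP$ yields $\dot\QQ$ with $\Vdash_\PP\dot\QQ\in\Gamma$ and, for a $\PP*\dot\QQ$-generic $G$, the relation $H_{\lambda^+}^V\prec^+ H_{(\lambda^+)^{V[G]}}^{V[G]}$, which in particular gives ordinary first-order elementarity. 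By the standard absorption properties of the Levy collapse, $\PP*\dot\QQ$ is equivalent to $\Col{\lambda}{{<}\nu'}$ for some $\nu'>\nu_0$ that is $\lambda$-inaccessible in $V$, with $\nu'=(\lambda^+)^{V[G]}$; in particular $(2^\lambda)^V<\nu'$. Thus $V[G]$ is a $\Col{\lambda}{{<}\nu'}$-generic extension of $V$ to which Lemma \ref{perfect subset of definable set in collapse extension} applies with $\psi$ in place of $\varphi$, and $\lambda$ and $z$ are fixed by the elementarity.

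For \ref{perfect set property from resurrection}, if $|A|\le\lambda$ in $V$ the perfect set property holds by definition; otherwise $|A|>\lambda$ in $V$, a statement over $H_{\lambda^+}$, so by elementarity $|A^{V[G]}|>\lambda$ in $V[G]$, and Lemma \ref{perfect subset of definable set in collapse extension}(1) produces a perfect subset of $A^{V[G]}$. As ``$A$ has a perfect subset'' is again expressible over $H_{\lambda^+}$, the elementarity transfers it back to $V$. For \ref{almost Baire property from resurrection}, I distinguish in $V$ whether $A$ is meager. If it is, player II has a winning strategy in $G_\lambda(A)$ by Lemma \ref{characterization of Baire property by game}\,\ref{II wins if the set is meager}, which is a theorem of $\ZFC$. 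If $A$ is not meager, then by elementarity $A^{V[G]}$ is not meager in $V[G]$; exactly as in the proof of Theorem \ref{determinacy result}, the set of $\Add{\lambda}{1}$-generics over $V$ is comeager in $V[G]$ (here the choice $(2^\lambda)^V<\nu'$ is used, so that the $(2^\lambda)^V$ many dense sets of $\Add{\lambda}{1}$ lying in $V$ become $\lambda$ many in $V[G]$), whence $A^{V[G]}$ contains an $\Add{\lambda}{1}$-generic element over $V$; Lemma \ref{perfect subset of definable set in collapse extension}(2) then gives player I a winning strategy in $G_\lambda(A^{V[G]})$, and transferring ``player I has a winning strategy'' back to $V$ finishes the proof.

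The step I expect to require the most care is the bookkeeping of the expressibility and absoluteness claims in the first paragraph: checking that a perfect tree, meager cover, or strategy found in $H_{(\lambda^+)^{V[G]}}^{V[G]}$ genuinely witnesses the corresponding property of $A$ back in $V$, and that $\lambda$ is computed identically in both structures (which holds because the collapse is ${<}\lambda$-closed, so $({}^{<\lambda}\lambda)^{V[G]}=({}^{<\lambda}\lambda)^V$). The only arithmetic input is the existence of a $\lambda$-inaccessible $\nu_0$ above $2^\lambda$, which secures the comeagerness of the generics in the non-meager case; everything else is pure transfer of $H_{\lambda^+}$-statements together with the two cases of Lemma \ref{perfect subset of definable set in collapse extension}.
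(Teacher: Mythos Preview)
Your proposal is correct and follows exactly the route the paper intends: transfer first-order statements about $H_{\lambda^+}$ along the elementarity from $\RA^\lambda(\Gamma)$ into a collapse extension, apply Lemma~\ref{perfect subset of definable set in collapse extension} there, and transfer back. The paper's own proof is the single sentence ``follows immediately from Lemma~\ref{perfect subset of definable set in collapse extension} and the definition of the resurrection axiom,'' so you have simply written out what that sentence means.

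One small repair: your argument that there is no largest $\lambda$-inaccessible is not quite right as stated --- the $\dot\QQ$ produced by $\RA^\lambda(\Gamma)$ could well be $\Col{\lambda}{{<}(\lambda^+)}$ as computed in $V^{\PP}$, which is just $\Col{\lambda}{{<}\nu}$ again and yields no larger $\lambda$-inaccessible in $V$. But this detour is unnecessary: since $(2^\lambda)^{<\lambda}=2^\lambda$, the cardinal $(2^\lambda)^+$ is always $\lambda$-inaccessible, so you may simply take $\nu_0=(2^\lambda)^+$ directly. (Also, $\nu'\geq\nu_0$ rather than $\nu'>\nu_0$, but $\nu'\geq\nu_0>(2^\lambda)^V$ is all you use.)
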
 
\section{Questions} 

We conclude with some open questions. 
%We study this in work in preparation. 
We first note that by standard arguments, an inaccessible cardinal is necessary to obtain the perfect set property for $\lambda$-Borel subsets of $\ltl$. 
% but the analogous question is open for the almost Baire property. 
%In particular, w
The most striking question is whether the conclusion of Theorem \ref{determinacy result} can be achieved without an inaccessible cardinal as in \cite{MR768264}. 

\begin{question} 
Can the almost Baire property for all subsets of $\ltl$ definable from an element of $\Ordl$, for some uncountable regular cardinal $\lambda$, be forced over any model of $\mathsf{ZFC}$? 
% strictly less than an inaccessible cardinal? 
\end{question}

%The proofs of the main theorems use the Levy collapse $\Col{\kappa}{<\lambda}$. 
% is a product of $\Col{\kappa}{<\alpha}$ and $\Col{\kappa}{[\alpha,\lambda)}$ for all $\alpha$ with $\kappa<\alpha<\lambda$. It is not clear 
%\todo{REWRITE} It would be useful to obtain different models in which the conlusions of the main theorems hold. 
%We ask whether variants of these proofs work for other collapse forcings. 

Moreover, we 
%would like know if 
ask whether the conclusions of our results hold in the following other well-known models.

\begin{question} 
Do the conclusions of the main results, Theorem \ref{perfect subsets of definable sets} 
and Theorem \ref{determinacy result}, hold in the Silver collapse \cite[Definition 20.1]{MR2768691} and in the Kunen collapse \cite[Section 20]{MR2768691} of an inaccessible cardinal $\mu$ to $\lambda^+$, where $\lambda$ is any uncountable regular cardinal? 
\end{question} 

%A \emph{tactic} in $G_\kappa$ is a strategy $\sigma$ such that for $\vec{s}=\langle s_\alpha\mid\alpha<\gamma\rangle$, $\sigma(\vec{s})$ depends only on $\bigcup_{\alpha<\gamma} s_\alpha$. 
Since the existence of winning strategies implies the existence ot winning tactics for the Banach-Mazur game of length $\omega$, 
%as is easy to see, 
it is natural to consider the same problem in the present context. 

%the existence of strategies and tactics are equivalent by \ref{characterization of Baire property by game}. This suggests the following question. 

\begin{question} 
Is it consistent that for some uncountable regular cardinal $\lambda$ and 
% with $\kappa^{<\kappa}=\kappa$ such that for 
for all subsets $A$ of ${}^{\lambda}\lambda$ that are definable from elements of $\Ordl$, either player I or player II has a winning tactic in $G_\lambda(A)$? 
\end{question} 

Moreover, in analogy to the Baire property, it is natural to ask the following question, which arose in a discussion with Philipp L\"ucke. 

\begin{question} 
Does the almost Baire property for all subsets of $\ltl$ definable from elements of $\Ordl$ imply a version of the Kuratowski-Ulam theorem? 
\end{question} 

Finally, the similarities to other regularity properties suggest that our results can be extended as follows. 
%between the models obtained by forcing with $\Col{\omega}{{<}\lambda}$ and $\Col{\kappa}{{<}\lambda}$ for regular $\kappa$ and inaccessible $\lambda>\kappa$ suggest the question whether these results extend to other regularity properties. 

\begin{question} 
Can we prove results analogous to the main results 
%Theorem \ref{perfect subsets of definable sets} and Theorem \ref{determinacy result}, 
for games associated to other regularity properties such as the Hurewicz dichotomy? 
\end{question}

\bibliographystyle{alpha}
 \bibliography{references}

\end{document}